\documentclass[twoside,a4paper,10pt]{amsart}


\usepackage{amscd}
\usepackage{amsmath}
\usepackage{amssymb}
\usepackage{amsthm}
\usepackage{color}
\usepackage{dutchcal}
\usepackage{fullpage}
\usepackage{graphicx}
\usepackage{hyperref}
\usepackage[noabbrev,capitalize]{cleveref}
\usepackage{mathrsfs}
\usepackage{mathtools}
\usepackage{nccmath}
\usepackage{pict2e}
\usepackage{stackrel}
\usepackage[T1]{fontenc}
\usepackage{tikz}
\usetikzlibrary{decorations.pathmorphing}
\usepackage[utf8]{inputenc}
\usepackage{wasysym}
\usepackage[all]{xy}
\usepackage{xcolor}
\usepackage{xy}

\hypersetup{
    colorlinks,
    linkcolor={red!50!black},
    citecolor={blue!50!black},
    urlcolor={blue!80!black}
}

\usepackage{palatino}
\allowdisplaybreaks




\setlength{\parindent}{0pt}


\newtheorem{lemma}{Lemma}[section]
\newtheorem{remark}[lemma]{Remark}
\newtheorem{example}[lemma]{Example}
\newtheorem{theorem}[lemma]{Theorem}
\newtheorem{corollary}[lemma]{Corollary}
\newtheorem{definition}[lemma]{Definition}
\newtheorem{proposition}[lemma]{Proposition}




\newcommand{\A}{\ensuremath{\ensuremath{\mathcal{s}\mathscr{A}}}}
\newcommand{\as}{\ensuremath{\mathrm{As}}}
\newcommand{\com}{\ensuremath{\mathrm{Com}}}
\newcommand{\C}{\ensuremath{\mathscr{C}}}
\renewcommand{\L}{\ensuremath{\ensuremath{\mathcal{s}\mathscr{L}}}}
\newcommand{\lie}{\ensuremath{\mathrm{Lie}}}
\renewcommand{\P}{\ensuremath{\mathscr{P}}}
\newcommand{\Q}{\ensuremath{\mathscr{Q}}}
\newcommand{\susp}{\ensuremath{\mathscr{S}}}


\newcommand{\antishriek}{\mbox{\footnotesize{\rotatebox[origin=c]{180}{$!$}}}}
\renewcommand{\bar}{\ensuremath{\mathrm{B}}}
\newcommand{\Ccog}{\ensuremath{\mathscr{C}\text{-}\mathsf{cog}}}

\newcommand{\cocom}{\mathsf{coCom}}
\newcommand{\F}{\mathcal{F}}
\newcommand{\FLalg}{\ensuremath{\infty\text{-}\mathcal{s}\mathscr{L}_\infty\text{-}\mathsf{alg}^{\mathcal{F}}}}
\newcommand{\g}{\mathfrak{g}}
\newcommand{\h}{\mathfrak{h}}

\newcommand{\id}{\operatorname{id}}
\renewcommand{\k}{\ensuremath{\mathbb{K}}}
\newcommand{\q}{\ensuremath{\mathbb{Q}}}

\newcommand{\Lalg}{\ensuremath{\mathcal{s}\mathscr{L}_\infty\text{-}\mathsf{alg}}}
\newcommand{\M}{\rotatebox[origin=c]{180}{$\mathsf{M}$}}

\newcommand{\map}{\ensuremath{\mathrm{Map}_*}}
\newcommand{\MC}{\ensuremath{\mathrm{MC}}}
\newcommand{\op}{\ensuremath{\mathsf{Op}}}
\newcommand{\Palg}{\ensuremath{\mathscr{P}\text{-}\mathsf{alg}}}
\newcommand{\proj}{\ensuremath{\mathrm{proj}}}
\newcommand{\quil}{\widetilde{\mathcal{C}}\lambda}
\renewcommand{\S}{\ensuremath{\mathbb{S}}}
\newcommand{\Sh}{\ensuremath{\mathrm{Sh}}}
\newcommand{\sh}{\ensuremath{\operatorname{sh}}}
\newcommand{\ssets}{\ensuremath{\mathsf{sSets}}}
\newcommand{\Top}{\mathsf{Top}}
\newcommand{\Tw}{\ensuremath{\mathrm{Tw}}}


\makeatletter
\newcommand{\adjunction}{\@ifstar\named@adjunction\normal@adjunction}
\newcommand{\normal@adjunction}[4]{%
  #1\colon #2%
  \mathrel{\vcenter{%
    \offinterlineskip\m@th
    \ialign{%
      \hfil$##$\hfil\cr
      \longrightharpoonup\cr
      \noalign{\kern-.3ex}
      \smallbot\cr
      \longleftharpoondown\cr
    }%
  }}%
  #3 \noloc #4%
}
\newcommand{\named@adjunction}[4]{%
  #2%
  \mathrel{\vcenter{%
    \offinterlineskip\m@th
    \ialign{%
      \hfil$##$\hfil\cr
      \scriptstyle#1\cr
      \noalign{\kern.1ex}
      \longrightharpoonup\cr
      \noalign{\kern-.3ex}
      \smallbot\cr
      \longleftharpoondown\cr
      \scriptstyle#4\cr
    }%
  }}%
  #3%
}
\newcommand{\longrightharpoonup}{\relbar\joinrel\rightharpoonup}
\newcommand{\longleftharpoondown}{\leftharpoondown\joinrel\relbar}
\newcommand\noloc{%
  \nobreak
  \mspace{6mu plus 1mu}
  {:}
  \nonscript\mkern-\thinmuskip
  \mathpunct{}
  \mspace{2mu}
}
\newcommand{\smallbot}{%
  \begingroup\setlength\unitlength{.15em}%
  \begin{picture}(1,1)
  \roundcap
  \polyline(0,0)(1,0)
  \polyline(0.5,0)(0.5,1)
  \end{picture}%
  \endgroup
}
\makeatother


\author{Daniel Robert-Nicoud \and Felix Wierstra}

\date{}
\title{Homotopy morphisms between convolution homotopy Lie algebras}
\address{Daniel Robert-Nicoud, UBS Business Solutions AG, Max-H\"onnger-Strasse 80, 8048 Z\"urich}
\email{\href{mailto:daniel.robertnicoud@gmail.com}{daniel.robertnicoud@gmail.com}}
\address{Felix Wierstra, LAGA, CNRS, UMR 7539, Universit\'e Sorbonne Paris Nord, Sorbonne Paris Cit\'e, Universit\'e Paris 8, 99 Avenue Jean-Baptiste Cl\'ement, 93430 Villetaneuse, France \newline Felix Wierstra, Stockholm University, Department of Mathematics, Stockholm University, SE - 106 91 Stockholm, Sweden}
\email{\href{mailto:felix.wierstra@gmail.com}{felix.wierstra@gmail.com}}





\begin{document}
	
	\begin{abstract}
		In previous works by the authors --- \cite{w16}, \cite{rn17} --- a bifunctor was associated to any operadic twisting morphism, taking a coalgebra over a cooperad and an algebra over an operad, and giving back the space of (graded) linear maps between them endowed with a homotopy Lie algebra structure. We build on this result by using a more general notion of $\infty$-morphism between (co)algebras over a (co)operad associated to a twisting morphism, and show that this bifunctor can be extended to take such $\infty$-morphisms in either one of its two slots. We also provide a counterexample proving that it cannot be coherently extended to accept $\infty$-morphisms in both slots simultaneously. We apply this theory to rational models for mapping spaces.
	\end{abstract}
	
	\maketitle

\setcounter{tocdepth}{1}

\tableofcontents

\section{Introduction}

Homotopy Lie algebras --- hereafter called $\mathscr{L}_\infty$-algebras --- and their non-sym\-metric counterparts, $\mathscr{A}_\infty$-algebras, are structures which are ubiquitous in mathematics and theoretical physics. For example, they appear naturally in Kontsevich's proof of deformation quantization of Poisson manifolds \cite{kon03}, in string field theory \cite{zwie93}, in derived deformation theory \cite{prid10}, \cite{ss12} and others, as algebras of symmetries for conformal field theories \cite{bft17}, in symplectic topology \cite{kontsevich95}, \cite{seidel08}, \cite{fukayaohohtaono} and others, as rational models for mapping spaces \cite{ber15}, and in many other places. The good notion of morphism for these structures that one needs in order to study their homotopy theory is that of  $\infty$-morphisms, i.e. a generalized notion of morphisms of e.g. $\mathscr{L}_\infty$-algebras, encoding maps that are morphisms of algebras only "up to coherent homotopies". These $\infty$-morphisms also appear in the mathematical and physical landscape, for example again as a fundamental ingredient in the proof of deformation quantization of Poisson manifolds by Kontsevich.

\medskip

The goal of the present paper is to show how one can functorially produce $\mathscr{L}_\infty$-algebras and $\infty$-morphisms between them starting from couples formed by a coalgebra and an algebra respectively over a cooperad and an operad related in a certain way. More precisely, a notion of (co)algebra of a certain type is encoded by a (co)operad. We fix a cooperad $\C$ and an operad $\P$. A twisting morphism
\[
\alpha:\C\longrightarrow\P
\]
is a map satisfying the Maurer--Cartan equation in a certain associated Lie algebra. In their previous articles \cite{rn17} and \cite{w16}, the authors showed that the data of such a twisting morphism is equivalent to a natural $\mathscr{L}_\infty$-algebra structure on the chain complex of linear maps between a $\C$-coalgebra and a $\P$-algebra. This result is recalled in \cref{thm:bijTwHoms}. This construction is functorial with respect to morphisms of $\C$-coalgebras in the first slot, and morphisms of $\P$-algebras in the second one.

\medskip

One can relax the notion of morphisms of (co)algebras and use the twisting morphism $\alpha$ to define $\infty$-morphisms of $\C$-coalgebras and of $\P$-algebras. Versions of this generalized notion of $\infty$-morphism have already implicitly appeared in \cite{Mar04} and were explicitly written down in \cite{Ber14}. It is natural to ask whether the above functor is also functorial with respect to these generalized notions of morphisms. We prove that the answer is a partial yes: indeed, one can consider such $\infty$-morphisms in either one of the slots and preserve functoriality, but not in both at the same time. These results are presented in \cref{thm:twoBifunctors} and \cref{sect:counterexample}, and form the bulk of the novel results of this article.

\medskip

We develop this general operadic theory in order to apply it to the study of the rational homotopy theory of mapping spaces in \cref{sect:application}. In that section, we construct a rational model for the based mapping space $\map(K,L_{\q})$ between two simplicial sets $K$ and $L$. This model is given by the $\mathscr{L}_\infty$-convolution algebra $\hom^{\alpha}(C,\g)$, where $C$ is a $\C_{\infty}^{\vee}$-coalgebra model for $K$, and $\g$ an $\mathscr{L}_{\infty}$-algebra model for $L$. This improves \cite[Thm. 1.4]{ber15}  by no longer requiring that $C$  is strictly cocommutative, but also allowing $C$  to be just cocommutative up to homotopy.

\medskip

Along the way, we prove the folklore result that --- under some assumptions --- the dual of a  commutative rational model of a space is a cocommutative rational model, and dually, that the dual of a cocommutative rational model of a space is a commutative rational model. This result is \cref{thm:dualization of models}. We were not able to find a proof of this result in the literature.

\subsection*{Structure of the paper}

We start this paper with a short recollection on some notions in operad theory in \cref{sect:recollections}. In \cref{sect:inftyMorphisms}, we recall the standard notion of $\infty$-morphisms, before introducing the more general one we need for the subsequent results and studying its homotopical properties in detail. In \cref{sect:convolutionHomotopyAlg}, we recall the construction of convolution homotopy Lie algebras. We get to the core of the present article with the proof of the fact that $\infty$-morphisms of (co)algebras induce $\infty$-morphisms between convolution homotopy Lie algebras in \cref{sect:functoriality} and the proof that this cannot be done simultaneously for algebras and coalgebras in \cref{sect:counterexample}. In \cref{sect:differentSettings}, we explain what happens to the results if one considers non-symmetric operads instead of symmetric operads, or tensor products instead of hom spaces. In \cref{sect:MC spaces}, we prove that the Maurer--Cartan spaces of the convolution $\mathscr{L}_\infty$-algebras are invariant up to homotopy under certain morphisms. Finally, in \cref{sect:application} we apply the theory we developed to rational homotopy theory by constructing rational models for mapping spaces.

\subsection*{Acknowledgments}

Both authors are extremely thankful to Alexander Berglund and Bruno Vallette for their support and their useful comments.  The first author is grateful to the University of Stockholm for the hospitality during a short visit, during which the present collaboration was started. The authors would also like to thank Philip Hackney for answering some of their questions. The first author was a PhD student at Universit\'e Paris 13 at the time of writing, and he was supported by grants from R\'egion Ile-de-France and the grant ANR-14-CE25-0008-01 project SAT. Most of this paper was written while the second author was a PhD student at Stockholm University and it was completed while he was a postdoctoral student at Charles University in Prague; the second author also acknowledges the support of the grant GA CR No. P201/12/G028.

\subsection*{Notations and conventions}

Throughout the article, we work over a field $\k$ of characteristic $0$, unless explicitly stated differently. All operads, cooperads, algebras, and coalgebras are over chain complexes unless explicitly stated otherwise. All operads and cooperads are symmetric unless otherwise specified.

\medskip

In this paper, we mainly work with \emph{shifted} homotopy Lie algebras and \emph{shifted} homotopy associative algebras, which we denote $\L_\infty$-algebras and $\A_\infty$-algebras respectively, instead of their non-shifted versions. These notions have already been used in the literature, see e.g. \cite{dr17} and \cite{dr15}, and have the advantage of greatly simplifying the signs appearing in the computations. One can immediately pass to non-shifted homotopy Lie and associative algebras via a suspension.

\medskip

The symbol $\circ$ is reserved for the monadic composition of $\S$-modules and (co)ope\-rads, as well for the composite product of morphisms of (co)operads, see \cite[Sect. 5.1.4, 5.1.9]{lodayvallette}. The usual composite of e.g. two function $f$ and $g$ will simply be denoted by $gf$.

\medskip

Let $G$ be a finite group and let $V$ be a vector space with left $G$-action. Then we identify invariants and coinvariants of $V$ via the isomorphisms
\[
V^G\longrightarrow V_G,\qquad v\longmapsto[v]
\]
looking at the space of invariants as a subspace of $V$ and taking the class of an element to get to coinvariants. Its inverse is
\[
V_G\longrightarrow V^G,\qquad [v]\longmapsto\frac{1}{|G|}\sum_{g\in G}g\cdot v\ ,
\]
which takes a representative of a coinvariant and averages over its $G$-orbit. This will be used implicitly throughout the text: whenever group actions come into play (e.g. with symmetric operads) we place ourselves in characteristic $0$ and always work with coinvariants.

\medskip

If $V,W$ are two chain complexes, we denote by $\hom(V,W)$ the chain complex of linear maps from $V$ to $W$. A linear map $f$ has degree $n$ if it sends elements of degree $k$ in $V$ to elements of degree $k+n$ in $W$. The differential of $\hom(V,W)$ is given by
\[
\partial(f)\coloneqq d_Wf - (-1)^{|f|}fd_V\ .
\]
We also denote by $V^\vee\coloneqq\hom(V,\k)$ the dual chain complex of $V$, where $\k$ is seen as a chain complex concentrated in degree $0$.

\medskip

A chain complex is \emph{of finite type} if it is finite dimensional in each degree and if the set of degrees where it is non-zero is bounded either above or below. An algebra (of any type) is \emph{of finite type} if the underlying chain complex is. This notion is useful when dualizing algebras. For example, the linear dual of a commutative algebra of finite type is a cocommutative coalgebra, while this is not true if we drop the finite type assumption. If moreover the algebra is concentrated in strictly positive or strictly negative degree, then its dual is conilpotent.

\medskip

Throughout the whole article, all coalgebras and cooperads are implicitly supposed to be conilpotent, see \cite[Sect. 5.8.4]{lodayvallette}, and all operads and cooperads are assumed to be \emph{reduced}, i.e. $0$ in arity $0$ and $\k\id$ in arity $1$.

\section{Recollections} \label{sect:recollections}

In this section, we recall some basic notions of operad theory which we will need throughout the text. We will try to stay as close as possible to the conventions of the book \cite{lodayvallette}, to which we refer the reader for further details. We assume a basic knowledge of the definitions of operads, cooperads, and (co)algebras over (co)operads.

\subsection{The Koszul sign rule and the Koszul sign convention} \label{subsect:KoszulSignRule}

The Koszul sign rule is a sign that is put on the switching maps in the (symmetric monoidal) category of graded vector spaces. Namely, if $V,W$ are two graded vector spaces, then the isomorphism
\[
V\otimes W\longrightarrow W\otimes V
\]
is given by
\[
v\otimes w\longmapsto(-1)^{|v||w|}w\otimes v
\]
on homogeneous elements. The Koszul sign convention is the following convention on maps between graded vector spaces. Let $V_1,V_2,W_1,W_2$ be graded vector spaces, and let $f_i:V_i\to W_i$ be linear maps of homogeneous degree. Then the map $f_1\otimes f_2$ is given by
\[
(f_1\otimes f_2)(v_1\otimes v_2) = (-1)^{|f_2||v_1|}f_1(v_1)\otimes f_2(v_2)\ .
\]
This gives an automatic way of obtaining the correct signs in computations without having to actually apply maps to elements. An example of an application of the Koszul sign rule is the following. Let $(V,d_V)$ be a chain complex, then the differential on the suspended complex $(sV)_n\coloneqq V_{n-1}$ is given by
\[
d_{sV}(sv) = -sd_Vv\ .
\]
Let $V$ be a graded vector space, and let $v_1,\ldots,v_n$ be elements of $V$. Suppose we are given a partition of $[n]\coloneqq\{1,2,\ldots,n\}$ into disjoint (ordered) subsets $S_1\sqcup\ldots\sqcup S_k$. We will denote by $\sh(v^{S_1},\ldots,v^{S_k})$ the sign obtained by Koszul sign rule when doing the switch, that is to say that we have
\[
v_1\otimes\cdots\otimes v_n = (-1)^{\sh(v^{S_1},\ldots,v^{S_k})}\bigotimes_{i=1}^k\left(\bigotimes_{s\in S_i}v_s\right)
\]
in $(V^{\otimes n})_{\S_n}$. For example, suppose $n=3$ and $S_1=\{2\},S_2=\{1,3\}$. Then
\[
v_1\otimes v_2\otimes v_3 = (-1)^{|v_1||v_2|}v_2\otimes v_1\otimes v_3\ ,
\]
so that $\sh(v^{S_1},v^{S_2}) \equiv |v_1||v_2|\mod2$.

\subsection{Notations for operads}

As already stated, we reserve the symbol $\circ$ for operadic composition. For example, if $M_1,M_2,N_1,N_2$ are $\S$-modules, and $f_i:M_i\to N_i$, then
\[
f_1\circ f_2:M_1\circ M_2\longrightarrow N_1\circ N_2
\]
is given by
\[
(f_1\circ f_2)(\mu;\nu_1,\ldots,\nu_k) = (f_1(\mu);f_2(\nu_1),\ldots,f_2(\nu_k))\ .
\]
See \cite[Sect. 5.1]{lodayvallette} for details. We will also use the notation $M\circ(N_1;N_2)$ for the sub-$\S$-module of $M\circ(N_1\oplus N_2)$ which is linear in $N_2$, as well as
\[
M\circ_{(1)}N\coloneqq M\circ(I;N)\ ,
\]
and
\[
f_1\circ_{(1)}f_2:M_1\circ_{(1)}M_2\longrightarrow N_1\circ_{(1)}N_2
\]
for the map acting by $f_1$ on $M_1$ and by $f_2$ on $M_2$. Furthermore, we denote by $\mu\otimes_j\nu$ the element
\[
\mu\otimes_j\nu\coloneqq(\mu;1,\ldots,1,\underbrace{\nu}_{j^{\text{th}}\text{ position}},1,\ldots,1)\in M\circ_{(1)}N\ .
\]
Finally, if $f:M_1\to M_2$ and $g:N_1\to N_2$, then their \emph{infinitesimal composite} is the map
\[
f\circ' g:M_1\circ N_1\longrightarrow M_2\circ(N_1;N_2)
\]
given by applying $f$ to $M_1$ and $g$ to exactly one $N_2$ component, in all possible ways. See \cite[Sect. 6.1]{lodayvallette} for a more detailed exposition.

\subsection{Weight grading}\label{subsect:weight grading}

In order to do homological algebra on (co)operads, one often needs an additional grading different from the homological degree, see \cite[Sect. 6.3.11]{lodayvallette}. This allows the use of spectral sequence arguments in the proof of certain results, see e.g. \cite[Sect. 6.7]{lodayvallette}.

\begin{definition}
	A \emph{connected weight grading} on an operad $\P$ is a decomposition
	\[
	\P = \k\id\oplus\P^{(1)}\oplus\P^{(2)}\oplus\P^{(3)}\oplus\cdots
	\]
	into subspaces $\P^{(k)}$ of \emph{weight $k$}, such that both the differential and the composition map preserve the total weight. Similarly, a \emph{connected weight grading} on a cooperad $\C$ is an analogue decomposition such that the total weight is preserved by the decomposition map.
\end{definition}

Morphisms between connected weight graded (co)operads are required to preserve the weight, as are twisting morphisms.

\medskip

Since we work under the standing assumption that our (co)operads are reduced, we have a canonical connected weight grading given by
\[
\P^{(k)}\coloneqq\P(k+1)
\]
for any operad $\P$, and similarly for cooperads. Most of the results of the present paper are expected to hold in the general setting of connected weight graded (co)operads.

\subsection{Convolution operads}

Let $\C$ be a cooperad, and let $\P$ be an operad. We define an $\S$-module $\hom(\C,\P)$ by
\[
\hom(\C,\P)(n)\coloneqq\hom(\C(n),\P(n))
\]
for $n\ge0$ and with the action of the symmetric groups given by conjugation:
\[
(f^\sigma)(x) = f(x^{\sigma^{-1}})^\sigma\ .
\]
In \cite[p. 3]{bm03}, this $\S$-module was endowed with an operad structure as follows (see also \cite[Sect. 6.4.1]{lodayvallette} for an exposition using the same notations as in the present paper). Let $f\in\hom(\C,\P)(k)$ and $g_i\in\hom(\C,\P)(n_i)$ for $i=1,\ldots,k$. Let $n=n_1+\cdots+n_k$. Then $\gamma(f\otimes(g_1,\ldots,g_k))$ is given by the composite
\begin{align*}
	\C(n)\xrightarrow{\Delta}(\C\circ\C)(n)\xrightarrow{\proj}\C(k)\otimes\C(n_1)\otimes\cdots\otimes\C(n_k)\otimes\k[\S_n]\\
	\xrightarrow{f\otimes g_1\otimes\cdots\otimes g_k\otimes1}\P(k)\otimes\P(n_1)\otimes\cdots\otimes\P(n_k)\otimes\k[\S_n]\xrightarrow{\gamma}\P(n)
\end{align*}

\begin{definition}
	The operad $\hom(\C,\P)$ defined above is called the \emph{convolution operad} of $\C$ and $\P$.
\end{definition}

We have the following obvious fact, already proven e.g. in \cite[Prop. 2.9]{rn17} or \cite[Prop. 7.1]{w16}.

\begin{lemma}
	Let $D$ be a $\C$-coalgebra, and let $A$ be a $\P$-algebra. Then the chain complex $\hom(D,A)$ is naturally a $\hom(\C,\P)$-algebra.
\end{lemma}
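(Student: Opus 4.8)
The plan is to exhibit explicit structure maps and to reduce the algebra axioms to the defining properties of the four objects involved. Write $\Delta_D\colon D\to\C(D)=\bigoplus_n(\C(n)\otimes D^{\otimes n})_{\S_n}$ for the $\C$-coalgebra structure of $D$, with arity-$n$ component $\Delta_D^{(n)}\colon D\to(\C(n)\otimes D^{\otimes n})_{\S_n}$, and $\gamma_A\colon\P(A)=\bigoplus_n(\P(n)\otimes A^{\otimes n})_{\S_n}\to A$ for the $\P$-algebra structure of $A$, with arity-$n$ component $\gamma_A^{(n)}$. First I would define, for $f\in\hom(\C,\P)(n)=\hom(\C(n),\P(n))$ and $\phi_1,\dots,\phi_n\in\hom(D,A)$, a map $\Gamma(f;\phi_1,\dots,\phi_n)\colon D\to A$ as the composite
\[
D\xrightarrow{\ \Delta_D^{(n)}\ }(\C(n)\otimes D^{\otimes n})_{\S_n}\xrightarrow{\ f\otimes(\phi_1\otimes\cdots\otimes\phi_n)\ }(\P(n)\otimes A^{\otimes n})_{\S_n}\xrightarrow{\ \gamma_A^{(n)}\ }A\ ,
\]
where the middle arrow is interpreted by passing to the invariant representative furnished by the characteristic-zero identification of coinvariants with invariants, applying $f\otimes(\phi_1\otimes\cdots\otimes\phi_n)$ as an honest linear map, and projecting back to coinvariants.

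The first thing to check is that $\Gamma$ is well defined, i.e. that it descends to the coinvariants $(\hom(\C,\P)(n)\otimes\hom(D,A)^{\otimes n})_{\S_n}$ on which an algebra structure map over $\hom(\C,\P)$ must be defined. The only subtle point is that $f$ is not $\S_n$-equivariant, so the middle arrow genuinely requires the invariant representative $\omega=\Delta_D^{(n)}(d)$, for which $\sigma\cdot\omega=\omega$. A direct computation from the conjugation action $(f^\sigma)(x)=f(x^{\sigma^{-1}})^\sigma$ yields the identity $f^\sigma(c^\sigma)=f(c)^\sigma$, whence the composites defining $\Gamma(f^\sigma;\phi_{\sigma^{-1}(1)},\dots,\phi_{\sigma^{-1}(n)})$ and $\Gamma(f;\phi_1,\dots,\phi_n)$ produce elements of $\P(n)\otimes A^{\otimes n}$ related by the diagonal $\S_n$-action, hence equal after projecting to coinvariants and applying $\gamma_A^{(n)}$. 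This is exactly the invariance needed. That each $\Gamma(f;\phi_\bullet)$ is a chain map, and that $\Gamma$ is compatible with the differentials of $\hom(\C,\P)$ and $\hom(D,A)$, follows from the Leibniz rule together with the fact that $\Delta_D$ and $\gamma_A$ are chain maps; the unit axiom follows immediately from the counit of the $\C$-coalgebra and the unit of the $\P$-algebra in arity $1$.

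The main work is the associativity axiom, asserting that $\Gamma$ intertwines the operadic composition of $\hom(\C,\P)$ — recalled explicitly above in terms of $\Delta_\C$ and $\gamma_\P$ — with iterated application of $\Gamma$. I would prove this by unwinding both sides on an element $d\in D$. On one side, the convolution composite $\gamma(f\otimes(g_1,\dots,g_k))$ combines the cooperadic decomposition $\Delta_\C$ and the operadic composition $\gamma_\P$ into a single map $\C(n)\to\P(n)$, which is then applied to $\Delta_D^{(n)}(d)$; on the other side, iterating $\Gamma$ applies the coaction $\Delta_D$ twice and the action $\gamma_A$ twice. The two sides are matched by invoking the coassociativity of $\Delta_D$, which relates the double coaction on $D$ to $\Delta_\C$, and the associativity of $\gamma_A$, which relates the double action on $A$ to $\gamma_\P$; the combinatorics of the partition $n=n_1+\cdots+n_k$ and the $\k[\S_n]$-factor in the definition of the convolution composite are precisely those produced by the iterated (co)structure maps. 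The expected main obstacle is bookkeeping: keeping the Koszul signs (for which the $\sh(-)$ notation is the right tool) and the invariants/coinvariants identifications consistent throughout this comparison. As the statement itself notes, the result is established in \cite[Prop. 2.9]{rn17} and \cite[Prop. 7.1]{w16}, confirming that no further input is needed.
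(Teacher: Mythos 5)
Your proposal is correct and follows essentially the same route as the paper, which states the lemma without proof as an ``obvious fact'' and defers to \cite[Prop. 2.9]{rn17} and \cite[Prop. 7.1]{w16}: the structure map there is exactly your composite $\gamma_A^{(n)}\circ\bigl(f\otimes(\phi_1\otimes\cdots\otimes\phi_n)\bigr)\circ\Delta_D^{(n)}$, with the axioms reduced, as you do, to coassociativity of $\Delta_D$, associativity of $\gamma_A$, and the (co)unit axioms. The one genuinely delicate point --- descent to coinvariants despite $f$ not being $\S_n$-equivariant, handled via the identity $f^\sigma(c^\sigma)=f(c)^\sigma$ and the characteristic-zero invariants/coinvariants identification --- is precisely the point you address explicitly, so nothing is missing.
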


\subsection{Operadic twisting morphisms}

Let $\C$ be a cooperad, and let $\P$ be an operad. Then one can endow the chain complex of morphisms of $\S$-modules (i.e. $\S$-equivariant maps)
\[
\hom_\S(\C,\P)\coloneqq\prod_{n\ge0}\hom_{\S_n}(\C(n),\P(n))
\]
with a Lie algebra structure. First, one defines a pre-Lie product $\star$ given as the composite
\[
f\star g \coloneqq \left(\C\xrightarrow{\Delta_{(1)}}\C\circ_{(1)}\C\xrightarrow{f\circ_{(1)}g}\P\circ_{(1)}\P\xrightarrow{\gamma_{(1)}}\P\right)\ .
\]
One then antisymmetrizes this product to obtain a Lie bracket
\[
[f,g]\coloneqq f\star g - (-1)^{|f||g|}g\star f\ .
\]

\begin{remark}
	A more general construction associates a pre-Lie algebra, and thus a Lie algebra, to any operad, see \cite[Sect. 5.4.3]{lodayvallette}.
\end{remark}

\begin{definition}
	A degree $-1$ element $\alpha\in\hom_\S(\C,\P)$ solving the Maurer--Cartan equation
	\[
	\partial(\alpha)+\frac{1}{2}[\alpha,\alpha] = 0
	\]
	in the Lie algebra defined above is called an \emph{(operadic) twisting morphism} from $\C$ to $\P$. The set of all such twisting morphisms is denoted by $\Tw(\C,\P)$.
\end{definition}

Twisting morphisms play a very important role in operad theory, in particular with respect to the bar and cobar construction for (co)operads.

\begin{theorem}[\cite{lodayvallette} Theorem 6.5.7 ]
	Let $\C$ be a cooperad, let $\P$ be an operad. There exist natural isomorphisms
	\[
	\hom_{\mathsf{Op}}(\Omega\C,\P)\cong\Tw(\C,\P)\cong\hom_{\mathsf{coOp}}(\C,\bar\P)\ .
	\]
\end{theorem}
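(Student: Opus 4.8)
The plan is to establish the two natural isomorphisms separately, exploiting the universal properties of the cobar construction $\Omega\C$ and the bar construction $\bar\P$ together with the Maurer--Cartan characterization of twisting morphisms. First I would recall that the cobar construction $\Omega\C = (\mathcal{T}(s^{-1}\overline{\C}), d)$ is the free operad on the desuspension of the coaugmentation coideal of $\C$, equipped with a differential $d = d_1 + d_2$ built from the internal differential of $\C$ and its (infinitesimal) decomposition map. Dually, $\bar\P = (\mathcal{T}^c(s\overline{\P}), d)$ is the cofree conilpotent cooperad on the suspension of the augmentation ideal of $\P$, with differential encoding the internal differential and the partial composition of $\P$.

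For the left isomorphism $\hom_{\mathsf{Op}}(\Omega\C,\P)\cong\Tw(\C,\P)$, I would argue as follows. Since $\Omega\C$ is free as a graded operad on $s^{-1}\overline{\C}$, a morphism of graded operads $\Omega\C\to\P$ is determined by and equivalent to a degree $0$ map of $\S$-modules $s^{-1}\overline{\C}\to\P$, equivalently a degree $-1$ map $\overline{\C}\to\P$, which we extend by zero to obtain an element $\alpha\in\hom_\S(\C,\P)$ of degree $-1$. The key computation is to show that such a morphism commutes with the differentials \emph{if and only if} $\alpha$ satisfies the Maurer--Cartan equation $\partial(\alpha)+\tfrac12[\alpha,\alpha]=0$. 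Concretely, the part of the differential of $\Omega\C$ coming from the internal differential of $\C$ produces the $\partial(\alpha)$ term, while the quadratic part $d_2$ coming from the decomposition $\Delta_{(1)}$ of $\C$ produces, after composing with $\gamma_{(1)}$ in $\P$, precisely the pre-Lie term $\alpha\star\alpha = \tfrac12[\alpha,\alpha]$ (using that $\alpha$ has odd degree, so the bracket is the symmetric square of $\star$). Naturality in $\C$ and $\P$ is then immediate since all the structure maps involved are natural.

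The right isomorphism $\Tw(\C,\P)\cong\hom_{\mathsf{coOp}}(\C,\bar\P)$ is entirely dual: a morphism of conilpotent cooperads $\C\to\bar\P$ into the \emph{cofree} cooperad $\bar\P$ is equivalent, by the universal property of cofreeness, to a degree $0$ map of $\S$-modules $\C\to s\overline{\P}$, hence to a degree $-1$ map $\C\to\P$, again yielding $\alpha\in\hom_\S(\C,\P)$. The compatibility of this cooperad morphism with the differentials unwinds, by the same bookkeeping, into the identical Maurer--Cartan equation for $\alpha$, where now the quadratic term arises from the differential of $\bar\P$ built out of $\gamma_{(1)}$ of $\P$ rather than from $\Delta_{(1)}$ of $\C$. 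Composing the two bijections gives the bijection between the two hom-sets, and because both factor through the same intermediate set $\Tw(\C,\P)$ with the same underlying map $\alpha$, the whole correspondence is coherent and natural.

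The main obstacle I anticipate is the careful sign and degree bookkeeping in identifying the quadratic part of each differential with the Maurer--Cartan term $\tfrac12[\alpha,\alpha]$. One must verify that the infinitesimal decomposition $\Delta_{(1)}$ (respectively the infinitesimal composition $\gamma_{(1)}$) used in the differential of the (co)bar construction matches exactly the structure maps defining the pre-Lie product $\star$, and that the desuspensions $s^{-1}$ and $s$ contribute signs that convert the naive quadratic expression into the antisymmetrized bracket with the correct factor of $\tfrac12$. Since the paper works with shifted conventions and the Koszul sign rule as set up in \cref{subsect:KoszulSignRule}, I would track these signs explicitly only at the level of the quadratic term and invoke freeness/cofreeness to handle the remaining (linear) data formally. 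As this is a known result quoted from \cite{lodayvallette}, in practice one may simply cite Theorem 6.5.7 there rather than reproducing the full sign analysis.
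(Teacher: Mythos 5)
Your proposal is correct and takes essentially the same route as the source the paper relies on: the paper gives no proof of this statement, citing it directly as Theorem 6.5.7 of Loday--Vallette, and your argument (freeness of the underlying graded operad of $\Omega\C$, cofreeness of the underlying cooperad of $\bar\P$, and the identification of compatibility with the differentials with the Maurer--Cartan equation, using that $\alpha\star\alpha=\tfrac12[\alpha,\alpha]$ for odd-degree $\alpha$) is precisely the proof given there. Nothing needs to be corrected.
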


We reserve the symbols $\kappa$, $\pi$, and $\iota$ for the following canonical twisting morphisms:
\begin{itemize}
	\item $\kappa:\P^{\antishriek}\to\P$ for the twisting morphism given by Koszul duality, see e.g. \cite[Sect. 7.4]{lodayvallette},
	\item $\pi:\bar\P\to\P$ for the universal twisting morphism associated to the counit of the bar-cobar adjunction, and
	\item $\iota:\C\to\Omega\C$ for the universal twisting morphism associated to the unit of the bar-cobar adjunction.
\end{itemize}
For more details see \cite[Sect. 6.5.4]{lodayvallette}. The specific (co)operads $\C,\P$ will vary from case to case, and will be omitted from the notation.

\subsection{Bar and cobar construction relative to a twisting morphism}

Let $\C$ be a cooperad, and let $\P$ be an operad. Suppose we are given a twisting morphism $\alpha:\C\to\P$. Then one has two adjoint functors, called the \emph{bar} and \emph{cobar construction} respectively
\[
\adjunction{\Omega_\alpha}{\mathsf{conil.\ }\C\text{-}\mathsf{cog.}}{\P\text{-}\mathsf{alg.}}{\bar_\alpha}
\]
between conilpotent $\C$-coalgebras and $\P$-algebras. They are defined as follows.
\begin{enumerate}
	\item The bar construction $\bar_\alpha A$ of a $\P$-algebra $A$ is the cofree $\C$-coalgebra $\C(A)$ with the square zero codifferential $d_{\bar_\alpha A}\coloneqq d_1+d_2$, where $d_1\coloneqq d_\C\circ1_A + 1_\C\circ'd_A$ and $d_2$ is the unique coderivation extending the composite
	\[
	\C(A)\xrightarrow{\alpha\circ1_A}\P(A)\xrightarrow{\gamma_A}A\ .
	\]
	That is to say, the full expression for $d_2$ is given by the composite
	\begin{align*}
	\C(A)&\xrightarrow{\Delta_{(1)}\circ1_A}(\C\circ_{(1)}\C)(A)\\
	&\xrightarrow{(1_\C\circ_{(1)}\alpha)\circ1_A}(\C\circ_{(1)}\P)(A)\cong\C\circ(A;\P(A))\xrightarrow{1_\C\circ(1_A;\gamma_A)}\C(A)\ .
	\end{align*}
	\item Dually, the cobar construction $\Omega_\alpha D$ of a conilpotent $\C$-coalgebra $D$ is the free $\P$-algebra $\P(D)$ with the differential $d_{\Omega_\alpha D}\coloneqq d_1+d_2$, where $d_1\coloneqq d_\P\circ1_D + 1_\P\circ'd_D$ and $-d_2$ is the unique derivation extending the composite
	\[
	D\xrightarrow{\Delta_D}\C(D)\xrightarrow{\alpha\circ1_D}\P(D)\ .
	\]
	Similarly to the previous case, the full expression for $-d_2$ is given by the composite
	\begin{align*}
	\P(D)&\xrightarrow{1_\P\circ'\Delta_D}\P\circ(D;\C(D))\\
	&\xrightarrow{1_\P\circ(1_D;\alpha\circ1_D)}\P\circ(D;\P(D))\cong(\P\circ_{(1)}\P)(D)\xrightarrow{\gamma_{(1)}\circ1_D}\P(D)\ .
	\end{align*}
\end{enumerate}

If we fix a $\C$-coalgebra $D$ and a $\P$-algebra $A$, to an operadic twisting morphism one can also associate a Maurer--Cartan equation on the chain complex $\hom(D,A)$ of linear maps from $D$ to $A$. It is given by
\begin{equation} \label{eq:MCrelToAlpha}
	\partial(\varphi)+\star_\alpha(\varphi) = 0
\end{equation}
for $\varphi\in\hom(D,A)$, where $\partial$ is the differential of $\hom(D,A)$, given as usual by
\[
\partial(\varphi) = d_A\varphi - (-1)^{|\varphi|}\varphi d_D\ ,
\]
and $\star_\alpha$ is the operator defined by
\[
\star_\alpha(\varphi) = \big(D\xrightarrow{\Delta_D}\C\circ D\xrightarrow{\alpha\circ\varphi}\P\circ A\xrightarrow{\gamma_A}A\big)\ .
\]
The degree $0$ elements solving \cref{eq:MCrelToAlpha} are called \emph{twisting morphisms (relative to $\alpha)$}. The set of all such twisting morphisms is denoted by $\Tw_\alpha(D,A)\subseteq\hom(D,A)_0$. The following result is found e.g. in \cite[Prop. 11.3.1]{lodayvallette}.

\begin{theorem} \label{thm:RosettaStone}
	For any $\C$-coalgebra $D$ and any $\P$-algebra $A$, there are natural bijections
	\[
	\hom_{\P\text{-}\mathsf{alg}}(\Omega_\alpha D,A)\cong\Tw_\alpha(D,A)\cong\hom_{\C\text{-}\mathsf{cog}}(D,\bar_\alpha A)\ .
	\]
	In particular, the functors $\Bar_\alpha$ and $\Omega_\alpha$ are adjoint.
\end{theorem}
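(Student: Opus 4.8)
The plan is to establish the two displayed bijections separately, with the second one being formally dual to the first, and to identify both sides with the relative Maurer--Cartan condition \eqref{eq:MCrelToAlpha}. The backbone of the argument is that morphisms out of a free algebra (resp. into a cofree conilpotent coalgebra) are determined by their (co)restriction to the (co)generators, and that the same holds for derivations and coderivations; this reduces the whole statement to a short computation on (co)generators.

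I would first treat the bijection $\hom_{\P\text{-}\mathsf{alg}}(\Omega_\alpha D,A)\cong\Tw_\alpha(D,A)$. Since $\Omega_\alpha D$ has underlying graded $\P$-algebra the free algebra $\P(D)$, the universal property of the free $\P$-algebra yields a bijection between morphisms of graded $\P$-algebras $\P(D)\to A$ and degree $0$ linear maps $\varphi\colon D\to A$, obtained by restricting to the generators $D=I(D)\subseteq\P(D)$; explicitly, the morphism $f_\varphi$ attached to $\varphi$ sends $(\mu;d_1,\ldots,d_n)$ to $\gamma_A(\mu;\varphi(d_1),\ldots,\varphi(d_n))$. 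It then remains to decide when $f_\varphi$ is a chain map, i.e. when $d_Af_\varphi=f_\varphi d_{\Omega_\alpha D}$.

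Because $f_\varphi$ is a morphism of $\P$-algebras and the internal differentials act as derivations, this identity need only be checked on the generators $D$. On $D$ the term $d_1$ restricts to $d_D$ (the summand $d_\P\circ1_D$ vanishes on $I=\k\id$), so $f_\varphi(d_1(d))=\varphi(d_D(d))$; and $-d_2$, being the derivation extending $D\xrightarrow{\Delta_D}\C(D)\xrightarrow{\alpha\circ1_D}\P(D)$, restricts on generators to that very composite. Applying $f_\varphi$ and using the definition of $\star_\alpha$, a direct computation gives $f_\varphi(d_2(d))=-\star_\alpha(\varphi)(d)$, the sign accounting for the convention that $-d_2$ (and not $d_2$) is the derivation in question. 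Combining the two contributions, the chain-map identity on generators becomes exactly $\partial(\varphi)+\star_\alpha(\varphi)=0$, which is \eqref{eq:MCrelToAlpha}; this gives the first bijection. The second bijection $\Tw_\alpha(D,A)\cong\hom_{\C\text{-}\mathsf{cog}}(D,\bar_\alpha A)$ is proved dually: here the standing conilpotence assumption on $D$ is what lets one invoke the universal property of the cofree conilpotent $\C$-coalgebra $\C(A)$ underlying $\bar_\alpha A$, producing a bijection between graded $\C$-coalgebra morphisms $D\to\C(A)$ and linear maps $\varphi\colon D\to A$ via projection onto the cogenerators $A$. The analogous computation --- now detecting compatibility of $d_{\bar_\alpha A}$ after projecting to cogenerators, using that $d_{\bar_\alpha A}$ is a coderivation --- shows that $\varphi$ defines a morphism of dg $\C$-coalgebras precisely when it solves \eqref{eq:MCrelToAlpha}. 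Naturality in $D$ and $A$ follows from that of the two universal properties, and the adjunction of $\Omega_\alpha$ and $\bar_\alpha$ is the composite of the two bijections.

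I expect the main obstacle to be the careful sign bookkeeping identifying $f_\varphi\circ d_2$ --- and, dually, the cogenerator-projection of the coderivation term --- with $\pm\,\star_\alpha(\varphi)$, together with making fully precise the reduction ``it suffices to test on (co)generators'': namely that a morphism out of a free $\P$-algebra is determined by its restriction to generators, that a morphism into a cofree conilpotent $\C$-coalgebra is determined by its projection to cogenerators, and that derivations and coderivations are similarly determined, so that the chain-map condition is equivalent to its restriction to this low-dimensional piece where it coincides with \eqref{eq:MCrelToAlpha}.
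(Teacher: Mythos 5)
Your proof is correct. Note that the paper does not prove this statement itself: it is quoted as a known result with a citation to Loday--Vallette \cite[Prop. 11.3.1]{lodayvallette}, and your argument --- restricting to (co)generators via the universal properties of the free $\P$-algebra and the cofree conilpotent $\C$-coalgebra, observing that the chain-map condition is a(n) ($f$-)(co)derivation identity hence testable on (co)generators, and identifying the resulting equation with the Maurer--Cartan equation $\partial(\varphi)+\star_\alpha(\varphi)=0$ --- is essentially the standard proof given in that reference, with the signs handled correctly (in particular the sign coming from the convention that $-d_2$, not $d_2$, extends $(\alpha\circ 1_D)\Delta_D$).
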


For more details, the reader is invited to consult the standard reference \cite[Sect. 11.2]{lodayvallette}.

\subsection{(Shifted) homotopy Lie algebras}

A type of algebras which plays a central role in the present article are \emph{$\L_\infty$-algebras}, i.e. shifted (strong) homotopy Lie algebras. There are many books and articles giving good introductions to $\L_\infty$-algebras, and we recommend e.g. \cite[Sect. 2]{dr17} for details in the notions, since they use the same conventions as in the present paper (except they work with cochain complexes instead of chain complexes), and \cite[Sect. 13.2.9--13]{lodayvallette} (in the non-shifted setting). We give nonetheless a brief review of the structure of this kind of algebras.

\begin{remark}
	The theory of homotopy Lie algebras and suspended homotopy Lie algebras are exactly the same: one is sent to the other by (de)suspension.
\end{remark}

\begin{definition} \label{def:LooAlg}
	An \emph{$\L_\infty$-algebra} is a chain complex $\g$ endowed with graded symmetric operations
	\[
	\ell_n:\g^{\otimes n}\longrightarrow\g\ ,\qquad n\ge2\ ,
	\]
	of degree $-1$ satisfying the relations
	\[
	\sum_{\substack{n_1+n_2 = n+1\\\sigma\in\Sh(n_2,n_1-1)}}(\ell_{n_1}\circ_1\ell_{n_2})^\sigma = 0
	\]
	for $n\ge1$, where we used the notation $\ell_1 = d_\g$. We usually speak of ``the $\L_\infty$-algebra $\g$", without specifying the operations $\ell_n$.
\end{definition}

The relation for $n=2$ gives us (a shifted version of) the Leibniz rule for $\ell_2$, the relation for $n=3$ tells us that the (shifted) Jacobi rule is satisfied up to a homotopy given by $\ell_3$, while the relations for $n\ge4$ are coherent higher homotopies for the operations.

\medskip

One can of course consider strict morphisms of $\L_\infty$-algebras, i.e. chain maps
\[
\phi:\g\longrightarrow\h
\]
such that
\[
\phi\circ\ell_n = \ell_n\circ(\phi,\ldots,\phi)
\]
for all $n\ge1$. However, this definition of morphism is too strong for some applications. Therefore, one relaxes the notion of morphisms in such a way that they commute with the $\L_\infty$-structures only up to a system of coherent homotopies.

\begin{definition}
	Let $\g$ and $\h$ be $\L_\infty$-algebras. An \emph{$\infty$-morphism} $\Psi$ from $\g$ to $\h$, which we denote by $\Psi:\g\rightsquigarrow\h$, is a sequence of linear maps
	\[
	\psi_n:\g^{\otimes n}\longrightarrow\h
	\]
	for $n\ge1$ such that
	\[
	\sum_{\substack{n_1+n_2 = n+1\\1\le j\le n_1\\\sigma\in\Sh(n_2,n_1-1)}}(\psi_{n_1}\circ_i\ell_{n_2})^\sigma = \sum_{\substack{k\ge1\\i_1+\cdots+i_k = n\\\tau\in\Sh(i_1,\ldots,i_k)}}(\ell_k\circ(\psi_{i_1},\ldots,\psi_{i_k}))^\tau\ .
	\]
\end{definition}

Of course, $\L_\infty$-algebras are homotopy $\susp\otimes\lie$-algebras in the sense of \cite[Sect. 7]{lodayvallette}, and $\infty$-morphisms agree with the notion defined in loc. cit., as well as in \cref{subsec:inftymorhpism}.

\section{Infinity-morphisms relative to twisting morphisms} \label{sect:inftyMorphisms}

The notion of $\infty$-morphisms between homotopy algebras over a Koszul operad is well established in the literature, see e.g. \cite[Sect. 10.2]{lodayvallette}. In this section, we recall briefly the classical theory of $\infty$-morphisms before introducing a generalized version of $\infty$-morphisms of algebras and coalgebras relative to a twisting morphism of operads and studying it in depth. It is worth noticing that this more general notion of $\infty$-morphisms has already appeared in the literature, where it was first introduced in \cite{Ber14}.

\subsection{Infinity-morphisms of homotopy algebras} \label{subsec:inftymorhpism}

We recall the definition and some important properties of $\infty$-morphisms between homotopy $\P$-algebras, where $\P$ is a Koszul operad. The notion of $\infty$-morphism is more relevant than strict morphisms in the homotopy theory of algebras.

\medskip

For the rest of this subsection, let $\P$ be a Koszul operad. Recall that a $\P_\infty$-algebra, or homotopy $\P$-algebra, is an algebra over the operad $\P_\infty\coloneqq\Omega\P^{\antishriek}$, see \cite[Sect. 7, 10.1]{lodayvallette}.

\begin{definition} \label{def:classicalInftyMorphisms}
	Let $A$ and $A'$ be two $\P_\infty$-algebras. An $\infty$-morphism $\Psi: A \rightsquigarrow A'$ is a morphism
	\[
	\Psi:B_\iota A \longrightarrow B_\iota A'
	\]
	between the bar constructions relative to the canonical twisting morphism $\iota:\P^{\antishriek}\to\P_\infty$. Composition of $\infty$-morphisms is given by the usual composition of morphisms of $\P^{\antishriek}$-coalgebras between the bar constructions. The category of $\P$-algebras with $\infty$-morphisms is denoted by $\infty\text{-}\P_\infty\text{-}\mathsf{alg}$.
\end{definition}

\begin{remark}
	Notice that $\P$-algebras are special cases of $\P_\infty$-algebras. If $A,A'$ are $\P$-algebras, then an $\infty$-morphism $\Psi:A\rightsquigarrow A'$ is the same thing as a morphism
	\[
	\Psi:B_\kappa A \longrightarrow B_\kappa A'\ .
	\]
	This can be seen e.g. by \cref{lemma:equalityOfInftyMorphisms}.
\end{remark}

Now let $\Psi:A\rightsquigarrow A'$ be an $\infty$-morphism of $\P_\infty$-algebras. Then, thanks to \cref{thm:RosettaStone} we know that $\Psi$ is equivalent to an element of $\Tw_\iota(\bar_\iota A,A')$, which we will denote again by $\Psi$ by abuse of notation. This is equivalent to a collection of linear maps
\[
\psi_n:\P^{\antishriek}(n)\otimes_{\S_n}A^{\otimes n}\longrightarrow A'
\]
which satisfy certain compatibilities (equivalent to the Maurer--Cartan equation). Since $\P^{\antishriek}(1)\cong\k$, we can see $\psi_1$ as a linear map
\[
\psi_1:A\longrightarrow A'\ .
\]
Strict morphisms of $\P_\infty$-algebras are special cases of $\infty$-morphisms. Indeed, it is straightforward to see the following.

\begin{lemma}
	A morphism of $\P_\infty$-algebras $\psi:A\to A'$ is equivalent to an $\infty$-morphism $\Psi:A\rightsquigarrow A'$ with $\psi_1=\psi$ and $\psi_n=0$ for all $n\ge2$.
\end{lemma}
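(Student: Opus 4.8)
The plan is to unwind both sides of the claimed equivalence through the Rosetta Stone (\cref{thm:RosettaStone}) and compare the resulting Maurer--Cartan data explicitly. Recall that an $\infty$-morphism $\Psi:A\rightsquigarrow A'$ is by definition a morphism of $\P^{\antishriek}$-coalgebras $\Psi:\bar_\iota A\to\bar_\iota A'$, and by \cref{thm:RosettaStone} such a morphism is the same as a twisting morphism relative to $\iota$, i.e. an element of $\Tw_\iota(\bar_\iota A, A')$, which in turn unpacks into the collection of maps $\psi_n:\P^{\antishriek}(n)\otimes_{\S_n}A^{\otimes n}\to A'$ satisfying the relative Maurer--Cartan equation \cref{eq:MCrelToAlpha}. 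Under this correspondence, the component $\psi_1$ is a degree-preserving linear map $A\to A'$. So the content of the lemma is to identify precisely which families $(\psi_n)_{n\ge1}$ correspond to strict morphisms of $\P_\infty$-algebras.

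First I would set up the dictionary: a strict morphism $\psi:A\to A'$ of $\P_\infty$-algebras is a chain map commuting on the nose with all the generating operations indexed by $\P^{\antishriek}$. Translating this commutativity into the bar-construction picture, a strict morphism induces the morphism $\bar_\iota\psi:\bar_\iota A\to\bar_\iota A'$ of cofree $\P^{\antishriek}$-coalgebras obtained by applying the cofree functor $\P^{\antishriek}(-)$ to $\psi$, i.e. $\P^{\antishriek}\circ\psi$. The key point is that this cofree map has, as its only nonzero corestriction to cogenerators, the arity-one component, which is exactly $\psi$ itself; all higher components $\psi_n$ for $n\ge2$ vanish because $\P^{\antishriek}\circ\psi$ acts diagonally without mixing arities. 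Conversely, I would start from an $\infty$-morphism whose higher components vanish and show that the relative Maurer--Cartan equation \cref{eq:MCrelToAlpha} collapses: with $\psi_n=0$ for $n\ge2$, the operator $\star_\iota(\psi)$ reduces to the single term coming from $\psi_1$ interacting with the structure maps, and the equation becomes precisely the statement that $\psi_1$ is a chain map commuting with the $\P^{\antishriek}$-generated operations, i.e. a strict morphism.

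The verification that these two passages are mutually inverse is then a matter of tracking the arity-one component through the bijections of \cref{thm:RosettaStone}, which by naturality sends $\psi_1$ to $\psi$ and back without alteration. I expect the main (though modest) obstacle to be the bookkeeping in the converse direction: one must check that imposing $\psi_n=0$ for all $n\ge2$ in the relative Maurer--Cartan equation leaves exactly the chain-map-plus-compatibility conditions and no spurious constraints, which requires unfolding $\star_\iota$ along the decomposition map $\Delta_{\bar_\iota A}$ and observing that every contribution factoring through a higher $\psi_n$ drops out. Since the operations of a $\P_\infty$-algebra are indexed precisely by $\P^{\antishriek}$ in each arity, the surviving relation is the arity-by-arity commutativity $\psi\circ\gamma_A = \gamma_{A'}\circ(\id\circ\psi)$, which is the defining condition of a strict morphism. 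As this is stated to be straightforward, I would present the argument as a direct unpacking rather than an elaborate computation, emphasizing the cofree-coalgebra description of the induced bar map and the collapse of the Maurer--Cartan equation as the two halves of the equivalence.
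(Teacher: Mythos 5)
Your proposal is correct and matches what the paper intends: the paper offers no written proof at all (it simply declares the lemma straightforward), and your unwinding --- functoriality of $\bar_\iota$ giving the forward direction, and the collapse of the relative Maurer--Cartan equation of \cref{thm:RosettaStone} to ``chain map plus strict commutation with the $\P^{\antishriek}$-indexed operations'' for the converse --- is precisely the standard argument being alluded to. Both halves are sound, including the cofreeness observation that makes the two assignments mutually inverse.
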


The compatibility conditions on the whole collection $\{\psi_n\}_n$ associated to an $\infty$-morphism vary depending on the operad $\P$ one considers. However, something can always be said about $\psi_1$.

\begin{lemma}
	The component $\psi_1:A\to A'$ of an $\infty$-morphism is a chain map.
\end{lemma}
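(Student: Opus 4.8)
The plan is to unwind the definition of an $\infty$-morphism as a twisting morphism and extract the lowest-arity component of the Maurer--Cartan equation \eqref{eq:MCrelToAlpha}. Recall from the preceding discussion that an $\infty$-morphism $\Psi:A\rightsquigarrow A'$ is equivalent, via \cref{thm:RosettaStone}, to an element $\Psi\in\Tw_\iota(\bar_\iota A,A')$, that is, a degree $0$ solution of
\[
\partial(\Psi)+\star_\iota(\Psi)=0
\]
in $\hom(\bar_\iota A,A')$. The bar construction $\bar_\iota A$ has underlying coalgebra $\P^{\antishriek}(A)=\bigoplus_n \P^{\antishriek}(n)\otimes_{\S_n}A^{\otimes n}$, equipped with the codifferential $d_1+d_2$. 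Since $\star_\iota(\Psi)$ is built from the decomposition map $\Delta$ of $\P^{\antishriek}$ followed by $\iota\circ\Psi$ and the structure map $\gamma_{A'}$, and since $\iota:\P^{\antishriek}\to\Omega\P^{\antishriek}=\P_\infty$ vanishes on the arity-$1$ part $\P^{\antishriek}(1)\cong\k\id$ (the canonical twisting morphism kills $\id$), the operator $\star_\iota(\Psi)$ will only produce contributions of arity $\ge 2$ in the $A$-variables.

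First I would restrict the Maurer--Cartan equation to the weight/arity component corresponding to $\P^{\antishriek}(1)\otimes_{\S_1} A\cong A$, i.e. evaluate both sides on inputs lying in the degree-$1$ part of the cofree coalgebra. On this component, $\star_\iota(\Psi)$ contributes nothing, because any term arising from $\star_\iota$ requires applying $\iota$ to a factor of $\P^{\antishriek}$, and the only way to land back in arity $1$ after the cofree decomposition would force $\iota$ to act on an arity-$1$ element, where it vanishes. Hence on this component the equation reduces to $\partial(\Psi)|_{A}=0$. Second, I would compute $\partial(\Psi)$ on this component explicitly: by definition $\partial(\Psi)=d_{A'}\Psi-(-1)^{|\Psi|}\Psi\, d_{\bar_\iota A}$, and since $|\Psi|=0$ this is $d_{A'}\Psi-\Psi\, d_{\bar_\iota A}$. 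Restricting to arity $1$, the codifferential $d_{\bar_\iota A}$ acts as $d_1$, whose arity-preserving part is precisely the internal differential $d_A$ (the $d_2$-part and the $d_\C$-part either raise or do not preserve the relevant component). Writing $\psi_1:A\to A'$ for the restriction of $\Psi$ to the arity-$1$ component, this yields
\[
d_{A'}\psi_1-\psi_1 d_A=0\ ,
\]
which is exactly the statement that $\psi_1$ is a chain map.

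The main obstacle, and the only step requiring genuine care, is the bookkeeping in the second step: one must verify that, on the arity-$1$ component, the relevant piece of the codifferential $d_{\bar_\iota A}=d_1+d_2$ restricts exactly to the internal differential $d_A$ of $A$, with no stray contributions from the twisting part $d_2$ or from the cooperad differential $d_{\P^{\antishriek}}$. This is a matter of tracking how the cofree coderivation and the decomposition map interact with the arity filtration. I would argue that $d_2$ strictly increases arity (it is built from $\Delta_{(1)}$ followed by $\alpha\circ 1$ and $\gamma$, and $\alpha=\iota$ vanishes in arity $1$, so any nontrivial $d_2$-term lands in strictly higher arity), and that $d_{\P^{\antishriek}}$ preserves arity but acts on the cooperad factor, which in arity $1$ is the concentrated $\k\id$ with zero differential. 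Once this arity analysis is in place, the vanishing of the MC equation on the arity-$1$ slot gives the claim directly, so I would keep the computation at the level of identifying components rather than writing out the full coderivation formula.
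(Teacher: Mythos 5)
Your overall strategy is exactly the paper's: the paper's proof is the one-line remark that the claim follows from the Maurer--Cartan equation, and you correctly unwind that equation and isolate its arity-one component, using that the canonical twisting morphism $\iota$ vanishes on $\P^{\antishriek}(1)\cong\k\id$ (automatic here, since the (co)operads are reduced, so a degree $-1$ map $\k\id\to\k\id$ is zero). Your treatment of the term $\star_\iota(\Psi)$ is correct: on an arity-one input $\id\otimes a$ the decomposition of $\bar_\iota A$ is trivial, $\iota$ is applied to $\id$, and the term vanishes.

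However, one step of your justification is wrong as stated, and if it were literally true it would break the argument. The coderivation $d_2$ of the bar construction $\bar_\iota A=\P^{\antishriek}(A)$ does not increase arity: it is built from $\Delta_{(1)}$ followed by $\iota$ and $\gamma_A$, so it \emph{contracts} $A$-factors and strictly \emph{decreases} arity (you are likely thinking of the cobar construction, whose derivation $d_2$ does increase the number of factors). The direction matters because restricting the Maurer--Cartan equation to arity one means precomposing with the inclusion $A\subseteq\bar_\iota A$, after which the \emph{full} morphism $\Psi$, with all of its components $\psi_n$, is applied to $d_{\bar_\iota A}(\id\otimes a)$. So it is not enough that stray $d_2$-terms ``land outside arity one'': if $d_2(\id\otimes a)$ were a nonzero element of higher arity, the higher components $\psi_n$ would pick it up, and the equation would not reduce to $d_{A'}\psi_1-\psi_1 d_A=0$. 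What saves the argument is that $d_2$ vanishes \emph{identically} on arity-one inputs: $\Delta_{(1)}(\id)=\id\otimes_1\id$ and $\iota(\id)=0$, so $d_2(\id\otimes a)=0$ (equivalently, a nontrivial $d_2$-term would require $\iota$ applied to a factor of arity at least $2$, forcing the output into arity at most $0$, which is zero for reduced operads). With that correction, $d_{\bar_\iota A}$ restricted to $A$ is exactly $\id\otimes d_A$, and your computation closes and gives the lemma.
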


\begin{proof}
	This follows straightforwardly from the Maurer--Cartan equation.
\end{proof}

\begin{definition}
	An $\infty$-morphism $\Psi:A\rightsquigarrow A'$ of $\P_\infty$-algebras is called an $\infty$-iso\-morphism, resp. an $\infty$-quasi-isomorphism, if $\psi_1$ is an isomorphism of chain complexes, resp. a quasi-isomorphism.
\end{definition}

It is straightforward to show that the $\infty$-isomorphisms are exactly the isomorphisms in the category $\infty\text{-}\P_\infty\text{-}\mathsf{alg}$. The $\infty$-quasi-isomorphisms have a similar property at the level of homology.

\begin{theorem}[{\cite[Thm. 10.4.4]{lodayvallette}}]
	Let $\Psi:A\rightsquigarrow A'$ be an $\infty$-quasi-isomorphisms of $\P_\infty$-algebras. Then there exists another $\infty$-quasi-isomorphism $A'\rightsquigarrow A$ which is inverse to $\Psi$ in homology.
\end{theorem}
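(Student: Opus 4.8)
The plan is to construct the inverse $\infty$-morphism $\Phi\colon A'\rightsquigarrow A$ componentwise by induction, solving the defining equations of an $\infty$-morphism one arity at a time. Via \cref{thm:RosettaStone} and \cref{def:classicalInftyMorphisms}, such a $\Phi$ is the same datum as a collection of maps $\phi_n\colon\P^{\antishriek}(n)\otimes_{\S_n}(A')^{\otimes n}\to A$ satisfying the sequence of relations coming from the Maurer--Cartan equation \cref{eq:MCrelToAlpha} in $\hom(\bar_\iota A',A)$ relative to $\iota$. Isolating in the $n$-th such relation the two terms linear in $\phi_n$ — the one produced by the internal differential $\ell_1=d_{A'}$ on the source and the one produced by $\ell_1=d_A$ on the target — one rewrites it in the form
\[
\partial(\phi_n)=R_n\ ,
\]
where $\partial$ is the differential of the convolution complex and $R_n$ is an explicit expression built solely from $\phi_1,\dots,\phi_{n-1}$, the structure operations of $A$ and $A'$, and the cooperad structure of $\P^{\antishriek}$. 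Thus the whole problem reduces to choosing a base component $\phi_1$ and then inductively solving the equations $\partial(\phi_n)=R_n$.

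For the base case I would take $\phi_1\colon A'\to A$ to be a chain-level homology inverse of the quasi-isomorphism $\psi_1\colon A\to A'$: since we work over a field $\k$ of characteristic $0$, such a map exists together with homotopies $h_A$ and $h_{A'}$ witnessing $\phi_1\psi_1\simeq\id_A$ and $\psi_1\phi_1\simeq\id_{A'}$. By construction $\phi_1$ is then a quasi-isomorphism inverse to $\psi_1$ in homology, so the resulting $\Phi$ will automatically be the sought-after inverse $\infty$-quasi-isomorphism, provided the higher components can be produced. For the inductive step, assuming $\phi_1,\dots,\phi_{n-1}$ have been constructed and satisfy all lower relations, the first point to check is that $R_n$ is a $\partial$-cycle; this is a formal consequence of the lower-order relations together with the coherence (higher Jacobi/associativity) relations defining the $\P_\infty$-structures on $A$ and $A'$, and I would verify it by a direct but routine manipulation of the convolution differential governed by the Koszul conventions of \cref{subsect:KoszulSignRule}.

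The crux is then the solvability of $\partial(\phi_n)=R_n$, i.e. showing that the cycle $R_n$ is in fact a boundary. I would resolve this constructively using the data of the base case: treating $(\phi_1,\psi_1,h_A,h_{A'})$ as a contraction, one defines $\phi_n$ by a homological-perturbation-type formula, grafting $h_A$ onto $R_n$ and the operations of $A$ so as to produce a genuine solution. The conceptual reason this succeeds is that, by \cref{def:classicalInftyMorphisms}, the collection $\{\psi_n\}$ is a morphism of conilpotent dg $\P^{\antishriek}$-coalgebras $\Psi\colon\bar_\iota A\to\bar_\iota A'$; filtering the bar constructions by the weight of $\P^{\antishriek}$ (see \cref{subsect:weight grading}) yields a bounded-below exhaustive filtration whose associated graded differential is controlled by $\psi_1$, so a standard spectral sequence comparison shows that $\Psi$ is a quasi-isomorphism of dg coalgebras. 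A quasi-isomorphism between such quasi-free conilpotent coalgebras admits a homotopy inverse $\Phi\colon\bar_\iota A'\to\bar_\iota A$, and unravelling $\Phi$ through \cref{thm:RosettaStone} produces exactly the components $\phi_n$ solving the equations above, with $\phi_1$ the chosen homology inverse.

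I expect the main obstacle to be precisely this solvability statement at each inductive stage — equivalently, the fact that a quasi-isomorphism of quasi-free conilpotent coalgebras is invertible up to homotopy. This is where the standing hypotheses are indispensable: conilpotence guarantees convergence of the weight filtration, and working over a field of characteristic $0$ guarantees both that $\psi_1$ admits a chain-level homology inverse with explicit homotopies and that the obstruction cycles $R_n$ can always be trivialized. The remaining work — verifying closedness of $R_n$ and assembling $h_A$ and $h_{A'}$ into the perturbation formula for $\phi_n$ — is then bookkeeping.
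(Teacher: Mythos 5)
The statement is quoted in the paper without proof (it is \cite[Thm.~10.4.4]{lodayvallette}), so your attempt should be measured against the proof in that source; against it, your proposal has two genuine gaps, both located at the step you yourself call the crux. First, the induction: for $\partial(\phi_n)=R_n$ to be solvable it is not enough that $R_n$ be a $\partial$-cycle --- the convolution complex $\hom\bigl(\P^{\antishriek}(n)\otimes_{\S_n}(A')^{\otimes n},A\bigr)$ is not acyclic, so the homology class of $R_n$ is a genuine obstruction. Your proposed remedy does not remove it: the data $(\phi_1,\psi_1,h_A,h_{A'})$ of a chain homotopy equivalence is \emph{not} a contraction (neither composite is the identity on the nose, and no side conditions hold), so homological-perturbation machinery does not apply to it; and the naive formula $\phi_n\coloneqq -h_AR_n$ only gives, via the homotopy relation $\partial(h_A)=\phi_1\psi_1-\id_A$ and the fact that $R_n$ is a cycle, $\partial(-h_AR_n)=R_n-\phi_1\psi_1R_n$, i.e. a solution up to the error term $\phi_1\psi_1R_n$, which is homologous to $R_n$ and hence represents exactly the obstruction class you needed to kill. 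Nothing in the proposal shows this class vanishes, and with $\phi_1,\dots,\phi_{n-1}$ fixed there is no reason it should; this is precisely why the published proofs do not run this induction.

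Second, your conceptual backup rests on the unjustified principle that ``a quasi-isomorphism between quasi-free conilpotent coalgebras admits a homotopy inverse''. In the coalgebra world, homotopy invertibility is governed by \emph{weak equivalences} (maps whose cobar construction is a quasi-isomorphism), and the paper recalls right after \cref{thm: alpha-we are oo-alpha-qi} that quasi-isomorphisms of conilpotent coalgebras form a strictly larger class; so proving that $\Psi$ is a quasi-isomorphism of coalgebras via the weight filtration is both a detour and insufficient. The statement you should invoke instead is \cite[Prop.~11.4.7]{lodayvallette}, recorded in \cref{subsect:oo-alpha-qis and rectifications}: $\Psi$ is an $\infty$-quasi-isomorphism if and only if $\Omega_\iota\Psi$ is a quasi-isomorphism, i.e. if and only if $\Psi$ is a weak equivalence; then the Whitehead theorem in Vallette's model structure (all conilpotent coalgebras are cofibrant, bar constructions are fibrant) yields a homotopy inverse. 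Even then you would still owe a verification you assert without argument: the abstract homotopy inverse will \emph{not} have your pre-chosen $\phi_1$ as linear component, and to conclude that its linear part induces $H(\psi_1)^{-1}$ you need to know that coalgebra homotopies of $\infty$-morphisms induce chain homotopies between linear components. By contrast, the proof in the cited source avoids all obstructions: it uses the Homotopy Transfer Theorem to produce $\infty$-quasi-isomorphisms $H(A)\rightsquigarrow A$ and $A'\rightsquigarrow H(A')$, observes that the composite $H(A)\rightsquigarrow A\rightsquigarrow A'\rightsquigarrow H(A')$ has an isomorphism as linear part (a quasi-isomorphism between complexes with zero differential), inverts it by the obstruction-free recursion available for $\infty$-morphisms with invertible linear part, and composes back to obtain $A'\rightsquigarrow H(A')\rightsquigarrow H(A)\rightsquigarrow A$, which is manifestly inverse to $\Psi$ in homology.
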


\subsection{Infinity-morphisms relative to a twisting morphism}

In the definition of an $\infty$-morphism in \cref{subsec:inftymorhpism}, an $\infty$-morphism was defined as a map between the bar constructions relative to the canonical twisting morphism
\[
\iota:\P^{\antishriek}\to\Omega\P^{\antishriek}\ .
\]
We will replace $\iota$ by a more general twisting morphism $\alpha$ to get $\infty$-morphisms relative to $\alpha$. Versions of this idea have been defined implicitly in \cite{Mar04} and this definition can also be found in \cite{Ber14}. Notice that this allows us for example to consider $\infty$-morphisms for algebras over non-Koszul operads, as well as for coalgebras.

\begin{definition}\label{def:inftymorphism1}
	Let $\C$ be a cooperad, let $\P$ be an operad, and let $\alpha:\C\to\P$ be a Koszul twisting morphism.
	\begin{enumerate}
		\item An \emph{$\infty$-morphism of $\P$-algebras relative to $\alpha$}, or an \emph{$\infty_\alpha$-morphism of $\P$-algebras}, between two $\P$-algebras $A$ and $A'$ is a morphism $\Psi$ of $\C$-coalgebras
		\[
		\Psi:\Bar_\alpha A\longrightarrow\Bar_\alpha A'\ .
		\]
		Composition of $\infty_\alpha$-morphisms of $\P$-algebras is given by the standard composition of morphisms of $\C$-coalgebras between the bar constructions. We denote the category of $\P$-algebras with $\infty_\alpha$-morphisms by $\infty_\alpha\text{-}\Palg$.
		\item An \emph{$\infty$-morphism of conilpotent $\C$-coalgebras relative to $\alpha$}, or an \emph{$\infty_\alpha$-mor\-phism of conilpotent $\C$-coalgebras}, between two conilpotent $\C$-algebras $D'$ and $D$ is a morphism $\Phi$ of $\P$-algebras
		\[
		\Phi:\Omega_\alpha D'\longrightarrow\Omega_\alpha D\ .
		\]
		Composition of $\infty_\alpha$-morphisms of $\C$-coalgebras is given by the standard composition of morphisms of $\P$-algebras between the cobar constructions. We denote the category of conilpotent $\C$-coalgebras with $\infty_\alpha$-morphisms by $\infty_\alpha\text{-}\Ccog$.
	\end{enumerate}
\end{definition}

Since $B_{\alpha}A'$ is a cofree $\C$-coalgebra every morphism $\Psi:B_{\alpha}A \rightarrow B_{\alpha}A'$ is completely determined by its image on the cogenerators of $B_{\alpha}A'$. Similarly, since $\Omega_{\alpha}D '$ is free  every morphism $\Phi:\Omega_{\alpha}D' \rightarrow \Omega_{\alpha}D$ is determined by the image of the generators of $\Omega_{\alpha} D '$.

\begin{remark}
	Notice that, if $\P$ is a Koszul operad, then the $\infty$-morphisms of $\P_\infty$-algebras of \cref{def:classicalInftyMorphisms} are exactly $\infty_\iota$-morphisms of $\P_\infty$-algebras for the canonical twisting morphism
	\[
	\iota:\P^{\antishriek}\longrightarrow\Omega\P^{\antishriek}=\P_\infty  .
	\]
\end{remark}

Now we prove a straightforward lemma that allows us in some cases to relate $\infty_\alpha$-morphisms for different twisting morphisms $\alpha$. If $f:\C_2\to\C_1$ is a morphism of cooperads and $D$ is a $\C_2$-coalgebra, we denote by $f_*D$ the same chain complex with the $\C_1$-coalgebra structure obtained by pushing forward its $\C_2$-coalgebra structure by $f$. Dually, if $g:\P_1\to\P_2$ is a morphism of operads and $A$ is a $\P_2$-algebra, we denote by $g^*A$ the chain complex $A$ seen as a $\P_1$-algebra by pulling back its original structure.

\begin{lemma} \label{lemma:equalityOfInftyMorphisms}
	Let $\C',\C$ be two cooperads and let $\P$ be an operad. Let $\alpha\in\Tw(\C,\P)$, let $f:\C'\to\C$ be a morphism of cooperads, and let $D$ be a conilpotent $\C'$-coalgebra. Then
	\[
	\Omega_\alpha(f_*D) = \Omega_{f^*\alpha}D\ .
	\]
	In particular, $\infty_{f^*\alpha}$-morphisms between conilpotent $\C'$-coalgebras are the same as $\infty_\alpha$-morphisms between the same coalgebras seen as $\C$-coalgebras by pushforward of the structure along $f$.
	
	\medskip
	
	Dually, let $\C$ be a cooperad and let $\P,\P'$ be two operads. Let $\alpha\in\Tw(\C,\P)$, let $g:\P\to\P'$ be a morphism of operads, and let $A$ be a $\P'$-algebra. Then
	\[
	\bar_\alpha(g^*A) = \bar_{g_*\alpha}A\ .
	\]
	In particular, $\infty_{g_*\alpha}$-morphisms between $\P'$-algebras are the same as $\infty_\alpha$-morphisms between the same algebras seen as $\P$-algebras by pullback of the structure along $g$.
\end{lemma}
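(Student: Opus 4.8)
The plan is to prove both displayed equalities at the level of differential graded (co)algebras by a direct comparison of the twisted differentials, and then to read off the statements about $\infty$-morphisms immediately from \cref{def:inftymorphism1}. Before comparing the constructions I would first check that $f^*\alpha=\alpha f$ and $g_*\alpha=g\alpha$ are again twisting morphisms, so that $\Omega_{f^*\alpha}$ and $\bar_{g_*\alpha}$ are defined. For the first, observe that pullback along $f$ defines a map $f^*\colon\hom_\S(\C,\P)\to\hom_\S(\C',\P)$, $\beta\mapsto\beta f$; since $f$ is a morphism of cooperads it intertwines the infinitesimal decomposition maps, $\Delta_{(1)}f=(f\circ_{(1)}f)\Delta_{(1)}$, and a short computation shows that $f^*$ is a morphism of the convolution pre-Lie algebras, hence of the associated Lie algebras, and that it commutes with the differentials. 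Therefore it carries the Maurer--Cartan element $\alpha$ to a Maurer--Cartan element, i.e. $f^*\alpha\in\Tw(\C',\P)$. The dual fact that $g_*\colon\beta\mapsto g\beta$ sends $\alpha$ to $g_*\alpha\in\Tw(\C,\P')$ is proven the same way, using that $g$ commutes with $\gamma_{(1)}$.

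For the equality $\Omega_\alpha(f_*D)=\Omega_{f^*\alpha}D$, I note that both sides are built on the \emph{same} underlying object: the free $\P$-algebra $\P(D)$ on the common underlying chain complex of $f_*D$ and $D$, equipped with the same linear part $d_1=d_\P\circ1_D+1_\P\circ'd_D$. Thus it suffices to compare the quadratic parts $d_2$. By construction $-d_2$ is in each case the unique derivation of the free $\P$-algebra extending a composite out of $D$, so by the uniqueness of such an extension it is enough to check that these two composites coincide. For $\Omega_\alpha(f_*D)$ the composite is $D\xrightarrow{\Delta_{f_*D}}\C(D)\xrightarrow{\alpha\circ1_D}\P(D)$, and unfolding the pushforward structure $\Delta_{f_*D}=(f\circ1_D)\Delta_D$ this equals $(\alpha\circ1_D)(f\circ1_D)\Delta_D$. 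For $\Omega_{f^*\alpha}D$ the composite is $\bigl((\alpha f)\circ1_D\bigr)\Delta_D$. These agree because operadic composition of morphisms is functorial, so that $(\alpha\circ1_D)(f\circ1_D)=(\alpha f)\circ1_D$. Hence $d_2$ agrees on both sides and the two cobar constructions are equal as dg $\P$-algebras.

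The equality $\bar_\alpha(g^*A)=\bar_{g_*\alpha}A$ is proven dually: both sides are the cofree $\C$-coalgebra $\C(A)$ with the same linear part, so one compares the coderivations $d_2$, which by cofreeness are determined by their composites landing in $A$. For $\bar_\alpha(g^*A)$ the relevant composite is $\C(A)\xrightarrow{\alpha\circ1_A}\P(A)\xrightarrow{g\circ1_A}\P'(A)\xrightarrow{\gamma_A}A$, where the last two arrows make up the pulled-back structure map of $g^*A$, while for $\bar_{g_*\alpha}A$ it is $\C(A)\xrightarrow{(g\alpha)\circ1_A}\P'(A)\xrightarrow{\gamma_A}A$; these coincide again by functoriality of $\circ$, since $(g\circ1_A)(\alpha\circ1_A)=(g\alpha)\circ1_A$.

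Finally, the two ``in particular'' clauses are immediate from \cref{def:inftymorphism1}: an $\infty_\alpha$-morphism between the pushed-forward coalgebras $f_*D_1$ and $f_*D_2$ is by definition a morphism of $\P$-algebras $\Omega_\alpha(f_*D_1)\to\Omega_\alpha(f_*D_2)$, which by the equality just proven is the same datum as a morphism $\Omega_{f^*\alpha}D_1\to\Omega_{f^*\alpha}D_2$, i.e. an $\infty_{f^*\alpha}$-morphism; composition agrees because on both sides it is the composition of the underlying $\P$-algebra morphisms. The algebra case is symmetric, using the second equality. I do not expect a genuine obstacle here: the content is entirely a definition-chase, and the only points requiring any care are the verification that $f^*\alpha$ and $g_*\alpha$ are twisting morphisms and the correct invocation of the uniqueness of the (co)derivation extending a given map, both of which are standard.
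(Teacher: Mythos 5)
Your proof is correct and follows essentially the same route as the paper's: both identify the two cobar (resp.\ bar) constructions as the same free (co)algebra with the same linear part, and then reduce the equality of the quadratic parts $d_2$ to the identity $(\alpha\circ 1_D)(f\circ 1_D)\Delta_D = ((\alpha f)\circ 1_D)\Delta_D$ via functoriality of $\circ$; your appeal to uniqueness of the (co)derivation extending the generating map is just a cleaner packaging of the paper's manipulation of the full expression for $d_2$. The extra verifications you include --- that $f^*\alpha$ and $g_*\alpha$ are again twisting morphisms, and the explicit dual argument for the bar side --- are correct and are simply left implicit (``the proof of the second one being dual'') in the paper.
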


\begin{proof}
	We only prove the first of the two facts, the proof of the second one being dual. As algebras over graded vector spaces, it is clear that we have
	\[
	\Omega_{f^*\alpha}D = \P(D) = \Omega_{\alpha}(f_*D)\ ,
	\]
	so that we only have to check that the differentials agree.	We have
	\[
	d_{\Omega_{f^*\alpha}D} = d_{\P(D)} + d^{f^*\alpha}_2
	\]
	with $d^{f^*\alpha}_2$ given by the composite
	\begin{center}
		\begin{tikzpicture}
		\node (a) at (0,0){$\P(D)$};
		\node (b) at (4,0){$\P\circ(D;\C'\circ D)$};
		\node (c) at (4,-2){$\P\circ(D;\P\circ D)\cong(\P\circ_{(1)}\P)(D)$};
		\node (d) at (9.2,-2){$\P(D)\ .$};
		
		\draw[->] (a) to node[above]{\small$1_\P\circ'\Delta_D$} (b);
		\draw[->] (b) to node[right]{\small$1_\P\circ(1_D;f^*\alpha\circ1_D)$} (c);
		\draw[->] (c) to node[above]{\small$\gamma_{(1)}\circ1_D$} (d);
		\end{tikzpicture}
	\end{center}
	The part $d_{\P(D)}$ is independent of the twisting morphism, and thus of no interest to us. For the other part, we notice that
	\begin{align*}
		(1_\P\circ(1_D;f^*\alpha\circ1_D))(1_\P\circ'\Delta_D) =&\ (1_\P\circ(1_D;\alpha f\circ1_D))(1_\P\circ'\Delta_D)\\
		=&\ (1_\P\circ(1_D;\alpha\circ1_D))(1_\P\circ'(f\circ 1_D)\Delta_D)\\
		=&\ (1_\P\circ(1_D;\alpha_1\circ1_D))(1_\P\circ'\Delta_{f_*D})\ ,
	\end{align*}
	which implies the result.
\end{proof}

\subsection{Homotopical properties of infinity-morphisms}\label{subsect:oo-alpha-qis and rectifications}

The homotopy theory of classical $\infty$-morphisms is already well known in the literature, see for example \cite[Sect. 11.4]{lodayvallette} and \cite{val14}. We give here some results in the analogous theory for $\infty_\alpha$-morphisms, which we will need in \cref{sect:application}.

\medskip

Fix a Koszul morphism $\alpha:\C\to\P$ from a cooperad $\C$ to an operad $\P$. Notice that, since we assumed that all the (co)operads are reduced, for any $\infty_\alpha$-morphism $\Psi:A\rightsquigarrow A'$ of $\P$-algebras we have a canonical chain map
\[
\psi_1:A\longrightarrow A'\ .
\]
With this property, we can now define notions of $\infty_\alpha$-quasi-isomorphisms for (co)algebras. The version for algebras generalizes the one defined for classical $\infty$-morphisms, while the version for coalgebras is slightly different, but coherent with the notion of weak equivalences introduced in the article \cite{val14}.

\begin{definition}
	Let $\C$ be a cooperad, let $\P$ be an operad, and let $\alpha:\C\to\P$ be a twisting morphism.
	\begin{enumerate}
		\item An $\infty_\alpha$-morphism of $\P$-algebras $\Psi:A\rightsquigarrow A'$ is an \emph{$\alpha$-weak equivalence} if the morphism
		\[
		\Psi:\bar_\alpha A\longrightarrow \bar_\alpha A'
		\]
		is a weak equivalence of coalgebras in the category of conilpotent $\C$-coalgebras with the Vallette model structure \cite{val14}, i.e. if
		\[
		\Omega_\alpha\Psi:\Omega_\alpha\bar_\alpha A\longrightarrow\Omega_\alpha\bar_\alpha A'
		\]
		is a quasi-isomorphism.
		\item An $\infty_\alpha$-morphism of $\P$-algebras $\Psi:A\rightsquigarrow A'$ is an \emph{$\infty_\alpha$-quasi-isomorphism} if the chain map
		\[
		\psi_1:A\longrightarrow A'
		\]
		is a quasi-isomorphism.
		\item An $\infty_\alpha$-morphism of $\C$-coalgebras $\Phi:D'\rightsquigarrow D$ is an \emph{$\alpha$-weak equivalence} if the morphism
		\[
		\Phi:\Omega_\alpha D'\longrightarrow\Omega_\alpha D
		\]
		is a quasi-isomorphism, i.e. if it is a weak equivalence in the classical Hinich model structure on the category of $\P$-algebras \cite{Hin97homological}.
		\item An $\infty_\alpha$-morphism of $\C$-coalgebras $\Phi:D'\rightsquigarrow D$ is an \emph{$\infty_\alpha$-quasi-isomorphism} if the chain map
		\[
		\phi_1:D'\longrightarrow D
		\]
		is a quasi-isomorphism.
	\end{enumerate}
\end{definition}

\begin{remark}
	Notice that in the Vallette model structure on $\C$-coalgebras, the weak equivalences are created by the cobar construction $\Omega_\alpha$, that is $f$ is a weak equivalence of $\C$-coalgebras if, and only if $\Omega_\alpha f$ is a quasi-isomorphism of algebras, i.e. a weak equivalence in the classical Hinich model structure on $\P$-algebras. This motivates the definition of $\alpha$-weak equivalences given above.
\end{remark}

We will now try to understand how these four notions are related to each other. We begin with a classical fact. It was originally stated for $\P$ a Koszul operad and the classical $\infty$-morphisms of homotopy $\P$-algebras, but the proof readily generalizes to our setting. See also \cite[Prop. 32]{legrignou16}.

\begin{theorem}[{\cite[Prop. 11.4.7]{lodayvallette}}]
	An $\infty_\alpha$-morphism of $\P$-algebras is an $\alpha$-weak equivalence if and only if it is an $\infty_\alpha$-quasi-isomorphism.
\end{theorem}

For coalgebras, we can proceed similarly to prove a slightly weaker statement. In order to do so, we introduce a notion of \emph{rectification} between different types of (co)algebras which generalizes the one found in \cite[Sect. 11.4.3]{lodayvallette}. It will be extremely useful in \cref{sect:application}, where it will allow us to pass between $\infty$-morphisms relative to different twisting morphisms.

\medskip

Let $\C$ be a cooperad, let $\P,\P'$ be operads, let $\alpha:\C\to\P$ be a Koszul twisting morphism, take $g:\P\to\P'$ a quasi-isomorphism, and define $\alpha':\C\to\P'$ by $\alpha'\coloneqq g\alpha$. Notice that $g$ is a quasi-isomorphism if, and only if $\alpha'$ is a Koszul twisting morphism. We define the \emph{rectification functor}
\[
R^{g,\alpha}:\infty_\alpha\text{-}\Palg\longrightarrow\infty_\alpha\text{-}\Palg
\]
by
\[
R^{g,\alpha}(A)\coloneqq g^*\Omega_{\alpha'}\bar_\alpha A
\]
for a $\P$-algebra $A$, and
\[
R^{g,\alpha}(\Psi)\coloneqq g^*\Omega_{\alpha'}\Psi
\]
on $\infty_\alpha$-morphisms. There is a natural transformation $N$ from $R^{g,\alpha}$ to the identity of $\infty_\alpha\text{-}\Palg$ given by
\[
N_A:\bar_\alpha A\xrightarrow{\eta_{\bar_\alpha A}}\bar_{\alpha'}\Omega_{\alpha'}\bar_\alpha A = \bar_\alpha R^{g,a}(A)\ ,
\]
where $\eta$ is the unit of the bar-cobar adjunction relative to $\alpha'$. Notice that $\eta_{\bar_\alpha A}$ is a weak equivalence by \cite[Thm. 2.6]{val14}, so that $N$ defines a natural $\infty_\alpha$-quasi-isomorphism
\[
N_A:A\rightsquigarrow R^{g,\alpha}(A)
\]
by \cite[Prop. 11.4.7]{lodayvallette}.

\medskip

Dually, let $\C,\C'$ be cooperads, let $\P$ be an operad, let $\alpha:\C\to\P$ be a Koszul morphism, take $f:\C'\to\C$ be a quasi-isomorphism, and define $\alpha':\C'\to\P$ by $\alpha'\coloneqq\alpha f$. We define the \emph{rectification functor}
\[
R_{\alpha,f}:\infty_\alpha\text{-}\Ccog\longrightarrow\infty_\alpha\text{-}\Ccog
\]
by
\[
R_{\alpha,f}(D)\coloneqq f_*\bar_{\alpha'}\Omega_\alpha D
\]
on conilpotent $\C$-coalgebras, and
\[
R_{\alpha,f}(\Phi)\coloneqq f_*\bar_{\alpha'}\Phi
\]
on $\infty_\alpha$-morphisms of $\C$-coalgebras. The $f_*$ in the action on morphisms is there only for consistency of notation. There is a natural transformation $E$ from the rectification $R_{\alpha,f}$ to the identity of $\infty_\alpha\text{-}\Ccog$ given by
\[
E_D:\Omega_\alpha R_{\alpha,f}(D) = \Omega_{\alpha'}\bar_{\alpha'}\Omega_\alpha D\xrightarrow{\epsilon_{\Omega_\alpha D}}\Omega_\alpha D\ .
\]
As before, the morphism $\epsilon_{\Omega_\alpha D}$ is a quasi-isomorphism, so that $E$ defines a natural $\alpha$-weak equivalence.

\begin{proposition}\label{prop:rectification is oo-qi to original}
	Let $D$ be a $\C$-coalgebra. The $\alpha$-weak equivalence
	\[
	E_D:R_{\alpha,f}(D)\rightsquigarrow D
	\]
	is an $\infty_\alpha$-quasi-isomorphism.
\end{proposition}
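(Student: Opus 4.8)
The plan is to unravel what it means for the $\alpha$-weak equivalence $E_D$ to be an $\infty_\alpha$-quasi-isomorphism and then verify it directly. By definition this amounts to showing that the first structure map $\phi_1$ of the $\infty_\alpha$-morphism $E_D\colon R_{\alpha,f}(D)\rightsquigarrow D$ is a quasi-isomorphism. First I would make $\phi_1$ explicit. Recall that $E_D$ is the counit $\epsilon_{\Omega_\alpha D}$ of the bar--cobar adjunction relative to $\alpha'$, viewed via $\Omega_\alpha R_{\alpha,f}(D)=\Omega_{\alpha'}\bar_{\alpha'}\Omega_\alpha D$. Being a morphism out of the free $\P$-algebra $\Omega_{\alpha'}\bar_{\alpha'}\Omega_\alpha D$, it is determined by its restriction to generators $\bar_{\alpha'}\Omega_\alpha D=\C'(\Omega_\alpha D)\to\Omega_\alpha D$, which is the projection onto the arity-$1$ summand $\C'(1)\otimes\Omega_\alpha D\cong\Omega_\alpha D$. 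Extracting $\phi_1$ then requires one further projection of $\Omega_\alpha D=\P(D)$ onto $\P(1)\otimes D\cong D$. Hence $\phi_1\colon\C'(\P(D))\to D$ is the canonical projection onto the summand $\C'(1)\otimes\P(1)\otimes D\cong D$, and it is a chain map since it is the first component of an $\infty_\alpha$-morphism.

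Next I would analyse the complex $\bar_{\alpha'}\Omega_\alpha D$, with underlying space $\C'(\P(D))$, through the filtration by the number $N$ of factors coming from $D$. Its differential is the sum of the internal differentials of $\C'$, $\P$ and $D$, the bar differential coming from $\alpha'$, and the cobar differential coming from $\alpha$. Since the (co)operads are reduced, $\alpha$ and $\alpha'$ vanish in arity $1$; thus the internal and the bar differentials preserve $N$, while the cobar differential strictly raises it. Therefore $F^p\coloneqq\{N\ge p\}$ is a decreasing filtration by subcomplexes which is exhaustive and complete, as every element has finite $N$. The map $\phi_1$ vanishes on $F^2$ and so factors through $C/F^2$, where it induces the identity; in particular it is filtered, with $D$ given the trivial filtration concentrated in $N=1$.

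On the associated graded the cobar differential is killed, so $\mathrm{gr}^p$ is the weight-$p$ component (in the number of inputs from $D$) of the bar construction $\bar_{\alpha'}(\P(D))$ of the \emph{free} $\P$-algebra on $(D,d_D)$. Concretely $\mathrm{gr}^p\cong(\C'\circ_{\alpha'}\P)(p)\otimes_{\S_p}D^{\otimes p}$, where $\C'\circ_{\alpha'}\P$ is the twisted composite product of \cite[Sect. 6.4, 6.5]{lodayvallette} equipped with its internal plus twisted differential, tensored with the $d_D$'s. Because $\alpha'$ is a Koszul twisting morphism, this twisted composite product is acyclic, i.e. the augmentation $\C'\circ_{\alpha'}\P\to I$ is a quasi-isomorphism; using the Künneth theorem in characteristic $0$, this gives that $\mathrm{gr}^p$ is acyclic for all $p\ge2$, while for $p=1$ one finds $\mathrm{gr}^1=\C'(1)\otimes\P(1)\otimes D\cong(D,d_D)$, on which $\phi_1$ restricts to the identity. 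Hence $\mathrm{gr}(\phi_1)$ is a quasi-isomorphism of associated graded complexes.

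Finally I would conclude that $\phi_1$ itself is a quasi-isomorphism. Since both filtrations are exhaustive and complete and $\mathrm{gr}(\phi_1)$ is a quasi-isomorphism, the comparison theorem for complete filtered complexes applies. I expect the main obstacle to be precisely this convergence point: the filtration $F^p$ is \emph{not} bounded below, so the elementary convergence theorem for bounded filtrations is unavailable and one must argue through completeness. Here this is painless, because the induced spectral sequence has $E_1$-page concentrated in the single filtration degree $p=1$, so it collapses and converges strongly. The only other genuinely non-formal ingredient is the acyclicity of the twisted composite product, which is exactly the statement that $\alpha'$ is Koszul and can be quoted from \cite{lodayvallette}.
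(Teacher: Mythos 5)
Your proof follows the same strategy as the paper's own: identify the first component $e_1$ with the projection of $\C'(\P(D))$ onto the summand $\C'(1)\otimes\P(1)\otimes D\cong D$, filter by the number $N$ of $D$-factors, identify the associated graded with $(\C'\circ_{\alpha'}\P)(D)$, and invoke Koszulity of $\alpha'$ together with the operadic K\"unneth formula to get a first page equal to $H_\bullet(D)$ concentrated in weight one. In one respect you are more careful than the paper: since the cobar part of the differential strictly \emph{raises} $N$, the increasing filtration $F_p=\bigoplus_{k\le p}(\C'\circ\P)(k)\otimes_{\S_k}D^{\otimes k}$ used in the paper's proof is not stable under the differential, and your decreasing filtration $F^p=\{N\ge p\}$ is the one that actually makes the complex filtered.

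However, the convergence step --- which you yourself single out as the main obstacle --- contains a genuine error: the filtration $F^p$ is \emph{not} complete. The complex $\C'(\P(D))$ is a direct \emph{sum} over $N$, so $\lim_p C/F^p$ is the corresponding direct \emph{product}, strictly larger than $C$; finiteness of $N$ on each element gives Hausdorffness ($\bigcap_p F^p=0$), not completeness. This is not a pedantic point: the spectral sequence of $C$ agrees page by page with that of its completion $\widehat{C}$, and without completeness, collapse at $E_1$ does not imply convergence to $H_\bullet(C)$ (it is $H_\bullet(\widehat{C})$ that one computes). Concretely, take $C=\bigoplus_{k\ge2}(\k a_k\oplus\k b_k)$ with $|a_k|=1$, $|b_k|=0$, $d a_k=b_k-b_{k+1}$, $d b_k=0$, filtered by $F^p=\bigoplus_{k\ge p}(\k a_k\oplus \k b_k)$: every graded piece is acyclic, so $E_1=0$ and the spectral sequence collapses, yet $H_0(C)\cong\k$, generated by the class of $b_2$, which is a cycle but not a boundary (writing $b_2=\sum_k x_k(b_k-b_{k+1})$ forces $x_k=1$ for all $k$, contradicting finiteness). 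So ``exhaustive, Hausdorff, and degenerate at $E_1$'' is not enough, and the complete convergence theorem you appeal to has a failed hypothesis. The ingredient that rules out this pathology for $\bar_{\alpha'}\Omega_\alpha D$ --- which neither you nor, for that matter, the paper's own proof ever uses --- is the conilpotency of $D$: filtering by total coradical degree, the subspaces $G_p\subseteq\C'(\P(D))$ spanned by elements whose $D$-factors have coradical degrees summing to at most $p$ are subcomplexes exhausting $C$, and on each $G_p$ the weight filtration is \emph{bounded} (a factor of coradical degree $q$ can split into at most $q$ factors, so $N\le p$ on $G_p$). The classical convergence theorem then applies to each $G_p$, the same K\"unneth--Koszulity computation gives $H_\bullet(G_p)\cong H_\bullet(F^{\mathrm{rad}}_p D)$ via $e_1$, and one concludes by passing to the filtered colimit over $p$, since homology commutes with filtered colimits.
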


\begin{proof}
	We have to prove that the first component
	\[
	e_1:R_{\alpha,f}(D)\longrightarrow D
	\]
	of $E_D$ is a quasi-isomorphism. We will do this by a spectral sequence argument analogous to the one of \cite[Thm. 11.3.3 and 11.4.4]{lodayvallette}. We start by noticing that
	\[
	e_1:(\C'\circ\P)(D)\longrightarrow D
	\]
	is given by the projection onto $D$. We filter the left-hand side by the number of times that $D$ appears, i.e. by
	\[
	F_p\coloneqq\bigoplus_{k\le p}(\C'\circ\P)(k)\otimes_{\S_k}D^{\otimes k}\ .
	\]
	This filtration is increasing, bounded below and exhaustive. The page $E^0$ of the associated spectral sequence equals $(\C'\circ_{\alpha'}\P)(D)$, since the only parts of the differential that preserve the weight (that is, the arity) are the internal differential of $D$ and the part coming from the twisting morphism $\alpha'$. The page $E^1$ of the spectral sequence is
	\[
	H_\bullet((\C'\circ_{\alpha'}\P)(D))\cong H_\bullet(\C'\circ_{\alpha'}\P)\circ H_\bullet(D)\cong H_\bullet(D)
	\]
	by the operadic Künneth formula \cite[Prop. 6.2.3]{lodayvallette} and the fact that $\alpha'$ is a Koszul morphism. On the other side, we filter $D$ by $F_pD=D$ for $p\ge0$ and $F_pD=0$ otherwise. This filtration is also increasing, bounded below and exhaustive. The map $e_1$ is a map of spectral sequences, and so the induced map at the page $E^1$ is $H_\bullet(e_1)$, which induces an isomorphism. Therefore, the chain map $e_1$ is a quasi-isomorphism.
\end{proof}

Notice that if $f=1_\C$, then the rectification becomes the functor $\bar_\alpha\Omega_\alpha$, and the natural $\infty_\alpha$-morphism $E_D$ is given by the counit $\varepsilon$ of the bar-cobar adjunction (seen as an $\infty_\alpha$-morphism). As a consequence of this result, we have the following.

\begin{theorem}\label{thm: alpha-we are oo-alpha-qi}
	If an $\infty_\alpha$-morphism of $\C$-coalgebras is an $\alpha$-weak equivalence, then it is an $\infty_\alpha$-quasi-isomor\-phism.
\end{theorem}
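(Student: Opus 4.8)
The plan is to deduce the statement from \cref{prop:rectification is oo-qi to original} in the special case $f=1_\C$, by comparing first components across the naturality square of the counit and then invoking the two-out-of-three property. Let $\Phi\colon D'\rightsquigarrow D$ be an $\infty_\alpha$-morphism of $\C$-coalgebras that is an $\alpha$-weak equivalence, so that by definition the morphism of $\P$-algebras $\Phi\colon\Omega_\alpha D'\to\Omega_\alpha D$ is a quasi-isomorphism; we must prove that $\phi_1\colon D'\to D$ is a quasi-isomorphism. Taking $f=1_\C$, the rectification $R_{\alpha,1_\C}$ is the functor $\bar_\alpha\Omega_\alpha$ and $E$ is the counit $\varepsilon$ of the bar--cobar adjunction viewed as a natural $\infty_\alpha$-morphism, so \cref{prop:rectification is oo-qi to original} guarantees that both $(\varepsilon_{D'})_1$ and $(\varepsilon_D)_1$ are quasi-isomorphisms.

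First I would use the naturality of $\varepsilon$ with respect to $\Phi$, which gives the identity of $\infty_\alpha$-morphisms $\Phi\circ\varepsilon_{D'}=\varepsilon_D\circ R_{\alpha,1_\C}(\Phi)$ with $R_{\alpha,1_\C}(\Phi)=\bar_\alpha\Phi$. The assignment $\Psi\mapsto\psi_1$ is functorial under composition of $\infty_\alpha$-morphisms---a direct consequence of the reducedness of the (co)operads, since only the arity-one part survives in the first component of a composite---and the first component of a strict morphism of $\C$-coalgebras, regarded as an $\infty_\alpha$-morphism, is the morphism itself. Passing to first components therefore yields the equality of chain maps
\[
\phi_1\circ(\varepsilon_{D'})_1=(\varepsilon_D)_1\circ\bar_\alpha\Phi\ .
\]
Granting that $\bar_\alpha\Phi$ is a quasi-isomorphism, the right-hand side is a composite of two quasi-isomorphisms; hence $\phi_1\circ(\varepsilon_{D'})_1$ is a quasi-isomorphism, and since $(\varepsilon_{D'})_1$ is one, two-out-of-three forces $\phi_1$ to be a quasi-isomorphism, as desired.

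The hard part is thus to show that $\bar_\alpha\Phi$ is a quasi-isomorphism, which I would establish by a spectral sequence argument parallel to the one in the proof of \cref{prop:rectification is oo-qi to original}. Writing $\bar_\alpha\Omega_\alpha D'=\bigoplus_k\C(k)\otimes_{\S_k}(\Omega_\alpha D')^{\otimes k}$, I filter by the number $k$ of tensor factors. The component $d_2$ of the bar differential strictly lowers $k$ (the twisting morphism $\alpha$ vanishes in arity one, so it merges at least two factors), while $d_1$ preserves it; the filtration is increasing, bounded below and exhaustive, with associated graded differential $d_1$. By the operadic Künneth formula \cite[Prop. 6.2.3]{lodayvallette}, the page $E^1$ equals $H_\bullet(\C)(k)\otimes_{\S_k}H_\bullet(\Omega_\alpha D')^{\otimes k}$ in filtration degree $k$. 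Since $\bar_\alpha\Phi=\C(\Phi)$ preserves the filtration and acts on $E^1$ by $H_\bullet(\Phi)^{\otimes k}$, and since $\Phi$ is a quasi-isomorphism, the induced map is an isomorphism on $E^1$; the comparison theorem for these bounded-below exhaustive filtrations then shows that $\bar_\alpha\Phi$ is a quasi-isomorphism. The only genuine subtlety---the standard fact that the bar construction preserves quasi-isomorphisms---is the convergence ensuring that an isomorphism on $E^1$ propagates to the abutment, which here is supplied by the boundedness below and exhaustiveness of the filtration exactly as in the cited proposition.
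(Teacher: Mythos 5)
Your proof is correct and follows essentially the same route as the paper: the naturality square of the bar--cobar counit, \cref{prop:rectification is oo-qi to original} (with $f=1_\C$) for the vertical arrows, restriction to first components, and two-out-of-three. The only difference is that where the paper simply cites \cite[Prop. 11.2.3]{lodayvallette} for the fact that $\bar_\alpha\Phi$ is a quasi-isomorphism, you re-prove that fact via the standard filtration-by-number-of-tensor-factors spectral sequence (which is indeed how it is proved in loc. cit.), and you make explicit the functoriality of taking first components, which the paper leaves implicit.
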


Notice that the inverse implication is not true: it is known \cite[Prop. 2.4.3, Sect. 11.2.7]{lodayvallette} that there are (strict) quasi-isomorphisms of $\C$-coalgebras that are not sent to quasi-isomorphisms under the cobar construction.

\begin{proof}
	The proof is similar to \cite[Prop. 11.4.7]{lodayvallette}. Suppose $\Phi:D'\rightsquigarrow D$ is an $\alpha$-weak equivalence of $\C$-coalgebras. We have the commutative diagram
	\begin{center}
		\begin{tikzpicture}
			\node (a) at (0,1.5){$\bar_\alpha\Omega_\alpha D'$};
			\node (b) at (3,1.5){$\bar_\alpha\Omega_\alpha D$};
			\node (c) at (0,0){$D'$};
			\node (d) at (3,0){$D$};
			
			\draw[->] (a) -- node[above]{$\bar_\alpha\Phi$}node[below]{$\sim$} (b);
			\draw[->,line join=round,decorate,decoration={zigzag,segment length=4,amplitude=.9,post=lineto,post length=2pt}] (a) -- node[left]{$\varepsilon_{D'}$} (c);
			\draw[->,line join=round,decorate,decoration={zigzag,segment length=4,amplitude=.9,post=lineto,post length=2pt}] (b) -- node[right]{$\varepsilon_{D}$} (d);
			\draw[->,line join=round,decorate,decoration={zigzag,segment length=4,amplitude=.9,post=lineto,post length=2pt}] (c) -- node[above]{$\Phi$} (d);
		\end{tikzpicture}
	\end{center}
	where the vertical arrows are $\infty_\alpha$-quasi-isomorphisms by \cref{prop:rectification is oo-qi to original} and the top arrow is a quasi-isomorphism by \cite[Prop. 11.2.3]{lodayvallette}. Restriction to the first component gives us the diagram
	\begin{center}
		\begin{tikzpicture}
			\node (a) at (0,1.5){$\bar_\alpha\Omega_\alpha D'$};
			\node (b) at (3,1.5){$\bar_\alpha\Omega_\alpha D$};
			\node (c) at (0,0){$D'$};
			\node (d) at (3,0){$D$};
			
			\draw[->] (a) -- node[above]{$\bar_\alpha\Phi$}node[below]{$\sim$} (b);
			\draw[->] (a) -- node[left]{$\varepsilon_{D'}$} node[above,sloped]{$\sim$} (c);
			\draw[->] (b) -- node[right]{$\varepsilon_{D}$} node[below,sloped]{$\sim$} (d);
			\draw[->] (c) -- node[above]{$\Phi$} (d);
		\end{tikzpicture}
	\end{center}
	from which the statement follows.
\end{proof}

Finally, we can show that $\alpha$-weak equivalences of coalgebras are equivalent to zig-zags of weak equivalences of coalgebras, which is analogous to \cite[Thm. 11.4.9]{lodayvallette}.

\begin{proposition}\label{prop:alpha-we and zig-zags}
	Let $C$ and $D$ be two $\C$-coalgebras. The following are equivalent.
	\begin{enumerate}
		\item \label{pt:1} There is a zig-zag of weak equivalences
		\[
		C\longrightarrow\bullet\longleftarrow\cdots\longrightarrow\bullet\longleftarrow D\ .
		\]
		\item \label{pt:2} There are two weak equivalences forming a zig-zag
		\[
		C\longrightarrow\bullet\longleftarrow D\ .
		\]
		\item \label{pt:3} There is an $\alpha$-weak equivalence
		\[
		C\rightsquigarrow D\ .
		\]
	\end{enumerate}
\end{proposition}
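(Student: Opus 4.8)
The plan is to prove the cycle of implications (\ref{pt:2}) $\Rightarrow$ (\ref{pt:1}) $\Rightarrow$ (\ref{pt:3}) $\Rightarrow$ (\ref{pt:2}). The first of these is immediate, since a two-step zig-zag is a special case of a zig-zag. Throughout I would lean on the two defining facts at our disposal: a strict morphism of $\C$-coalgebras is a weak equivalence in the Vallette model structure exactly when $\Omega_\alpha$ sends it to a quasi-isomorphism of $\P$-algebras, and, by \cref{def:inftymorphism1}, an $\alpha$-weak equivalence $C\rightsquigarrow D$ is nothing but a $\P$-algebra morphism $\Phi\colon\Omega_\alpha C\to\Omega_\alpha D$ which happens to be a quasi-isomorphism. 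I will also use that, since $\alpha$ is Koszul, the counit $\varepsilon_A\colon\Omega_\alpha\bar_\alpha A\to A$ is a quasi-isomorphism for every $\P$-algebra $A$ \cite[Thm. 11.3.3]{lodayvallette}, and that $\Omega_\alpha\dashv\bar_\alpha$ is a Quillen equivalence between the Hinich and Vallette model structures \cite{val14}.

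For (\ref{pt:1}) $\Rightarrow$ (\ref{pt:3}), I would apply the functor $\Omega_\alpha$ to the whole zig-zag. As $\Omega_\alpha$ creates the weak equivalences of the Vallette model structure, this yields a zig-zag of quasi-isomorphisms of $\P$-algebras connecting $\Omega_\alpha C$ and $\Omega_\alpha D$, so these two objects become isomorphic in the homotopy category $\ho{\Palg}$ of the Hinich model structure. Both endpoints are fibrant and cofibrant: every $\P$-algebra is fibrant, while $\Omega_\alpha C=\P(C)$ and $\Omega_\alpha D=\P(D)$ are quasi-free, hence cofibrant. By the Whitehead theorem in a model category, two fibrant--cofibrant objects isomorphic in the homotopy category are linked by a direct weak equivalence, so there is an honest quasi-isomorphism of $\P$-algebras $\Phi\colon\Omega_\alpha C\to\Omega_\alpha D$. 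By definition this $\Phi$ is an $\alpha$-weak equivalence $C\rightsquigarrow D$, which establishes (\ref{pt:3}).

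For (\ref{pt:3}) $\Rightarrow$ (\ref{pt:2}), let $\Phi\colon\Omega_\alpha C\to\Omega_\alpha D$ be the $\P$-algebra quasi-isomorphism underlying the given $\alpha$-weak equivalence. Since $\bar_\alpha$ is right Quillen and every $\P$-algebra is fibrant, Ken Brown's lemma shows that $\bar_\alpha$ sends every quasi-isomorphism of $\P$-algebras to a weak equivalence of $\C$-coalgebras; in particular $\bar_\alpha\Phi\colon\bar_\alpha\Omega_\alpha C\to\bar_\alpha\Omega_\alpha D$ is a weak equivalence. Next, for any $\C$-coalgebra the unit $\eta_C\colon C\to\bar_\alpha\Omega_\alpha C$ is a weak equivalence: applying $\Omega_\alpha$ and invoking the triangle identity $\varepsilon_{\Omega_\alpha C}\circ\Omega_\alpha\eta_C=\id_{\Omega_\alpha C}$ together with the fact that $\varepsilon_{\Omega_\alpha C}$ is a quasi-isomorphism, the two-out-of-three property forces $\Omega_\alpha\eta_C$ to be a quasi-isomorphism. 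Therefore the composite $\bar_\alpha\Phi\circ\eta_C\colon C\to\bar_\alpha\Omega_\alpha D$ is a weak equivalence, and together with $\eta_D\colon D\to\bar_\alpha\Omega_\alpha D$ it forms the desired two-step zig-zag $C\to\bar_\alpha\Omega_\alpha D\leftarrow D$.

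I expect the genuine difficulty to sit in the implication (\ref{pt:1}) $\Rightarrow$ (\ref{pt:3}): the other steps are formal manipulations of the bar--cobar adjunction, but collapsing an arbitrarily long zig-zag into a single $\alpha$-weak equivalence cannot be achieved by naive composition, because $\infty_\alpha$-morphisms of coalgebras compose like strict $\P$-algebra maps and these are not invertible even when they are quasi-isomorphisms. The indispensable input is thus the model-categorical Whitehead statement, and the one point deserving genuine care is the verification that the endpoints $\Omega_\alpha C$ and $\Omega_\alpha D$ are fibrant--cofibrant, which is precisely what licenses replacing the homotopy-category isomorphism by an actual morphism realizing it.
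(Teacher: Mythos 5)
Your proof is correct and takes essentially the same route as the paper's: the same cycle $(2)\Rightarrow(1)\Rightarrow(3)\Rightarrow(2)$, with $(3)\Rightarrow(2)$ built from the units $\eta_C,\eta_D$ and $\bar_\alpha\Phi$, and $(1)\Rightarrow(3)$ resting on fibrancy of all $\P$-algebras, cofibrancy of the cobar constructions, and the Whitehead-type theorem of Dwyer--Spalinski (the paper reverses each wrong-way arrow individually via a homotopy inverse and then composes the resulting $\alpha$-weak equivalences, whereas you collapse the whole zig-zag at once in the homotopy category --- but this is the same key lemma either way). The one justification to tighten is your claim that $\Omega_\alpha C=\P(C)$ is cofibrant \emph{because} it is quasi-free: quasi-freeness alone does not imply cofibrancy (one also needs the exhaustive filtration coming from conilpotency of $C$), so you should instead invoke Vallette's result that $\Omega_\alpha$ lands in cofibrant $\P$-algebras, exactly as the paper does.
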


\begin{proof}
	The fact that (\ref{pt:2}) implies (\ref{pt:1}) is obvious.
	
	\medskip
	
	We prove that (\ref{pt:3}) implies (\ref{pt:2}). Suppose we have an $\alpha$-weak equivalence
	\[
	\Phi:C\rightsquigarrow D\ .
	\]
	Then $\bar_\alpha\Phi$ is a weak equivalence, and thus we have the zig-zag
	\[
	C\xrightarrow{\eta_C}\bar_\alpha\Omega_\alpha C\xrightarrow{\bar_\alpha\Phi}\bar_\alpha\Omega_\alpha D\xleftarrow{\eta_D}D\ ,
	\]
	where the counits of the bar-cobar adjunction are weak equivalences by \cite[Thm. 2.6(2)]{val14}.
	
	\medskip
	
	Finally, we show that (\ref{pt:1}) implies (\ref{pt:3}). Every weak equivalence of coalgebras is in particular an $\alpha$-weak equivalence. Therefore, it is enough to prove that whenever we have a weak equivalence
	\[
	C\stackrel{\phi}{\longleftarrow}D
	\]
	of coalgebras, then we have an $\alpha$-weak equivalence going the other way round. Since $\phi$ is a weak equivalence, we have that
	\[
	\Omega_\alpha C\xleftarrow{\Omega_\alpha\phi}\Omega_\alpha D
	\]
	is a quasi-isomorphism. Moreover, every $\P$-algebra is fibrant and $\Omega_\alpha$ lands in the cofibrant $\P$-algebras by \cite[Thm. 2.9(1)]{val14}. Therefore, we can apply \cite[Lemma 4.24]{ds95} to obtain a homotopy inverse
	\[
	\Omega_\alpha C\stackrel{\Psi}{\longrightarrow}\Omega_\alpha D\ ,
	\]
	which is again a quasi-isomorphism, and thus defines an $\alpha$-weak equivalence from $C$ to $D$, as we desired.
\end{proof}

\section{Convolution homotopy Lie algebras} \label{sect:convolutionHomotopyAlg}

Both authors independently discovered that, given an operadic twisting morphism, one canonically obtains a morphism from the operad $\L_\infty$ to a certain convolution operad. This was done in full generality by the second author in \cite{w16} and in a slightly different particular case by the first author in \cite{rn17}. This produces a natural way to induce an $\L_\infty$-algebra structure on certain convolution algebras. In this section, we recall how this is done and give an interpretation of the Maurer--Cartan elements of the $\L_\infty$-algebras obtained this way.

\medskip

Let $\C$ be a cooperad, let $\P$ be an operad, and let
\[
\alpha:\C\longrightarrow\P
\]
be a twisting morphism from $\C$ to $\P$.

We will denote by $\alpha(n)$ the restriction of $\alpha$ to $\C(n)$. We define a morphism of operads
\[
\M_\alpha:\L_\infty\longrightarrow\hom(\C,\P)
\]
from the operad $\L_\infty$ to the convolution operad $\hom(\C,\P)$ by setting
\[
\M_\alpha(\ell_n)\in\hom(\C(n),\P(n))
\]
to be $\alpha(n)$.

\begin{theorem} \label{thm:bijTwHoms}
	Let $\C$ be a cooperad and let $\P$ be an operad. There is a bijection
	\[
	\Tw(\C,\P)\stackrel{\cong}{\longrightarrow}\hom_{\op}(\L_\infty,\hom(\C,\P))\ .
	\]
	The image of a twisting morphism $\alpha$ is given by the morphism of operads $\M_\alpha$ described above. This assignment is natural in the sense that if $f:\C'\longrightarrow\C$ is a morphism of cooperads and $g:\P\longrightarrow\P'$ is a morphism of operads, then
	\[
	\M_{f^*\alpha} = (f^*)\M_\alpha\qquad\text{and}\qquad\M_{g_*\alpha}=(g_*)\M_\alpha\ .
	\]
	Here, $(f_*)$ denotes the pullback
	\[
	f^*:\hom(\C,\P)\longrightarrow\hom(\C',\P)\ ,
	\]
	and similarly $(g_*)$ denotes the pushforward by $g$.
\end{theorem}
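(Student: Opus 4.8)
The plan is to exploit the description of $\L_\infty$ as a cobar construction, which reduces the statement to a bookkeeping comparison between two convolution structures. Recall that in the shifted convention $\L_\infty=\Omega\cocom$ is the cobar construction of the cocommutative cooperad, so that as a graded operad $\L_\infty$ is free on the $\S$-module $\susp^{-1}\overline{\cocom}$, whose arity $n$ component is a one-dimensional trivial $\S_n$-representation placed in degree $-1$ and spanned by $\ell_n$. Consequently a morphism of graded operads $\M\colon\L_\infty\to\hom(\C,\P)$ is freely and uniquely determined by the images $\M(\ell_n)$, which must be of degree $-1$ and $\S_n$-invariant, hence elements of
\[
\hom(\C,\P)(n)^{\S_n} = \hom_{\S_n}(\C(n),\P(n))\ .
\]
Collecting these over all $n$, the underlying data of such an $\M$ is exactly a degree $-1$ element $\alpha\in\hom_\S(\C,\P)$, and I would set $\M_\alpha(\ell_n)\coloneqq\alpha(n)$ as in the statement. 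This yields the bijection on underlying sets; it then remains to match the conditions ``morphism of dg operads'' and ``twisting morphism''.

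For this I would impose compatibility with the differentials. Since $\cocom$ carries the zero differential, the internal differential of $\L_\infty=\Omega\cocom$ is purely the cobar differential, which on the generator $\ell_n$ is the quadratic expression $d(\ell_n)=\sum\pm\,\ell_{n_1}\circ_i\ell_{n_2}$ coming from the infinitesimal decomposition of $\cocom$. As $\M_\alpha$ commutes with partial compositions, the chain-map condition $\partial\,\M_\alpha(\ell_n)=\M_\alpha(d\ell_n)$ reads
\[
\partial(\alpha(n)) = \sum\pm\, \alpha(n_1)\circ_i\alpha(n_2)\ ,
\]
where now $\circ_i$ and $\partial$ are those of the convolution operad $\hom(\C,\P)$. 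The crux of the argument — and the step I expect to be the main obstacle — is to identify the right-hand side, summed over all arities and insertion slots, with $-\tfrac12[\alpha,\alpha]=-\alpha\star\alpha$ as computed in the convolution Lie algebra $\hom_\S(\C,\P)$. This is a purely structural identity: unwinding the partial composition of $\hom(\C,\P)$, each term $\alpha(n_1)\circ_i\alpha(n_2)$ decomposes $\C(n)$ through $\Delta_{(1)}^{\C}$ at the slot $i$, applies $\alpha\circ_{(1)}\alpha$, and recomposes in $\P$ via $\gamma_{(1)}^{\P}$; summing over all two-vertex shapes — which the one-dimensional cooperad $\cocom$ indexes precisely once each — reassembles exactly the defining composite $\gamma_{(1)}^{\P}\,(\alpha\circ_{(1)}\alpha)\,\Delta_{(1)}^{\C}=\alpha\star\alpha$. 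The only delicate points are the Koszul signs produced by the degree $-1$ shift and the invariants/coinvariants identification fixed in the conventions; once these are checked, the chain-map condition becomes exactly $\partial(\alpha)+\tfrac12[\alpha,\alpha]=0$, i.e. $\alpha\in\Tw(\C,\P)$. The factor $\tfrac12$ is automatic, since $|\alpha|=-1$ gives $[\alpha,\alpha]=2\,\alpha\star\alpha$.

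Finally, I would verify naturality directly on generators. The maps $f^*\colon\hom(\C,\P)\to\hom(\C',\P)$ and $g_*\colon\hom(\C,\P)\to\hom(\C,\P')$ induced by $f\colon\C'\to\C$ and $g\colon\P\to\P'$ are morphisms of operads, which is immediate from the functoriality of the convolution operad construction in each slot: its structure map $\gamma$ is built from $\Delta^{\C}$ and $\gamma^{\P}$, with which pre- and post-composition by $f$ and $g$ commute. Both $f^*\M_\alpha$ and $\M_{f^*\alpha}$ are then morphisms of operads $\L_\infty\to\hom(\C',\P)$, and on $\ell_n$ they agree since $f^*(\M_\alpha(\ell_n))=\alpha(n)\,f(n)=(f^*\alpha)(n)=\M_{f^*\alpha}(\ell_n)$; by freeness they coincide, and the computation for $g_*$ is identical with post-composition replacing pre-composition. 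Alternatively, the whole bijection can be packaged as the chain of natural identifications $\hom_{\op}(\Omega\cocom,\hom(\C,\P))\cong\Tw(\cocom,\hom(\C,\P))\cong\Tw(\C,\P)$, the first being the bar–cobar adjunction and the second the isomorphism of convolution Lie algebras established in the second step.
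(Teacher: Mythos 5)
Your proposal is correct, but its main argument takes a genuinely different route from the paper's. The paper proves the bijection by pure reduction: a twisting morphism $\alpha\in\Tw(\C,\P)$ is by definition a Maurer--Cartan element in the Lie algebra associated to the convolution operad $\hom(\C,\P)$; the canonical isomorphism of operads $\hom(\C,\P)\cong\hom(\com^\vee,\hom(\C,\P))$ identifies such elements with $\Tw(\com^\vee,\hom(\C,\P))$; and the operadic bar--cobar adjunction (\cite{lodayvallette}, Thm.~6.5.7) identifies this set with $\hom_\op(\Omega\com^\vee,\hom(\C,\P))=\hom_\op(\L_\infty,\hom(\C,\P))$. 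You instead re-derive the content of that adjunction by hand in this special case: quasi-freeness of $\Omega\com^\vee$ on the degree $-1$ generators $\ell_n$ shows that graded-operad morphisms correspond exactly to degree $-1$ elements of $\hom_\S(\C,\P)$, and compatibility with the cobar differential on generators translates into $\partial(\alpha)+\alpha\star\alpha=0$, which is the Maurer--Cartan equation since $[\alpha,\alpha]=2\,\alpha\star\alpha$ in odd degree. The step you flag as the crux --- identifying $\sum\pm\,\alpha(n_1)\circ_i\alpha(n_2)$, summed over all two-vertex shapes, with $\gamma_{(1)}(\alpha\circ_{(1)}\alpha)\Delta_{(1)}=\alpha\star\alpha$ in the convolution Lie algebra, including the characteristic-$0$ invariants/coinvariants identification and the Koszul signs --- is exactly the ``unwinding of definitions'' that the paper also declines to write out, so your level of rigor matches the paper's. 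What each approach buys: the paper's proof is two lines and outsources all bookkeeping to cited results, with naturality left as an exercise; yours is self-contained, makes transparent that the Maurer--Cartan equation is literally the dg-compatibility on generators, and actually carries out the naturality check via freeness plus functoriality of the convolution operad in each slot. Note finally that your closing alternative --- packaging everything as $\hom_\op(\Omega\cocom,\hom(\C,\P))\cong\Tw(\cocom,\hom(\C,\P))\cong\Tw(\C,\P)$ --- is precisely the paper's proof, so in that formulation the two arguments coincide.
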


\begin{proof}
	We repeat here the argument presented in \cite{w16}. A twisting morphism is nothing else than a Maurer--Cartan element in the Lie algebra associated to the convolution operad $\hom(\C,\P)$. We have a canonical isomorphism of operads
	\[
	\hom(\C,\P)\cong\hom(\com^\vee,\hom(\C,\P))\ ,
	\]
	so that a Maurer--Cartan element in $\hom(\C,\P)$ is the same thing as a twisting morphism in
	\[
	\Tw(\com^\vee,\hom(\C,\P))\cong\hom_\op(\Omega\com^\vee,\hom(\C,\P))\ .
	\]
	This gives the desired bijection. Checking that the image of a twisting morphism $\alpha$ is given by the morphism of operads $\M_\alpha$ is simply a matter of unwinding definitions, and the proof of naturality is left as an easy exercise to the interested reader.
\end{proof}

\begin{remark}
	If we work with $\mathscr{L}_\infty$-algebras instead of $\L_\infty$-algebras, the result still holds but suspensions appear. The bijection of \cref{thm:bijTwHoms} in this case becomes
	\[
	\Tw(\C,\P)\stackrel{\cong}{\longrightarrow}\hom_\op(\mathscr{L}_\infty,\hom(\susp^c\otimes\C,\P))\ .
	\]
	All further results of the paper hold in this context, but one must be careful not to forget suspensions and signs that might appear.
\end{remark}

\begin{remark} \label{rem:SHLpaperRN}
	In \cite{rn17}, the following special case was treated. Let
	\[
	\Psi:\Q\longrightarrow\P
	\]
	be a morphism of operads. Then one obtains a canonical morphism
	\[
	\L_\infty\longrightarrow\hom(\bar\Q,\P)\ .
	\]
	This is recovered from \cref{thm:bijTwHoms} simply by considering the twisting morphism
	\[
	\bar\Q\stackrel{\pi}{\longrightarrow}\Q\stackrel{\Psi}{\longrightarrow}\P\ .
	\]
\end{remark}

Let $D$ be a conilpotent $\C$-coalgebra, and let $A$ be a $\P$-algebra. Then the chain complex $\hom(D,A)$ is naturally a $\hom(\C,\P)$-algebra, and thus a $\L_\infty$-algebra by pullback of the structure along the map $\M_\alpha$.

\begin{definition}\label{def:homAlpha}
	We denote by $\hom^\alpha(D,A)$, the chain complex $\hom(D,A)$ with the induced $\L_\infty$-algebra structure coming from $\alpha$. We  call this the \emph{convolution $\L_\infty$-algebra} of $D$ and $A$ (relative to $\alpha$).
\end{definition}

The compatibility with compositions stated in \cref{thm:bijTwHoms} translates as follows at the level of algebras.

\begin{lemma}\label{lemma:naturality of convolution algebras wrt compositions}
	Let $\alpha\in\Tw(\C,\P)$ be a twisting morphism, and suppose $f:\C'\to\C$ is a morphism of cooperads, and that $g:\P\to\P'$ is a morphism of operads.
	\begin{enumerate}
		\item\label{pt1} Let $A$ be a $\P$-algebra, let $D$ be a conilpotent $\C'$-coalgebra. We have
		\[
		\hom^{f^*\alpha}(D,A) = \hom^\alpha(f_*D,A)
		\]
		as $\L_\infty$-algebras.
		\item Dually, let $D$ be a $\C$-coalgebra, let $A$ be a $\P'$-algebra. We have
		\[
		\hom^{g_*\alpha}(D,A) = \hom^\alpha(D,g^*A)
		\]
		as $\L_\infty$-algebras.
	\end{enumerate}
\end{lemma}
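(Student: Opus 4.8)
The plan is to reduce both identities to the naturality of $\M$ established in \cref{thm:bijTwHoms}, combined with the elementary fact that restriction of algebra structures along a composite of operad morphisms is the composite of the individual restrictions. First note that in each case the two sides have the same underlying chain complex: $f_*D$ and $D$ have the same underlying complex, and so do $g^*A$ and $A$, whence $\hom(f_*D,A)=\hom(D,A)$ and $\hom(D,g^*A)=\hom(D,A)$ as chain complexes. Thus only the induced $\L_\infty$-structures must be compared, and since each arises by restricting a convolution-operad action along a morphism out of $\L_\infty$, it suffices to compare these actions.

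The crux is a compatibility of the bifunctor $\hom$ with the operadic pushforward $f_*$ and the operad morphism $f^*\colon\hom(\C,\P)\to\hom(\C',\P)$. I would isolate it as the claim that the $\hom(\C,\P)$-algebra $\hom(f_*D,A)$ is precisely the restriction along $f^*$ of the $\hom(\C',\P)$-algebra $\hom(D,A)$. To verify this I unwind the convolution action: for $\mu\in\hom(\C,\P)(n)$ and $\varphi_1,\dots,\varphi_n\in\hom(D,A)$, the map $\mu(\varphi_1,\dots,\varphi_n)$ is the composite of the comultiplication of $f_*D$, the map $\mu$ on the cooperad slot and the $\varphi_i$ on the coalgebra slots, followed by $\gamma_A$. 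Because the pushed-forward comultiplication satisfies $\Delta_{f_*D}=(f\circ1_D)\Delta_D$, the morphism $f$ may be absorbed into $\mu$, replacing $\mu f(n)$ by $(f^*\mu)(n)$; the resulting composite is exactly the action of $f^*\mu$ on $\hom(D,A)$ viewed as a $\hom(\C',\P)$-algebra. This is the one genuine computation, and it is a direct diagram chase in which no signs intervene.

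Granting this, the first statement follows formally. The $\L_\infty$-structure on $\hom^\alpha(f_*D,A)$ is the restriction of the $\hom(\C,\P)$-action along $\M_\alpha$; by the claim this equals the restriction of the $\hom(\C',\P)$-action on $\hom(D,A)$ first along $f^*$ and then along $\M_\alpha$, hence along the composite $(f^*)\M_\alpha$. The naturality in \cref{thm:bijTwHoms} identifies this composite with $\M_{f^*\alpha}$, so the restriction is the defining $\L_\infty$-structure of $\hom^{f^*\alpha}(D,A)$, as desired.

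The second statement is dual. Here one checks analogously that $\hom(D,g^*A)$ is the restriction along $g_*\colon\hom(\C,\P)\to\hom(\C,\P')$ of the $\hom(\C,\P')$-algebra $\hom(D,A)$, using this time that the pulled-back structure map of $g^*A$ is $\gamma_{g^*A}=\gamma_A(g\circ1_A)$; this lets one absorb $g$ into $\mu$ and replace $\mu$ by $g_*\mu$. Restricting along $\M_\alpha$ and invoking $\M_{g_*\alpha}=(g_*)\M_\alpha$ gives the claim. I expect no serious obstacle: once the two restriction-compatibility identities are in place the argument is purely formal. The only thing demanding care is notational bookkeeping --- keeping the restriction of algebra structures along the operad maps $f^*$ and $g_*$ distinct from the pushforward and pullback of (co)algebra structures along $f$ and $g$ --- but this is cosmetic rather than conceptual.
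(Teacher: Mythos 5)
Your proposal is correct and follows essentially the same route as the paper: the paper's proof is exactly the chain of equalities reducing the claim to the naturality $\M_{f^*\alpha}=(f^*)\M_\alpha$ (resp.\ $\M_{g_*\alpha}=(g_*)\M_\alpha$) from \cref{thm:bijTwHoms} together with the compatibility $\gamma_{\hom(D,A)}((f^*)\M_\alpha(\mu_n^\vee)\otimes F)=\gamma_{\hom(f_*D,A)}(\M_\alpha(\mu_n^\vee)\otimes F)$, which is precisely your ``crux'' claim. If anything, you justify that middle step (absorbing $f$ into $\mu$ via $\Delta_{f_*D}=(f\circ 1_D)\Delta_D$) slightly more explicitly than the paper, which simply asserts it.
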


\begin{proof}
	We only prove (\ref{pt1}), the other case being dual. Let $f_1,\ldots,f_n\in\hom(D,A)$, and denote by $F\coloneqq f_1\otimes\cdots\otimes f_n$. We have
	\begin{align*}
		\gamma_{\hom^{f^*\alpha}(D,A)}(\mu_n^\vee\otimes_{\S_n}F) =&\ \gamma_{\hom(D,A)}(\M_{f^*\alpha}(\mu_n^\vee)\otimes F)\\
		=&\ \gamma_{\hom(D,A)}((f^*)\M_\alpha(\mu_n^\vee)\otimes F)\\
		=&\ \gamma_{\hom(f_*D,A)}(\M_\alpha(\mu_n^\vee)\otimes F)\\
		=&\ \gamma_{\hom^\alpha(f_*D,A)}(\mu_n^\vee\otimes_{\S_n}F)\ ,
	\end{align*}
	where $\hom(D,A)$ is seen as a $\hom(\C',\P)$-algebra and $\hom(f_*D,A)$ is seen as a $\hom(\C,\P)$-algebra.
\end{proof}

Given an $\L_\infty$-algebra, it is natural to wonder what its Maurer--Cartan elements represent.

\begin{theorem}\label{thm:MCelOfConvAlg}
	With notations as above, we have that
	\[
	\MC(\hom^\alpha(D,A))\cong\Tw_\alpha(D,A)\ .
	\]
\end{theorem}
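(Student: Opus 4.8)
The plan is to observe that both $\MC(\hom^\alpha(D,A))$ and $\Tw_\alpha(D,A)$ are, by their very definitions, subsets of the degree $0$ part of the chain complex $\hom(D,A)$, and to prove that the two equations cutting them out coincide. The asserted bijection is then simply the identity of $\hom(D,A)_0$, so the whole content of the statement reduces to matching the Maurer--Cartan equation of the $\L_\infty$-algebra $\hom^\alpha(D,A)$ with \eqref{eq:MCrelToAlpha}.

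First I would write down the Maurer--Cartan equation of $\hom^\alpha(D,A)$. For a degree $0$ element $\varphi$ it reads
\[
\sum_{n\ge1}\frac{1}{n!}\,\ell_n(\varphi,\ldots,\varphi) = 0\ ,
\]
where the $\ell_n$ are the operations of the $\L_\infty$-structure of \cref{def:homAlpha}, and $\ell_1 = d$ is the internal differential of $\hom(D,A)$ (the morphism $\M_\alpha$ being a morphism of dg operads, it preserves differentials). I would isolate the $n=1$ term, which by construction is exactly $\ell_1(\varphi)=\partial(\varphi)$, reproducing the first summand of \eqref{eq:MCrelToAlpha}.

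Next I would identify the higher terms. By \cref{def:homAlpha} the operation $\ell_n$ is the pullback along $\M_\alpha$ of the $\hom(\C,\P)$-algebra structure on $\hom(D,A)$, and by \cref{thm:bijTwHoms} we have $\M_\alpha(\ell_n)=\alpha(n)$. Unwinding the convolution-operad action of $\alpha(n)\in\hom(\C(n),\P(n))$ on $n$ copies of $\varphi$ --- using the explicit description of the $\hom(\C,\P)$-algebra structure on $\hom(D,A)$ recalled in \cref{sect:recollections} --- each term $\ell_n(\varphi,\ldots,\varphi)$ is recognised as the arity-$n$ part of the composite
\[
D\xrightarrow{\Delta_D}\C\circ D\xrightarrow{\alpha\circ\varphi}\P\circ A\xrightarrow{\gamma_A}A\ ,
\]
that is, of $\star_\alpha(\varphi)$. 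Summing over $n\ge2$, the goal is then to establish
\[
\sum_{n\ge2}\frac{1}{n!}\,\ell_n(\varphi,\ldots,\varphi) = \star_\alpha(\varphi)\ ,
\]
so that the Maurer--Cartan equation of $\hom^\alpha(D,A)$ collapses to $\partial(\varphi)+\star_\alpha(\varphi)=0$, which is exactly \eqref{eq:MCrelToAlpha}.

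The main obstacle is precisely this last identity, which amounts to a careful bookkeeping of the symmetric-group actions and of the factorial coefficients $\tfrac{1}{n!}$ intrinsic to the $\L_\infty$ Maurer--Cartan equation. The decisive input is that $\alpha$ is a morphism of $\S$-modules, so that each component $\alpha(n)$ is $\S_n$-equivariant; combined with the systematic identification of invariants with coinvariants fixed in the conventions (which in characteristic $0$ is governed by the averaging factors $\tfrac{1}{n!}$), this is what makes the arity-$n$ contribution coming from $\ell_n(\varphi,\ldots,\varphi)$ match, coefficient by coefficient, the arity-$n$ summand of $\star_\alpha(\varphi)$ obtained from the single coproduct $\Delta_D$. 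Once this matching is verified --- keeping track of the Koszul signs of \cref{subsect:KoszulSignRule} along the way --- the two Maurer--Cartan equations literally coincide on $\hom(D,A)_0$, and everything else is a routine unwinding of definitions.
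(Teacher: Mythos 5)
Your proposal is correct, and since the paper gives no internal proof of this theorem --- it defers to \cite{w16} and \cite{rn17} --- your direct unwinding is exactly the argument those references carry out: both $\MC(\hom^\alpha(D,A))$ and $\Tw_\alpha(D,A)$ are cut out of $\hom(D,A)_0$ by equations that one matches arity by arity. Your key identity $\tfrac{1}{n!}\,\ell_n(\varphi,\ldots,\varphi)=\gamma_A(\alpha(n)\otimes\varphi^{\otimes n})\Delta_D^n$ does hold in the paper's conventions: since a tensor product of maps acts on coinvariants by summing over $\S_n$ (cf. \cref{eq:actionOfF}), evaluating $\ell_n$ on $n$ copies of the degree-zero element $\varphi$ produces a factor $n!$ (no Koszul signs intervene in degree $0$) which cancels the $\tfrac{1}{n!}$ of the $\L_\infty$ Maurer--Cartan equation, and conilpotency of $D$ makes both infinite sums locally finite, so the two equations literally coincide.
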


This result was also independently proven in both \cite{w16} and \cite{rn17}. The special case where $\P$ is a binary quadratic algebra, $\C=\P^{\antishriek}$, and $\kappa$ is the canonical twisting morphism was already known in the literature, see e.g. \cite[Sect. 11.1.2]{lodayvallette}. In that case, the convolution $\L_\infty$-algebra is in fact a strict Lie algebra and everything is much simpler. Notice that, thanks to \cref{thm:RosettaStone}, this result shows that Maurer--Cartan elements in the convolution $\L_\infty$-algebra can be interpreted as either morphisms of $\P$-algebras, or as morphisms of $\C$-coalgebras.

\section{Functoriality with respect to \texorpdfstring{$\infty$}{oo}-morphisms} \label{sect:functoriality}

In the previous section we defined the $\L_{\infty}$-convolution algebra. It is clear that it is functorial with respect to strict morphisms of algebras and coalgebras in both slots. In this section, we will prove that it is possible to extend the bifunctor $\hom^\alpha$ so that it accepts $\infty_\alpha$-morphisms in either one of its two slots. However, in \cref{sect:counterexample} we will prove that it can not be extended to take $\infty_\alpha$-morphisms in both slots at the same time.

\subsection{Construction of the induced infinity-morphisms}

As just remarked, it is straightforward to see that $\hom^\alpha$ behaves well with respect to strict morphisms in both slots, and thus defines a bifunctor
\[
\hom^\alpha:(\mathsf{conil.}\ \Ccog)^{op}\times\Palg\longrightarrow\Lalg\ .
\]
It is natural to ask what is its behavior with respect to $\infty_{\alpha}$-morphisms.

\medskip

Let $D,D'$ be two $\C$-coalgebras and let $A$ be a $\P$-algebra. Given an $\infty_\alpha$-morphism
\[
\Phi:\Omega_\alpha D'\longrightarrow\Omega_\alpha D
\]
of $\C$-coalgebras, we define a morphism of cocommutative coalgebras over gra\-ded vector spaces (i.e. not commuting with the differentials \emph{a priori})
\[
\hom^\alpha_\ell(\Phi,1):\bar_\iota\hom^\alpha(D,A)\longrightarrow\bar_\iota\hom^\alpha(D',A)
\]
as follows. First recall that such a morphism is completely determined by its post-composition with the projection
\[
\bar_\iota\hom^\alpha(D',A)\longrightarrow\hom^\alpha(D',A)\ .
\]
So let $f_1,\ldots,f_n\in\hom(D,A)$ and denote $F\coloneqq f_1\otimes\cdots\otimes f_n$. Then $\hom^\alpha_\ell(\Phi,1)(\mu_n^\vee\otimes F)$ is given by the composite
\begin{center}
	\begin{tikzpicture}
		\node (a) at (0,3){$D'$};
		\node (b) at (2.5,3){$\P(D)$};
		\node (c) at (0,0){$A$};
		\node (d) at (2.5,0){$\P(A)$};
		\node (e) at (2.5,1.5){$\P(n)\otimes_{\S_n}D^{\otimes n}$};
		
		\draw[->] (a) -- node[above]{$\Phi$} (b);
		\draw[->] (b) -- node[right]{$\proj_n$} (e);
		\draw[->] (e) -- node[right]{$F$} (d);
		\draw[->] (d) -- node[above]{$\gamma_A$} (c);
		\draw[dashed,->] (a) -- (c);
	\end{tikzpicture}
\end{center}
where
\[
\proj_n:\P(D)\longrightarrow\P(n)\otimes_{\S_n}D^{\otimes n}
\]
is the projection and $F$ acts on $\P(n)\otimes_{\S_n}D^{\otimes n}$ by sending a coinvariant $p\otimes x_1\otimes\cdots \otimes x_n$ to
\begin{equation}\label{eq:actionOfF}
	\sum_{\sigma\in\S_n}(-1)^\theta p\otimes f_{\sigma(1)}(x_1)\otimes\cdots\otimes f_{\sigma(n)}(x_n)\in\P(n)\otimes_{\S_n}A^{\otimes n}\ .
\end{equation}
Here, $\theta$ is the sign obtained by the Koszul sign rule, i.e.
\[
\theta = |p||F| + \sigma(F) + \sum_{i=1}^n|x_i|\left(\sum_{j=i+1}^n|f_{\sigma(j)}|\right)\ .
\]
Dually, let $D$ be a conilpotent $\C$-coalgebra and let $A,A'$ be two $\P$-algebras. Given an $\infty_\alpha$-morphism
\[
\Psi:\bar_\alpha A\longrightarrow\bar_\alpha A'
\]
of $\P$-algebras, we define a morphism of cocommutative coalgebras over graded vector spaces
\[
\hom^\alpha_r(1,\Psi):\bar_\iota\hom^\alpha(D,A)\longrightarrow\bar_\iota\hom^\alpha(D,A')
\]
by giving $\hom^\alpha_r(1,\Psi)(\mu_n^\vee\otimes F)$ as the following composite.
\begin{center}
	\begin{tikzpicture}
		\node (a) at (0,3){$D$};
		\node (b) at (2.5,3){$\C(D)$};
		\node (c) at (0,0){$A$};
		\node (d) at (2.5,0){$\C(A)$};
		\node (e) at (2.5,1.5){$\C(n)\otimes_{\S_n}D^{\otimes n}$};
		
		\draw[->] (a) -- node[above]{$\Delta_D$} (b);
		\draw[->] (b) -- node[right]{$\proj_n$} (e);
		\draw[->] (e) -- node[right]{$F$} (d);
		\draw[->] (d) -- node[above]{$\Psi$} (c);

		\draw[dashed,->] (a) -- (c);
	\end{tikzpicture}
\end{center}
Notice that here we crucially used the fact that we are working over a field of characteristic $0$ in order to use coinvariants instead of invariants.

\medskip

Using the notation
\[
\Delta_D^n\coloneqq\proj_n\Delta_D\ ,
\]
we can write
\[
\hom^\alpha_\ell(\Phi,1)(\mu_n^\vee\otimes F) = \gamma_AF\phi_n
\]
and
\[
\hom^\alpha_r(1,\Psi)(\mu_n^\vee\otimes F) = \Psi F\Delta_D^n\ .
\]

\subsection{Extension of the bifunctor to infinity-morphisms}

The main result for the maps we just constructed is the following one.

\begin{theorem} \label{thm:twoBifunctors}
	Let the various objects be as before.
	\begin{itemize}
		\item[a)] The map $\hom^\alpha_\ell(\Phi,1)$ is an $\infty$-morphism of $\L_\infty$-algebras.
		\item[b)] The map $\hom^\alpha_r(1,\Psi)$ is an $\infty$-morphism of $\L_\infty$-algebras.
	\end{itemize}
\end{theorem}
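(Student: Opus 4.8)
The plan is to verify directly that each of the two maps satisfies the defining relation of an $\infty$-morphism between the $\L_\infty$-algebras on the convolution complexes, where both source and target carry the structure induced by $\M_\alpha$ via the universal twisting morphism $\iota:\com^\vee\to\L_\infty$. Since both $\hom^\alpha_\ell(\Phi,1)$ and $\hom^\alpha_r(1,\Psi)$ are by construction morphisms of the underlying cofree cocommutative coalgebras $\bar_\iota\hom^\alpha(-,-)$ over graded vector spaces, the only thing left to prove in each case is that they \emph{commute with the codifferentials}. By the cofreeness of the bar construction, such a morphism is determined by its corestriction to the cogenerators, and commuting with the codifferentials is equivalent to a single equation obtained by projecting onto the cogenerators $\hom^\alpha(-,-)$. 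I would carry out the two parts separately but in parallel, since they are formally dual.

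For part a), I would start from the two compact formulas $\hom^\alpha_\ell(\Phi,1)(\mu_n^\vee\otimes F)=\gamma_A F\phi_n$, where $\phi_n=\proj_n\Phi$ is the $n$-th component of the $\infty_\alpha$-morphism $\Phi:\Omega_\alpha D'\to\Omega_\alpha D$. The key input is that $\Phi$ is a \emph{strict} morphism of $\P$-algebras between the cobar constructions, so it commutes with the cobar differentials $d_{\Omega_\alpha D'}$ and $d_{\Omega_\alpha D}$; unwinding this compatibility into the components $\phi_n$ produces exactly the quadratic relations among the $\phi_n$ that I will need. The plan is to expand both sides of the $\infty$-morphism relation, writing the left-hand side (the sum of $\psi_{n_1}\circ_j\ell_{n_2}$ terms) using the bracket $\ell_{n_2}$ on $\hom^\alpha(D,A)$ induced by $\alpha$, and the right-hand side (the sum of $\ell_k\circ(\psi_{i_1},\dots,\psi_{i_k})$ terms) using the bracket on $\hom^\alpha(D',A)$, and then to show term-by-term that the two expansions coincide once one substitutes $\gamma_A F\phi_n$ for the components. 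The terms involving the internal differential and the twisting-morphism part $\alpha$ of $d_{\Omega_\alpha D}$ will match the $\ell_2$-contributions from $\gamma_A$, while the terms coming from $d_D$ and from the compatibility of $\Phi$ with the two cobar differentials will account for the remaining brackets. Part b) is entirely dual: here I use $\hom^\alpha_r(1,\Psi)(\mu_n^\vee\otimes F)=\Psi F\Delta_D^n$, and the crucial input is that $\Psi:\bar_\alpha A\to\bar_\alpha A'$ is a strict morphism of $\C$-coalgebras, hence commutes with the bar codifferentials; expanding this compatibility into the components $\psi_n$ yields the relations needed to match the two sides of the $\infty$-morphism equation, now with the coproduct $\Delta_D$ playing the role that $\gamma_A$ played before.

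The main obstacle I anticipate is purely bookkeeping: getting all the Koszul signs to match. The action of $F$ on coinvariants in \eqref{eq:actionOfF} already carries the nontrivial sign $\theta$, and when one composes with the brackets $\ell_{n_2}$ — which themselves are built from $\alpha$ and the operadic composition of $\P$ — the signs from permuting the $f_i$'s past one another, past the elements $x_j$, and past the differentials must be tracked carefully against the signs implicit in the shuffle notation $\sh$ and in the Koszul sign convention for $f_1\otimes\cdots\otimes f_n$. The conceptual content (commutation with codifferentials, reduced to corestriction onto cogenerators) is clean, but the verification that the combinatorial coefficients and signs on the two sides agree is where the real work lies. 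To keep this manageable I would organize the computation by the type of term in the cobar (resp. bar) differential — internal differential, $\alpha$-part, and structure-map part — and check each type against the corresponding summand in the $\infty$-morphism relation, invoking the strictness of $\Phi$ (resp. $\Psi$) precisely to cancel the cross-terms.
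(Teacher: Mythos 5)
Your proposal follows essentially the same route as the paper's proof: reduce by cofreeness to the corestriction onto the cogenerators, verify the resulting single projected equation (which is exactly the Maurer--Cartan equation $\partial(\varphi)+\star_\iota(\varphi)=0$ relative to $\iota$) on the components $\gamma_A F\phi_n$, resp. $\Psi F\Delta_D^n$, and feed in the strictness of $\Phi$ (resp. $\Psi$), i.e. its compatibility with the cobar (resp. bar) differentials, which restricted to (co)generators is precisely the Maurer--Cartan equation for $\Phi$ (resp. $\Psi$) relative to $\alpha$. The organization by term type and the identification of Koszul-sign bookkeeping as the main labor also match the paper's computation, including treating part (b) as the dual case.
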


\begin{remark}
	The point (b) of \cref{thm:twoBifunctors} was already proven in \cite[Prop. 4.4]{rn17} in a slightly different setting, namely for tensor products instead of hom spaces and with twisting morphisms of the form
	\[
	\alpha:\bar\Q\xrightarrow{\ \pi\ }\Q\xrightarrow{\ \Psi\ }\P\ ,
	\]
	where $\Psi$ is a morphism of operads and $\Q$ is Koszul.
\end{remark}

\begin{proof}
	We begin by proving (a). We see $\hom^\alpha(\Phi,1)$ as a linear map
	\[
	\hom^\alpha_\ell(\Phi,1):\bar_\iota\hom^\alpha(D,A)\longrightarrow\hom^\alpha(D',A')\ ,
	\]
	so that our goal becomes to prove that it satisfies the Maurer--Cartan equation
	\begin{equation}\label{eq:MCforHom(Phi,1)}
		\partial(\hom^\alpha_\ell(\Phi,1)) + \star_\iota(\hom^\alpha_\ell(\Phi,1)) = 0\ .
	\end{equation}
	This goes roughly as follows. We have
	\[
	\partial(\hom^\alpha_\ell(\Phi,1)) = d_{\hom(D',A)}\hom^\alpha_\ell(\Phi,1) - \hom^\alpha_\ell(\Phi,1)d_{\bar_\iota\hom^\alpha(D,A)}\ .
	\]
	With some algebraic and combinatorial manipulations, as well as using the fact that $\Phi$ is an $\infty_\alpha$-morphism, we prove that
	\[
	\hom^\alpha_\ell(\Phi,1)d_{\bar_\iota\hom^\alpha(D,A)} = d_A\hom^\alpha_\ell(\Phi,1) + (\bigstar)\ ,
	\]
	where $(\bigstar)$ is a term involving $\star_\alpha(\Phi)$. Then, again through algebraic manipulations, $(\bigstar)$ is shown to be equal to $\star_\iota(\hom^\alpha_\ell(\Phi,1))$, completing the proof.
	
	\medskip
	
	We do all of this by checking it holds on elements of $\bar_\iota(\hom^\alpha(D,A))$. Let
	\[
	f_1,\ldots,f_n\in\hom(D,A)
	\]
	and again denote $F\coloneqq f_1\otimes\cdots\otimes f_n$. If $S=\{i_1<\ldots<i_k\}\subseteq[n]$, we use the notation $F^S\coloneqq f^{i_1}\otimes\cdots\otimes f^{i_k}$. We have
	\begin{align*}
		d_{\bar_\iota\hom^\alpha(D,A)}(\mu_n^\vee\otimes F) =&\ d_1(\mu_n^\vee\otimes F) + d_2(\mu_n^\vee\otimes F)\\
		=&\ \mu_n^\vee\otimes\left(d_{A^{\otimes n}}F - (-1)^{|F|}Fd_{D^{\otimes n}}\right) +\\
		&+ \sum_{S_1\sqcup S_2 = [n]}(-1)^\epsilon\mu_{n_1}^\vee\otimes\left(\ell_{n_2}(F^{S_2})\otimes F^{S_1}\right)\ ,
	\end{align*}
	where $n_1 = |S_1|+1$, and $\epsilon = \sh(S_2,S_1)$ is the sign appearing by Koszul sign rule, as defined in \cref{subsect:KoszulSignRule}. We also have
	\[
	\ell_{n_2}(F^{S_2}) = \gamma_A(\alpha\circ 1_A)F^{S_2}\Delta_D^{n_2}\ ,
	\]
	where $n_2 = |S_2|$. Now we compute
	\begin{align*}
		\hom^\alpha_\ell(\Phi,1)&(d_{\bar_\iota\hom^\alpha(D,A)}(\mu_n^\vee\otimes F)) =\\
		=&\ \hom^\alpha_\ell(\Phi,1)\left(\mu_n^\vee\otimes\left(d_{A^{\otimes n}}F - (-1)^{|F|}Fd_{D^{\otimes n}}\right)\right) +\\
		&+ \hom^\alpha_\ell(\Phi,1)\left(\sum_{S_1\sqcup S_2 = [n]}(-1)^\epsilon\mu_{n_1}^\vee\otimes\left(\ell_{n_2}(F^{S_2})\otimes F^{S_1}\right)\right)\\
		=&\ \gamma_A(1_\P\circ'd_A)F\phi_n - (-1)^{|F|}\gamma_AF(1_\P\circ'd_D)\phi_n +\\
		&+ \sum_{S_1\sqcup S_2 = [n]}(-1)^\epsilon\gamma_A\left(\ell_{n_2}(F^{S_2})\otimes F^{S_1}\right)\phi_{n_1}\ .
	\end{align*}
	The first term simply gives
	\begin{align*}
	\gamma_A(1_\P\circ'd_A)F\phi_n =&\ d_A\gamma_AF\phi_n - \gamma_A(d_\P\circ1_A)F\phi_n\\
	=&\ d_A\hom^\alpha_\ell(\Phi,1)(\mu_n^\vee\otimes F) - \gamma_A(d_\P\circ1_A)F\phi_n\ .
	\end{align*}
	For the third term, we have
	\begin{align*}
		\sum_{S_1\sqcup S_2 = [n]}&(-1)^\epsilon\gamma_A\left(\ell_{n_2}(F^{S_2})\otimes F^{S_1}\right)\phi_{n_1} =\\
		=&\ \sum_{S_1\sqcup S_2 = [n]}(-1)^\epsilon\gamma_A\left(\left(\gamma_A(\alpha\circ 1_A)F^{S_2}\Delta_D^{n_2}\right)\otimes F^{S_1}\right)\phi_{n_1}\\
		=&\ \gamma_A(1_\P\circ_{(1)}\gamma_A)(1_\P\circ_{(1)}(\alpha\circ1_A))\sum_{S_1\sqcup S_2 = [n]}(-1)^\epsilon\left(\left(F^{S_2}\Delta_D^{n_2}\right)\otimes F^{S_1}\right)\phi_{n_1}\\
		=&\ \gamma_A(\gamma_\P\circ1_A)(1_\P\circ_{(1)}(\alpha\circ1_A))\sum_{S_1\sqcup S_2 = [n]}(-1)^\epsilon\left(\left(F^{S_2}\Delta_D^{n_2}\right)\otimes F^{S_1}\right)\phi_{n_1}\ .
	\end{align*}
	Now we study the action of
	\[
	\sum_{S_1\sqcup S_2 = [n]}(-1)^\epsilon\left(\left(F^{S_2}\Delta_D^{n_2}\right)\otimes F^{S_1}\right)
	\]
	on a fixed element $p\otimes x_1\otimes\cdots\otimes x_{n_1}\in\P(n_1)\otimes_{\S_{n_1}}D^{\otimes n_1}$. Using Sweedler-like notation, we write
	\[
	\Delta_D^{n_2}(x_j) = c_j\otimes y_1^j\otimes\cdots\otimes y_{n_2}^j\in\C(n_2)\otimes_{\S_{n_2}}D^{\otimes n_2}\ .
	\] 
	
	We fix $n_1,n_2$ such that $n_1+n_2 = n+1$ and we restrict to partitions $S_1\sqcup S_2 = [n]$ such that $|S_1|=n_1-1$ and $|S_2|=n_2$. Then we have	
	\begin{align*}
		&\sum_{\substack{S_1\sqcup S_2 = [n]\\|S_1|=n_1-1,|S_2|=n_2}}(-1)^\epsilon\left(\left(F^{S_2}\Delta_D^{n_2}\right)\otimes F^{S_1}\right)(p\otimes x_1\otimes\cdots\otimes x_{n_1}) =\\
		&\\
		&\ = \sum_{j=1}^{n_1}\sum_{\substack{S_1\sqcup S_2 = [n]\\|S_1|=n_1-1,|S_2|=n_2\\\tau_1\in\S_{n_1-1}}}(-1)^{\epsilon + \epsilon'}p\otimes f_{\tau_1(i_1)}(x_1)\otimes\cdots\otimes f_{\tau_1(i_{j-1})}(x_{j-1})\otimes\\
		&\qquad\qquad\qquad\qquad\qquad\otimes(F^{S_2}\Delta_D^{n_2})(x_j)\otimes f_{\tau_1(i_{j+1})}(x_{j+1})\otimes\cdots\otimes f_{\tau_1(i_{n_1})}(x_{n_1})\\
		&\\
		&\ = \sum_{j=1}^{n_1}\sum_{\substack{S_1\sqcup S_2 = [n]\\|S_1|=n_1-1,|S_2|=n_2\\\tau_1\in\S_{n_1-1}}}(-1)^{\epsilon + \epsilon'+\epsilon''}(p\otimes_jc_j)\otimes f_{\tau_1(i_1)}(x_1)\otimes\cdots\otimes f_{\tau_1(i_{j-1})}(x_{j-1})\otimes\\
		&\qquad\qquad\qquad\qquad\qquad\otimes\sum_{\tau_2\in\S_{n_2}}f_{\tau_2(k_1)}(y_1^j)\otimes\cdots\otimes f_{\tau_2(k_{n_2})}(y_{n_2}^j)\otimes\\
		&\qquad\qquad\qquad\qquad\qquad\otimes f_{\tau_1(i_{j+1})}(x_{j+1})\otimes\cdots\otimes f_{\tau_1(i_{n_1})}(x_{n_1})\ ,		
	\end{align*}
	where $S_1 = \{i_1<i_2<\ldots<i_{j-1}<i_{j+1}<\ldots<i_{n_1}\}$ and $S_2 = \{k_1<\ldots<k_{n_2}\}$. The signs $\epsilon'$ and $\epsilon''$ are given by the Koszul sign rule, explicitly
	\[
	\epsilon' = |p||F|+\tau_1(F^{S_1})+\sum_{\substack{1\le a\le n_1\\a\neq j}}|f_{\tau_1(i_a)}|\sum_{b=1}^{i_a-1}|x_b| + |F^{S_2}|\sum_{c=1}^{j-1}(|f_{\tau_1(i_c)}|+|x_c|)\ ,
	\]
	and
	\[
	\epsilon'' = |c_j||F^{S_2}| + \tau_2(F^{S_2}) + \sum_{a=1}^{n_2}|f_{\tau_2(k_a)}|\sum_{b=1}^{k_a-1}|y_b^j| + |c_j|\sum_{c=1}^{j-1}(|f_{\tau_1(i_c)}|+|x_c|)\ .
	\]
	Fixing $j$, we have an obvious bijection
	\[
	\S_n\longrightarrow\{S_1\sqcup S_2 = [n]\mid|S_1|=n_1-1,|S_2|=n_2\}\times\S_{n_1-1}\times\S_{n_2}\ .
	\]
	Applying this, we obtain that the above equals
	\begin{align*}
		&\sum_{j=1}^{n_1}\sum_{\tau\in\S_n}(-1)^{\theta+|c_j|\sum_{c=1}^{j-1}|x_c|}(p\otimes_jc_j)\otimes f_{\tau(1)}(x_1)\otimes\cdots\otimes f_{\tau(j-1)}(x_{j-1})\otimes\\
		&\qquad\qquad\qquad\otimes f_{\tau(j)}(y_1^j)\otimes\cdots\otimes f_{\tau(j+n_2)}(y_{n_2}^j)\otimes\\
		&\qquad\qquad\qquad\otimes f_{\tau(j+n_2+1)}(x_{j+1})\otimes\cdots\otimes f_{\tau(n)}(x_{n_1})\\
		&\qquad = F(1_\P\circ'\Delta_D^{n_2})(p\otimes x_1\otimes\cdots\otimes x_{n_1})\ ,
	\end{align*}
	where $\theta$ is the sign of \cref{eq:actionOfF} and was recovered by noticing that
	\[
	\sh(S_2,S_1) + \tau_1(F^{S_1}) + \tau_2(F^{S_2}) + |F^{S_2}|\sum_{c=1}^{j-1}|x_c| = \tau(F)
	\]
	under the correspondence above, together with some careful sign manipulations.	It follows that
	\begin{align*}
		\sum_{S_1\sqcup S_2 = [n]}(-1)^\epsilon\gamma_A&\left(\ell_{n_2}(F^{S_2})\otimes F^{S_1}\right)\phi_{n_1} =\\
		=&\ \gamma_A(\gamma_\P\circ1_A)(1_\P\circ_{(1)}(\alpha\circ1_A))\sum_{S_1\sqcup S_2 = [n]}F(1\circ_{(1)}\Delta_D^{n_2})\phi_{n_1}\\
		=&\ (-1)^{|F|}\gamma_AF\proj_n(\gamma_\P\circ_{(1)}1_A)(1_\P\circ_{(1)}(\alpha\circ1_A))(1_\P\circ'\Delta_D)\Phi\\
		=&\ -(-1)^{|F|}\gamma_AF\proj_nd^{\Omega_\alpha D}_2\Phi\ .
	\end{align*}
	Therefore, we have
	\begin{align*}
		\hom^\alpha_\ell(\Phi,1)&(d_{\bar_\iota\hom^\alpha(D,A)}(\mu_n^\vee\otimes F)) =\\
		=&\ d_A\hom^\alpha_\ell(\Phi,1)(\mu_n^\vee\otimes F) - \gamma_A(d_\P\circ1_A)F\phi_n\\
		&\qquad - (-1)^{|F|}\gamma_AF(1_\P\circ'd_D)\phi_n - (-1)^{|F|}\gamma_AFd_2^{\Omega_\alpha D}\phi_n\\	
		=&\ d_A\hom^\alpha_\ell(\Phi,1)(\mu_n^\vee\otimes F) - (-1)^{|F|}\gamma_AF(d_\P\circ1_D)\phi_n\\
		&\qquad - (-1)^{|F|}\gamma_AF(1_\P\circ'd_D)\phi_n - (-1)^{|F|}\gamma_AFd_2^{\Omega_\alpha D}\phi_n\\
		=&\ d_A\hom^\alpha_\ell(\Phi,1)(\mu_n^\vee\otimes F) - (-1)^{|F|}\gamma_AF\proj_nd_{\Omega_\alpha D}\Phi\\
		=&\ d_A\hom^\alpha_\ell(\Phi,1)(\mu_n^\vee\otimes F) - (-1)^{|F|}\gamma_AF\proj_n\left(\Phi d_{D'} - \star_\alpha(\Phi)\right)\\
		=& \left(d_{\hom(D',A)}\hom^\alpha_\ell(\Phi,1)\right)(\mu_n^\vee\otimes F) + (-1)^{|F|}\gamma_AF\proj_n\star_\alpha(\Phi)\ ,
	\end{align*}
	where in the fourth equality we used the Maurer--Cartan equation for $\Phi$. Written otherwise, this gives
	\[
	0 = \partial(\hom^\alpha_\ell(\Phi,1))(\mu_n^\vee\otimes F) + (-1)^{|F|}\gamma_AF\proj_n\star_\alpha(\Phi)\ .
	\]
	To conclude, we only need to prove that
	\[
	(-1)^{|F|}\gamma_AF\proj_n\star_\alpha(\Phi) = \star_\iota(\hom^\alpha_\ell(\Phi,1))(\mu_n^\vee\otimes F)\ .
	\]
	Recall that $\star_\alpha(\Phi)$ is given by the composition
	\[
	\star_\alpha(\Phi) = \gamma_{\Omega_\alpha D}(\alpha\circ\Phi)\Delta_{D'} = (\gamma_\P\circ1_D)(\alpha\circ\Phi)\Delta_{D'}\ ,
	\]
	so that
	\[
	\gamma_AF\proj_n\star_\alpha(\Phi) = \gamma_A(\gamma_\P\circ1_D)F\proj_n(\alpha\circ\Phi)\Delta_{D'}\ .
	\]
	Here, we noticed that $(\gamma_\P\circ1_D)$ and $F\proj_n$ commute. Similarly, we have
	\begin{align*}
		\star_\iota&(\hom^\alpha_\ell(\Phi,1))(\mu_n^\vee\otimes F) =\\
		=&\ \gamma_{\hom^\alpha(D',A)}(\iota\circ\hom^\alpha_\ell(\Phi,1))\Delta_{\bar_\iota\hom^\alpha(D',A)}(\mu_n^\vee\otimes F)\\
		=&\ \medmath{\gamma_{\hom^\alpha(D',A)}(\iota\circ\hom^\alpha_\ell(\Phi,1))\left(\sum_{\substack{S_1\sqcup\ldots\sqcup S_k = [n]}}(-1)^\kappa\mu_k^\vee\otimes(\mu_{n_1}^\vee\otimes F^{S_1})\otimes\cdots\otimes(\mu_{n_k}^\vee\otimes F^{S_k})\right)}\\
		=&\ \gamma_{\hom^\alpha(D',A)}\left(\sum_{\substack{S_1\sqcup\ldots\sqcup S_k = [n]}}(-1)^\kappa\mu_k^\vee\otimes(\gamma_AF^{S_1}\phi_{n_1})\otimes\cdots\otimes(\gamma_AF^{S_k}\phi_{n_k})\right)\\
		=&\ \sum_{\substack{S_1\sqcup\ldots\sqcup S_k = [n]}}(-1)^\kappa\gamma_A\big(\alpha\otimes(\gamma_AF^{S_1}\phi_{n_1})\otimes\cdots\otimes(\gamma_AF^{S_k}\phi_{n_k})\big)\Delta_{D'}^k\\
		=&\ \gamma_A(\alpha\circ1_A)(1_\C\circ\gamma_A)F\proj_n(1_\C\circ\Phi)\Delta_{D'}\\
		=&\ (-1)^{|F|}\gamma_A(\gamma_\P\circ1_A)F\proj_n(\alpha\circ\Phi)\Delta_{D'}\ ,
	\end{align*}
	where $\kappa = \sh(F^{S_1},\ldots,F^{S_k})$ comes from the Koszul sign rule. In the third line, we used the fact that $\Delta_{\bar_\iota\hom^\alpha(D,A)} = \Delta_{\com^\vee}\circ1_{\hom^\alpha(D,A)}$ on $\bar_\iota\hom^\alpha(D,A)=\com^\vee\circ\hom^\alpha(D,A)$ (as algebraic operads). In the fourth line, one must impose $|S_i|\ge2$ for every $i$, and $n_i\coloneqq|S_k|$. In the sixth line, we use an argument similar to the one we did above: fixing the cardinalities $n_i$ of the $S_i$, we have an isomorphism
	\[
	\S_n\longrightarrow\{S_1\sqcup\ldots\sqcup S_k\mid |S_i|=n_i\}\times\S_{n_1}\times\cdots\times\S_{n_k}\ .
	\]
	In the last line, the sign appears by Koszul sign rule because we switch $F$ and $\alpha$. It follows that
	\[
	\partial(\hom^\alpha_\ell(\Phi,1)) + \star_\iota(\hom^\alpha_\ell(\Phi,1)) = 0\ ,
	\]
	i.e. that $\hom^\alpha_\ell(\Phi,1)$ is an $\infty$-morphism of $\L_\infty$-algebras, as desired. This proves (a).
	
	\medskip
	
	The proof of (b) is completely analogous, but we sketch it here for completeness. As before, we start by computing
	\begin{align*}
		\hom^\alpha_r(1,\Psi)&(d_{\bar_\iota\hom^\alpha(D,A)}(\mu_n^\vee\otimes F)) =\\
		=&\ \Psi d_{A^\otimes n}F\Delta_D^n - (-1)^{|F|}\hom^\alpha_r(1,\Psi)(\mu_n^\vee\otimes F)d_D +\\
		&\ + \sum_{S_1\sqcup S_2 = [n]}(-1)^\epsilon\Psi\left(\ell_{n_2}(F^{S_2})\otimes F^{S_1}\right)\Delta_D^{n_1}\ .
	\end{align*}
	By the same kind of manipulations we did above, we get
	\begin{align*}
		\Psi d_{A^\otimes n}F\Delta_D^n + \sum_{S_1\sqcup S_2 = [n]}(-1)^\epsilon\Psi&\left(\ell_{n_2}(F^{S_2})\otimes F^{S_1}\right)\Delta_D^{n_1} =\\
		=&\ \Psi d_{\bar_\alpha A}F\Delta_D^n\\
		=&\ d_{A'}\Psi F\Delta_D^n + \star_\alpha(\Psi)F\Delta_D^n\ .
	\end{align*}
	It follows that
	\begin{align*}
	\hom^\alpha_r(1,\Psi)(d_{\bar_\iota\hom^\alpha(D,A)}&(\mu_n^\vee\otimes F)) =\\
	=&\ d_{\hom(A',D)}\hom^\alpha_r(1,\Psi)(\mu_n^\vee\otimes F) + \star_\alpha(\Psi)F\Delta_D^n\ ,
	\end{align*}
	and showing that
	\[
	\star_\alpha(\Psi)F\Delta_D^n = \star_\iota(\hom^\alpha_r(1,\Psi))(\mu_n^\vee\otimes F)
	\]
	gives the Maurer--Cartan equation
	\[
	\partial(\hom^\alpha_r(1,\Psi)) + \star_\iota(\hom^\alpha_r(1,\Psi)) = 0\ ,
	\]
	completing the proof.
\end{proof}

\begin{lemma}
	Let $\Phi_1,\Phi_2$ be composable $\infty_\alpha$-morphisms of $\C$-coalgebras, and let $\Psi_1,\Psi_2$ be composable $\infty_\alpha$-morphisms of $\P$-algebras. Then
	\begin{itemize}
		\item[a)] $\hom^\alpha_\ell(\Phi_1\Phi_2,1) =\hom^\alpha_\ell(\Phi_2,1)\hom^\alpha_\ell(\Phi_1,1)$, and
		\item[b)] $\hom^\alpha_r(1,\Psi_1\Psi_2) = \hom^\alpha_r(1,\Psi_1)\hom^\alpha_r(1,\Psi_2)$.
	\end{itemize}
\end{lemma}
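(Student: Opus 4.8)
The plan is to use that, for each of the two statements, both sides are morphisms of the \emph{cofree} cocommutative coalgebras $\bar_\iota\hom^\alpha(-,-)$ sharing the same source and target, so that, by cofreeness, it suffices to check the equalities after post-composing with the canonical projection onto the cogenerators $\bar_\iota\hom^\alpha(E,A)\to\hom^\alpha(E,A)$. This is legitimate because, by \cref{thm:twoBifunctors}, the maps $\hom^\alpha_\ell(-,1)$ and $\hom^\alpha_r(1,-)$ produce genuine $\infty$-morphisms, and composition of $\infty$-morphisms of $\L_\infty$-algebras is by definition the composition of the underlying morphisms of $\com^\vee$-coalgebras; hence the two sides of each identity are directly comparable as morphisms of graded coalgebras.

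I will treat (a) in detail, statement (b) being dual. Write $\Phi_2\colon\Omega_\alpha D_1\to\Omega_\alpha D_2$ and $\Phi_1\colon\Omega_\alpha D_2\to\Omega_\alpha D_3$, so that $\hom^\alpha_\ell(\Phi_2,1)\hom^\alpha_\ell(\Phi_1,1)$ and $\hom^\alpha_\ell(\Phi_1\Phi_2,1)$ both run from $\bar_\iota\hom^\alpha(D_3,A)$ to $\bar_\iota\hom^\alpha(D_1,A)$. Evaluating the corestriction of $\hom^\alpha_\ell(\Phi_1\Phi_2,1)$ on a cogenerator $\mu_n^\vee\otimes F$ gives $\gamma_AF\proj_n(\Phi_1\Phi_2)$ by definition. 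For the composite, one first expands $\hom^\alpha_\ell(\Phi_1,1)(\mu_n^\vee\otimes F)$ using the cofree cocommutative comultiplication computed in the proof of \cref{thm:twoBifunctors}, namely a signed sum over partitions $S_1\sqcup\cdots\sqcup S_k=[n]$ with sign $\kappa=\sh(F^{S_1},\ldots,F^{S_k})$; applying afterwards the corestriction of $\hom^\alpha_\ell(\Phi_2,1)$ yields
\[
\sum_{S_1\sqcup\cdots\sqcup S_k=[n]}(-1)^\kappa\,\gamma_A\big((\gamma_AF^{S_1}\proj_{n_1}\Phi_1)\otimes\cdots\otimes(\gamma_AF^{S_k}\proj_{n_k}\Phi_1)\big)\proj_k\Phi_2\ ,
\]
where $n_i=|S_i|$.

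It then remains to identify this partition sum with $\gamma_AF\proj_n(\Phi_1\Phi_2)$, which is a purely operadic statement. Since $\Phi_1$ and $\Phi_2$ are morphisms of \emph{free} $\P$-algebras, each is determined by its restriction to the generators, and the composite acts on a generator by first sending $D_1\to\P(D_2)$ via $\Phi_2$ and then applying $\Phi_1|_{D_2}$ to each generator and composing in $\P$ through $\gamma_\P$. Projecting onto the arity-$n$ component $\P(n)\otimes_{\S_n}D_3^{\otimes n}$ therefore expands $\proj_n(\Phi_1\Phi_2)$ into $\gamma_\P$ applied to $\proj_k\Phi_2$ and the $\proj_{n_i}\Phi_1$; using the bijection $\S_n\cong\{S_1\sqcup\cdots\sqcup S_k=[n]\}\times\S_{n_1}\times\cdots\times\S_{n_k}$ already employed in the proof of \cref{thm:twoBifunctors}, together with the associativity of the structure map $\gamma_A$ of the $\hom(\C,\P)$-algebra $\hom(D,A)$, this reproduces precisely the displayed partition sum.

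As always in this paper, the genuine difficulty is the bookkeeping of the Koszul signs: one has to verify that the shuffle sign $\kappa=\sh(F^{S_1},\ldots,F^{S_k})$ created by the cofree comultiplication on the composite side matches, under the above bijection of $\S_n$, the sign produced by permuting the inputs of $F$ through the operadic composite $\Phi_1\Phi_2$. This is the very same sign-matching identity used in the proof of \cref{thm:twoBifunctors}. Statement (b) is obtained by the dual argument: one replaces free $\P$-algebras, $\gamma_\P$ and restriction to generators by cofree $\C$-coalgebras, the decomposition map $\Delta_D$ and corestriction to cogenerators, and reverses the variances accordingly, the corestriction of $\hom^\alpha_r(1,\Psi_1)\hom^\alpha_r(1,\Psi_2)$ on $\mu_n^\vee\otimes F$ matching $(\Psi_1\Psi_2)F\Delta_D^n$ after the analogous sign analysis.
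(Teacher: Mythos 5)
Your proof is correct and takes essentially the same route as the paper's: both reduce the identity to a check on cogenerators of the cofree coalgebra, expand the full map as the signed sum over partitions with the shuffle sign $\kappa$, and match the two sides via the bijection $\S_n\cong\{S_1\sqcup\cdots\sqcup S_k=[n]\}\times\S_{n_1}\times\cdots\times\S_{n_k}$ together with the associativity $\gamma_A(\gamma_\P\circ 1_A)=\gamma_A(1_\P\circ\gamma_A)$ (note this is associativity of the $\P$-algebra structure on $A$, not of the $\hom(\C,\P)$-algebra structure on $\hom(D,A)$ as you phrase it). The only immaterial difference is the direction of the computation: the paper expands $\hom^\alpha_\ell(\Phi_1\Phi_2,1)$ until it becomes the composite, whereas you expand the composite until it becomes $\hom^\alpha_\ell(\Phi_1\Phi_2,1)$.
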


\begin{proof}
	The full map
	\[
	\hom^\alpha_\ell(\Phi,1):\bar_\iota\hom^\alpha(D,A)\longrightarrow\bar_\iota\hom^\alpha(D',A)
	\]
	is given by
	\[
	\hom^\alpha(\Phi,1)(\mu_n^\vee\otimes F) = \sum_{S_1\sqcup\ldots\sqcup S_k = [n]}(-1)^\kappa\mu_k^\vee\otimes\left((\gamma_AF^{S_1}\phi_{n_1})\otimes\cdots\otimes(\gamma_AF^{S_k}\phi_{n_k})\right)\ ,
	\]
	where $\kappa = \sh(F^{S_1},\ldots,F^{S_k})$ comes from the Koszul sign rule, as usual. Thus, the projected map $\hom^\alpha(\Phi_1\Phi_2,1)$ is given by
	\begin{align*}
		\hom^\alpha(\Phi_1\Phi_2,1)&(\mu_n^\vee\otimes F) =\\
		=&\ \gamma_AF\proj_n\Phi_1\Phi_2\\
		=&\ \gamma_AF\sum_{n_1+\cdots+n_k = n}(\gamma_\P\circ1_D)(1_\P\otimes((\phi_1)_{n_1}\otimes\cdots\otimes(\phi_1)_{n_k}))(\phi_2)_k\\
		=&\ \gamma_A\sum_{S_1\sqcup\ldots\sqcup S_k = [n]}(-1)^\kappa\left(\gamma_AF^{S_1}(\phi_1)_{n_1}\otimes\cdots\otimes\gamma_AF^{S_k}(\phi_1)_{n_k}\right)(\phi_2)_k\\
		=&\ \hom^\alpha(\Phi_2,1)\hom^\alpha(\Phi_1,1)(\mu_n^\vee\otimes F)\ .
	\end{align*}
	In the last line, $\hom^\alpha(\Phi_1,1)$ must be interpreted as the full map. Once again, in the third line we used the fact that $\gamma_A(\gamma_\P\circ1_A) = \gamma_A(1_\P\circ\gamma_A)$ as well as the bijection
	\[
	\S_n\longrightarrow\{S_1\sqcup\ldots\sqcup S_k\mid|S_i|=n_1\}\times\S_{n_1}\times\cdots\times\S_{n_k}
	\]
	that we have whenever we fix $n_1+\cdots+n_k=n$. This proves point (a). The proof of (b) is completely analogous and is left as an exercise to the reader.
\end{proof}

\begin{corollary} \label{cor:twoBifunctors}
	The bifunctor
	\[
	\hom^\alpha:(\mathsf{conil.}\ \Ccog)^{op}\times\Palg\longrightarrow\Lalg
	\]
	extends to bifunctors
	\[
	\hom^\alpha_\ell:(\infty_\alpha\text{-}\Ccog)^{op}\times\Palg\longrightarrow\Lalg
	\]
	and
	\[
	\hom^\alpha_r:(\mathsf{conil.}\ \Ccog)^{op}\times\infty_\alpha\text{-}\Palg\longrightarrow\Lalg\ .
	\]
\end{corollary}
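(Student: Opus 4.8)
The plan is to assemble the corollary from the results already established, the only genuinely new content being functoriality in the ``strict'' slot and the verification that the two constructions interchange correctly with the ``$\infty_\alpha$'' slot. First I would check that $\hom^\alpha_\ell$ and $\hom^\alpha_r$ genuinely extend $\hom^\alpha$. On objects there is nothing to do, as both send a pair $(D,A)$ to the $\L_\infty$-algebra $\hom^\alpha(D,A)$ of \cref{def:homAlpha}. On strict morphisms one unwinds the definitions: if $\phi\colon D'\to D$ is a strict morphism of $\C$-coalgebras, then $\Omega_\alpha\phi$ restricted to the generators lands in the arity-one component $\P(1)\otimes D\cong D$, so its components are $\phi_1=\phi$ and $\phi_n=0$ for $n\ge2$; substituting this into $\hom^\alpha_\ell(\Phi,1)(\mu_n^\vee\otimes F)=\gamma_AF\phi_n$ recovers exactly the strict $\L_\infty$-morphism $f\mapsto f\phi$, that is $\hom^\alpha(\phi,1)$. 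The same computation applied to $1_{\Omega_\alpha D}$, whose components are $\phi_1=\id_D$ and $\phi_n=0$ for $n\ge2$, shows that $\hom^\alpha_\ell(1,1)$ is the identity $\infty$-morphism, and dually for $\hom^\alpha_r$.

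Next I would record functoriality in each variable separately. In the $\infty_\alpha$-slot, \cref{thm:twoBifunctors} guarantees that the constructions land in genuine $\infty$-morphisms of $\L_\infty$-algebras, the preceding lemma supplies compatibility with composition (contravariantly in $\C$-coalgebras, covariantly in $\P$-algebras), and the identity computation above completes the verification that $\hom^\alpha_\ell(-,1)$ and $\hom^\alpha_r(1,-)$ are functors. In the remaining, strict slot one simply invokes the already-known strict functoriality of $\hom^\alpha$ as a bifunctor from $(\mathsf{conil.}\ \Ccog)^{op}\times\Palg$ to $\Lalg$.

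The one point that still requires an argument is the interchange law, which is precisely what glues the two one-variable functors into a bifunctor on the product category. For $\hom^\alpha_\ell$ it amounts to checking that, for a strict morphism of $\P$-algebras $g\colon A\to A'$ and an $\infty_\alpha$-morphism $\Phi$, post-composition $g_*=\hom^\alpha(1,g)$ commutes with $\hom^\alpha_\ell(\Phi,1)$. Since $g_*$ is strict, its cofree extension sends $\mu_n^\vee\otimes F$ to $\mu_n^\vee\otimes(gF)$, where $gF\coloneqq(gf_1)\otimes\cdots\otimes(gf_n)$, so the claim reduces to the identity
\[
g\,\gamma_AF\phi_n=\gamma_{A'}(gF)\phi_n\ ,
\]
which follows from $g\gamma_A=\gamma_{A'}\P(g)$ (as $g$ is a morphism of $\P$-algebras) together with the fact that $\P(g)$ carries the $F$-action of \cref{eq:actionOfF} to the $gF$-action, no new signs appearing since $g$ has degree $0$. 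Dually, for $\hom^\alpha_r$ one checks that pre-composition $\phi^*=\hom^\alpha(\phi,1)$ with a strict coalgebra morphism $\phi\colon D'\to D$ commutes with $\hom^\alpha_r(1,\Psi)$, which reduces to $\Psi F\Delta_D^n\phi=\Psi(F\phi^{\otimes n})\Delta_{D'}^n$ and follows from the intertwining relation $\Delta_D\circ\phi=(1_\C\circ\phi)\circ\Delta_{D'}$. I expect this interchange verification to be the main, though still routine, obstacle, the substantive work having already been carried out in \cref{thm:twoBifunctors}; the only subtlety is that the equalities must be read at the level of $\infty$-morphisms, where they simplify because in each case one of the two factors is strict and hence has a single nonzero component.
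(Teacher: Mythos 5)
Your proof is correct and takes essentially the same approach as the paper: the paper's own proof of this corollary just observes that, after \cref{thm:twoBifunctors} and the composition lemma, the only thing left to check is the compatibility of an $\infty_\alpha$-morphism in one slot with a strict morphism in the other, which it declares straightforward and leaves as an exercise to the reader. Your interchange verification --- reducing to $g\gamma_A=\gamma_{A'}\P(g)$ in one case and $\Delta_D\phi=(1_\C\circ\phi)\Delta_{D'}$ in the other, with no sign issues since strict morphisms have degree $0$ --- is exactly that exercise, carried out correctly, together with the (also correct) check that strict morphisms, viewed as $\infty_\alpha$-morphisms concentrated in arity one, recover the original bifunctor.
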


\begin{proof}
	The only thing left to check is the compatibility between $\infty_\alpha$-morphism in one slot and strict morphisms in the other one. This is straightforward and left as an exercise to the reader.
\end{proof}

The bifunctors we just defined are well behaved with respect to $\infty_\alpha$-quasi-iso\-morphisms.

\begin{proposition} \label{prop:qiInduceQi}
	For both versions of the bifunctor $\hom^\alpha(-,-)$ of \cref{cor:twoBifunctors}, if we fill one slot with an $\infty_\alpha$-quasi-isomorphism and the other one with a strict quasi-isomorphism, then the resulting $\infty$-morphism of $\L_\infty$-algebras is an $\infty$-quasi-isomor\-phism.
\end{proposition}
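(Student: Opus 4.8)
The plan is to reduce the statement, for both bifunctors, to the behaviour of the \emph{first component} of the induced $\infty$-morphism, and then to invoke the elementary fact that over a field the bifunctor $\hom(-,-)$ preserves quasi-isomorphisms. I would treat the two cases in parallel, since they are formally dual.

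First I would identify the linear component of the induced $\infty$-morphism in each case. By the bifunctoriality established in \cref{cor:twoBifunctors} it suffices to read off the arity-one part of the defining formulas, and since the (co)operads are reduced we have $\C(1)=\k$ and $\P(1)=\k\id$, so that $\proj_1$, $\Delta_D^1$ and $\gamma_A$ all collapse to identities. Concretely, for $\hom^\alpha_\ell(\Phi,g)$ with $\Phi\colon D'\rightsquigarrow D$ an $\infty_\alpha$-morphism of $\C$-coalgebras and $g\colon A\to A'$ a strict morphism of $\P$-algebras, evaluating $\hom^\alpha_\ell(\Phi,1)(\mu_n^\vee\otimes F)=\gamma_AF\phi_n$ at $n=1$ and composing with the postcomposition by $g$ gives
\[
(\hom^\alpha_\ell(\Phi,g))_1 = \hom(\phi_1,g)\colon \hom(D,A)\to\hom(D',A'),\qquad f\mapsto g\,f\,\phi_1,
\]
where $\phi_1\colon D'\to D$ is precisely the first component of $\Phi$. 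Dually, for $\hom^\alpha_r(h,\Psi)$ with $h\colon D'\to D$ strict and $\Psi\colon A\rightsquigarrow A'$ an $\infty_\alpha$-morphism of algebras, evaluating $\hom^\alpha_r(1,\Psi)(\mu_n^\vee\otimes F)=\Psi F\Delta_D^n$ at $n=1$ yields
\[
(\hom^\alpha_r(h,\Psi))_1 = \hom(h,\psi_1)\colon \hom(D,A)\to\hom(D',A'),\qquad f\mapsto \psi_1\,f\,h,
\]
with $\psi_1\colon A\to A'$ the first component of $\Psi$. Here I use the standard fact that the first component of a composite of $\infty$-morphisms of $\L_\infty$-algebras is the composite of the first components, which lets me factor $\hom^\alpha_\ell(\Phi,g)$ (resp. $\hom^\alpha_r(h,\Psi)$) through the strict morphism in one slot and the genuinely homotopical one in the other.

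Second I would invoke the key lemma: over a field, every quasi-isomorphism of chain complexes is a chain homotopy equivalence, and consequently the additive bifunctor $\hom(-,-)$ sends a pair of quasi-isomorphisms to a quasi-isomorphism. The homotopy-equivalence statement follows from the fact that over a field every complex splits as the direct sum of its homology (with zero differential) and a contractible complex; through these splittings any quasi-isomorphism becomes homotopic to an isomorphism, hence is itself a homotopy equivalence. Applying $\hom(-,-)$ to the homotopy inverses and the witnessing homotopies then produces a homotopy inverse for the induced map. In the situation at hand both arguments are quasi-isomorphisms by hypothesis ($\phi_1$ and $g$ in the first case, $h$ and $\psi_1$ in the second), so in each case the first component displayed above is a quasi-isomorphism, which is exactly the definition of an $\infty$-quasi-isomorphism of $\L_\infty$-algebras.

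The only point requiring genuine care is the homotopy-invariance of $\hom(-,-)$, and this is where I expect the main (though not deep) obstacle to lie: the complex $\hom(D,A)$ is in general unbounded, being a product over degrees of the spaces $\hom(D_k,A_{k+n})$, so one cannot argue via a naive K\"unneth or spectral-sequence comparison, which would demand a boundedness or finiteness hypothesis. The clean route is precisely the chain-homotopy-equivalence argument above, which is valid over any field with no finiteness assumptions: an additive functor automatically preserves contracting homotopies and inverses, so it carries homotopy equivalences to homotopy equivalences and thus to quasi-isomorphisms. Everything else is the bookkeeping of the first paragraph.
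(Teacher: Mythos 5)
Your proof is correct and takes essentially the same route as the paper: the paper likewise reduces the statement to the first components, namely $f\mapsto \psi f\phi_1$ and $f\mapsto \psi_1 f\phi$, and then concludes by upgrading the quasi-isomorphisms being plugged in to homotopy equivalences, which $\hom(-,-)$ visibly preserves. The only divergence is in how that upgrade is justified --- you use the elementary splitting of complexes over a field into homology plus a contractible summand, while the paper invokes the model-categorical Whitehead theorem (every chain complex is bifibrant, together with \cite[Lemma 4.24]{ds95}) --- and your variant is, if anything, the more self-contained, correctly avoiding any K\"unneth or boundedness issues on the unbounded complex $\hom(D,A)$.
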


\begin{proof}
	The proof follows from the following facts. First, we have
	\[
	\hom^\alpha(\Phi,\psi)_1(f) = \psi f \phi_1
	\]
	for $\Phi$ an $\infty_\alpha$-morphism of $\C$-coalgebras and $\psi$ be a morphism of $\P$-algebras. Similarly, if $\Psi$ is an $\infty_\alpha$-morphism of $\P$-algebras and $\phi$ is a morphism of $\C$-coalgebras, then
	\[
	\hom^\alpha(\phi,\Psi)_1(f) = \psi_1 f \phi\ .
	\]
	Therefore, the statement reduces to proving that the pullback, resp. pushforward by a quasi-iso\-mor\-phism is a quasi-isomorphism. One can do this for example by noticing that, since we are working over a field, every chain complex is bifibrant, then applying \cite[Lemma 4.24]{ds95} to obtain a homotopy inverse to the quasi-isomorphism, and use it to prove that the pullback and pushforward are again quasi-isomorphisms.
\end{proof}

\section{Failure to be a bifunctor} \label{sect:counterexample}

The obvious thing one would try to do at this point is to define a bifunctor
\[
\hom^\alpha:(\infty_\alpha\text{-}\Ccog)^{op}\times\infty_\alpha\text{-}\Palg\longrightarrow\infty\text{-}\Lalg
\]
which restricts to the functors defined in the last subsection if we take a strict morphism in one of the two slots. Unfortunately this is not possible, as we will prove in this section.

\subsection{Introductory remarks}

We will work over a field of characteristic $0$ and in the non-symmetric setting (see \cref{subsect:nsCase}). If there were such a bifunctor, then we would necessarily have
\[
\hom^\alpha_\ell(\Phi,1)\hom^\alpha_r(1,\Psi) = \hom^\alpha(\Phi,\Psi) = \hom^\alpha_r(1,\Psi)\hom^\alpha_\ell(\Phi,1)
\]
for any couple of $\infty_\alpha$-morphisms. We will give an explicit example where this is not the case. For reference, notice that the two composites are given by the diagrams
\begin{center}
	\begin{tikzpicture}
		\node (a) at (.5,3){$D'$};
		\node (b) at (3,3){$\P(D)$};
		\node (c) at (6,3){$\P(\C(D))$};
		\node (d) at (.5,0){$A'$};
		\node (e) at (3,0){$\P(A')$};
		\node (f) at (6,0){$\P(\C(A))$};
		\node (g) at (6,1.5){$(\P\circ\C)(n)\otimes_{\S_n}D^{\otimes n}$};
		
		\draw[->] (a) -- node[above]{$\Phi$} (b);
		\draw[->] (b) -- node[above]{$\P(\Delta_D)$} (c);
		\draw[->] (g) -- node[right]{$F$} (f);
		\draw[->] (f) -- node[above]{$\P(\Psi)$} (e);
		\draw[->] (e) -- node[above]{$\gamma_{A'}$} (d);
		\draw[dashed,->] (a) -- node[left]{$\hom^\alpha_\ell(\Phi,1)\hom^\alpha_r(1,\Psi)(\mu_n^\vee\otimes F)$} (d);
		\draw[->] (c) -- node[right]{$\mathrm{proj}_n$} (g);
	\end{tikzpicture}
\end{center}
and
\begin{center}
	\begin{tikzpicture}
		\node (a) at (.5,3){$D'$};
		\node (b) at (3,3){$\C(D')$};
		\node (c) at (6,3){$\C(\P(D))$};
		\node (d) at (.5,0){$A'$};
		\node (e) at (3,0){$\C(A)$};
		\node (f) at (6,0){$\C(\P(A))$};
		\node (g) at (6,1.5){$(\C\circ\P)(n)\otimes_{\S_n}D^{\otimes n}$};
		
		\draw[->] (a) -- node[above]{$\Delta_{D'}$} (b);
		\draw[->] (b) -- node[above]{$\C(\Phi)$} (c);
		\draw[->] (g) -- node[right]{$F$} (f);
		\draw[->] (f) -- node[above]{$\C(\gamma_A)$} (e);
		\draw[->] (e) -- node[above]{$\Psi$} (d);
		\draw[dashed,->] (a) -- node[left]{$\hom^\alpha_r(1,\Psi)\hom^\alpha_\ell(\Phi,1)(\mu_n^\vee\otimes F)$} (d);
		\draw[->] (c) -- node[right]{$\mathrm{proj}_n$} (g);
	\end{tikzpicture}
\end{center}
respectively, when applied to $\mu_n^\vee\otimes F\in\bar_\iota(\hom(D,A))$.

\medskip

We will work with \emph{non-symmetric} associative algebras and (suspended) coassociative coalgebras. Since $\as(n)\cong\k$ for each $n\ge1$, for any associative algebra $A$ we will implicitly identify $\as(n)\otimes A^{\otimes n}$ with $A^{\otimes n}$ in some places, and similarly for coassociative coalgebras.

\subsection{The families \texorpdfstring{$A^n$}{An} and \texorpdfstring{$H^n$}{Hn}}

We define $A^n$ for $n\ge1$ as the commutative algebra
\[
A^n\coloneqq\overline{\k[x,y]}
\]
seen as an associative algebra. The overline means that we take the augmentation ideal of $\k[x,y]$, i.e. that we only consider polynomials with no constant term. The degrees are $|x|=0$ and $|y|=1$ and the differential is given by $dy = x^n$. Notice that $y^2=0$. We have
\[
d(x^a) = 0\ ,\qquad d(x^ay) = x^{a+n}\ .
\]
It follows that, as a chain complex,
\[
H^n\coloneqq H_\bullet(A^n) \cong \bigoplus_{a=1}^{n-1}\k z_a\ ,
\]
where $z_a=[x^a]$ is the class of $x^a$. We have three maps
\begin{center}
	\begin{tikzpicture}
		\node (a) at (0,0){$A_n$};
		\node (b) at (2,0){$H_n$};
		
		\draw[->] (a)++(.3,.1)--node[above]{\mbox{\tiny{$p$}}}+(1.4,0);
		\draw[<-,yshift=-1mm] (a)++(.3,-.1)--node[below]{\mbox{\tiny{$i$}}}+(1.4,0);
		\draw[->] (a) to [out=-150,in=150,looseness=4] node[left]{\mbox{\tiny{$h$}}} (a);
	\end{tikzpicture}
\end{center}
given by
\begin{enumerate}
	\item $i(z_a) = x^a$.
	\item $p(x^a) = y_a$ for $a<n$ and zero on all other monomials.
	\item $h(x^a) = x^{a-n}y$ for $a\ge n$ and zero on all other monomials.
\end{enumerate}

\begin{lemma}
	The maps described above form a contraction.
\end{lemma}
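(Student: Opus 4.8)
\emph{Proof plan.} The plan is to verify directly the defining axioms of a contraction for the data $(A^n, H^n, i, p, h)$. Recall that this amounts to checking that $i$ and $p$ are chain maps, that $p i = \id_{H^n}$, that the homotopy relation
\[
\id_{A^n} - i p = d h + h d
\]
holds, and finally that the three side conditions $h^2 = 0$, $p h = 0$, and $h i = 0$ are satisfied; here $d$ denotes the differential of $A^n$, and $H^n$ carries the zero differential. Since all of the maps involved, as well as $d$, are defined explicitly on the monomial basis $\{x^a : a \geq 1\} \cup \{x^a y : a \geq 0\}$, every one of these identities reduces to a finite case-check on the two families of basis elements, the only bookkeeping being whether the exponent $a$ lies below or above the threshold $n$.

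First I would record the two basic formulas $d(x^a) = 0$ and $d(x^a y) = x^{a+n}$, noting that no Koszul sign intervenes since $|x^a| = 0$. The chain-map conditions are then immediate: $i$ lands in cycles, and $p d = 0$ because on a monomial $x^a y$ one has $p(x^{a+n}) = 0$, the exponent $a + n$ being at least $n$. The equality $p i (z_a) = p(x^a) = z_a$ for $1 \leq a \leq n - 1$ gives $p i = \id_{H^n}$.

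Next I would compute $i p$, which is the identity on $x^a$ for $a < n$ and is zero on every other monomial, and then evaluate both sides of the homotopy relation on each basis element. On $x^a$ with $a < n$ both sides vanish; on $x^a$ with $a \geq n$ the right-hand side is $d h(x^a) = d(x^{a-n} y) = x^a$; and on $x^a y$ the right-hand side is $h\, d(x^a y) = h(x^{a+n}) = x^a y$. In each case this matches $\id_{A^n} - i p$, so the homotopy identity holds.

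Finally, the three side conditions follow at once from two observations: the image of $h$ consists entirely of $y$-monomials, and $h$ itself vanishes on all $y$-monomials and on $x^a$ with $a < n$. Thus $h^2 = 0$, since $h$ feeds its output back into monomials on which it vanishes; $p h = 0$, since $p$ kills all $y$-monomials; and $h i = 0$, since $i(z_a) = x^a$ with $a \leq n - 1$, where $h$ is zero. I do not expect any genuine obstacle here: the whole argument is a routine verification on basis elements, and the only point requiring a little care is keeping the two exponent regimes $a < n$ and $a \geq n$ straight and confirming that the differential of $x^a y$ carries no sign.
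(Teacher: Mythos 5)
Your verification is correct and is exactly the computation the paper has in mind: the paper's proof simply says ``This is a straightforward computation,'' and your case-check on the monomial basis $\{x^a\}\cup\{x^ay\}$, including the homotopy relation $\id_{A^n}-ip = dh+hd$ and the side conditions $h^2=0$, $ph=0$, $hi=0$, carries that computation out in full. No gaps; the only point of care (no Koszul sign in $d(x^ay)=x^{a+n}$, since $|x^a|=0$) is one you address explicitly.
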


\begin{proof}
	This is a straightforward computation.
\end{proof}

Now we apply the Homotopy Transfer Theorem \cite[Thm. 1]{kad80}, \cite[Sect. 6.4]{ks00}, see also \cite[Sect. 9.4]{lodayvallette} for the a more modern treatment, to obtain an $\as_\infty$-algebra structure on $H^n$ and $\infty$-morphisms between the two algebras.

\begin{lemma}\label{lemma:An is formal}
	The algebra $A^n$ is formal, and
	\[
	H^n\cong\frac{\overline{\k[z]}}{(z^n)}
	\]
	as associative (and $\as_\infty$-) algebras.
\end{lemma}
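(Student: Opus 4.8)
The plan is to apply the Homotopy Transfer Theorem to the contraction of the previous lemma. This transfers the associative product of $A^n$ to an $\as_\infty$-algebra structure on $H^n$ and, simultaneously, produces an $\infty$-quasi-isomorphism relating $A^n$ and $H^n$. Formality will follow the moment we check that the transferred structure is the \emph{strict} associative algebra $\overline{\k[z]}/(z^n)$: I must identify its binary product and show that all its higher operations vanish.

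First I would compute the transferred binary operation. By the Homotopy Transfer Theorem it is the composite $p\,\mu\,(i\otimes i)$, where $\mu$ denotes the multiplication of $A^n$. On the basis $\{z_a\}$ of $H^n$ it is given by $m_2(z_a\otimes z_b)=p(x^{a+b})$, which equals $z_{a+b}$ when $a+b\le n-1$ and $0$ when $a+b\ge n$. Under the identification $z_a\leftrightarrow z^a$ this is exactly the product of $\overline{\k[z]}/(z^n)$.

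The hard part will be the vanishing of the higher transferred operations, and this is where I would concentrate the argument. Recall that $m_k$ is a signed sum indexed by planar binary trees with $k$ leaves, each summand being a composite with the leaves decorated by $i$, the $k-1$ internal vertices by $\mu$, the $k-2$ internal edges by the homotopy $h$, and the root by $p$. The maps $i$, $\mu$ and $p$ all preserve the internal degree, whereas $h$ raises it by one, since $h(x^a)=x^{a-n}y$ lands in degree $1$. Hence every summand, and therefore $m_k$ itself, has internal degree $k-2$. As $H^n$ is concentrated in degree $0$, this forces $m_k=0$ for all $k\ge 3$. Concretely, each internal edge inserts a factor of $y$ into the computation, and $p$ annihilates every monomial divisible by $y$, so no tree with an internal edge can contribute.

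Finally, having shown that only $m_2$ survives and that it is the product of $\overline{\k[z]}/(z^n)$, I would conclude that the transferred $\as_\infty$-structure on $H^n$ is in fact this strict associative algebra. The $\infty$-quasi-isomorphism furnished by the Homotopy Transfer Theorem then exhibits $A^n$ as $\infty$-quasi-isomorphic to its homology equipped with a strict associative structure, which is precisely the definition of formality. The only delicate point throughout is the degree bookkeeping in the tree formula; everything else is routine.
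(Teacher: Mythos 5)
Your proposal is correct, and it follows the paper's overall strategy: apply the Homotopy Transfer Theorem to the given contraction, identify the transferred binary operation, and show that all higher operations vanish; your computation of $m_2$ is identical to the paper's. Where you genuinely diverge is in the key vanishing step. The paper argues computationally: it observes that $h(i(z_a)i(z_b))$ is either $0$ or the monomial $x^{a+b-n}y$, and that multiplying such an element by anything in $A^n$ and then applying $h$ or $p$ always gives $0$ (both maps annihilate monomials divisible by $y$, and $y^2=0$), so no tree with more than one internal vertex contributes. You instead give a degree argument: each summand of the tree formula for $m_k$ contains exactly $k-2$ occurrences of $h$, which has degree $+1$, while $i$, $p$ and the product have degree $0$, so $m_k$ has degree $k-2$; since $H^n$ is concentrated in degree $0$, this forces $m_k=0$ for all $k\ge 3$. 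Your route is cleaner and more robust --- it shows that a transferred $\as_\infty$-structure on a homology concentrated in degree $0$ is automatically strict, independently of the particular contraction chosen --- and you even record the paper's $y$-divisibility observation as a concrete cross-check, so your argument subsumes it. What the paper's explicit computation buys in exchange is reusability: the formula for $h(i(z_a)i(z_b))$ is exactly what is needed in the very next lemma to compute the component $i_2$ of the $\infty$-quasi-isomorphism $i_\infty$, something the degree argument alone does not provide.
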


\begin{proof}
	The arity $2$ operation in $H^n$ is given by
	\[
	m_2(z_a,z_b) = p(i(z_a)i(z_b)) = p(x^{a+b}) = \begin{cases}z_{a+b}&\text{if }a+b<n,\\0&\text{otherwise}.
	\end{cases}
	\]
	Therefore, the underlying associative algebra is indeed
	\[
	H^n\cong\frac{\overline{\k[z]}}{(z^n)}
	\]
	(keeping in mind that $d=0$ on $H^n$, so that associativity is indeed satisfied). For the higher operations, we notice that
	\[
	h(i(z_a)i(z_b)) = \begin{cases}x^{a+b-n}y&\text{if }a+b\ge n,\\0&\text{otherwise}.
	\end{cases}
	\]
	it follows that if we multiply by any element of $A_n$ and then apply either $h$ or $p$, we always get $0$. It follows that all higher operations are $0$, concluding the proof.
\end{proof}

\begin{lemma}
	The $\infty$-quasi-isomorphism $i_\infty$ of $\as_\infty$-algebras extending $i$ is given by $i_1=i$,
	\[
	i_2(z^a,z^b) = \begin{cases}x^{a+b-n}y&\text{if }a+b\ge n,\\0&\text{otherwise},
	\end{cases}
	\]
	and $i_n=0$ for all $n\ge3$.
\end{lemma}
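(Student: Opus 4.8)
The plan is to read off every component $i_n$ directly from the explicit tree-level formula for the $\infty$-quasi-isomorphism produced by the Homotopy Transfer Theorem \cite[Sect. 9.4]{lodayvallette}, applied to the contraction $(i,p,h)$ of the previous lemma. In the non-symmetric setting this formula reads $i_1 = i$, while for $n \geq 2$ the map $i_n$ is a signed sum over all planar binary rooted trees $T$ with $n$ leaves of the composite obtained by labelling each leaf of $T$ by $i$, each internal vertex by the product of $A^n$, each internal edge by the homotopy $h$, and then applying one further copy of $h$ at the root. The signs are the usual ones prescribed by the Koszul sign rule and will play no role in what follows; a quick degree count confirms that this single root application of $h$ is exactly what makes $i_2$ land in degree $1$, consistent with the stated value.

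First I would treat the low-arity cases. For $n=1$ the trivial tree yields $i_1 = i$, as required. For $n=2$ there is a unique planar binary tree, with one internal vertex and no internal edge, so that
\[
i_2(z^a, z^b) = h\big(i(z^a)\,i(z^b)\big) = h(x^{a+b})\ ,
\]
which by the definition of $h$ equals $x^{a+b-n}y$ when $a+b \geq n$ and $0$ otherwise. This establishes the claimed value of $i_2$. (As a consistency check, one verifies that $i_2$ satisfies the arity-$2$ $\as_\infty$-morphism relation, since $d_{A^n}(x^{a+b-n}y) = x^{a+b}$ precisely accounts for the defect between $i\,m_2$ and the product of $A^n$.)

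It remains to show that every tree contributes $0$ as soon as $n \geq 3$, which is where the genuine work lies, although it reduces to two elementary features of $A^n$: the homotopy $h$ always takes values in the ideal generated by $y$ (indeed $h(x^c) = x^{c-n}y$), and conversely $h$ annihilates every monomial divisible by $y$, while $y^2 = 0$. For $n \geq 3$ every planar binary tree has at least one internal edge, and its root vertex receives at least one input coming from a subtree with $\geq 2$ leaves; that input passes through an internal-edge $h$ and hence lies in the $y$-ideal. If the other input to the root vertex is again a $y$-term the product vanishes by $y^2=0$; otherwise the product is a single $y$-term, which is then killed by the root $h$. The main obstacle will be to check that this dichotomy is genuinely exhaustive over all tree shapes — i.e. that no arrangement of internal edges can route a pure power of $x$ all the way to the root untouched — but this follows at once from the observation that the output of \emph{every} internal-edge $h$ lies in the $y$-ideal and that all branches ultimately feed into the single root $h$. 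Hence $i_n = 0$ for all $n \geq 3$, completing the proof.
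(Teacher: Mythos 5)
Your proof is correct and takes essentially the same approach as the paper: the paper's proof is a one-line reference to ``computations analogous to'' the formality lemma, whose key facts --- that $h(i(z_a)i(z_b))$ gives exactly the stated $i_2$, and that multiplying an $h$-output by anything in $A^n$ and then applying $h$ or $p$ yields $0$ because $h$-outputs lie in the $y$-ideal --- are precisely the tree-level observations you use. Your explicit HTT tree formula and the root-vertex organization of the vanishing argument for $n\ge3$ simply fill in the details the paper leaves implicit.
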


\begin{proof}
	This is proven with computations analogous to the ones in the proof of \cref{lemma:An is formal}.
\end{proof}

Notice that, by \cref{lemma:equalityOfInftyMorphisms}, the $\infty$-morphism $i_\infty$ is an $\infty_\kappa$-morphism of associative algebras.

\subsection{A coalgebra and an \texorpdfstring{$\infty_\kappa$}{ook}-morphism}

A structure of conilpotent dg $\as^{\antishriek}$-coalgebra, that is a shifted coassociative coalgebra, on a graded vector space $V$ is the same thing as a square zero differential $d$ on $\as(V)$ such that
\[
d(V)\subseteq V\oplus V^{\otimes 2}.
\]
Let
\[
V\coloneqq\bigoplus_{i\ge1}\k v_i
\]
with $|v_i| = i$. We define
\[
d:\as(V)\longrightarrow\as(V)
\]
of degree $-1$ by
\[
d(v_i) = \sum_{j+k = i-1}(-1)^jv_j\otimes v_k\ .
\]

\begin{lemma}
	The map $d$ squares to $0$.
\end{lemma}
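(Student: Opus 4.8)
The plan is to exploit that $d$, being a differential on the \emph{free} associative algebra $\as(V)$, is a derivation and is therefore completely determined by its restriction $d|_V$ to the generators. Consequently $d^2 = \tfrac12[d,d]$ is again a derivation (the graded commutator of two derivations is a derivation, and we are in characteristic $0$), so it is itself determined by its values on generators. Thus it suffices to prove the single identity $d^2(v_i) = 0$ for every $i \ge 1$. Note also that $d$ has no linear part, since $d(v_i) \in V^{\otimes 2}$ for all $i$; hence the content of the statement is exactly the (shifted) coassociativity carried by the quadratic term.

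First I would expand $d^2(v_i) = d(d(v_i))$ by the graded Leibniz rule. As $d$ has degree $-1$ and $|v_j| = j$, applying $d$ to a two-fold tensor gives $d(v_j \otimes v_k) = d(v_j)\otimes v_k + (-1)^{j} v_j \otimes d(v_k)$. Substituting the definition $d(v_i) = \sum_{j+k=i-1}(-1)^j v_j \otimes v_k$, and then $d(v_j) = \sum_{p+q=j-1}(-1)^p v_p\otimes v_q$ and $d(v_k) = \sum_{r+s=k-1}(-1)^r v_r\otimes v_s$, produces two families of three-fold tensors, both indexed by triples of positive integers summing to $i-2$.

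The key step — and the only place demanding care — is the sign bookkeeping. After reindexing both families by the triple $(p,q,k)$ with $p+q+k = i-2$, where $q$ denotes the middle index in each case, I expect the ``left'' family (where $d$ hits the first factor) to carry the sign $(-1)^{q+1}$, while the ``right'' family (where $d$ hits the second factor) carries the sign $(-1)^{q}$. The point is that in the second family the Koszul sign $(-1)^{j}$ produced by commuting $d$ past $v_j$ cancels the prefactor $(-1)^{j}$, so that after relabelling the surviving sign depends only on the middle index; in the first family the prefactors combine to $(-1)^{q+1}$ instead. The two contributions are therefore opposite, and they cancel term by term:
\[
d^2(v_i) = \sum_{p+q+k = i-2}\big((-1)^{q+1} + (-1)^{q}\big)\, v_p \otimes v_q \otimes v_k = 0\ .
\]

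The main obstacle is thus purely this sign verification: confirming that the alternating sign $(-1)^{j}$ hard-wired into the definition of $d$ conspires exactly with the Koszul sign arising from the graded Leibniz rule, so that the two summands appear with opposite signs. Once the identity $d^2(v_i)=0$ is established on generators, the derivation property upgrades it to $d^2 = 0$ on all of $\as(V)$, which completes the proof.
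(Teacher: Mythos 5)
Your proof is correct and is precisely the ``straightforward routine computation'' that the paper's proof alludes to without writing out: extend $d$ as a derivation, reduce to generators, and check that the sign $(-1)^{j}$ in the definition of $d$ cancels the Koszul sign from the graded Leibniz rule so that the two families of threefold tensors cancel term by term. Your sign bookkeeping (signs $(-1)^{q+1}$ and $(-1)^{q}$ indexed by the middle factor) checks out, so there is nothing to add.
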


\begin{proof}
	This is a straightforward routine computation.
\end{proof}

Thus, we have an $\as^{\antishriek}$-coalgebra $V$. Notice that, since $d(V)\subseteq V^{\otimes 2}$, the underlying chain complex $V$ of the $\as^{\antishriek}$-coalgebra has the zero differential. We define
\[
\Phi:\as(V)\longrightarrow\as(V)
\]
by
\[
\Phi(v_n) = \sum_{k\ge1}\sum_{i_1+\cdots+i_k = n}v_{i_1}\otimes\cdots\otimes v_{i_k}\ .
\]

\begin{lemma}
	The map $\Phi$ commutes with the differential, and therefore defines an $\infty_\kappa$-morphism $\Phi:V\rightsquigarrow V$.
\end{lemma}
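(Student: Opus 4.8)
The plan is to show that $\Phi$ is a chain map, i.e.\ that $d\Phi = \Phi d$ (there is no extra sign since $\Phi$ has degree $0$), from which the statement follows at once. Indeed, $\Omega_\kappa V = \as(V)$ is the free associative algebra, and $\Phi$ is to be read as the algebra morphism extending the given assignment on the generators $V$; once $\Phi$ is a chain map it is a morphism of $\as$-algebras $\Omega_\kappa V \to \Omega_\kappa V$, which is exactly an $\infty_\kappa$-morphism $V\rightsquigarrow V$ by \cref{def:inftymorphism1}.

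First I would reduce the identity $d\Phi = \Phi d$ to the generators $v_n\in V$. Since $\Phi$ is a degree-$0$ algebra morphism and $d$ is a derivation, on a monomial $v_{i_1}\otimes\cdots\otimes v_{i_k}$ both $d\Phi$ and $\Phi d$ distribute over the tensor factors with the same Koszul signs, which depend only on the degrees $|v_{i_a}| = i_a$ that $\Phi$ preserves. An induction on word length then reduces everything to checking $d\Phi(v_n) = \Phi d(v_n)$ for each $n$.

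Next I would note that both $d\Phi(v_n)$ and $\Phi d(v_n)$ are linear combinations of monomials $v_{j_1}\otimes\cdots\otimes v_{j_m}$ with $j_1+\cdots+j_m = n-1$ and every $j_s\ge1$, i.e.\ of compositions of $n-1$, and compare coefficients termwise. For $\Phi d(v_n)$, using $d(v_n) = \sum_{s+t=n-1}(-1)^s v_s\otimes v_t$ and multiplicativity of $\Phi$, the monomial $v_{j_1}\otimes\cdots\otimes v_{j_m}$ is produced exactly by the cut points $r\in\{1,\ldots,m-1\}$ that split it into the images $\Phi(v_{j_1+\cdots+j_r})$ and $\Phi(v_{j_{r+1}+\cdots+j_m})$, each contributing $(-1)^{j_1+\cdots+j_r}$, so its coefficient is $\sum_{r=1}^{m-1}(-1)^{j_1+\cdots+j_r}$. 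For $d\Phi(v_n)$, the same monomial arises from the compositions of $n$ obtained by merging an adjacent pair $j_r,j_{r+1}$ into a single generator $v_{j_r+j_{r+1}+1}$ and then splitting it via $d$; combining the derivation's Koszul sign $(-1)^{j_1+\cdots+j_{r-1}}$ with the internal sign $(-1)^{j_r}$ from the formula for $d$ yields $(-1)^{j_1+\cdots+j_r}$ as well, so the coefficient is again $\sum_{r=1}^{m-1}(-1)^{j_1+\cdots+j_r}$. Matching these gives $d\Phi(v_n) = \Phi d(v_n)$.

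The only delicate point is the sign bookkeeping: one must carefully track the Koszul sign picked up when the degree-$(-1)$ derivation $d$ moves past the leading factors $v_{j_1},\ldots,v_{j_{r-1}}$, together with the explicit $(-1)^c$ appearing in the definition of $d$, and verify that it collapses to $(-1)^{j_1+\cdots+j_r}$ on the nose, exactly matching the $(-1)^s$ coming from $d(v_n)$ on the other side. Everything else is a routine rewriting of sums over compositions, the key combinatorial input being the evident bijection between splitting one block of a composition of $n$ into two and choosing an adjacent pair of blocks of a composition of $n-1$ to merge.
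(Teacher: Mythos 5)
Your proof is correct and is essentially the paper's own argument: the paper verifies the Maurer--Cartan equation $\partial(\Phi)+\star_\kappa(\Phi)=0$ on the generators $v_n$, which --- since the quadratic part of the cobar differential $d_{\Omega_\kappa V}$ is built precisely from the coproduct encoded by $d$ --- is literally the identity $d\Phi(v_n)=\Phi(d v_n)$ that you check, and both proofs expand it into the same signed sums over compositions of $n-1$ with signs $(-1)^{j_1+\cdots+j_r}$ (the paper's $(-1)^{x_1+\cdots+x_a}$). Your preliminary reduction to generators via the $(\Phi,\Phi)$-derivation argument is the elementary counterpart of the paper's reformulation through the cobar adjunction of \cref{thm:RosettaStone}, so the two proofs differ only in packaging, not in substance.
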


\begin{proof}
	We have to show that $\Phi:V\to\Omega_\kappa V$ satisfies the Maurer--Cartan equation
	\[
	\partial(\Phi)+\star_\kappa(\Phi) = 0\ .
	\]
	We have
	\begin{align*}
		\partial(\Phi)(v_n)&\ = d_{\Omega_\kappa V}\left(\sum_{k\ge1}\sum_{i_1+\cdots+i_k = n}v_{i_1}\otimes\cdots\otimes v_{i_k}\right)\\
		=&\ \sum_{\substack{k\ge1\\i_1+\cdots+i_k = n}}\sum_{j=0}^{k-1}(-1)^{i_1+\cdots+i_j}v_{i_1}\otimes\cdots\otimes v_{i_j}\otimes\\
		&\qquad\qquad\qquad\otimes\left(\sum_{\alpha+\beta = i_{j+1}-1}(-1)^\alpha v_\alpha\otimes v_\beta\right)\otimes v_{i_{j+2}}\otimes\cdots\otimes v_{i_k}\\
		=&\ \sum_{a,b\ge1}\sum_{\substack{x_1+\cdots+x_a+\\+y_1+\cdots+y_b = n-1}}(-1)^{x_1+\cdots+x_a}v_{x_1}\otimes\cdots\otimes v_{x_a}\otimes v_{y_1}\otimes\cdots\otimes v_{y_b}
	\end{align*}
	where in the first line we used the fact that $d_V = 0$, and in the last line we substituted $a=j+1,b=k-j+1$, $x_s = i_s$ for $s\le a$, $x_a = \alpha$, $y_1 = \beta$, and $y_s = i_{j+s}$ for $s\ge2$. At the same time, we have
	\begin{align*}
		\star_\kappa(\Phi)(v_n) =&\ (\gamma_\as\circ 1)(\kappa\circ\Phi)\Delta_V(v_n)\\
		=&\ -\sum_{i+j=n-1}(-1)^i\Phi(v_i)\otimes\Phi(v_j)\\
		=&\ -\sum_{a,b\ge1}\sum_{\substack{x_1+\cdots+x_a+\\+y_1+\cdots+y_b = n-1}}(-1)^{x_1+\cdots+x_a}v_{x_1}\otimes\cdots\otimes v_{x_a}\otimes v_{y_1}\otimes\cdots\otimes v_{y_b}\ .
	\end{align*}
	Notice the sign in the second line: it comes from the signs in the definition of the differential $d_2$ in the cobar construction. This concludes the proof.
\end{proof}

\subsection{The counterexample}

We now prove what claimed at the beginning of the present section by considering $\C=\as^{\antishriek}$, $\P=\as$, the canonical twisting morphism
\[
\kappa:\as^{\antishriek}\longrightarrow\as\ ,
\]
the associative algebras $A=A^2$, $A'=H^2$, the $\as^{\antishriek}$-coalgebras $D'=D=V$, and the $\infty$-morphisms $\Psi=i_\infty$ and $\Phi$ described above. We take the linear maps $f_1,f_2,f_3:V\to H^2$ such that $f_i(v_1) = z$ for $i=1,2,3$, $f_1(v_2) = z$, and $f_2(v_2) = f_3(v_2) = 0$. Notice that $f_2$ and $f_3$ have degree $-1$, while $f_1$ decomposes as the sum of a degree $-1$ map and a degree $-2$ map.

\medskip

We start by computing how $\hom^\kappa_\ell(\Phi,1)\hom^\kappa_r(1,i_\infty)(\mu^\vee_3\otimes F)$ acts on $v_4\in V$. We have
\begin{align*}
	\Phi(v_4&) =\\
	=&\ \id\otimes v_4 + \mu_2\otimes(v_1\otimes v_3 + v_2\otimes v_2 + v_3\otimes v_1)\\
	& + \mu_3\otimes(v_1\otimes v_1\otimes v_2 + v_1\otimes v_2\otimes v_1 + v_2\otimes v_1\otimes v_1) + \mu_4\otimes v_1\otimes v_1\otimes v_1\otimes v_1
\end{align*}
Since we will project on the part with only three copies of $V$, we don't care about the last term and will omit it in the following step. Notice that the comultiplication of $V$ is explicitly given by
\begin{align*}
\Delta_V(v_n) =&\ \id\otimes v_n + \sum_{i_1+i_2 = n-1}(-1)^{i_1}\susp_2^{-1}\mu_2^\vee\otimes v_{i_1}\otimes v_{i_2} -\\
&- \sum_{j_1+j_2+j_3 = n-2}(-1)^{j_2}\susp_3^{-1}\mu_3^\vee\otimes v_{j_1}\otimes v_{j_2}\otimes v_{j_3} + \cdots\ ,
\end{align*}
where the dots indicate terms with at least $4$ copies of $V$. Applying this to the above, and then using $\mathrm{proj}_3$, we get
\begin{align*}
	\mathrm{proj}_3&\as(\Delta_V)\Phi(v_4) =\\
	=&\ -\mu_2\otimes\Big((\id\otimes v_1)\otimes(\susp_2^{-1}\mu_2^\vee\otimes v_1\otimes v_1) + (\susp_2^{-1}\mu_2^\vee\otimes v_1\otimes v_1)\otimes(\id\otimes v_1)\Big)\\
	&\ + \mu_3\otimes\Big((\id\otimes v_1)\otimes(\id\otimes v_1)\otimes(\id\otimes v_2) + (\id\otimes v_1)\otimes(\id\otimes v_2)\otimes(\id\otimes v_1)\\
	&\qquad\qquad + (\id\otimes v_2)\otimes(\id\otimes v_1)\otimes(\id\otimes v_1)\Big)
\end{align*}
Applying $F$ gives
\begin{align*}
	F\mathrm{proj}_3\as(\Delta_V)&\Phi(v_4) =\\
	=&\ \mu_2\otimes\Big((\id\otimes z)\otimes(\susp_2^{-1}\mu_2^\vee\otimes z\otimes z) - (\susp_2^{-1}\mu_2^\vee\otimes z\otimes z)\otimes(\id\otimes z)\Big)\\
	&\ - \mu_3\otimes\big((\id\otimes z)\otimes(\id\otimes z)\otimes(\id\otimes z)\big)\ ,
\end{align*}
and thus
\begin{align*}
	\as(i_\infty)F\mathrm{proj}_3\as(\Delta_V)&\Phi(v_4) = \mu_2\otimes(x\otimes y - y\otimes x) - \mu_3\otimes x\otimes x\otimes x\ .
\end{align*}
Finally, we have
\begin{align*}
	\hom^\kappa_\ell(\Phi,1)\hom^\kappa_r(1,i_\infty)(\mu^\vee_3\otimes F)(v_4) =&\ \gamma_{A_2}\as(i_\infty)F\mathrm{proj}_3\as(\Delta_V) = -x^3.
\end{align*}
Now we look at the action of $\hom^\kappa_r(1,i_\infty)\hom^\kappa_\ell(\Phi,1)(\mu_3^\vee\otimes F)$ on $v_4$. We have
\[
\Delta_V(v_4) = \id\otimes v_4 + \susp_2^{-1}\mu_2^\vee\otimes(-v_1\otimes v_2 + v_2\otimes v_1)\ ,
\]
and thus
\begin{align*}
	\mathrm{proj}_3&\as^{\antishriek}(\Phi)\Delta_V(v_4) =\\
	=&\ \id\otimes\mu_3\otimes(v_1\otimes v_1\otimes v_2 + v_1\otimes v_2\otimes v_1 + v_2\otimes v_1\otimes v_1)\\
	&\ + \susp_2^{-1}\mu_2^\vee\otimes\big(-(\id\otimes v_1)\otimes(\mu_2\otimes v_1\otimes v_1) + (\mu_2\otimes v_1\otimes v_1)\otimes(\id\otimes v_1)\big)\ .
\end{align*}
Applying $F$ we obtain
\begin{align*}
	F\mathrm{proj}_3&\as^{\antishriek}(\Phi)\Delta_V(v_4) =\\
	=&\ -\id\otimes\mu_3\otimes z\otimes z\otimes z +\\
	&+ \susp_2^{-1}\mu_2^\vee\otimes\big(-(\id\otimes z)\otimes(\mu_2\otimes z\otimes z) + (\mu_2\otimes z\otimes z)\otimes(\id\otimes z)\big)\ ,
\end{align*}
and thus
\begin{align*}
	\as^{\antishriek}(\gamma_{H^2})F\mathrm{proj}_3\as^{\antishriek}(\Phi)&\Delta_V(v_4) = 0
\end{align*}
since $z^2 = 0$ in $H^2$ and by \cref{lemma:An is formal}. Therefore,
\[
\hom^\kappa_r(1,i_\infty)\hom^\kappa_\ell(\Phi,1)(\mu_3^\vee\otimes F)(v_4) = 0\ ,
\]
showing that
\[
\hom^\kappa_r(1,i_\infty)\hom^\kappa_\ell(\Phi,1)\neq\hom^\kappa_\ell(\Phi,1)\hom^\kappa_r(1,i_\infty)
\]
as claimed. This implies the result we wanted.

\begin{theorem}
	In general, there is no bifunctor
	\[
	\hom^\alpha:(\mathsf{conil.}\ \infty_\alpha\text{-}\Ccog)^{op}\times\infty_\alpha\text{-}\Palg\longrightarrow\infty\text{-}\Lalg.
	\]
	that restricts to the functors
	\[
	\hom^\alpha_\ell:(\mathsf{conil.}\ \infty_{\alpha}\text{-}\Ccog)^{op}\times\Palg\longrightarrow\infty\text{-}\Lalg\ 
	\]
	and
	\[
	\hom^\alpha_r:\mathsf{conil.}\ \Ccog^{op}\times\infty_\alpha\text{-}\Palg\longrightarrow\infty\text{-}\Lalg
	\]
	defined above in the respective subcategories.
\end{theorem}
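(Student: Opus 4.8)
The plan is to argue by contradiction using the defining property of a bifunctor. If a bifunctor $\hom^\alpha$ as in the statement existed, then any morphism $(\Phi,\Psi)$ of the product category could be factored through either intermediate object, forcing the interchange identity
\[
\hom^\alpha_\ell(\Phi,1)\,\hom^\alpha_r(1,\Psi) = \hom^\alpha(\Phi,\Psi) = \hom^\alpha_r(1,\Psi)\,\hom^\alpha_\ell(\Phi,1)
\]
for every pair consisting of an $\infty_\alpha$-morphism $\Phi$ of $\C$-coalgebras and an $\infty_\alpha$-morphism $\Psi$ of $\P$-algebras. So it suffices to exhibit one pair for which these two composites disagree, and this is precisely what the constructions of the preceding subsections are designed to do.

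Concretely, I would specialize to the non-symmetric setting of \cref{subsect:nsCase} with $\C=\as^{\antishriek}$, $\P=\as$, and $\alpha=\kappa$, and take the objects and morphisms assembled above: the formal algebra $A=A^2=\overline{\k[x,y]}$ with $dy=x^2$, its homology $A'=H^2\cong\overline{\k[z]}/(z^2)$, the $\infty_\kappa$-quasi-isomorphism $\Psi=i_\infty$ carrying the non-trivial quadratic term $i_2$, and the coalgebra $D'=D=V$ together with its $\infty_\kappa$-endomorphism $\Phi$. Feeding the maps $F=f_1\otimes f_2\otimes f_3$ into the two composite diagrams drawn above and evaluating $\mu_3^\vee\otimes F$ on the element $v_4$, the computation gives $-x^3$ for the composite $\hom^\kappa_\ell(\Phi,1)\hom^\kappa_r(1,i_\infty)$ and $0$ for $\hom^\kappa_r(1,i_\infty)\hom^\kappa_\ell(\Phi,1)$. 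Since $x^3\neq0$ in $A^2$, the two composites are genuinely different, the interchange identity fails, and no such bifunctor can exist.

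The only delicate ingredient is the evaluation on $v_4$, where one must track the Koszul signs coming from the comultiplication $\Delta_V$ and from the quadratic component of $i_\infty$; this is routine but sign-sensitive. The conceptual heart of the argument --- and the genuine obstacle --- is the choice of a counterexample in which the two orders of application behave asymmetrically: in one order the strict product forces the relevant term into the relation $z^2=0$ of $H^2$ and it vanishes before the homotopy $i_2$ can act, whereas in the other order $i_2$ survives and produces a non-zero cubic class. Identifying such incompatible objects is what makes the negative result work; once they are in place, the remainder is the bookkeeping sketched above.
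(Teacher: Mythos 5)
Your proposal is correct and follows essentially the same route as the paper: the paper's proof likewise derives the interchange identity $\hom^\alpha_\ell(\Phi,1)\hom^\alpha_r(1,\Psi) = \hom^\alpha_r(1,\Psi)\hom^\alpha_\ell(\Phi,1)$ as a necessary consequence of bifunctoriality, and then refutes it with exactly the counterexample you describe --- $\C=\as^{\antishriek}$, $\P=\as$, $\alpha=\kappa$, $A=A^2$, $A'=H^2$, $D=D'=V$, $\Psi=i_\infty$, $\Phi$ --- evaluating $\mu_3^\vee\otimes F$ on $v_4$ to get $-x^3$ in one order and $0$ in the other. Your closing observation about why the asymmetry arises (the strict multiplication killing the term via $z^2=0$ before $i_2$ can act in one order, but not in the other) is a faithful summary of the mechanism underlying the paper's computation.
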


\begin{remark}
	The result is true in any characteristic in the non-symmetric case by the same counterexample as above, and in the symmetric case as well, by considering the same counterexample and tensoring the operads by the regular representation of the symmetric groups.
\end{remark}

\section{Analogous results in different settings} \label{sect:differentSettings}

All the results presented above have analogous incarnation in different contexts. We present here the cases of non-symmetric operads, where everything works over fields of any characteristic, and the dual case, where we take tensor products of algebras instead of convolution algebras.

\subsection{Tensor products}

Let $\C$ be a cooperad, and let $\P$ be an operad. It is a well known fact that the dual of a cooperad is always an operad, and if we further assume that $\C$ is finite dimensional in every arity, then we have a canonical isomorphism of $\S$-modules
\[
\hom(\C,\P)\cong\P\otimes\C^\vee,
\]
where $\otimes$ is the Hadamard tensor product (i.e. the arity-wise tensor product of $\S$-modules). The operad structure making the left-hand side into the convolution operad naturally induces an operad structure on the right hand side.

\medskip

We have the following result, analogous to \cref{thm:bijTwHoms}.

\begin{theorem} \label{thm:bijTwTensor}
	Suppose $\C$ is finite dimensional in every arity. There is a bijection
	\[
	\Tw(\C,\P)\stackrel{\cong}{\longrightarrow}\hom_{\op}(\L_\infty,\P\otimes\C^\vee)\ .
	\]
	This is natural in a sense analogous to what explained in \cref{thm:bijTwHoms}.
\end{theorem}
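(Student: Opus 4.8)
The plan is to deduce this statement directly from \cref{thm:bijTwHoms} by transporting the bijection established there along the operad isomorphism $\hom(\C,\P)\cong\P\otimes\C^\vee$ recalled just above. The finite-dimensionality hypothesis on $\C$ is exactly what guarantees that the canonical evaluation map $\P(n)\otimes\C(n)^\vee\to\hom(\C(n),\P(n))$, sending $p\otimes\xi$ to the assignment $c\mapsto\xi(c)\,p$, is an isomorphism in every arity $n$, and hence that the two $\S$-modules agree.

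The first and only substantial step is to check that this arity-wise isomorphism is in fact an isomorphism of \emph{operads}, where $\P\otimes\C^\vee$ carries the operad structure transported from the convolution operad as in the paragraph preceding the statement. Concretely, I would verify that the conjugation $\S$-action on $\hom(\C,\P)$ corresponds to the diagonal action on $\P\otimes\C^\vee$, and that the convolution composition $\gamma$ corresponds to the composite built from the operadic composition of $\P$ and the (dualized) decomposition map of $\C$. This is a bookkeeping verification rather than a conceptual difficulty, but it is where the Koszul signs must be tracked carefully; I expect this to be the main, if modest, obstacle.

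Granting the operad isomorphism, applying the functor $\hom_\op(\L_\infty,-)$ --- which carries isomorphisms of operads to bijections of hom-sets --- yields
\[
\hom_\op(\L_\infty,\hom(\C,\P))\cong\hom_\op(\L_\infty,\P\otimes\C^\vee)\ ,
\]
and composing with the bijection of \cref{thm:bijTwHoms} gives the map in the statement. Under this identification the image of a twisting morphism $\alpha$ remains $\M_\alpha$, now read through the evaluation pairing, so that its value on $\ell_n$ is the element of $\P(n)\otimes\C(n)^\vee$ corresponding to $\alpha(n)$. Finally, naturality follows from the naturality already recorded in \cref{thm:bijTwHoms} together with the functoriality of the pairing in both variables; the only point to note is that a cooperad morphism $f\colon\C'\to\C$ dualizes to an operad morphism $\C^\vee\to(\C')^\vee$, so that pullback along $f$ on the convolution side matches $\P\otimes f^\vee$ on the tensor side, and dually for $g\colon\P\to\P'$.
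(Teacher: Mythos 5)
Your proposal is correct and takes essentially the same route as the paper: the paper offers no separate proof, because it defines the operad structure on $\P\otimes\C^\vee$ precisely as the one transported from the convolution operad along the arity-wise isomorphism, so the bijection follows at once from \cref{thm:bijTwHoms} exactly as you argue. Note only that, with that definition, your ``one substantial step'' is vacuous --- the isomorphism is an operad isomorphism by construction --- and the sign-and-equivariance bookkeeping you describe is needed only if one instead wants to identify the transported structure with the intrinsic Hadamard product of $\P$ and the dual operad $\C^\vee$.
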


\begin{remark}
	A hint to this idea can already be found in Ginzburg--Kapranov \cite[Prop. 3.2.18]{gk94} and \cite[Appendix C]{bl15}. A special case, namely the one already mentioned in \cref{rem:SHLpaperRN}, was studied in \cite{rn17}.
\end{remark}

As a corollary, we obtain a bifunctor
\[
\otimes^\alpha:\P\text{-}\mathsf{alg}\times\C^\vee\text{-}\mathsf{alg}\longrightarrow\Lalg
\]
taking a $\P$-algebra $A$ and a $\C^\vee$-algebra $D$ and giving back the chain complex $A\otimes D$ with the $\L_\infty$-algebra structure induced by \cref{thm:bijTwTensor} above.

\begin{remark}
	One can give a result analogous to \cref{thm:MCelOfConvAlg} also in this setting. However, in this case one must assume that the $\C^\vee$-coalgebras under consideration are finite dimensional, plus some completeness assumptions in order for the Maurer--Cartan equation to be well-defined. See \cite[Cor. 6.6]{rn17} for a special case of this.
\end{remark}

An explicit formula for the $\L_\infty$-algebra structure is the following one. Fix a basis $\{c_i\}_i$ of $\C(n)$, and let $\{c_i^\vee\}_i\in\C(n)^\vee$ be the dual basis. Denote by $p_i\coloneqq\alpha(c_i)$. Then
\[
\ell_n(a_1\otimes x_1,\ldots,a_n\otimes x_n) = \sum_i(-1)^\epsilon\gamma_A(p_i\otimes a_1\otimes\cdots\otimes a_n)\otimes\gamma_C(c_i^\vee\otimes x_1\otimes\ldots\otimes x_n)\ ,
\]
for $a_j\in A$ and $x_j\in C$, where
\[
\epsilon = \sum_{k=1}^n|a_k|\sum_{j=1}^{k-1}|x_j| + |c_i|\sum_{k=1}^n|a_k|
\]
is the obvious Koszul sign. Notice that this is independent from the choice of the basis.

\medskip

The question is now to understand how this behaves with respect to the appropriate notions of $\infty$-morphism in both slots. A first question one has to ask is: what is the good notion of homotopy morphism for $\C^\vee$-algebras?

\begin{lemma}
	Let $\C$ be a cooperad which is finite-dimensional in every arity, and let $\P$ be an operad which is finite-dimensional in every arity. Then $\alpha:\C\to\P$ is a twisting morphism if and only if $\alpha^\vee:\P^\vee\to\C^\vee$ is a twisting morphism.
\end{lemma}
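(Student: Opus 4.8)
The plan is to show that linear dualization defines an isomorphism of differential graded Lie algebras
\[
(-)^\vee \colon \hom_\S(\C,\P) \xrightarrow{\ \cong\ } \hom_\S(\P^\vee,\C^\vee),
\]
sending $\alpha$ to $\alpha^\vee$, and then to observe that such an isomorphism must carry Maurer--Cartan elements to Maurer--Cartan elements. Since $\C$ and $\P$ are finite dimensional in each arity, so are $\C^\vee$ and $\P^\vee$, the canonical maps $\C\to\C^{\vee\vee}$ and $\P\to\P^{\vee\vee}$ are isomorphisms, and for each $n$ the evaluation pairing identifies $\hom_{\S_n}(\C(n),\P(n))$ with $\hom_{\S_n}(\P(n)^\vee,\C(n)^\vee)$. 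This identification preserves the homological degree, so $\alpha^\vee$ is again of degree $-1$, as required of a twisting morphism; here $\P^\vee$ is regarded as a cooperad and $\C^\vee$ as an operad.

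First I would record how the structure maps dualize. The cooperad structure on $\P^\vee$ is by definition the dual of the operadic composition of $\P$, and dually (using finite-dimensionality) the operad structure on $\C^\vee$ is the dual of the decomposition of $\C$; the same holds for the infinitesimal versions, so that $\Delta_{(1)}^{\P^\vee}=(\gamma_{(1)}^{\P})^\vee$ and $\gamma_{(1)}^{\C^\vee}=(\Delta_{(1)}^{\C})^\vee$. Under the canonical isomorphism $(\C\circ_{(1)}\C)^\vee\cong\C^\vee\circ_{(1)}\C^\vee$ one has $(f\circ_{(1)}g)^\vee=\pm\,f^\vee\circ_{(1)}g^\vee$, the partial composition not reversing the order of the two factors. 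Dualizing the defining composite $f\star g=\gamma_{(1)}^{\P}(f\circ_{(1)}g)\Delta_{(1)}^{\C}$ and substituting these identities yields
\[
(f\star g)^\vee=\pm\,\gamma_{(1)}^{\C^\vee}(f^\vee\circ_{(1)}g^\vee)\Delta_{(1)}^{\P^\vee}=\pm\,f^\vee\star g^\vee,
\]
where $\star$ on the right denotes the pre-Lie product of $\hom_\S(\P^\vee,\C^\vee)$. Antisymmetrizing gives $[f,g]^\vee=\pm[f^\vee,g^\vee]$, and a direct check using $d_{\C^\vee}=\pm(d_\C)^\vee$ and $d_{\P^\vee}=\pm(d_\P)^\vee$ gives $(\partial f)^\vee=\pm\,\partial(f^\vee)$ for the convolution differentials.

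It then remains to combine these identities with $f=g=\alpha$. The dual of the Maurer--Cartan expression $\partial(\alpha)+\tfrac12[\alpha,\alpha]$ becomes, up to the signs above, $\partial(\alpha^\vee)+\tfrac12[\alpha^\vee,\alpha^\vee]$; since dualization is injective, the left-hand expression vanishes if and only if the right-hand one does, which is exactly the asserted equivalence (the reverse implication following formally by applying the same argument to $\alpha^\vee$ and using $\alpha^{\vee\vee}=\alpha$). The only genuinely delicate point, and the step I expect to be the main obstacle, is the sign bookkeeping: one must verify that the differential term and the bracket term dualize with the \emph{same} sign, so that the dual of the Maurer--Cartan expression is a nonzero scalar multiple of $\partial(\alpha^\vee)+\tfrac12[\alpha^\vee,\alpha^\vee]$ rather than an incompatible combination. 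This is a routine but careful application of the Koszul sign conventions recalled in \cref{subsect:KoszulSignRule}, tracking the signs produced by dualizing graded composites and by the evaluation pairings on $\C^\vee\circ_{(1)}\C^\vee$ and $\P^\vee\circ_{(1)}\P^\vee$.
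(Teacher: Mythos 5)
Your proposal is correct in strategy but takes a genuinely different route from the paper's own proof. The paper never dualizes the dg Lie algebra structure maps at all: it reinterprets a twisting morphism as a collection of invariant elements $\alpha(n)\in\left(\P(n)\otimes\C(n)^\vee\right)^{\S_n}$, i.e.\ as an element of the arity-wise tensor product of $\P$ with $\C^\vee$, notes that maps $\P^\vee\to\C^\vee$ admit the same description via $\left(\C(n)^\vee\otimes\P(n)^{\vee\vee}\right)^{\S_n}$, and observes that $\alpha$ and $\alpha^\vee$ correspond to the same datum under the factor swap and biduality; finite-dimensionality also reduces everything to one direction, exactly as your closing remark does via $\alpha^{\vee\vee}=\alpha$. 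The advantage of that packaging is that your ``genuinely delicate point'' evaporates: since $\alpha$ has degree $-1$, each term $p\otimes\xi$ of $\alpha(n)$ has one factor of even and one of odd degree, so the Koszul sign $(-1)^{|p||\xi|}$ of the swap is $+1$, and the Maurer--Cartan equation is transported along an identification of the underlying data rather than checked term by term. What your approach buys instead is a stronger, reusable statement --- an explicit dg Lie algebra isomorphism $\hom_\S(\C,\P)\cong\hom_\S(\P^\vee,\C^\vee)$ --- but you leave unverified precisely the step where the lemma could fail, namely the signs. For the record, the check does go through with no residual signs: with the Koszul conventions $f^\vee(\xi)=(-1)^{|f||\xi|}\,\xi f$ and $(\xi\otimes\eta)(v\otimes w)=(-1)^{|\eta||v|}\xi(v)\eta(w)$ one gets $(f\otimes g)^\vee=f^\vee\otimes g^\vee$ on the nose, hence $(f\star g)^\vee=f^\vee\star g^\vee$, and also $(\partial f)^\vee=\partial(f^\vee)$, using $d_{V^\vee}=-(d_V)^\vee$ together with $(gf)^\vee=(-1)^{|f||g|}f^\vee g^\vee$; so the Maurer--Cartan expression dualizes exactly, not merely up to a scalar. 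Had the two terms dualized with opposite signs, only $-\alpha^\vee$ would have been a twisting morphism and the statement would need a sign correction, so this verification is the actual substance of the lemma and should not be waved off as routine.
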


\begin{proof}
	Since we work in the finite dimensional case, it is sufficient to prove one direction. A twisting morphism $\alpha:\C\to\P$ is the same thing as a collection of elements
	\[
	\alpha(n)\in\left(\P(n)\otimes\C(n)^\vee\right)^{\S_n},
	\]
	but
	\[
	\left(\P(n)\otimes\C(n)^\vee\right)^{\S_n}\cong\left(\C(n)^\vee\otimes\P(n)^{\vee\vee}\right)^{\S_n}
	\]
	and one recovers $\alpha^\vee$ from $\alpha$ through this isomorphism.
\end{proof}

Now let $A$ be a $\P$-algebra, let $C,C'$ be two $\C^\vee$-algebras and let
\[
g:\bar_{\alpha^\vee}C\longrightarrow\bar_{\alpha^\vee}C'
\]
be an $\infty_{\alpha^\vee}$-morphism $g:C\rightsquigarrow C'$ of $\C^\vee$-algebras. We construct a morphism (not respecting the differentials \emph{a priori})
\[
1\otimes^\alpha g:\bar_\iota(A\otimes^\alpha C)\longrightarrow\bar_\iota(A\otimes^\alpha C')
\]
as follows. For each $n$, fix a basis $\{p_i\}_i$ of $\P(n)$ (where we leave the $n$ out of the notation). Then the identity of $\P(n)$ is the element $\sum_ip_i\otimes p_i^\vee\in\P(n)\otimes\P(n)^\vee$, where $\{p_i^\vee\}_i\in\P(n)^\vee$ is the dual basis. The morphism $1\otimes^\alpha g$ is the unique morphism of cofree cocommutative coalgebras extending the map sending
\[
\mu_n^\vee\otimes(a_1\otimes x_1)\otimes\cdots\otimes(a_n\otimes x_n)\in\com(n)^\vee\otimes_{\S_n}(A\otimes C)^{\otimes n}
\]
to
\[
\sum_i(-1)^\epsilon\gamma_A(p_i\otimes a_1\otimes\cdots\otimes a_n)\otimes g_n(p_i^\vee\otimes x_1\otimes\cdots\otimes x_n)\ ,
\]
where
\[
\epsilon = \sum_{k=1}^n|a_k|\sum_{j=1}^{k-1}|x_j| + |p_i|\sum_{k=1}^n|a_k|\ .
\]
Dually, given two $\P$-algebras $A,A'$, a $\C^\vee$-algebra $C$ and an $\infty_\alpha$-morphism $f:A\rightsquigarrow A'$, we have a morphism
\[
f\otimes^\alpha1:\bar_\iota(A\otimes^\alpha C)\longrightarrow\bar_\iota(A'\otimes^\alpha C)
\]
by sending
\[
\mu_n^\vee\otimes(a_1\otimes x_1)\otimes\cdots\otimes(a_n\otimes x_n)\in\com(n)^\vee\otimes_{\S_n}(A\otimes C)^{\otimes n}
\]
to
\[
\sum_i(-1)^\epsilon f_n(c_i\otimes a_1\otimes\cdots\otimes a_n)\otimes \gamma_C(c_i^\vee\otimes x_1\otimes\cdots\otimes x_n)\ ,
\]
where $\{c_i\}_i\in\C(n)$ is a basis, $\{c_i^\vee\}_i\in\C(n)^\vee$ is the dual basis, and
\[
\epsilon = \sum_{k=1}^n|a_k|\sum_{j=1}^{k-1}|x_j| + |c_i|\sum_{k=1}^n|a_k|\ .
\]
The analogue of \cref{thm:twoBifunctors} in this context is the following.

\begin{theorem} \label{thm:twoInftyMorphismsTensor}
	The morphisms $f\otimes^\alpha 1$ and $1\otimes^\alpha g$ defined above are $\infty$-morphisms of $\L_\infty$-algebras.
\end{theorem}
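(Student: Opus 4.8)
The plan is to verify directly that each of the two maps satisfies the Maurer--Cartan equation characterizing an $\infty$-morphism of $\L_\infty$-algebras, exactly along the lines of the proof of \cref{thm:twoBifunctors}. By construction both $f\otimes^\alpha1$ and $1\otimes^\alpha g$ are morphisms of cofree cocommutative coalgebras over graded vector spaces, so each is determined by its corestriction to the cogenerators of the target; writing $f\otimes^\alpha1$ and $1\otimes^\alpha g$ also for these corestricted linear maps out of $\bar_\iota(A\otimes^\alpha C)$, the task reduces to proving
\[
\partial(f\otimes^\alpha1)+\star_\iota(f\otimes^\alpha1)=0\qquad\text{and}\qquad\partial(1\otimes^\alpha g)+\star_\iota(1\otimes^\alpha g)=0\ .
\]
Following \cref{sect:functoriality}, I would evaluate everything on a typical element $\mu_n^\vee\otimes(a_1\otimes x_1)\otimes\cdots\otimes(a_n\otimes x_n)$, expand the bar differential as $d_1+d_2$ using the explicit formula for the operations $\ell_k$ on $A\otimes^\alpha C$ (which feed $\gamma_A$ into the $A$-slot and the $\C^\vee$-structure map $\gamma_C$ into the $C$-slot via $\alpha$), push the result through the morphism, and collect terms. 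For $f\otimes^\alpha1$ one then invokes the Maurer--Cartan equation satisfied by $f$ as an $\infty_\alpha$-morphism of $\P$-algebras to isolate a $\star_\alpha(f)$-term, and identifies it with $\star_\iota(f\otimes^\alpha1)$; for $1\otimes^\alpha g$ one uses instead the Maurer--Cartan equation for $g$ as an $\infty_{\alpha^\vee}$-morphism of $\C^\vee$-algebras. The skeleton of the argument is identical to that of parts (a) and (b) of \cref{thm:twoBifunctors}, with $\gamma_C$ and the components of $g$ playing the roles previously played by $\Delta_D$ and the components of $\Phi$.

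A conceptually cleaner shortcut, available when $C$ is of finite type, is to reduce both statements to \cref{thm:twoBifunctors} via the duality isomorphism of convolution operads $\hom(\C,\P)\cong\P\otimes\C^\vee$, which I denote $\Theta$. Setting $D\coloneqq C^\vee$, a $\C$-coalgebra, the naturality of $\M_\alpha$ from \cref{thm:bijTwTensor} yields a canonical isomorphism of $\L_\infty$-algebras $A\otimes^\alpha C\cong\hom^\alpha(D,A)$ induced by $\Theta$. Under this identification an $\infty_\alpha$-morphism $f\colon A\rightsquigarrow A'$ of $\P$-algebras induces $f\otimes^\alpha1$ on one side and $\hom^\alpha_r(1,f)$ on the other, and the two are identified; dually, an $\infty_{\alpha^\vee}$-morphism $g\colon C\rightsquigarrow C'$ of $\C^\vee$-algebras dualizes to an $\infty_\alpha$-morphism $g^\vee\colon D'\rightsquigarrow D$ of $\C$-coalgebras --- dualizing the underlying map $\bar_{\alpha^\vee}C\to\bar_{\alpha^\vee}C'$ of $\P^\vee$-coalgebras to a map $\Omega_\alpha D'\to\Omega_\alpha D$ of $\P$-algebras --- and $1\otimes^\alpha g$ is identified with $\hom^\alpha_\ell(g^\vee,1)$. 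In the finite type case both claims then follow immediately from \cref{thm:twoBifunctors}. For general $C$ one notes that, for each fixed $n$, both sides of the Maurer--Cartan equation evaluated on a given element involve only finitely many applications of the structure maps and of the components of $g$, so the identity is detected on finite type data and the finite type case suffices.

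The main obstacle I anticipate is twofold, depending on the route. On the direct route the entire difficulty is the bookkeeping of Koszul signs: one must reproduce the sign manipulations of \cref{thm:twoBifunctors} --- in particular the reindexing bijection $\S_n\cong\{S_1\sqcup\cdots\sqcup S_k\}\times\S_{n_1}\times\cdots\times\S_{n_k}$ and the resulting shuffle signs --- in the tensor-product incarnation, carrying the sign $\epsilon$ of the defining formulas correctly through each composite. On the duality route the genuine work is verifying that the explicit formulas for $f\otimes^\alpha1$ and $1\otimes^\alpha g$ really correspond to those for $\hom^\alpha_r(1,f)$ and $\hom^\alpha_\ell(g^\vee,1)$ under $\Theta$ and the duality $A\otimes C\cong\hom(C^\vee,A)$ --- again a matter of matching signs, together with checking that $g\mapsto g^\vee$ sends $\infty_{\alpha^\vee}$-morphisms of $\C^\vee$-algebras to $\infty_\alpha$-morphisms of $\C$-coalgebras --- and justifying the passage from finite type to the general case. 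Either way, I expect the reconciliation of signs to be the principal source of friction, precisely as it was in \cref{sect:functoriality}.
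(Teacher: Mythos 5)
Your first, direct route is exactly the paper's proof: the paper establishes the theorem by dualizing the computation of \cref{thm:twoBifunctors}, choosing bases of $\C(n)$ and $\P(n)$ and working on elements, with precisely the Koszul-sign bookkeeping you anticipate. The one ingredient you leave implicit --- and which the paper singles out as the essential step --- is that one must first translate the Maurer--Cartan equation satisfied by $\alpha$ into explicit identities among the tensors $\alpha(n)\in\P(n)\otimes\C(n)^\vee$, since in the tensor-product setting $\alpha$ enters only through these coefficients and not as a composable map; this is what replaces the manipulations with $d_2^{\Omega_\alpha D}$ and $d_2^{\bar_\alpha A}$ in the hom-space proof.

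Your ``duality shortcut,'' however, has genuine gaps and should not be relied on, even as an alternative. First, finite type of $C$ does not give $A\otimes C\cong\hom(C^\vee,A)$: in each degree the right-hand side is the product $\prod_m A_{n-m}\otimes C_m$ while the left-hand side is the direct sum, and these agree only under joint boundedness hypotheses on $A$ and $C$ --- this is why \cref{lemma:isomHomTensor} in the paper is formulated with a \emph{completed} tensor product and filtered algebras. Second, dualizing the map $g:\bar_{\alpha^\vee}C\to\bar_{\alpha^\vee}C'$ does not produce an $\infty_\alpha$-morphism of conilpotent $\C$-coalgebras: the bar construction is a direct sum over arities, so its linear dual is the \emph{completed} free algebra $\prod_n\P(n)\otimes_{\S_n}(C^\vee)^{\otimes n}$, and the restriction of $g^\vee$ to the cogenerators $C'^\vee$ is the family $(\xi\circ g_n)_n$, which has no reason to vanish for almost all $n$; hence $g^\vee$ lands in the completion rather than in $\Omega_\alpha(C^\vee)$, and \cref{thm:twoBifunctors} does not apply (there is also the separate issue of whether $C^\vee$ is conilpotent). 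Third, the reduction from arbitrary $C$ to the finite type case is unjustified: the $\C^\vee$-subalgebra generated by the finitely many inputs need not be of finite type, and $g$ need not restrict compatibly to it, so ``the identity is detected on finite type data'' is not established --- in practice, verifying the identity on a given element just \emph{is} the direct computation of your first route. Since the theorem is stated for arbitrary $\P$-algebras and $\C^\vee$-algebras (only the (co)operads are assumed arity-wise finite dimensional, as required for \cref{thm:bijTwTensor}), only the direct route covers the claimed generality.
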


\begin{proof}
	The proof is essentially dual to the proof of \cref{thm:twoBifunctors}. One has to choose bases and work with them. An essential step is to characterize the properties of the elements $\alpha(n)\in\P(n)\otimes\C(n)^\vee$ corresponding to the fact that $\alpha$ satisfies the Maurer--Cartan equation.
\end{proof}

\begin{remark}
	This result is a generalization of \cite[Prop. 4.4]{rn17} by the first author. The case presented there is the following. We are given two binary quadratic operads with a finite dimensional generating space for $\Q$ and a morphism
	\[
	\Psi:\Q\longrightarrow\P
	\]
	between them, a $\P$-algebra $A$ and two $\Omega\Q^\vee$-algebras $C,C'$ (i.e. $\Q^!_\infty$-algebras, up to a suspension). Then we have $\L_\infty$-algebras $A\otimes^\Psi C$ and $A\otimes^\Psi C'$ (see \cref{rem:SHLpaperRN}). If we have an $\infty$-morphism
	\[
	g:\bar_\iota C\longrightarrow\bar_\iota C'
	\]
	of $\Q^!_\infty$-algebras, we obtain an $\infty$-morphism of $\L_\infty$-algebras
	\[
	1\otimes^\Psi g:\bar_\iota(A\otimes^\Psi C)\longrightarrow\bar_\iota(A\otimes^\Psi C')
	\]
	by sending $\mu_n^\vee\otimes(a_1\otimes c_1)\otimes\cdots(a_n\otimes c_n)$ to
	\[
	\sum_i(-1)^\epsilon\gamma_A(\Psi(q_i)\otimes a_1\otimes\cdots\otimes a_n)\otimes g_n(q_i^\vee\otimes c_1\otimes\cdots\otimes c_n)\ ,
	\]
	with $\epsilon$ the appropriate Koszul sign. The difference here is that the $\infty$-morphism $g$ is relative to the twisting morphism $\iota:\Q^\vee\to\Omega\Q^\vee$, and not with respect to the dual of the twisting morphism inducing the $\L_\infty$-algebra structure on the tensor products, that is
	\[
	\psi=\left(\Bar\Q\stackrel{\pi}{\longrightarrow}\Q\stackrel{\Psi}{\longrightarrow}\P\right)\ .
	\]
	We recover this from our \cref{thm:twoInftyMorphismsTensor} by using $\Psi:\Q\to\P$ to see $A$ as a $\Q$-algebra $\Psi^*A$. Then
	\[
	A\otimes^\Psi C = A\otimes^\psi C = (\Psi^*A)\otimes^\pi C
	\]
	and
	\[
	1\otimes^\Psi g = 1\otimes^\pi g\ .
	\]
\end{remark}

\begin{theorem}
	The bifunctor
	\[
	\otimes^\alpha:\P\text{-}\mathsf{alg}\times\C^\vee\text{-}\mathsf{alg}\longrightarrow\Lalg
	\]
	extends to bifunctors
	\[
	\otimes^\alpha_\ell:\infty_\alpha\text{-}\Palg\times\C^\vee\text{-}\mathsf{alg}\longrightarrow\Lalg\ .
	\]
	and
	\[
	\otimes^\alpha_r:\Palg\times\infty_{\alpha^\vee}\text{-}\C^\vee\text{-}\mathsf{alg}\longrightarrow\Lalg
	\]
\end{theorem}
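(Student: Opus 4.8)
The plan is to mirror the proof of \cref{cor:twoBifunctors} in the present dual setting, reducing everything to \cref{thm:twoInftyMorphismsTensor} together with two routine verifications. By \cref{thm:twoInftyMorphismsTensor}, for any $\infty_\alpha$-morphism $f$ of $\P$-algebras the map $f\otimes^\alpha 1$ is an $\infty$-morphism of $\L_\infty$-algebras, and for any $\infty_{\alpha^\vee}$-morphism $g$ of $\C^\vee$-algebras the map $1\otimes^\alpha g$ is as well; moreover, for strict morphisms of $\P$-algebras and of $\C^\vee$-algebras the two assignments reduce to $\otimes^\alpha$ applied to those morphisms, which is already functorial. Thus the morphism assignments of $\otimes^\alpha_\ell$ and $\otimes^\alpha_r$ land in $\Lalg$, and what remains is to check that they respect composition and identities, and that an $\infty$-morphism in one slot is compatible with a strict morphism in the other.

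For the respect of composition, I would fix, for each arity $n$, a basis $\{c_i\}$ of $\C(n)$ with dual basis $\{c_i^\vee\}$ (resp.\ a basis $\{p_i\}$ of $\P(n)$), and write out the \emph{full} (non-projected) cocommutative coalgebra maps $f\otimes^\alpha 1$ and $1\otimes^\alpha g$ on $\bar_\iota(A\otimes^\alpha C)$ using the explicit formula preceding the statement. Composing two $\infty_\alpha$-morphisms $f_1,f_2$ of $\P$-algebras amounts to composing the underlying $\C$-coalgebra maps $\bar_\alpha A\to\bar_\alpha A'\to\bar_\alpha A''$; unwinding this in components produces a sum over ordered partitions $S_1\sqcup\cdots\sqcup S_k=[n]$ weighted by the decomposition map of $\C$. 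Matching this against the composite $(f_2\otimes^\alpha 1)(f_1\otimes^\alpha 1)$ is then carried out exactly as in the proof of the lemma preceding \cref{cor:twoBifunctors}, using the bijection
\[
\S_n\longrightarrow\{S_1\sqcup\ldots\sqcup S_k\mid|S_i|=n_i\}\times\S_{n_1}\times\cdots\times\S_{n_k}
\]
and the associativity relation $\gamma_A(\gamma_\P\circ1_A)=\gamma_A(1_\P\circ\gamma_A)$. The verification for $1\otimes^\alpha g$ is obtained by the evident dualization, with the coassociativity of $C$ as a $\C^\vee$-algebra playing the role of the associativity of $A$. Identities are preserved because the identity $\infty$-morphism has first component the identity and all higher components zero.

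Finally, for the compatibility between the two slots I would show that, for a strict morphism $\phi$ of $\C^\vee$-algebras and an $\infty_\alpha$-morphism $f$ of $\P$-algebras, both composites $(f\otimes^\alpha 1)(1\otimes^\alpha\phi)$ and $(1\otimes^\alpha\phi)(f\otimes^\alpha 1)$ agree, since evaluating on cogenerators each gives
\[
\mu_n^\vee\otimes\bigotimes_{j=1}^n(a_j\otimes x_j)\longmapsto\sum_i(-1)^\epsilon f_n(c_i\otimes a_1\otimes\cdots\otimes a_n)\otimes\phi\,\gamma_C(c_i^\vee\otimes x_1\otimes\cdots\otimes x_n)\ ,
\]
using that $\phi$ commutes strictly with $\gamma_C$; this defines the common value $\otimes^\alpha_\ell(f,\phi)$, and dually for $\otimes^\alpha_r$. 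The main obstacle is, as in \cref{thm:twoBifunctors} and \cref{cor:twoBifunctors}, purely combinatorial: keeping track of the Koszul signs $\epsilon$ and of the summations over bases through the identification $\hom(\C,\P)\cong\P\otimes\C^\vee$, and checking that the resulting maps are independent of the chosen bases. Since no new conceptual ingredient beyond those of \cref{thm:twoInftyMorphismsTensor} enters, I would present the argument as a formal dualization of the convolution case and relegate the sign bookkeeping, concluding that $\otimes^\alpha_\ell$ and $\otimes^\alpha_r$ are indeed bifunctors into $\Lalg$.
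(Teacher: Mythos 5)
Your proposal is correct and takes exactly the route the paper intends: the paper states this theorem with no proof at all, treating it as the formal dualization of \cref{thm:twoInftyMorphismsTensor} together with the composition lemma and the slot-compatibility check underlying \cref{cor:twoBifunctors}, which is precisely the reduction you carry out. Your explicit base-dependent verification of compositionality and your common-value formula for the mixed (strict/$\infty$) composites fill in the details the paper leaves implicit, and they check out, including the use of strictness of $\phi$ to commute it past $\gamma_C$.
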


By dualizing the coalgebras, the example treated in \cref{sect:counterexample} proves that one can not extend $\otimes^\alpha$ to a bifunctor taking $\infty$-morphisms in both slots.

\begin{theorem}
	In general, there is no bifunctor
	\[
	\otimes^\alpha:\infty_\alpha\text{-}\Palg\times\infty_{\alpha^\vee}\text{-}\C^\vee\text{-}\mathsf{alg}\longrightarrow\infty\text{-}\Lalg.
	\]
	that restricts to the functors $\otimes^\alpha_\ell$ and $\otimes^\alpha_r$ defined above in the respective subcategories.
\end{theorem}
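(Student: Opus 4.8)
The plan is to deduce the non-existence from the counterexample of \cref{sect:counterexample} by dualization, so that no new computation is required. If such a bifunctor $\otimes^\alpha$ existed, then for any $\infty_\alpha$-morphism $f\colon A\rightsquigarrow A'$ of $\P$-algebras and any $\infty_{\alpha^\vee}$-morphism $g\colon C\rightsquigarrow C'$ of $\C^\vee$-algebras the value $\otimes^\alpha(f,g)$ would have to coincide with both composites $(f\otimes^\alpha1)(1\otimes^\alpha g)$ and $(1\otimes^\alpha g)(f\otimes^\alpha1)$, forcing
\[
(f\otimes^\alpha1)(1\otimes^\alpha g)=(1\otimes^\alpha g)(f\otimes^\alpha1)\ .
\]
It therefore suffices to produce a single pair $(f,g)$ violating this identity.

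First I would port the data of \cref{sect:counterexample} to the tensor setting through the isomorphism $\hom(\C,\P)\cong\P\otimes\C^\vee$. Keeping $\C=\as^{\antishriek}$, $\P=\as$, and $\alpha=\kappa$, this isomorphism identifies $\hom^\kappa(V,A)$ with $A\otimes^\kappa V^\vee$ as $\L_\infty$-algebras, where $V^\vee$ is the $(\as^{\antishriek})^\vee$-algebra dual to the $\as^{\antishriek}$-coalgebra $V$; this identification is legitimate here because $\as^{\antishriek}$ is one-dimensional in each arity, $V$ is finite-dimensional in each degree, and the maps $f_1,f_2,f_3$ of \cref{sect:counterexample} are of finite rank, so all elements occurring in the computation live in the image of $A\otimes V^\vee\hookrightarrow\hom(V,A)$. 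Under this dictionary the associative $\infty_\kappa$-morphism $i_\infty$ becomes the algebra-slot input $f$, while the coalgebra $\infty_\kappa$-morphism $\Phi$ dualizes to an $\infty_{\kappa^\vee}$-morphism $\Phi^\vee$ of $\C^\vee$-algebras, becoming the $\C^\vee$-algebra-slot input $g$.

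Next I would verify that the two tensor-side maps are the transposes of the hom-side ones, namely that $f\otimes^\alpha1$ corresponds to $\hom^\kappa_r(1,i_\infty)$ and $1\otimes^\alpha g$ corresponds to $\hom^\kappa_\ell(\Phi,1)$. This amounts to comparing the explicit basis formulas defining $f\otimes^\alpha1$ and $1\otimes^\alpha g$ (as in the proof of \cref{thm:twoInftyMorphismsTensor}) with the formulas for $\hom^\alpha_r$ and $\hom^\alpha_\ell$, matching a chosen basis of $\P(n)$, respectively $\C(n)$, with the dual basis and tracking the Koszul sign $\epsilon$. Granting this correspondence, the explicit evaluation carried out in \cref{sect:counterexample} --- which returns $-x^3$ for one order of composition and $0$ for the other on the element $v_4$ --- shows that the two tensor-side composites disagree, contradicting the forced commutativity and proving the theorem.

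The main obstacle is precisely the sign bookkeeping in establishing this transpose dictionary: passing from coinvariants to invariants, dualizing $\Phi$ to $\Phi^\vee$, and converting the shuffle sums $\sum_{S_1\sqcup\cdots\sqcup S_k=[n]}$ into sums $\sum_i$ over a basis each introduce Koszul signs that must be reconciled. However, since the discrepancy inherited from \cref{sect:counterexample} is between a nonzero value and zero, the conclusion is insensitive to the precise signs, so once the dictionary is set up on the finitely many elements involved the argument closes with no further computation.
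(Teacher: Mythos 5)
Your proposal is correct and follows essentially the same route as the paper: the paper's proof likewise dualizes the coalgebras in the counterexample of \cref{sect:counterexample}, observing that $f_1,f_2,f_3$ can be taken of finite rank (by declaring them zero on $v_i$ for $i\ge3$) so that they are represented as elements of the tensor product, whence the $-x^3$ versus $0$ discrepancy transfers to the tensor setting. Your write-up merely spells out the transpose dictionary that the paper leaves implicit; the only nuance is that the finite-rank property of the $f_i$ is a choice one makes rather than a given, exactly as the paper notes.
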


\begin{proof}
	The counterexample in this setting is obtained by dualizing the coalgebras in the counterexample given in \cref{sect:counterexample}. Notice that one can have $f_1,f_2,f_3$ of finite rank by setting the image of all $v_i$ to zero for $i\ge3$, and such maps can be represented as elements of $A^2\otimes V^\vee$.
\end{proof}

\subsection{Non-symmetric operads}\label{subsect:nsCase}

If we consider the same situation as in the rest of the article, but working with non-symmetric operads, then all the results still hold after replacing the operad $\L_\infty$ with the operad $\A_\infty$ encoding suspended homotopy associative algebras. The Maurer--Cartan elements one considers in a $\A_\infty$-algebra $A$ are the elements $x\in A_0$ such that
\[
dx + \sum_{n\ge2}m_n(x,\ldots,x) = 0\ ,
\]
where $m_n$ is the generator of arity $n$ of $\A_\infty$. Moreover, since in this setting we don't need to identify invariants and coinvariants, we are allowed to use any base field, without restrictions on the characteristic.

\section{Invariance of Maurer--Cartan spaces}\label{sect:MC spaces}

Given an $\L_\infty$-algebra, one can associate to it a Kan complex --- the Deligne--Hinich--Getzler $\infty$-grou\-poid, or deformation $\infty$-grou\-poid --- \cite{Hin97}, \cite{Get09}. In this section, we study the homotopical behavior of the deformation $\infty$-groupoids associated to convolution homotopy Lie algebras under $\alpha$-weak equivalences and $\infty_\alpha$-quasi-isomorphisms.

\subsection{Deformation $\infty$-groupoids and the Dolgushev--Rogers theorem}

We start by recalling the deformation $\infty$-groupoid of an $\L_\infty$-algebra. This object, first defined in \cite{Hin97} and \cite{Get09}, plays a fundamental role in modern deformation theory and rational homotopy theory. It encodes Maurer--Cartan elements, equivalences between them, and higher compatibilities, all at once in a simplicial set.

\medskip

In order to define this object, we first need to introduce the notion of a filtered $\L_\infty$-algebra.

\begin{definition} \label{def:filteredLoo}
	A \emph{filtered $\L_\infty$-algebra} is an $\L_\infty$-algebra $\g$ together with a descending filtration
	\[
	\g=\F_1\g\supseteq\F_2\g\supseteq\F_3\g\supseteq\cdots
	\]
	satisfying:
	\begin{itemize}
		\item[i)] $d(\F_n\g)\subseteq\F_n\g$,
		\item[ii)] $\ell_k(\F_{n_1}\g,\ldots,\F_{n_k}\g)\subseteq		     \F_{n_1+\cdots+n_k}\g$, and
		\item[iii)] $\g\cong\lim_n\g/\F_n\g$ as $\L_\infty$-algebras.
	\end{itemize}
	Let $(\g,\F\g)$ and $(\h,\F\h)$ be two filtered $\L_\infty$-algebras. An \emph{$\infty$-morphism of filtered $\L_\infty$-algebras}
	\[
	\Phi:(\g,\F\g)\rightsquigarrow(\h,\F\h)
	\]
	is an $\infty$-morphism of $\L_\infty$-algebras $\Phi:\g\rightsquigarrow\h$ such that
	\[
	\phi_k(\F_{n_1}\g,\ldots,\F_{n_k}\g)\subseteq\F_{n_1+\cdots+n_k}\h\ .
	\]
	It is called a \emph{filtered $\infty$-quasi-isomorphism} if moreover we have that all the induced morphisms
	\[
	\phi_1|_{\F_n\g}:\F_n\g\longrightarrow\F_n\h
	\]
	are quasi-isomorphisms. The category of filtered $\L_\infty$-algebras and filtered $\infty$-mor\-phisms between them will be denoted by $\FLalg$.
\end{definition}

\begin{example}\label{ex:complete Loo algebras}
	Let $\g$ be an $\L_\infty$-algebra. Its \emph{canonical filtration} --- also known as the \emph{lower central series} --- is the descending filtration defined as
	\[
	\F_n^{can}\g\coloneqq\{x\in\g\mid x\text{ can be written as a bracketing of at least }n\text{ elements}\}\ .
	\]
	for $n\ge1$. This filtration always satisfies conditions (i) and (ii) of \cref{def:filteredLoo}. When it also satisfied condition (iii), so that $(\g,\F_\bullet^{can}\g)$ is a filtered $\L_\infty$-algebra, we simply say that $\g$ is \emph{complete}. Notice that every nilpotent $\L_\infty$-algebra is complete, since its canonical filtration will terminate at some point.
\end{example}

We denote by $\Omega_\bullet$ the Sullivan algebra of polynomial de Rham forms on the simplex. It is the simplicial commutative algebra given by
\[
\Omega_n\coloneqq\frac{\k[t_0,\ldots,t_n,dt_0,\ldots,dt_n]}{\left(\sum_{i=0}^nt_i = 1,\sum_{i=0}^ndt_i = 0\right)}\ ,
\]
with $|t_i|=0$ and $|dt_i|=-1$ for all $0\le i\le n$ and differential induced by $d(t_i) = dt_i$. It can be interpreted as the algebra of polynomial differential forms on the standard geometric $n$-simplex $\Delta^n$, and it was introduced by Sullivan in \cite{Sul77} as a commutative rational model for the $n$-simplex. Notice that we reversed the degrees, as we are working over chain complexes.

\begin{definition}
	Let $(\g,\F\g)$ be a filtered $\L_\infty$-algebra. The \emph{Deligne--Hinich--Getz\-ler $\infty$-groupoid}, also called the \emph{deformation $\infty$-groupoid}, of $(\g,\F\g)$ is the simplicial set whose $n$-simplices are given by
	\[
	\MC_n(\g,\F\g)\coloneqq\lim_r\MC(\g/\F_r\g\otimes\Omega_n)\ ,
	\]
	and whose face and degeneracy maps are induced by the face and degeneracy maps of $\Omega_{\bullet}$. We will sometimes omit the filtration from the notation when this is clear from the context.
\end{definition}

As a matter of fact, the assignment $\MC_\bullet$ can be extended to a functor on the category of filtered $\L_\infty$-algebras and filtered $\infty$-morphisms between them
\[
\MC_\bullet:\FLalg\longrightarrow\ssets\ ,
\]
see e.g. \cite[Sect. 2.2]{dr15}.

\medskip

A very important result, due to Hinich and Getzler, is the fact that the functor $\MC_\bullet$ takes values in Kan complexes, which motivates calling it an $\infty$-groupoid.

\begin{theorem}[{\cite[Prop. 2.2.3]{Hin97},\cite[Prop. 4.7]{Get09}}]
	Let $(\g,\F\g)$ be a filtered $\L_\infty$-algebra. The simplicial set $\MC_\bullet(\g,\F\g)$ is a Kan complex.
\end{theorem}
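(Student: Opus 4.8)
The plan is to reduce to the case of a nilpotent $\L_\infty$-algebra and then to verify the horn-filling condition by solving the Maurer--Cartan equation explicitly, exploiting the homotopical properties of the simplicial de Rham algebra $\Omega_\bullet$. First I would use completeness: by condition (iii) of \cref{def:filteredLoo} we have $\g\cong\lim_r\g/\F_r\g$, and since the tensor products and (inverse) limits involved commute, this gives $\MC_\bullet(\g,\F\g)\cong\lim_r\MC_\bullet(\g/\F_r\g)$. Each quotient $\g/\F_r\g$ is nilpotent, so it suffices to establish two facts: (i) that $\MC_\bullet$ of a nilpotent $\L_\infty$-algebra is a Kan complex, and (ii) that the structure maps of the tower $\{\MC_\bullet(\g/\F_r\g)\}_r$ are Kan fibrations. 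Indeed, the limit of a tower of Kan fibrations between Kan complexes is again a Kan complex, and the desired conclusion then follows.

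For (i), an $n$-simplex of $\MC_\bullet(\g)$ is a Maurer--Cartan element of the $\L_\infty$-algebra $\g\otimes\Omega_n$, and a horn $\Lambda^n_k\to\MC_\bullet(\g)$ is a compatible family of such elements on the faces of the horn, that is, an element $\beta\in\MC(\g\otimes\Omega_{\Lambda^n_k})$, where $\Omega_{\Lambda^n_k}$ denotes the polynomial forms on the horn. The key geometric input is that $\Lambda^n_k$ is a deformation retract of $\Delta^n$, so the restriction $\Omega_n\to\Omega_{\Lambda^n_k}$ is a surjective quasi-isomorphism of commutative dg algebras carrying a contraction: a section $s$, a projection given by restriction, and a homotopy $h$ satisfying the usual side conditions $h^2=0$, $hs=0$, and $\mathrm{res}\circ h=0$. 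Tensoring with $\g$ transports this to a contraction of the underlying complex of $\g\otimes\Omega_n$ onto that of $\g\otimes\Omega_{\Lambda^n_k}$.

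I would then produce the filler as the solution of the fixed-point equation
\[
\alpha = s\beta - h\Big(\sum_{m\ge2}\tfrac{1}{m!}\,\ell_m(\alpha,\ldots,\alpha)\Big)
\]
in $\g\otimes\Omega_n$. The side condition $\mathrm{res}\circ h=0$ guarantees that $\alpha$ automatically restricts to $\beta$ on the horn, and because $\g$ is nilpotent each application of the brackets strictly increases the filtration degree, so the recursion stabilizes and $\alpha$ is given by an explicit finite sum. The main obstacle is then to verify, using the contraction identities together with the fact that $\beta$ is itself a Maurer--Cartan element, that this $\alpha$ genuinely solves the Maurer--Cartan equation and not merely a linearized version of it; this is precisely the content of the homotopy-transfer argument of Getzler via the Dupont contraction. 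The secondary obstacle is point (ii), namely that the tower consists of Kan fibrations, which is obtained by running the same filling procedure relatively, using the surjectivity of $\g/\F_{r+1}\g\to\g/\F_r\g$. Both steps are carried out in full in \cite{Hin97} and \cite{Get09}, to which we refer for the details.
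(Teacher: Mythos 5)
The paper does not prove this statement at all --- it is recalled from the literature, with the citations to Hinich and Getzler standing in for the proof --- so there is no internal argument to compare against. Your outline is a correct reconstruction of the standard argument from those references: the reduction to the nilpotent quotients $\g/\F_r\g$ (which in this paper is essentially built into the definition of $\MC_\bullet(\g,\F\g)$ as $\lim_r\MC(\g/\F_r\g\otimes\Omega_\bullet)$), the observation that a limit of a tower of Kan fibrations over a Kan complex is Kan, and horn-filling via a chain-level contraction of $\Omega_n$ onto the forms on $\Lambda^n_k$ together with nilpotence, with the genuinely delicate verifications deferred to the very sources the paper itself cites.
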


Another fundamental result in this context, which we will need in what follows, is the Dolgushev--Rogers theorem \cite[Thm. 2.2]{dr15}.

\begin{theorem}[Dolgushev--Rogers]\label{thm:DR}
	Let $(\g,\F\g)$ and $(\h,\F\h)$ be two filtered $\L_\infty$-algebras, and let
	\[
	\Phi:(\g,\F\g)\rightsquigarrow(\h,\F\h)
	\]
	be a filtered $\infty$-quasi-isomorphism. Then the morphism of simplicial sets
	\[
	\MC_\bullet(\Phi):\MC_\bullet(\g,\F\g)\longrightarrow\MC_\bullet(\h,\F\h)
	\]
	is a homotopy equivalence.
\end{theorem}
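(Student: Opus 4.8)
The plan is to reduce the statement to the case of nilpotent $\L_\infty$-algebras and then argue by induction along the filtration, exploiting the fact --- due to Getzler --- that $\MC_\bullet$ sends surjections of nilpotent $\L_\infty$-algebras to Kan fibrations. Since by definition $\MC_\bullet(\g,\F\g)=\lim_r\MC_\bullet(\g/\F_r\g)$ and the filtered $\infty$-morphism $\Phi$ induces a compatible family of $\infty$-morphisms $\g/\F_r\g\rightsquigarrow\h/\F_r\h$ between the nilpotent quotients, it suffices to show that each of these induces a homotopy equivalence on $\MC_\bullet$ and that the two towers of Kan complexes are well behaved enough for the limit to preserve the equivalence. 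The towers consist of fibrations (again by Getzler), so the limit computes the homotopy limit, and a levelwise weak equivalence of towers of fibrations of Kan complexes induces a weak equivalence on limits; since all the spaces involved are Kan complexes, weak equivalences are genuine homotopy equivalences by Whitehead's theorem.

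For the nilpotent case, I would induct on the nilpotency length using the principal fibration attached to a square-zero extension. Writing $\mathrm{gr}_r\g\coloneqq\F_r\g/\F_{r+1}\g$, each quotient map $\g/\F_{r+1}\g\twoheadrightarrow\g/\F_r\g$ is a surjection of nilpotent $\L_\infty$-algebras whose kernel is the abelian $\L_\infty$-algebra $\mathrm{gr}_r\g$, that is, the chain complex $\mathrm{gr}_r\g$ with all higher brackets set to zero. Applying $\MC_\bullet$ yields a Kan fibration, and I would compare the two fibration sequences coming from $\g$ and from $\h$. The crucial inputs are two. First, the hypothesis that each $\phi_1|_{\F_n\g}\colon\F_n\g\to\F_n\h$ is a quasi-isomorphism forces the induced maps $\mathrm{gr}_r\g\to\mathrm{gr}_r\h$ on associated graded pieces to be quasi-isomorphisms, by the five lemma applied to the short exact sequences $0\to\F_{r+1}\g\to\F_r\g\to\mathrm{gr}_r\g\to0$. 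Second, for an abelian $\L_\infty$-algebra the space $\MC_\bullet$ is, through the Dold--Kan correspondence, the simplicial abelian group associated to the relevant truncation of the underlying complex, so that its homotopy groups are the homology groups and a quasi-isomorphism of chain complexes induces a homotopy equivalence of the corresponding $\MC_\bullet$ spaces.

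With these inputs, the inductive step is a diagram chase. At the base of the induction the $\MC_\bullet$ space of $\g/\F_1\g=0$ is a point, and the first genuine stage $\g/\F_2\g=\mathrm{gr}_1\g$ is abelian and handled directly. At each subsequent stage the map on base spaces is a homotopy equivalence by the inductive hypothesis, while the map on fibers is a homotopy equivalence by the abelian computation. Comparing the long exact sequences of homotopy groups of the two fibrations and invoking the five lemma shows that the map on total spaces induces isomorphisms on all homotopy groups, with respect to every choice of basepoint, whence it is a weak, and therefore a homotopy, equivalence.

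The main obstacle will be the careful treatment of the limit together with the basepoint issues. Passing to the inverse limit requires not only that $\MC_\bullet$ commute with the cofiltered limit defining $\MC_\bullet(\g,\F\g)$ --- which holds by construction --- but also that the levelwise equivalence of towers descend to the limit, for which one must check that the relevant $\lim^1$-type obstructions vanish; this is ensured because every stage is a fibration whose fibers have homotopy groups computed by the graded pieces. The basepoint issue is the genuinely delicate point, since $\MC_\bullet$ spaces need not be connected: the five-lemma argument has to be run over each connected component, using that $\Phi$ induces a bijection on $\pi_0$, and tracking this uniformly across the whole tower is where the bulk of the technical work lies.
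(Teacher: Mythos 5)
The paper itself contains no proof of this statement: it is imported verbatim from Dolgushev--Rogers \cite{dr15} (their Theorem 2.2), so your attempt can only be measured against the original argument, and your overall strategy --- passing to the tower of nilpotent quotients $\g/\F_r\g$, invoking Getzler's fibration theorem for the surjections $\g/\F_{r+1}\g\twoheadrightarrow\g/\F_r\g$ with central abelian kernel $\mathrm{gr}_r\g$, identifying fibers via Dold--Kan, and descending a levelwise equivalence of Reedy fibrant towers to the limit --- is exactly the Goldman--Millson-style induction in the spirit of that proof. Most intermediate claims are correct: the squares of $\infty$-morphisms over the towers commute because $\Phi$ is filtered; $\mathrm{gr}_r\phi_1$ is a quasi-isomorphism by the five lemma; and the map induced on a \emph{nonempty} fiber really is the one induced by $\mathrm{gr}_r\phi_1$, because for $k\ge2$ and $\xi\in\F_r\g$ one has $\phi_k(\tilde\tau^{\otimes(k-1)}\otimes\xi)\in\F_{(k-1)+r}\h\subseteq\F_{r+1}\h$, so the higher components of $\Phi$ do not pollute the fiber map. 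This last verification is silently assumed in your phrase ``handled directly by the abelian computation'' and should be spelled out, but it is easy.

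The genuine gap is at $\pi_0$. You write that the five-lemma argument is run ``using that $\Phi$ induces a bijection on $\pi_0$'' --- but that bijection is part of the conclusion, not an available input, and it cannot be extracted from the long exact sequences of the fibrations. Concretely, the fibration comparison only yields a weak equivalence of total spaces if fibers over corresponding points are \emph{simultaneously empty or nonempty}; otherwise surjectivity of $\pi_0(\MC_\bullet(\g/\F_{r+1}\g))\to\pi_0(\MC_\bullet(\h/\F_{r+1}\h))$ fails. A Maurer--Cartan element $\tau$ of $\g/\F_r\g$ lifts to $\g/\F_{r+1}\g$ if and only if an obstruction class $\mathrm{ob}(\tau)\in H_{-1}(\mathrm{gr}_r\g)$ (the class of the curvature of an arbitrary set-theoretic lift, well defined by centrality) vanishes, and what is needed is the naturality statement $\mathrm{ob}\big(\MC_\bullet(\Phi)(\tau)\big)=(\mathrm{gr}_r\phi_1)_*\,\mathrm{ob}(\tau)$, so that injectivity of $(\mathrm{gr}_r\phi_1)_*$ transfers vanishing of the obstruction backwards, together with the analogous order-by-order lifting of homotopies needed for injectivity on $\pi_0$ and for passage up the tower. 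This obstruction calculus is the actual content of the Goldman--Millson-type theorem and the bulk of Dolgushev--Rogers' work; in your sketch it is deferred to ``tracking this uniformly across the whole tower,'' which names the difficulty rather than resolving it.
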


\subsection{Filtered algebras and $\infty$-quasi-isomorphisms}

The notion of filtered algebras is not exclusive to $\L_\infty$-algebras. We define here the analogous notion for algebras over an arbitrary operad $\P$ and go on to state the main result of this section, giving us invariance of the Maurer--Cartan spaces of convolution algebras under (filtered) $\infty_\alpha$-quasi-isomorphisms.

\medskip

For the rest of the section, fix a cooperad $\C$, an operad $\P$, and a Koszul twisting morphism $\alpha:\C\to\P$.

\begin{definition}
	A \emph{filtered $\P$-algebra} is a $\P$-algebra $A$ together with a descending filtration
	\[
	A=\F_1A\supseteq\F_2A\supseteq\F_3A\supseteq\cdots
	\]
	satisfying:
	\begin{itemize}
		\item[i)] $d(\F_nA)\subseteq\F_nA$,
		\item[ii)] $\gamma_A(p\otimes\F_{n_1}A\otimes\cdots\otimes\F_{n_k}A)\subseteq\F_{n_1+\cdots+n_k}A$ for any $p\in\P(k)$, and
		\item[iii)] The canonical map $A\to\lim_nA/\F_nA$ is an isomorphism of $\P$-algebras.
	\end{itemize}
	Let $(A,\F A)$ and $(A',\F A')$ be two filtered $\P$-algebras. An \emph{$\infty_\alpha$-morphism of filtered $\P$-algebras}
	\[
	\Phi:(A,\F A)\rightsquigarrow(A',\F A')
	\]
	is an $\infty_\alpha$-morphism of $\P$-algebras $\Phi:A\rightsquigarrow A'$ such that
	\[
	\phi_k(c\otimes\F_{n_1}A\otimes\cdots\otimes\F_{n_k}A)\subseteq\F_{n_1+\cdots+n_k}A'
	\]
	for any $c\in\C(k)$. It is called a \emph{filtered $\infty_\alpha$-quasi-isomorphism} if moreover we have that all the induced morphisms
	\[
	\phi_1|_{\F_nA}:\F_nA\longrightarrow\F_nA'
	\]
	are quasi-isomorphisms.
\end{definition}

\begin{remark}
	There is a dual notion of cofiltered $\C$-coalgebra. We will not need it in the present paper, and thus we will avoid giving the details and the results dual to those we will state for algebras, but everything works similarly.
\end{remark}

Given a conilpotent $\C$-coalgebra and a filtered $\P$-algebra, we can put a natural filtration on their convolution $\L_\infty$-algebra.

\begin{proposition}
	Let $D$ be a conilpotent $\C$-coalgebra, and let $(A,\F A)$ be a filtered $\P$-algebra. Then the filtration
	\[
	\F_n\hom^\alpha(D,A)\coloneqq\hom^\alpha(D,\F_nA)
	\]
	makes $\hom^\alpha(D,A)$ into a filtered $\L_\infty$-algebra.
\end{proposition}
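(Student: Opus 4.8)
The plan is to verify directly the three conditions of \cref{def:filteredLoo} for the filtration $\F_n\hom^\alpha(D,A)=\hom^\alpha(D,\F_nA)$. Conditions (i) and (ii) follow immediately from the corresponding properties of the filtered $\P$-algebra $(A,\F A)$, together with the explicit formulas for the differential and the brackets of the convolution $\L_\infty$-algebra; the completeness condition (iii) is the only one requiring a genuine argument, and it is where I would place the main content of the proof.

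For condition (i), recall that the differential of $\hom^\alpha(D,A)$ is $\partial(\varphi)=d_A\varphi-(-1)^{|\varphi|}\varphi d_D$. If $\varphi\in\hom(D,\F_nA)$, then $d_A\varphi$ again lands in $\F_nA$ by property (i) of $(A,\F A)$, while the image of $\varphi d_D$ is contained in the image of $\varphi$, hence in $\F_nA$; thus $\partial(\varphi)\in\F_n\hom^\alpha(D,A)$. For condition (ii), recall that the bracket $\ell_k$ of the convolution $\L_\infty$-algebra is the composite
\[
D\xrightarrow{\Delta_D^k}\C(k)\otimes_{\S_k}D^{\otimes k}\xrightarrow{\alpha(k)\otimes(\varphi_1\otimes\cdots\otimes\varphi_k)}\P(k)\otimes_{\S_k}A^{\otimes k}\xrightarrow{\gamma_A}A
\]
(with the symmetrization of \cref{eq:actionOfF} understood). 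If $\varphi_i\in\hom(D,\F_{n_i}A)$ for each $i$, then every term produced lies in $\gamma_A\bigl(\P(k)\otimes\F_{n_1}A\otimes\cdots\otimes\F_{n_k}A\bigr)$, up to a permutation of the factors which does not affect the total weight $n_1+\cdots+n_k$. By property (ii) of $(A,\F A)$ this is contained in $\F_{n_1+\cdots+n_k}A$, so $\ell_k(\varphi_1,\ldots,\varphi_k)\in\F_{n_1+\cdots+n_k}\hom^\alpha(D,A)$, as required.

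It remains to establish the completeness condition (iii), namely that the canonical map $\hom^\alpha(D,A)\to\lim_n\hom^\alpha(D,A)/\F_n\hom^\alpha(D,A)$ is an isomorphism of $\L_\infty$-algebras. I would first note that, since we work over a field, the functor $\hom(D,-)$ is exact, so applying it to the short exact sequence $0\to\F_nA\to A\to A/\F_nA\to0$ yields a natural isomorphism of chain complexes
\[
\hom^\alpha(D,A)/\F_n\hom^\alpha(D,A)=\hom(D,A)/\hom(D,\F_nA)\cong\hom(D,A/\F_nA)=\hom^\alpha(D,A/\F_nA)\ .
\]
Moreover, the projection $A\to A/\F_nA$ is a strict morphism of $\P$-algebras, hence induces a strict morphism of $\L_\infty$-algebras by functoriality of $\hom^\alpha$ in its second slot, so the isomorphism above respects the $\L_\infty$-structures. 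Finally, $\hom(D,-)$ preserves limits, being a $\hom$-functor, so
\[
\lim_n\hom^\alpha(D,A/\F_nA)\cong\hom\bigl(D,\lim_nA/\F_nA\bigr)\cong\hom(D,A)=\hom^\alpha(D,A)\ ,
\]
where the second isomorphism uses property (iii) of $(A,\F A)$. Since the inverse limit of $\L_\infty$-algebras is computed on underlying chain complexes and all transition maps are strict $\L_\infty$-morphisms, this chain of isomorphisms is compatible with the $\L_\infty$-structures, finishing the verification.

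The main obstacle is the bookkeeping in condition (iii): one must check that the purely chain-level identifications $\hom(D,A)/\hom(D,\F_nA)\cong\hom(D,A/\F_nA)$ and $\lim_n\hom(D,A/\F_nA)\cong\hom(D,\lim_nA/\F_nA)$ are in fact isomorphisms of $\L_\infty$-algebras and not merely of chain complexes. This is guaranteed by functoriality of the convolution construction with respect to strict morphisms of $\P$-algebras, which sends both the projections $A\to A/\F_nA$ and the limit cone to strict $\L_\infty$-morphisms; conditions (i) and (ii), by contrast, are essentially immediate from the properties of $(A,\F A)$.
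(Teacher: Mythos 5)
Your proposal is correct and follows essentially the same route as the paper's proof: conditions (i) and (ii) are verified directly from the corresponding properties of $(A,\F A)$, and condition (iii) is handled via the identification $\hom^\alpha(D,A)/\F_n\hom^\alpha(D,A)\cong\hom^\alpha(D,A/\F_nA)$ followed by commuting $\hom(D,-)$ with the limit $\lim_n A/\F_nA$. The only difference is that you spell out what the paper leaves as "straightforward to check" — the exactness argument over a field and the use of functoriality of $\hom^\alpha$ in the second slot to see that these chain-level isomorphisms are isomorphisms of $\L_\infty$-algebras — which is a welcome but not essentially different elaboration.
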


\begin{proof}
	The filtration is obviously descending and $\F_1\hom^\alpha(D,A) = \hom^\alpha(D,A)$. We have to show that the three conditions of \cref{def:filteredLoo} are satisfied.
	
	\medskip
	
	For the first condition, let $f\in\F_n\hom(D,A)$ and let $x\in D$. Then
	\[
	\partial(f)(x) = d_Af(x) - (-1)^ff(d_Dx)\in\F_nA\ ,
	\]
	since $d_A(\F_nA)\subseteq\F_nA$ by definition. Therefore, we have that
	\[
	\partial(f)\in\F_n\hom(D,A)\ .
	\]
	For the second condition, let $k\ge2$, fix $n_1,\ldots,n_k\ge1$ and let $f_i\in\F_{n_i}\hom^\alpha(D,A)$ for $1\le i\le k$. Take $x\in D$ and denote $F\coloneqq f_1\otimes\cdots\otimes f_k$ as usual. We have
	\[
	\ell_k(f_1,\ldots,f_k)(x) = \gamma_A(\alpha\circ1_D)F\Delta^k_D(x)\in\F_{n_1+\cdot+n_k}\hom^\alpha(D,A)
	\]
	since
	\[
	F\Delta^k_D(x)\in\C(k)\otimes_{\S_k}(\F_{n_1}A\otimes\cdots\otimes\F_{n_k}A)
	\]
	and by the fact that $\F A$ makes $A$ into a filtered $\P$-algebra. Thus, the second condition also holds.
	
	\medskip
	
	For the third condition, we have an isomorphism of chain complexes
	\[
	\frac{\hom^\alpha(D,A)}{\F_n\hom^\alpha(D,A)}\stackrel{\cong}{\longrightarrow}\hom^\alpha(D,A/\F_nA)
	\]
	given by sending the class of a linear map $f:D\to A$ to the composite
	\[
	D\stackrel{f}{\longrightarrow}A\xrightarrow{\mathrm{proj.}}A/\F_nA\ .
	\]
	It is straightforward to check that this is in fact an isomorphism of $\L_\infty$-alge\-bras. Therefore, we have
	\begin{align*}
		\lim_n\frac{\hom^\alpha(D,A)}{\F_n\hom^\alpha(D,A)}\cong&\ \lim_n\hom^\alpha(D,A/\F_nA)\\
		\cong&\ \hom^\alpha(D,\lim_nA/\F_nA)\\
		\cong&\ \hom^\alpha(D,A)\ .
	\end{align*}
	The isomorphism of the second line holds \emph{a priori} for chain complexes, but once again it is straightforward to check that it is also true at the level of $\L_\infty$-algebras. This completes the proof.
\end{proof}

This filtration is well behaved with respect to $\infty$-morphisms of $\L_\infty$-algebras induced through \cref{thm:twoBifunctors}. More precisely, we have the following.

\begin{proposition}\label{prop:induced morphisms are filtered}
	Let $D,D'$ be two $\C$-coalgebras, and let $(A,\F A),(A',\F A')$ be two filtered $\P$-algebras. We endow the various convolution $\L_\infty$-algebras with the respective filtrations defined as above.
	\begin{enumerate}
		\item\label{part1} Let $\Phi:D'\rightsquigarrow D$ be an $\infty_\alpha$-morphism of conilpotent $\C$-coalgebras. The induced $\infty$-morphism between the convolution $\L_\infty$-algebras
		\[
		\hom_\ell^\alpha(\Phi,1):\hom^\alpha(D,A)\rightsquigarrow\hom^\alpha(D',A)
		\]
		is filtered.
		\item\label{part2} Let $\Psi:(A,\F A)\rightsquigarrow(A',\F A')$ be a filtered $\infty_\alpha$-morphism of filtered $\P$-algebras. The induced $\infty$-morphism between the convolution $\L_\infty$-algebras
		\[
		\hom_r^\alpha(1,\Psi):\hom^\alpha(D,A)\rightsquigarrow\hom^\alpha(D,A')
		\]
		is filtered.
	\end{enumerate}
\end{proposition}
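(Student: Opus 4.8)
The plan is to verify, in each of the two cases, the filtration-compatibility condition of \cref{def:filteredLoo} directly on the structure maps of the induced $\infty$-morphism, using the explicit formulas $\hom^\alpha_\ell(\Phi,1)(\mu_n^\vee\otimes F)=\gamma_A F\phi_n$ and $\hom^\alpha_r(1,\Psi)(\mu_n^\vee\otimes F)=\Psi F\Delta_D^n$ recorded just before \cref{thm:twoBifunctors}. The $n$-th structure map of an induced $\infty$-morphism, evaluated on $f_1,\dots,f_n\in\hom(D,A)$ written as $F=f_1\otimes\cdots\otimes f_n$, is precisely the cogenerator projection $\hom^\alpha_\bullet(-,-)(\mu_n^\vee\otimes F)$. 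Since the filtration on the convolution algebra is $\F_m\hom^\alpha(D,A)=\hom^\alpha(D,\F_m A)$, the hypothesis $f_i\in\F_{m_i}\hom^\alpha(D,A)$ means exactly that the image of $f_i$ lies in $\F_{m_i}A$. Thus in both parts the goal is to show that for such inputs the resulting map $D^{(\prime)}\to A^{(\prime)}$ has image in $\F_{m_1+\cdots+m_n}A^{(\prime)}$.

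For part (\ref{part1}) I would trace an element along the composite
\[
D'\xrightarrow{\ \Phi\ }\P(D)\xrightarrow{\ \proj_n\ }\P(n)\otimes_{\S_n}D^{\otimes n}\xrightarrow{\ F\ }\P(n)\otimes_{\S_n}A^{\otimes n}\xrightarrow{\ \gamma_A\ }A\ .
\]
By \cref{eq:actionOfF}, the map $F$ sends a coinvariant $p\otimes x_1\otimes\cdots\otimes x_n$ to the signed sum $\sum_{\sigma\in\S_n}(-1)^\theta\,p\otimes f_{\sigma(1)}(x_1)\otimes\cdots\otimes f_{\sigma(n)}(x_n)$, and each factor $f_{\sigma(i)}(x_i)$ lies in $\F_{m_{\sigma(i)}}A$. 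Applying $\gamma_A$ and invoking property (ii) of a filtered $\P$-algebra, each summand lands in $\F_{m_{\sigma(1)}+\cdots+m_{\sigma(n)}}A=\F_{m_1+\cdots+m_n}A$. The point is that this filtration degree does not depend on $\sigma$, so the whole symmetrized sum lands there as well. Note that no hypothesis on $\Phi$ is required, since $\Phi$ and $\proj_n$ act only on the coalgebra slot and leave the $A$-filtration untouched.

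Part (\ref{part2}) is entirely analogous, replacing $\gamma_A$ by $\Psi$ in the composite
\[
D\xrightarrow{\ \Delta_D\ }\C(D)\xrightarrow{\ \proj_n\ }\C(n)\otimes_{\S_n}D^{\otimes n}\xrightarrow{\ F\ }\C(n)\otimes_{\S_n}A^{\otimes n}\xrightarrow{\ \Psi\ }A'\ .
\]
After applying $F$ exactly as above, one applies the arity-$n$ component of $\Psi$, a map $\C(n)\otimes_{\S_n}A^{\otimes n}\to A'$; here one must use precisely the defining filtration-compatibility of a filtered $\infty_\alpha$-morphism of $\P$-algebras to conclude that the image of each summand lies in $\F_{m_{\sigma(1)}+\cdots+m_{\sigma(n)}}A'=\F_{m_1+\cdots+m_n}A'$, again using permutation-invariance of the total filtration degree.

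Since both verifications are immediate consequences of the filtration axioms, I do not expect a genuine obstacle here; the only bookkeeping subtlety is to confirm that the symmetrization over $\S_n$ coming from $F$ does not spoil the estimate, which it does not because every summand carries the same total weight $m_1+\cdots+m_n$. The asymmetry between the two parts --- that (\ref{part1}) needs no filtration hypothesis on $\Phi$ whereas (\ref{part2}) crucially needs the filtered hypothesis on $\Psi$ --- reflects the fact that the filtration on $\hom^\alpha(D,A)$ is induced solely from the $\P$-algebra slot.
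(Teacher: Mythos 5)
Your proof is correct and follows essentially the same route as the paper's: both verify the filtration condition directly on the explicit formulas $\gamma_A F\phi_n$ and $\Psi F\Delta_D^n$, using property (ii) of filtered $\P$-algebras for part (\ref{part1}) and the filtered hypothesis on $\Psi$ for part (\ref{part2}). If anything, your write-up is slightly more careful than the paper's (which cites the ``third'' rather than second condition and writes $\C(k)$ where $\P(k)$ is meant in part (\ref{part1})), and your closing remark on the asymmetry between the two parts is an accurate observation.
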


\begin{proof}
	We start by proving (\ref{part1}). Take elements $f_1,\ldots,f_k\in\hom^\alpha(D,A)$ with $f_i\in\F_{n_i}\hom^\alpha(D,A)$, and let $x\in D$. Then
	\[
	\hom_\ell^\alpha(\Phi,1)(f_1,\ldots,f_k)(x) = \gamma_AF\phi_k(x)\in\F_{n_1+\cdots+n_k}A
	\]
	since
	\[
	F\phi_k(x)\in\C(k)\otimes_{\S_k}(\F_{n_1}A\otimes\cdots\otimes\F_{n_k}A)
	\]
	and the third condition defining filtered $\P$-algebras. Thus,
	\[
	\hom_\ell^\alpha(\Phi,1)(f_1,\ldots,f_k)\in\F_{n_1+\cdots+n_k}\hom^\alpha(D,A)
	\]
	showing that $\hom_\ell^\alpha(\Phi,1)$ is a filtered $\infty$-morphism.
	
	\medskip
	
	Part (\ref{part2}) is similar. Again, let $f_1,\ldots,f_k\in\hom^\alpha(D,A)$ with $f_i\in\F_{n_i}\hom^\alpha(D,A)$, and let $x\in C$. Then
	\[
	\hom_r^\alpha(\Phi,1)(f_1,\ldots,f_k)(x) = \phi_k F\Delta^k_D(x)\in\F_{n_1+\cdots+n_k}A
	\]
	since again
	\[
	F\Delta^C_k(x)\in\C(n)\otimes_{\S_k}(\F_{n_1}A\otimes\cdots\otimes\F_{n_k}A)
	\]
	and because of the fact that $\Psi$ is a filtered morphism. This shows that also $\hom_r^\alpha(1,\Psi)$ is a filtered $\infty$-morphism, concluding the proof.
\end{proof}

Finally, we state the main result of this section.

\begin{theorem}\label{thm:invariance of MC spaces}
	Let $D,D'$ be two conilpotent $\C$-coalgebras, and let $(A,\F A),(A',\F A')$ be two filtered $\P$-algebras. We endow the various convolution $\L_\infty$-algebras with the respective filtrations defined as above.
	\begin{enumerate}
		\item Let $\Phi:D'\rightsquigarrow D$ be an $\infty_\alpha$-quasi-isomorphism of $\C$-coalgebras. The induced morphism
		\[
		\MC_\bullet(\hom_\ell^\alpha(\Phi,1)):\MC_\bullet(\hom^\alpha(D,A))\longrightarrow\MC_\bullet(\hom^\alpha(D',A))
		\]
		is a weak equivalence of simplicial sets. In particular, by \cref{thm: alpha-we are oo-alpha-qi} this is true for $\alpha$-weak equivalences of conilpotent $\C$-coalgebras.
		\item Let $\Psi:(A,\F A)\rightsquigarrow(A',\F A')$ be a filtered $\infty_\alpha$-quasi-isomorphism of filtered $\P$-algebras. The induced morphism
		\[
		\MC_\bullet(\hom_r^\alpha(1,\Psi)):\MC_\bullet(\hom^\alpha(D,A))\longrightarrow\MC_\bullet(\hom^\alpha(D,A'))
		\]
		is a weak equivalence of simplicial sets.
	\end{enumerate}
\end{theorem}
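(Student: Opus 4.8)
The plan is to deduce both statements from the Dolgushev--Rogers theorem (\cref{thm:DR}): once we know that the induced $\infty$-morphisms $\hom_\ell^\alpha(\Phi,1)$ and $\hom_r^\alpha(1,\Psi)$ are \emph{filtered $\infty$-quasi-isomorphisms}, the conclusion is immediate, since such morphisms are sent to homotopy equivalences — in particular weak equivalences — of Maurer--Cartan spaces. Both convolution algebras are genuine filtered $\L_\infty$-algebras for the filtration $\F_n\hom^\alpha(D,A)=\hom^\alpha(D,\F_nA)$ by the proposition establishing that $\hom^\alpha(D,A)$ is a filtered $\L_\infty$-algebra, and by \cref{prop:induced morphisms are filtered} the two induced $\infty$-morphisms are already filtered. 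Hence the only thing left to check is the quasi-isomorphism condition on each filtration layer.

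First I would record the first components, exactly as in the proof of \cref{prop:qiInduceQi}: $\hom_\ell^\alpha(\Phi,1)$ has first component $f\mapsto f\phi_1$ (precomposition by $\phi_1\colon D'\to D$), while $\hom_r^\alpha(1,\Psi)$ has first component $f\mapsto\psi_1 f$ (postcomposition by $\psi_1\colon A\to A'$). Restricting to the $n$-th filtration layer $\F_n\hom^\alpha(D,A)=\hom(D,\F_nA)$, these become
\[
(\phi_1)^*\colon\hom(D,\F_nA)\longrightarrow\hom(D',\F_nA),\qquad (\psi_1)_*\colon\hom(D,\F_nA)\longrightarrow\hom(D,\F_nA')\ ,
\]
where in the second map $\psi_1$ is understood as its restriction $\psi_1|_{\F_nA}\colon\F_nA\to\F_nA'$. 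In case (1) the map $\phi_1$ is a quasi-isomorphism by the hypothesis that $\Phi$ is an $\infty_\alpha$-quasi-isomorphism, and it is a single fixed map used at every layer. In case (2) each restriction $\psi_1|_{\F_nA}$ is a quasi-isomorphism precisely because $\Psi$ is assumed to be a \emph{filtered} $\infty_\alpha$-quasi-isomorphism — this is exactly where the filtered hypothesis on $\Psi$, absent on $\Phi$ in case (1), is needed.

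The crux is then to upgrade these quasi-isomorphisms between chain complexes to quasi-isomorphisms between the hom-complexes. Since we work over a field, every chain complex is bifibrant, so by \cite[Lemma 4.24]{ds95} each of the maps $\phi_1$ and $\psi_1|_{\F_nA}$ admits a chain homotopy inverse. Precomposition with the homotopy equivalence $\phi_1$, respectively postcomposition with the homotopy equivalence $\psi_1|_{\F_nA}$, is then a homotopy equivalence of hom-complexes — by the same reasoning used in the proof of \cref{prop:qiInduceQi} — and in particular a quasi-isomorphism. This verifies the layerwise quasi-isomorphism condition and shows that $\hom_\ell^\alpha(\Phi,1)$ and $\hom_r^\alpha(1,\Psi)$ are filtered $\infty$-quasi-isomorphisms. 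Applying \cref{thm:DR} to each then yields the desired weak equivalences of Maurer--Cartan spaces, and the ``in particular'' assertion in (1) follows by combining this with \cref{thm: alpha-we are oo-alpha-qi}. I do not expect a serious obstacle here: the only point requiring care is that in case (2) the homotopy inverses are produced independently on each layer, which is harmless since the filtered $\infty$-quasi-isomorphism condition is itself a layer-by-layer statement.
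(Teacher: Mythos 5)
Your proof is correct, and its skeleton coincides with the paper's: both reduce the theorem, via \cref{prop:induced morphisms are filtered}, to showing that $\hom_\ell^\alpha(\Phi,1)$ and $\hom_r^\alpha(1,\Psi)$ are filtered $\infty$-quasi-isomorphisms, and then conclude with the Dolgushev--Rogers theorem (\cref{thm:DR}). Where you genuinely diverge is in how the layer-wise quasi-isomorphism condition is verified. The paper argues through the quotients: it applies \cref{prop:qiInduceQi} to the quotient algebras $A/\F_nA$, identifies $\hom^\alpha(D,A/\F_nA)$ with $\hom^\alpha(D,A)/\F_n\hom^\alpha(D,A)$, and recovers the statement about the layers from the short exact sequence
\[
0\longrightarrow\F_n\hom^\alpha(D,A)\longrightarrow\hom^\alpha(D,A)\longrightarrow\hom^\alpha(D,A/\F_nA)\longrightarrow0
\]
via the long exact sequence in homology and the five lemma (and in case (2) this exactness argument is effectively run twice, first to convert the hypothesis on $\psi_1|_{\F_nA}$ into quasi-isomorphisms on the quotients, then back to the layers). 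You instead work directly on the layers: the restriction of the first component to $\F_n\hom^\alpha(D,A)=\hom(D,\F_nA)$ is precomposition by $\phi_1$, resp.\ postcomposition by $\psi_1|_{\F_nA}$, and over a field these are quasi-isomorphisms of hom-complexes because every quasi-isomorphism of chain complexes is a homotopy equivalence (every complex is bifibrant, so \cite[Lemma 4.24]{ds95} applies) and the dg-hom functors preserve chain homotopies. Your route is shorter, avoids the homological algebra with quotients entirely, and makes the asymmetry of the hypotheses transparent: in case (1) the single map $\phi_1$ is precomposed at every layer, so no filtered hypothesis on $\Phi$ is needed, whereas in case (2) the filtered hypothesis on $\Psi$ is exactly what supplies a quasi-isomorphism $\F_nA\to\F_nA'$ at each layer. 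What the paper's version buys is that \cref{prop:qiInduceQi} is invoked only as a black box, whereas you reopen its proof and repeat its homotopy-inverse mechanism layer by layer --- which, as you correctly note, is harmless since the filtered $\infty$-quasi-isomorphism condition is itself a layer-by-layer condition with no compatibility required between the homotopy inverses chosen at different layers.
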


\begin{proof}
	By \cref{prop:induced morphisms are filtered}, we already know that the induced $\infty$-morphisms of $\L_\infty$-algebras are filtered. We will show that the induced morphisms are filtered $\infty$-quasi-isomorphisms of $\L_\infty$-algebras, and then conclude by \cref{thm:DR}.
	
	\medskip
	
	For the $\infty$-morphism $\hom_\ell^\alpha(\Phi,1)$, since we assumed that $\Phi$ is an $\infty_\alpha$-quasi-iso\-mor\-phism, we have that $\hom_\ell^\alpha(\Phi,1)$ and all of the $\infty$-morphisms
	\[
	\hom_\ell^\alpha(\Phi,1_{A/\F_nA}):\hom^\alpha(D,A/\F_nA)\rightsquigarrow\hom^\alpha(D',A/\F_nA)
	\]
	are $\infty$-quasi-isomorphisms by \cref{prop:qiInduceQi}, i.e. that their component in arity $1$ is a quasi-iso\-mor\-phism. We have
	\[
	\hom^\alpha(D,A/\F_nA)\cong\frac{\hom^\alpha(D,A)}{\F_n\hom^\alpha(D,A)}\ ,
	\]
	giving the short exact sequence
	\[
	0\longrightarrow\F_n\hom^\alpha(D,A)\longrightarrow\hom^\alpha(D,A)\longrightarrow\hom^\alpha(D,A/\F_nA)\longrightarrow0\ .
	\]
	The associated long exact sequence in homology, together with the $5$-lemma, implies that $\hom_\ell^\alpha(\Phi,1)$ is a filtered $\infty$-quasi-isomorphism.
	
	\medskip
	
	For the $\infty$-morphism $\hom_r^\alpha(1,\Psi)$, we assumed that $\Psi$ is a filtered $\infty_\alpha$-quasi-isomorphism. Therefore, all of the induced maps
	\[
	\psi^{(n)}_1:\g/\F_n\g\longrightarrow\g'/\F\g'
	\]
	are quasi-isomorphisms, again by an argument similar to the one presented for the first case. It follows that all of the $\infty$-morphisms
	\[
	\hom_r^\alpha(1,\Psi^{(n)}):\hom^\alpha(C,\g/\F_n\g)\rightsquigarrow\hom^\alpha(C,\g'/\F_n\g')
	\]
	are $\infty$-quasi-isomorphisms, and thus that $\hom^\alpha(1,\Psi)$ is a filtered $\infty$-quasi-iso\-mor\-phism.
\end{proof}

\section{Application: Rational models for mapping spaces} \label{sect:application}

In this final section, we present an application of the theory developed in the rest of the present paper to rational homotopy theory, using it to construct rational models for mapping spaces. We assume that the reader is familiar with the basics of rational homotopy theory, and redirect the reader to \cite{FHT01} for an accessible introduction.

\medskip

In the rest of this section, by spaces we mean simplicial sets. We only work with simply-connected spaces of finite $\q$-type. Moreover, we will often impose additional conditions on the spaces, such as the existence of a locally finite, degree-wise nilpotent $\L_\infty$-models for them (see \cref{def:locally finite}, \cref{def:degree-wise nilpotent}, and \cref{def:Loo models}). This last condition is not too stringent, as shown by \cref{prop:condition ok}.

\medskip

Let $K$ and $L$ be two such spaces, and suppose moreover that $K$ and  $L$ are based. Let $C$ be a homotopy cocommutative coalgebra model for $K$, and let $\g$ be an $\L_\infty$-algebra model for $L$. In \cite{w16}, the second author equipped the chain complex $\hom(C,\g)$ with an $\L_\infty$-algebra structure using \cref{thm:bijTwHoms}. We will prove that $\hom(C,\g)$ with this $\L_\infty$-algebra structure gives a rational model for the based mapping space $\map(K,L)$ (the basepoint being given by the constant map to the basepoint of $L$).

\medskip

Throughout this section, we will work over the field $\k=\q$  of rational numbers. By \emph{homotopy cocommutative coalgebras} we mean coalgebras over the cooperad $\bar\Omega\com^\vee$, which provides us with a resolution of $(\susp\otimes\lie)^{\antishriek}=\com^\vee$. We have the commutative diagram
\begin{center}
	\begin{tikzpicture}
		\node (a) at (0,1.5){$\com^\vee$};
		\node (b) at (0,0){$\bar\Omega\com^\vee$};
		\node (d) at (3,0){$\L_\infty$};
		
		\draw[->] (a) -- node[left]{$f_\iota$} (b);
		\draw[->] (a) edge[out=0,in=120] node[above]{$\iota$} (d);
		\draw[->] (b) -- node[above]{$\pi$} (d);
	\end{tikzpicture}
\end{center}
where the quasi-isomorphism $f_\iota$ is obtained by \cite[Prop. 6.5.8]{lodayvallette} --- it is in fact the unit of the bar-cobar adjunction --- and where all twisting morphisms are Koszul. We endow the three operads with the connected weight grading given by putting in weight $\omega$ the elements of arity $\omega+1$. The morphism $f_\iota$ and both twisting morphisms preserve the weight, so that we can use the results of Section \ref{subsect:oo-alpha-qis and rectifications}. Further, we suppose that all our algebras and coalgebras are simply-connected, i.e. concentrated in degrees greater or equal than $2$. This can always be done when working with simply-connected spaces, see \cite[Thm. 1]{Quil69}.

\subsection{Conditions on $\L_\infty$-algebras}

Later, we will need a condition stronger than being filtered on the $\L_\infty$-al\-ge\-bras that we will consider.

\begin{definition}\label{def:locally finite}
	A filtered $\L_\infty$-algebra $(\g,\F\g)$ is \emph{locally finite} if all of the quotients $\g/\F_n\g$, $n\ge1$, are finite dimensional.
\end{definition}

Given a filtered $\L_\infty$-algebra $(\g,\F\g)$ and a commutative algebra $A$, from now on we will denote by
\[
\g\widehat{\otimes}A\coloneqq\lim_n\big(\g/\F_n\g\otimes A\big)
\]
the tensor product completed with respect to the filtration on $\g$. Since we will never consider different filtrations on the same algebra, we omit the filtration from the notation.

\begin{lemma}\label{lemma:isomHomTensor}
	Let $(\g,\F\g)$ be a filtered $\L_\infty$-algebra, and let $C$ be a cocommutative coalgebra. Suppose that either
	\begin{enumerate}
		\item the cocommutative coalgebra $C$ is finite dimensional, or
		\item the filtered $\L_\infty$-algebra $(\g,\F\g)$ is locally finite.
	\end{enumerate}
	Then we have an isomorphism
	\[
	\g\widehat{\otimes}C^\vee\cong\hom^\iota(C,\g)
	\]
	of $\L_\infty$-algebras, where $\iota:\com^\vee\to\L_\infty$ is the canonical twisting morphism.
\end{lemma}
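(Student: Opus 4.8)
The plan is to reduce the statement to a level-wise comparison of two explicit $\L_\infty$-structures and then pass to the limit. First I would record the underlying linear isomorphism. Since $\com^\vee$ is one-dimensional in each arity, it is in particular finite dimensional in each arity, so by the discussion preceding \cref{thm:bijTwTensor} there is a canonical isomorphism of operads $\hom(\com^\vee,\L_\infty)\cong\L_\infty\otimes\com$. Under either hypothesis (1) or (2), for every $n$ the natural evaluation map
\[
\g/\F_n\g\otimes C^\vee\longrightarrow\hom(C,\g/\F_n\g),\qquad a\otimes\phi\longmapsto\big(c\mapsto(-1)^{|a|\,|\phi|}\phi(c)\,a\big)
\]
is an isomorphism of chain complexes, because one of the two factors is finite dimensional. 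These maps are compatible with the structure maps of the two towers, so they assemble into an isomorphism after passing to the limit.

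Next I would identify the two completions. Using the filtration $\F_n\hom^\iota(C,\g)=\hom^\iota(C,\F_n\g)$ together with the fact that $\hom(C,-)$ is exact and commutes with limits, one has $\hom(C,\g/\F_n\g)\cong\hom(C,\g)/\F_n\hom(C,\g)$, and by condition (iii) of \cref{def:filteredLoo} also $\lim_n\hom(C,\g/\F_n\g)\cong\hom(C,\g)$. Combined with the definition $\g\widehat\otimes C^\vee=\lim_n(\g/\F_n\g\otimes C^\vee)$, this upgrades the level-wise isomorphisms to a linear isomorphism $\g\widehat\otimes C^\vee\cong\hom^\iota(C,\g)$. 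Since the tower maps are strict morphisms of $\L_\infty$-algebras, it will then suffice to verify that each level-wise map is an isomorphism of $\L_\infty$-algebras, i.e. to treat the case of a finite-dimensional $V$ (case (2)) or a finite-dimensional $C$ (case (1)).

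For this finite-dimensional heart I would use that both $\L_\infty$-structures are governed by the same twisting morphism $\iota$. By \cref{thm:bijTwHoms} and \cref{thm:bijTwTensor} the morphisms $\M_\iota\colon\L_\infty\to\hom(\com^\vee,\L_\infty)$ and $\L_\infty\to\L_\infty\otimes\com$ are the images of $\iota\in\Tw(\com^\vee,\L_\infty)$ under the two compatible bijections, hence correspond to one another under the operad isomorphism above. Since $\hom(C,V)$ is a $\hom(\com^\vee,\L_\infty)$-algebra and $V\otimes C^\vee$ is (by the tensor analogue of the convolution-algebra lemma) an $\L_\infty\otimes\com$-algebra, and the evaluation map intertwines these two actions, pulling back along $\M_\iota$ yields the desired isomorphism of $\L_\infty$-algebras.

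The main obstacle is precisely this last compatibility: that the evaluation isomorphism intertwines the convolution action with the Hadamard action. Concretely this amounts to matching, for each $n$, the convolution bracket
\[
\ell_n(\varphi_1,\ldots,\varphi_n)=\gamma_\g(\iota(n)\otimes\varphi_1\otimes\cdots\otimes\varphi_n)\,\Delta^n_C
\]
with the tensor bracket written in the explicit form following \cref{thm:bijTwTensor}, using that $\com^\vee(n)$ is spanned by a single element whose image under $\iota$ is the relevant generator of $\L_\infty(n)$, and that the product on $C^\vee$ is dual to the $n$-fold comultiplication $\Delta^n_C$. The content of this step is purely Koszul-sign bookkeeping, which is unavoidable but routine, since both sides are built from the same sign conventions.
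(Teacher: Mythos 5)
Your proposal is correct and follows essentially the same route as the paper: the paper likewise reduces to the finite-dimensional case (where its isomorphism $\phi\longmapsto\sum_i x_i\otimes(x_i^\vee\phi)$, written in a homogeneous basis of $\g$, is exactly the inverse of your evaluation map) and then passes to the limit over the quotients $\g/\F_n\g$ using condition (iii) of the definition of a filtered $\L_\infty$-algebra, exactly as in your tower argument. The only difference is one of presentation: where you justify the $\L_\infty$-compatibility of the level-wise isomorphism by an operadic intertwining argument through the isomorphism $\hom(\com^\vee,\L_\infty)\cong\L_\infty\otimes\com$ (deferring the Koszul-sign check as routine), the paper declares the same verification a straightforward exercise and cites \cite{rn17} for it.
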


\begin{proof}
	The first case is straightforward, so we only give some details for the second one. First begin by considering the case where $\g$ is finite dimensional. If we fix a homogeneous basis $\{x_i\}_i$ of $\g$, then we obtain an isomorphism
	\[
	\hom^\iota(C,\g)\longrightarrow\g\otimes^\iota C^\vee
	\]
	by sending
	\[
	\phi\longmapsto\sum_ix_i\otimes(x_i^\vee\phi)\ .
	\]
	It is a straightforward exercise to check that this is independent of the chosen basis, and to see that the isomorphism holds true at the level of $\L_\infty$-algebras (e.g. using \cite[Lemma 3.8]{rn17}).
	
	\medskip
	
	Now if $\g$ is not necessarily finite dimensional, but only locally finite, we have
	\begin{align*}
		\hom^\iota(C,\g) \cong&\ \hom^\iota(C,\lim_n\g/\F_n\g)\\
		\cong&\ \lim_n\hom^\iota(C,\g/\F_n\g)\\
		\cong&\ \lim_n(\g/\F_n\g\otimes C^\vee)\\
		=&\ \g\widehat{\otimes}C^\vee\ ,
	\end{align*}
	where the fact that the second isomorphism holds at the level of $\L_\infty$-algebras is straightforward to check, and in the third line we used the fact that $\g^{(n)}$ is finite dimensional for all $n$ in order to apply what said above.
\end{proof}

Finally, there is a last condition we will impose on some of our $\L_\infty$-algebras. It was first introduced in \cite{ber15}.

\begin{definition}\label{def:degree-wise nilpotent}
	Let $(\g,\F\g)$ be a filtered $\L_\infty$-algebra. We say that $(\g,\F\g)$ is \emph{degree-wise nilpotent} if for any $n\in\mathbb{Z}$ there is a $k\ge1$ such that $(\F_k\g)_n = 0$.
\end{definition}

The functor $\MC_\bullet$ acts in a very straightforward manner on degree-wise nilpotent filtered $\L_\infty$-algebras satisfying a boundedness condition with respect to the homological degree, as the following result demonstrates.

\begin{proposition}\label{prop:MC with deg-wise nilpotent filtration}
	Let $(\g,\F\g)$ be a degree-wise nilpotent, filtered $\L_\infty$-algebra, and suppose that the degrees in which $\g$ is non-zero are bounded below. Then
	\[
	\MC_\bullet(\g,\F\g)\cong\MC(\g\otimes\Omega_\bullet)\ .
	\]
	In particular, $\MC_\bullet(\g,\F\g)$ is independent of the filtration $\F\g$, as long as $(\g,\F\g)$ is degree-wise nilpotent.
\end{proposition}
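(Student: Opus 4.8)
The plan is to reduce the statement to two facts: that the Maurer--Cartan functor commutes with the cofiltered limit defining $\MC_\bullet$, and that under the hypotheses the canonical map from the ordinary tensor product $\g\otimes\Omega_n$ to the completed one $\g\widehat{\otimes}\Omega_n$ is already an isomorphism of $\L_\infty$-algebras.

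First I would unwind the definitions. By definition $\MC_n(\g,\F\g)=\lim_r\MC\big(\g/\F_r\g\otimes\Omega_n\big)$. Since the Maurer--Cartan set of an $\L_\infty$-algebra $B$ is the zero locus of the map $B_0\to B_{-1}$ sending $x$ to $dx+\sum_{k\ge2}\ell_k(x,\ldots,x)$, and this map is natural with respect to the strict projections of the tower $\{\g/\F_r\g\otimes\Omega_n\}_r$, the functor $\MC$ sends this inverse limit to an inverse limit (the defining equation is checked componentwise). Carrying the limit structure through, this gives $\MC_n(\g,\F\g)\cong\MC\big(\lim_r(\g/\F_r\g\otimes\Omega_n)\big)=\MC\big(\g\widehat{\otimes}\Omega_n\big)$.

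The heart of the argument is then to show that the canonical map $\g\otimes\Omega_n\to\g\widehat{\otimes}\Omega_n$ is a degreewise isomorphism. The key observation is that $\Omega_n$ is concentrated in homological degrees $[-n,0]$, so a fixed total degree $m$ of the tensor product receives contributions only from the pieces $\g_k$ with $k\in[m,m+n]$ --- a finite set, further trimmed by the bounded-belowness of $\g$. Degree-wise nilpotency then furnishes, for each such $k$, an integer $r_k$ with $(\F_{r_k}\g)_k=0$; taking the maximum over this finite range shows that the tower $\{(\g/\F_r\g\otimes\Omega_n)_m\}_r$ stabilizes, so its limit is exactly $(\g\otimes\Omega_n)_m$. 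Thus the comparison map is an isomorphism in every degree, hence an isomorphism of $\L_\infty$-algebras, and combining with the previous step yields $\MC_n(\g,\F\g)\cong\MC(\g\otimes\Omega_n)$. Finally, since every identification above is built from the identity on $\g$ and the cosimplicial structure maps of $\Omega_\bullet$, they assemble into an isomorphism of simplicial sets $\MC_\bullet(\g,\F\g)\cong\MC(\g\otimes\Omega_\bullet)$; independence of the filtration is then immediate, as the right-hand side makes no reference to $\F\g$.

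I expect the degree bookkeeping in the middle step to be the main obstacle: one must track the degrees of $\Omega_n$ against the tensor grading carefully and verify that the stabilization is uniform enough to produce a genuine isomorphism of $\L_\infty$-algebras, so that Maurer--Cartan elements are carried back and forth bijectively. A secondary subtlety worth isolating is that the Maurer--Cartan equation in $\g\otimes\Omega_n$ involves an \emph{a priori} infinite sum $\sum_k\ell_k(x,\ldots,x)$; it is precisely degree-wise nilpotency (applied in the finitely many degrees contributing to a degree $0$ element) that makes this sum finite, so that $\MC(\g\otimes\Omega_\bullet)$ is well defined and the comparison is legitimate.
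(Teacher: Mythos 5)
Your proof is correct, and it is a genuine variant of the paper's argument, with a stronger intermediate conclusion. Both proofs rest on the same stabilization idea, but the paper applies it only where strictly needed: fixing $n$, it uses degree-wise nilpotency \emph{together with} bounded-belowness to produce a single $k_0$ with $(\F_{k_0}\g)_m=0$ for all $m\le n$, deduces that each projection $\g\otimes\Omega_n\to\g/\F_k\g\otimes\Omega_n$ (for $k\ge k_0$) is an isomorphism in homological degrees $0$ and $-1$ only --- the degrees where Maurer--Cartan elements and the Maurer--Cartan equation live --- and concludes that the tower of sets $\MC(\g/\F_k\g\otimes\Omega_n)$ is eventually constant. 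You instead prove that the comparison map $\g\otimes\Omega_n\to\g\widehat{\otimes}\Omega_n$ is an isomorphism in \emph{every} degree, via the finite window $j\in[m,m+n]$, and then identify $\MC$ of the completion with the limit of the $\MC$ sets, which is essentially the definition of $\MC_\bullet$. Your route buys two things. First, a stronger statement: $\g\otimes\Omega_n$ and $\g\widehat{\otimes}\Omega_n$ are isomorphic as $\L_\infty$-algebras, not merely indistinguishable at the level of Maurer--Cartan sets. Second --- and you only hint at this by never really using the hypothesis --- the bounded-below assumption is superfluous in your argument: the window $[m,m+n]$ is finite regardless, whereas the paper needs bounded-belowness precisely because it asks for a $k_0$ uniform over all $m\le n$ rather than working degree by degree. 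The paper's route is in turn more economical, checking only the two degrees that matter. Your closing remark on convergence is also exactly the point the paper leaves implicit: for $x$ of degree $0$ one has $\ell_k(x,\ldots,x)\in(\F_k\g\otimes\Omega_n)_{-1}$ by condition (ii) of the filtration, and this vanishes for large $k$ by degree-wise nilpotency applied over the finite range of $\g$-degrees $[-1,n-1]$, so the Maurer--Cartan sum in $\g\otimes\Omega_n$ is a finite sum and the right-hand side $\MC(\g\otimes\Omega_\bullet)$ is well defined.
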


\begin{proof}
	Suppose that $(\g,\F_\bullet\g)$ satisfies the assumptions above. Fix $n\ge0$, then there exists $k_0\ge1$ such that $(\F_{k_0}\g)_{m} = 0$ for all $m\le n$, since the degrees of $\g$ are bounded below. It follows that
	\[
	(\F_k\g\otimes\Omega_n)_0 = 0\qquad\text{and}\qquad(\F_k\g\otimes\Omega_n)_{-1} = 0
	\]
	for all $k\ge k_0$, as $\Omega_n$ is concentrated in degrees from $-n$ up to $0$. The projection
	\[
	\g\otimes\Omega_n\longrightarrow\g/\F_k\g\otimes\Omega_n
	\]
	has kernel $\F_k\g\otimes\Omega_n$, and is therefore an isomorphism in degrees $0$ and $-1$. Therefore, the set of Maurer--Cartan elements $\MC(\g/\F_k\g\otimes\Omega_n)$ is constant in $k$ for $k\ge k_0$, which implies the statement.
\end{proof}

\begin{lemma}\label{lemma:sccftGivesDegWiseNil}
	Let $\g$ be a simply-connected, complete $\L_\infty$-algebra. Then $(\g,\F^{can}\g)$ is degree-wise nilpotent.
\end{lemma}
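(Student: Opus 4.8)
The plan is to reduce the statement to a single degree estimate on iterated brackets, exploiting the facts that $\g$ is concentrated in degrees $\ge2$ and that each higher operation $\ell_k$ ($k\ge2$) has degree $-1$. Since $\g$ is complete, $(\g,\F^{can}\g)$ is already a filtered $\L_\infty$-algebra by \cref{ex:complete Loo algebras}, so the only thing left to verify is the degree-wise nilpotency condition, namely that for each $n$ there is a $k\ge1$ with $(\F_k^{can}\g)_n=0$. Throughout, by a \emph{bracketing} I mean a composite of the operations $\ell_k$ with $k\ge2$ applied to elements of $\g$, not involving the differential $\ell_1=d_\g$.

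First I would make precise the key estimate: any element of $\g$ obtained as a bracketing of exactly $m$ elements $x_1,\dots,x_m\in\g$ has homological degree at least $m+1$. To see this, suppose such a bracketing uses operations $\ell_{k_1},\dots,\ell_{k_N}$. Each operation takes $k_i$ inputs and returns one output, so assembling $m$ elements into a single output forces $\sum_{i=1}^N(k_i-1)=m-1$; since every $k_i\ge2$ we have $k_i-1\ge1$, whence $N\le m-1$. Because each $\ell_{k_i}$ has degree $-1$ and each $|x_j|\ge2$ by simple-connectivity, the total degree of the bracketing equals $\sum_{j=1}^m|x_j|-N\ge 2m-(m-1)=m+1$.

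From here the conclusion is immediate. By definition $\F_m^{can}\g$ is spanned by bracketings of at least $m$ elements, and any bracketing of $m'\ge m$ elements has degree $\ge m'+1\ge m+1$ by the estimate; hence $(\F_m^{can}\g)_n=0$ whenever $n\le m$. Consequently, given any $n\in\mathbb{Z}$ it suffices to take $k=\max(n,1)$: for $n\ge2$ this yields $(\F_n^{can}\g)_n=0$, while for $n\le1$ we already have $\g_n=0$, so $(\F_1^{can}\g)_n=\g_n=0$. This is exactly the degree-wise nilpotency of $(\g,\F^{can}\g)$.

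The only genuinely delicate point is the bookkeeping in the degree estimate, precisely the justification of the identity $\sum_{i=1}^N(k_i-1)=m-1$ and the resulting bound $N\le m-1$ for an arbitrary composite of operations, i.e. for an arbitrary rooted tree with $m$ leaves whose internal vertices are labelled by the $\ell_{k_i}$. This is a standard counting fact about operadic tree monomials, and it is the crux on which the whole argument rests; once it is in place, the degree bound and the vanishing of $(\F_k^{can}\g)_n$ are purely formal. Note also that completeness of $\g$ is used only to guarantee that $(\g,\F^{can}\g)$ is a bona fide filtered $\L_\infty$-algebra in the first place, so that degree-wise nilpotency is even meaningful; the estimate itself uses nothing beyond simple-connectivity and the degree $-1$ property of the brackets.
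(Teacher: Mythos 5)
Your proof is correct and follows essentially the same argument as the paper: a bracketing of $k$ elements uses at most $k-1$ operations (the binary-only case being the worst), each of degree $-1$, so with all inputs in degree $\ge 2$ the result lies in degree $\ge 2k-(k-1)=k+1$, giving $\F_k^{can}\g\subseteq\g_{\ge k+1}$ and hence degree-wise nilpotency. Your version is in fact slightly more careful than the paper's, since you justify the bound on the number of operations via the tree-counting identity $\sum_i(k_i-1)=m-1$ and handle the degrees $n\le 1$ explicitly.
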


\begin{proof}
	Suppose we take $k$ elements and we bracket them together. We can use at most $k-1$ brackets (taking only binary brackets), and every element has degree at least $2$. It follows that the resulting element has degree at least
	\[
	(1-k) + 2k = k + 1\ .
	\]
	Thus,
	\[
	\F_k^{can}\g\subseteq\g_{\ge k+1}\ ,
	\]
	and the statement follows.
\end{proof}

\subsection{Homotopy Lie algebra and homotopy cocommutative coalgebra models for rational spaces}

In rational homotopy theory, one models spaces via various types of algebras and coalgebras over $\q$. The original approach of Quillen \cite{Quil69} was done using Lie algebras or cocommutative coalgebras. We review here a few possible more general approaches using $\L_\infty$-algebras and homotopy cocommutative coalgebras.

\medskip

First of all, let's look at $\L_\infty$-algebra models, which are a natural generalization of the more classical Lie models.

\begin{definition}\label{def:Loo models}
	Let $K$ be a simplicial set. A filtered $\L_\infty$-algebra $(\g,\F\g)$ is a \emph{rational model} for $K$ if there is a homotopy equivalence
	\[
	\MC_\bullet(\g,\F\g)\simeq K_\q\ ,
	\]
	where $K_\q$ is the rationalization of $K$, see \cite[Chap. 9]{FHT01} for more details.
\end{definition}

From now on, we will require that our $\L_\infty$-models are locally finite and de\-gree-wise nilpotent. This assumption is needed e.g. for \cref{thm:Berglund} to hold. This assumption is not that strong, as we can model a great number of spaces with such models. Recall that a simplicial complex is said to be $1$-reduced if it has a single $0$-cell and no $1$-cells.

\begin{proposition}\label{prop:condition ok}
	Every $1$-reduced finite simplicial complex $K$ admits a locally finite, degree-wise nilpotent $\L_\infty$-model.
\end{proposition}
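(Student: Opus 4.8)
The plan is to build the required model by hand from a classical Quillen model, turning the finiteness of $K$ into local finiteness and the $1$-reducedness into degree-wise nilpotency. First I would invoke classical rational homotopy theory: since $K$ is a simply-connected finite simplicial complex, its reduced rational homology $\tilde{H}_\bullet(K;\q)$ is finite dimensional and concentrated in degrees $\ge 2$, and $K$ admits a minimal Quillen model, i.e. a free graded Lie algebra $(\mathbb{L}(W),\partial)$ on a finite-dimensional graded vector space $W\cong s^{-1}\tilde{H}_\bullet(K;\q)$ concentrated in degrees $\ge 1$, with decomposable differential (see \cite{Quil69}, \cite{FHT01}). Suspending, I obtain a shifted Lie model, i.e. an $\L_\infty$-algebra $\g$ whose underlying algebra is the free shifted Lie algebra $\mathbb{L}(V)$ on $V\coloneqq sW$, now finite dimensional and concentrated in degrees $\ge 2$, carrying the differential $\ell_1$ and a single binary bracket $\ell_2$ (all higher operations $\ell_n$ vanishing). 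By the standard (de)suspension dictionary between homotopy Lie and shifted homotopy Lie algebras, passing between the unshifted and the shifted picture is harmless.

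Next I would equip $\g$ with its canonical filtration $\F^{can}_\bullet\g$ of \cref{ex:complete Loo algebras}, which for a free shifted Lie algebra is exactly the word-length filtration $\F^{can}_n\g=\mathbb{L}^{\ge n}(V)$. By \cref{ex:complete Loo algebras} this automatically satisfies conditions (i) and (ii) of \cref{def:filteredLoo}. The two remaining points are completeness and the finiteness statements. For completeness I argue degree-wise: since $V$ sits in degrees $\ge 2$, a bracketing of $n$ generators lands in degree at least $2n-(n-1)=n+1$, so in any fixed degree $m$ only the finitely many word-lengths $n\le m-1$ contribute; hence $\g$ is degree-wise finite dimensional and coincides with $\lim_n\g/\F^{can}_n\g$, giving condition (iii). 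This is precisely the hypothesis of \cref{lemma:sccftGivesDegWiseNil}, which then yields that $(\g,\F^{can}\g)$ is degree-wise nilpotent for free.

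Local finiteness is where the finiteness of $K$ enters decisively: the quotient $\g/\F^{can}_n\g\cong\mathbb{L}^{<n}(V)$ is spanned by the brackets of fewer than $n$ elements of the finite-dimensional space $V$, hence is finite dimensional. Thus $(\g,\F^{can}\g)$ is a locally finite, degree-wise nilpotent filtered $\L_\infty$-algebra, and it only remains to check that it is a rational model in the sense of \cref{def:Loo models}, i.e. that $\MC_\bullet(\g,\F^{can}\g)\simeq K_\q$.

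The hard part will be this last identification, since it is where one must connect the purely algebraic object to the homotopy type of $K$. My plan is to reduce it to classical statements. By \cref{prop:MC with deg-wise nilpotent filtration}, the space $\MC_\bullet(\g,\F^{can}\g)$ depends only on the $\L_\infty$-algebra $\g$ and equals $\MC(\g\otimes\Omega_\bullet)$, because $\g$ is degree-wise nilpotent and bounded below; and the fact that the Maurer--Cartan space of a Quillen model computes the rationalization $K_\q$ is the Hinich--Getzler incarnation of Quillen's equivalence (\cite{Hin97}, \cite{Get09}, \cite{Quil69}), matching the way $\L_\infty$-models are used throughout this section. I would therefore spell out that $\g$, being the suspension of a minimal Quillen model of $K$, satisfies $\MC_\bullet(\g,\F^{can}\g)\simeq K_\q$, completing the proof. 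The only genuine subtlety to keep an eye on is the bookkeeping of the degree shift between the classical (unshifted) Lie model and the shifted conventions of the present paper, making sure that ``degrees $\ge 1$'' for the Quillen model becomes ``degrees $\ge 2$'' for $\g$, so that the simply-connectedness hypothesis of \cref{lemma:sccftGivesDegWiseNil} genuinely applies.
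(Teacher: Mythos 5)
Your proposal is correct and follows the same overall strategy as the paper --- a finitely generated free Lie model plus the canonical filtration, with \cref{lemma:sccftGivesDegWiseNil} giving degree-wise nilpotency and \cref{prop:MC with deg-wise nilpotent filtration} reducing the Maurer--Cartan space to $\MC(\g\otimes\Omega_\bullet)$ --- but it differs in two details worth recording. First, you start from the \emph{minimal} Quillen model $\mathbb{L}(s^{-1}\widetilde{H}_\bullet(K;\q))$, whereas the paper uses the free Lie model $\lie(C_\bullet(X))$ on the simplicial chains from \cite[Sect.\ 24.(e)]{FHT01}. Your choice buys something: since the (shifted) generators sit in degrees $\ge 2$, the canonical filtration is degree-wise eventually zero, so your $\g$ is already complete and locally finite as it stands; the paper instead has to interpose the completion $\widehat{\g}=\prod_n\lie(n)\otimes_{\S_n}C_\bullet(X)^{\otimes n}$ before the filtration axioms of \cref{def:filteredLoo} hold, and your word-length computation $\F^{can}_n\g=\mathbb{L}^{\ge n}(V)$ makes local finiteness transparent.

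Second, and this is the one point you should repair: the identification $\MC(\g\otimes\Omega_\bullet)\simeq K_\q$ is not contained in the references you wave at. Quillen's equivalence \cite{Quil69} is formulated through a different composite of functors, and \cite{Hin97}, \cite{Get09} treat nilpotent (or pro-nilpotent/complete) algebras, not a free, non-nilpotent Lie model tensored with $\Omega_\bullet$ without completion. The precise bridging statement --- that for a \emph{finitely generated} free Lie model $\g$ of a simply-connected space one has $\MC_\bullet(\g\otimes\Omega_\bullet)\simeq K_\q$ --- is exactly \cite[Prop.\ 6.1]{ber15}, which is what the paper invokes, and it applies verbatim to your $\g$: finiteness of $K$ makes $\widetilde{H}_\bullet(K;\q)$ finite dimensional, so the minimal Quillen model is finitely generated and concentrated in the right degrees. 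With that citation substituted for the vague appeal to Hinich--Getzler--Quillen, your argument is complete; without it, the ``hard part'' you correctly isolated is left resting on references that do not prove it.
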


\begin{proof}
	According to \cite[Sect. 24.(e)]{FHT01}, every simplicial complex $K$ of finite type admits a free Lie model --- in the sense of the Definition found at \cite[p.322]{FHT01} --- of the form $\g=\lie(C_\bullet(X))$, where $C_\bullet(X)$ is the complex of simplicial chains of $K$. Since we supposed that $K$ is finite, $\g$ is a finitely generated Lie algebra, and thus it satisfies the assumptions of \cite[Prop. 6.1]{ber15}. It follows that we have
	\[
	\MC_\bullet(\g\otimes\Omega_\bullet)\simeq K_\q\ .
	\]
	An apparent problem is the fact that $\g$ is not complete. However, we can replace $\g$ by
	\[
	\widehat{\g}\coloneqq\widehat{\lie}(C_\bullet(X)) = \prod_{n\ge1}\lie(n)\otimes_{\S_n}C_\bullet(X)^{\otimes n}\ .
	\]
	Then \cref{prop:MC with deg-wise nilpotent filtration} and \cref{lemma:sccftGivesDegWiseNil} give
	\[
	\MC_\bullet(\g\otimes\Omega_\bullet)\cong\MC_\bullet(\widehat{\g},\F^{can}\widehat{\g})\ ,
	\]
	so that we may use $(\widehat{\g},\F^{can}\widehat{\g})$ as a $\L_\infty$-model. This is obviously locally finite, since the operad $\lie$ is finite dimensional in every arity and $C_\bullet(X)$ is finite dimensional, and degree-wise nilpotent e.g. by \cref{lemma:sccftGivesDegWiseNil}.
\end{proof}

Alternatively to using Lie or $\L_\infty$-models in order to do rational ho\-mo\-to\-py theory, one can use cocommutative coalgebras or commutative algebras.

\begin{theorem}[{\cite[Thm. 1]{Quil69}}]
	There is a functor
	\[
	\quil:\Top_{*,1}\longrightarrow\cocom_{\ge2}
	\]
	from the category of simply-connected, pointed topological spaces to the category of sim\-ply-con\-nected cocommutative coalgebras which induces an equivalence between the res\-pec\-tive homotopy categories.
\end{theorem}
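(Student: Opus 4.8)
The plan is to realize $\quil$ as the composite of a zig-zag of Quillen equivalences connecting the homotopy category of simply-connected pointed spaces to that of simply-connected cocommutative dg coalgebras over $\q$, passing through an intermediate dg Lie algebra model. First I would replace $\Top_{*,1}$ by the Quillen-equivalent category of reduced simplicial sets, so that every subsequent construction is functorial and combinatorial; the rationalization is then modelled algebraically in the later steps rather than imposed by hand.

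Second, I would produce the passage to Lie algebras. Kan's loop group functor $G$ gives a Quillen equivalence between reduced simplicial sets and reduced simplicial groups. Composing with the degreewise Malcev completion over $\q$ and the normalized-chains functor (Dold--Kan) yields a functor into connected dg Lie algebras over $\q$, equipped with the model structure whose weak equivalences are the quasi-isomorphisms and whose fibrations are the surjections. The key point is that, under simple-connectivity, this functor restricts to a Quillen equivalence onto the subcategory of dg Lie algebras concentrated in degrees $\ge 1$, with the rational homotopy groups $\pi_*(K)\otimes\q$ recovered as the homology of the associated Lie algebra.

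Third --- and this is where the operadic machinery recalled above applies directly --- I would establish the equivalence between connected dg Lie algebras and simply-connected cocommutative dg coalgebras via the bar--cobar adjunction relative to the Koszul twisting morphism $\kappa\colon\lie^{\antishriek}\to\lie$. Here the bar construction $\bar_\kappa$ is the Chevalley--Eilenberg coalgebra functor, landing in $\lie^{\antishriek}$-coalgebras (i.e.\ cocommutative coalgebras, up to the suspension identification $(\susp\otimes\lie)^{\antishriek}=\com^\vee$), its left adjoint $\Omega_\kappa$ is the cofree-Lie/cobar functor, and by \cref{thm:RosettaStone} these form an adjoint pair. Endowing the coalgebra side with the Vallette model structure --- whose weak equivalences are created by $\Omega_\kappa$, exactly as recalled in \cref{sect:inftyMorphisms} --- makes $(\Omega_\kappa,\bar_\kappa)$ a Quillen equivalence; restricting to the simply-connected (degree $\ge 2$) objects produces precisely $\cocom_{\ge2}$. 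Defining $\quil$ as the composite of all these functors and assembling the induced equivalences of homotopy categories then gives the statement.

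The hard part will be the middle step: showing that the topological-to-Lie comparison is a \emph{rational} equivalence. This requires checking that the Malcev completion of the Kan loop group is compatible with the model structures and that the spectral sequence computing the homology of the resulting dg Lie algebra in terms of the rational homotopy of $K$ converges. It is exactly here that the restriction to simply-connected spaces is indispensable, both to guarantee convergence and to ensure that the intermediate Lie algebras and coalgebras lie in the connected range where Koszul duality is homotopically well-behaved. By contrast, given the framework of \cref{sect:inftyMorphisms} and \cref{thm:RosettaStone}, the final coalgebra--Lie-algebra step is essentially formal.
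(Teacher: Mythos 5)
This statement is quoted verbatim from Quillen's paper, and the paper gives no proof of its own beyond the remark that $\quil$ is built in two steps --- Quillen's functor $\lambda$ from spaces to dg Lie algebras, followed by the bar construction relative to $\kappa:\com^\vee\to\lie$. Your outline (Kan loop group and Malcev completion to reach dg Lie algebras, then the formal Koszul-duality/bar step to cocommutative coalgebras) is precisely Quillen's original strategy, i.e.\ the same approach the paper relies on, so the proposal is correct in approach.
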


This functor is constructed in two steps, first Quillen constructs a functor $\lambda$ from spaces to differential graded Lie algebras. Then he applies the bar construction relative to the canonical twisting morphism $\kappa:\com^{\vee} \rightarrow \lie$ to get a cocommutative model for the space $X$.

\medskip 

Dually, one has the following approach with commutative algebras, see \cite{Sul77} and \cite[Chap. 17]{FHT01}.

\begin{theorem}
	There is a functor
	\[
	A_{PL}^*:\widetilde{\Top}_{*,1}^{op}\longrightarrow\{\text{Com. alg. with homology concentrated in degree }\le-2\}
	\]
	from the (opposite category of the) category of simply-connected, pointed topological spaces of finite $\q$-type to the category of  commutative algebras over $\q$, such that the homology of these algebras is concentrated in degree less than or equal to $-2$. This functor induces an equivalence between the respective homotopy categories.
\end{theorem}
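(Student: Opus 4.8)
The plan is to exhibit $A_{PL}^*$ as the classical Sullivan--de Rham functor of polynomial differential forms and then to invoke the fundamental theorem of rational homotopy theory. First I would define, for a simplicial set $K$, the commutative algebra
\[
A_{PL}^*(K)\coloneqq\mathrm{Hom}_{\ssets}(K,\Omega_\bullet)\ ,
\]
namely the set of simplicial maps from $K$ into the simplicial commutative algebra $\Omega_\bullet$ of polynomial de Rham forms introduced above, endowed with the levelwise product and with the differential inherited from $\Omega_\bullet$. Since $\Omega_\bullet$ is a simplicial commutative algebra, this is manifestly a commutative differential graded algebra, and $K\mapsto A_{PL}^*(K)$ is visibly a contravariant functor on $\ssets$; passing through singular simplicial sets extends it to $\widetilde{\Top}_{*,1}^{op}$. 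Because we reversed the grading on $\Omega_\bullet$ so as to remain in the world of chain complexes, cohomological degree $n$ corresponds here to homological degree $-n$.

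Next I would record the two standard computational inputs. The Sullivan--de Rham theorem provides a natural isomorphism $H_\bullet(A_{PL}^*(K))\cong H^{-\bullet}(K;\q)$, see \cite[Chap. 10]{FHT01}. For $K$ simply connected this (reduced) cohomology vanishes in degrees $0$ and $1$, so $H_\bullet(A_{PL}^*(K))$ is concentrated in homological degrees $\le -2$, which is exactly the asserted degree bound and shows that $A_{PL}^*$ lands in the stated target category. The finite $\q$-type hypothesis guarantees that each cohomology group is finite dimensional, which is what will be needed in order for the comparison to be an equivalence rather than merely a functor landing in the subcategory.

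Finally, to obtain the equivalence of homotopy categories I would appeal directly to Sullivan's main theorem \cite{Sul77}, in the form presented in \cite[Chap. 17]{FHT01}: restricted to simply-connected spaces of finite $\q$-type, $A_{PL}^*$ sends rational homotopy equivalences to quasi-isomorphisms, and the induced functor on the homotopy categories—the algebra side being localised at quasi-isomorphisms—is an equivalence. The essential surjectivity and full faithfulness are supplied by the theory of minimal Sullivan models, which functorially reconstruct the rational homotopy type. The one point genuinely requiring care—rather than a real obstacle, as the statement is classical—is the bookkeeping of conventions: matching the chain-complex grading used throughout the present paper (so that simply-connected algebras sit in homological degrees $\le -2$ instead of cohomological degrees $\ge 2$) against the cochain conventions of the standard references, and checking that the simple-connectivity and finite-type assumptions are imposed on both sides so that Sullivan's equivalence applies verbatim.
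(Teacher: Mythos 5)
Your proposal is correct and takes essentially the same route as the paper: the paper gives no proof of this statement but simply cites Sullivan \cite{Sul77} and \cite[Chap. 17]{FHT01}, and your sketch (the polynomial de Rham functor $\mathrm{Hom}_{\ssets}(K,\Omega_\bullet)$, the Sullivan--de Rham isomorphism $H_\bullet\cong H^{-\bullet}(K;\q)$, and the fundamental equivalence theorem restricted to simply-connected finite-$\q$-type objects) is precisely the standard argument those references supply. The only point worth making explicit is the one you flag with ``(reduced)'': the unital algebra $\mathrm{Hom}_{\ssets}(K,\Omega_\bullet)$ has $\q$ in homological degree $0$, so to land literally in the stated target category (which, in the paper's operadic conventions, consists of non-unital algebras) one should pass to the augmentation ideal determined by the basepoint, whose homology is the reduced cohomology and is then concentrated in degrees $\le-2$.
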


\begin{remark}
Recall that we are working with a homological grading, so in particular $A_{PL}^{*}$ will be negatively graded.
\end{remark}

We put ourselves in a slightly more general context, and use homotopy cocommutative coalgebras instead of strict cocommutative coalgebras.

\begin{definition}
	Let $X$ be a simply-connected, pointed topological space. A homotopy cocommutative coalgebra $C$ is a \emph{rational model} for $X$ if there exists a zig-zag of weak equivalences of homotopy cocommutative coalgebras
	\[
	(f_\iota)_*\quil(X)\longrightarrow\bullet\longleftarrow\cdots\longrightarrow\bullet\longleftarrow C\ ,
	\]
	or equivalently (by \cref{prop:alpha-we and zig-zags}) if there exists a $\pi$-weak equivalence $(f_\iota)_*\quil(X)\rightsquigarrow C$.
\end{definition}

The version using algebras is as follows.

\begin{definition}
	Let $X$ be a simply-connected, pointed topological space of finite $\q$-type. A commutative algebra $A$ is a \emph{rational model} for $X$ if there exists a zig-zag of quasi-isomorphisms of commutative algebras
	\[
	A\longleftarrow\bullet\longrightarrow\cdots\longleftarrow\bullet\longrightarrow A_{PL}^*(X)\ ,
	\]
	or equivalently (by \cite[Thm. 11.4.9]{lodayvallette}) if there exists an $\infty_{\iota}$-quasi-isomorphism $A\rightsquigarrow A^*_{PL}(X)$ of $\C_\infty$-algebras.
\end{definition}

For any given $X\in\widetilde{\Top}_{*,1}$ there exists such a model. For example, one can trivially take $A_{PL}^*(X)$ as commutative rational model for $X$.

\medskip

Commutative and cocommutative models are strictly related, as one would expect. In order to prove it, we recall the following theorem, due to Majewski.

\begin{theorem}[{\cite[Thm. 4.90]{majewski}}]\label{thm:majewski}
	Let $X$ be a simply-connected space. Let
	\[
	\ell_X:M_X\stackrel{\sim}{\longrightarrow}A_{PL}^*(X)
	\]
	be a simply-connected commutative model of finite type for $X$, and let
	\[
	\mu_X:L_X\stackrel{\sim}{\longrightarrow}\lambda(X)
	\]
	be a free suspended Lie model for $X$. There exists a canonical homotopy class of quasi-isomorphisms
	\[
	\alpha_X:L_X\stackrel{\sim}{\longrightarrow}\Omega_\kappa(M_X^\vee)\ .
	\]
\end{theorem}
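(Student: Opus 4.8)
The statement is Majewski's, so I only indicate the strategy I would follow with the machinery of this paper. The plan is to exhibit both $L_X$ and $\Omega_\kappa(M_X^\vee)$ as differential graded Lie algebra models for the same rational space $X$, to transport an equivalence between them across the Koszul duality adjunction $\Omega_\kappa\dashv\bar_\kappa$ between cocommutative coalgebras and dg Lie algebras, and finally to rigidify the resulting zig-zag into a single map $\alpha_X$. The standing fact that makes this work is that $\lie$ is Koszul with Koszul dual cooperad $\com^\vee$ (up to suspension), so $\kappa\colon\com^\vee\to\lie$ is a Koszul twisting morphism: the (co)units of the bar--cobar adjunction relative to $\kappa$ are then weak equivalences, and $\Omega_\kappa$ sends weak equivalences of cocommutative coalgebras to quasi-isomorphisms of dg Lie algebras by the very definition of the model structure involved \cite{val14}, \cite{Hin97homological}.

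First I would observe that, because $M_X$ is of finite type and simply connected, its linear dual $M_X^\vee$ is a conilpotent cocommutative coalgebra, and that it is a coalgebra model for $X$: concretely, dualizing $\ell_X\colon M_X\xrightarrow{\sim}A^*_{PL}(X)$ gives a quasi-isomorphism identifying $M_X^\vee$, up to coherent homotopy, with the reduced rational chains $C^{red}_*(X;\q)$ equipped with their cocommutative comultiplication. On the Quillen side, the classical description of the functor $\lambda$ \cite{Quil69}, \cite{FHT01} identifies $\lambda(X)$, up to quasi-isomorphism of dg Lie algebras, with the cobar construction $\Omega_\kappa$ of this same cocommutative chain coalgebra. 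Combining the two facts produces a zig-zag of weak equivalences of cocommutative coalgebras relating $M_X^\vee$ to $\bar_\kappa\lambda(X)$, and applying $\Omega_\kappa$ yields a zig-zag of quasi-isomorphisms of dg Lie algebras
\[
\Omega_\kappa(M_X^\vee)\xleftarrow{\ \sim\ }\bullet\xrightarrow{\ \sim\ }\cdots\xleftarrow{\ \sim\ }\Omega_\kappa\bar_\kappa\lambda(X)\xrightarrow[\sim]{\varepsilon}\lambda(X)\xleftarrow[\sim]{\mu_X}L_X\ ,
\]
where $\varepsilon$ is the counit of the $\kappa$-bar--cobar adjunction and hence a quasi-isomorphism, and $\mu_X$ is the given comparison.

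To obtain an actual arrow $\alpha_X\colon L_X\to\Omega_\kappa(M_X^\vee)$ rather than a zig-zag, I would invoke the model-categorical rigidification already used elsewhere in this paper: the free Lie algebra $L_X$ is cofibrant in the Hinich model structure \cite{Hin97homological}, while every dg Lie algebra is fibrant, so the isomorphism in the homotopy category determined by the zig-zag above is represented by a genuine quasi-isomorphism $\alpha_X$, unique up to homotopy, exactly as one extracts a homotopy inverse through \cite[Lemma 4.24]{ds95}. This homotopy class is canonical because every arrow in the zig-zag is itself canonical up to homotopy — the adjunction (co)units, the dualization of $\ell_X$, the comparison $\mu_X$, and the naturality of $\Omega_\kappa$. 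I expect the main obstacle to be precisely the step matching the two cocommutative coalgebra structures: verifying that the dual of the Sullivan model and Quillen's chain-level cocommutative coalgebra agree up to coherent homotopy, and that the finite-type and simple-connectivity hypotheses guarantee that all dualizations, completions, and (co)bar constructions converge and preserve the relevant weak equivalences. This coherence is the technical heart of Majewski's argument, and it is exactly where the hypotheses of the theorem are genuinely needed.
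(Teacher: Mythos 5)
The paper does not prove this statement at all: it is imported verbatim from Majewski's memoir, and the citation \cite[Thm. 4.90]{majewski} is the entire ``proof.'' (Indeed, proving it here would be circular for the paper, since \cref{thm:majewski} is used as a black box in the proof of \cref{thm:dualization of models}.) So the only question is whether your sketch stands on its own, and it does not: the decisive step is assumed rather than proven. Your zig-zag requires a chain of weak equivalences of cocommutative coalgebras connecting $M_X^\vee$ (the dual Sullivan side) to $\bar_\kappa\lambda(X)$ (the Quillen side), and you produce it by ``combining two facts.'' Fact (a) --- that dualizing $\ell_X$ identifies $M_X^\vee$ with the reduced rational chains ``equipped with their cocommutative comultiplication'' --- is false as stated: the Alexander--Whitney coproduct on $C_*(X;\q)$ is coassociative but not cocommutative; cocommutativity holds only up to coherent homotopy ($E_\infty$, or $\C_\infty$ in the paper's language), and replacing it by a strict cocommutative coalgebra quasi-isomorphic to $M_X^\vee$ is a genuinely nontrivial rectification. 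Fact (b) --- that Quillen's $\lambda(X)$ ``is'' the cobar construction $\Omega_\kappa$ of this same coalgebra --- is not the definition of $\lambda$, which passes through the loop group, completed group rings, and primitives; identifying $\lambda(X)$, up to quasi-isomorphism, with a cobar construction on a coalgebra built from the \emph{Sullivan} side is exactly the Sullivan--Quillen comparison, i.e., the content of Majewski's theorem. In short, the ``technical heart'' you explicitly defer at the end of your proposal is not a verification left to the reader; it \emph{is} the theorem, so the argument as proposed is circular.

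Two smaller points. First, within this paper's logical architecture you also cannot shortcut fact (a) by saying ``$M_X^\vee$ is a cocommutative model for $X$,'' because that assertion is \cref{thm:dualization of models}(\ref{1}), whose proof in the paper relies on \cref{thm:majewski}; any proof of the present statement must avoid that route. Second, the parts of your sketch that do not concern the comparison are sound: the counit $\Omega_\kappa\bar_\kappa\lambda(X)\to\lambda(X)$ is a quasi-isomorphism, $\Omega_\kappa$ preserves weak equivalences by the definition of the Vallette model structure, and the rigidification of a zig-zag into a single map $\alpha_X$, unique up to homotopy, via cofibrancy of the (quasi-)free Lie algebra $L_X$ and fibrancy of every dg Lie algebra, is the standard argument and would indeed deliver canonicity --- \emph{once} the zig-zag exists. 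The existence of that zig-zag is what Majewski's book actually establishes.
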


\begin{theorem}\label{thm:dualization of models}
	Let $X$ be a simply-connected space of finite $\q$-type.
	\begin{enumerate}
		\item \label{1} Let $A$ be a commutative model of finite type for $X$. Then its dual $A^\vee$ is a cocommutative model for $X$.
		\item \label{2} Dually, let $C$ be a cocommutative model for $X$. Then its linear dual $C^\vee$ is a commutative model for $X$.
	\end{enumerate}
	Notice that we do not have any finiteness assumption on our cocommutative models.
\end{theorem}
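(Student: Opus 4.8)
The plan is to establish \ref{1} directly and then deduce \ref{2} from it by a double-dualization argument. Write $p\colon\L_\infty=\Omega\com^\vee\to\susp\otimes\lie$ for the canonical quasi-isomorphism of operads, so that the two Koszul twisting morphisms are related by $\kappa=p_*\iota$, and recall from the commutative diagram opening this section that $f_\iota^*\pi=\iota$. The one technical ingredient I need beyond the existing machinery is a change-of-operad comparison for the cobar construction: for every simply-connected conilpotent $\com^\vee$-coalgebra $D$ the natural map
\[
\nu_D\colon\Omega_\iota D\longrightarrow p^*\Omega_\kappa D
\]
induced by $p$ is a quasi-isomorphism of $\L_\infty$-algebras. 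I would prove this exactly as in \cref{prop:rectification is oo-qi to original}: filter both sides by the number of copies of $D$, so that on the associated graded only the internal differentials survive (the twisting parts strictly raise this number); the induced map on the $E^1$-page is the one induced by $H_\bullet(p)$ on $H_\bullet(\Omega\com^\vee)\circ H_\bullet(D)\cong(\susp\otimes\lie)\circ H_\bullet(D)$, which is an isomorphism by the operadic Künneth formula \cite[Prop. 6.2.3]{lodayvallette} together with the fact that $p$ is a quasi-isomorphism. Simple-connectivity makes the filtration bounded in each degree, so the spectral sequence converges and $\nu_D$ is a quasi-isomorphism.

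For \ref{1}, I apply Majewski's theorem. Possibly after replacing $A$ by a quasi-isomorphic finite-type commutative model (the dual zig-zag relates the two $\Omega_\kappa(-^\vee)$ up to quasi-isomorphism), \cref{thm:majewski} with $M_X=A$ yields a free suspended Lie model $L_X$ and quasi-isomorphisms $\mu_X\colon L_X\xrightarrow{\sim}\lambda(X)$ and $\alpha_X\colon L_X\xrightarrow{\sim}\Omega_\kappa(A^\vee)$. Since $\kappa$ is Koszul, the counit $\varepsilon\colon\Omega_\kappa\bar_\kappa\lambda(X)=\Omega_\kappa\quil(X)\xrightarrow{\sim}\lambda(X)$ is also a quasi-isomorphism \cite[Prop. 11.3.3]{lodayvallette}. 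Pulling everything back along $p$ and splicing in $\nu$ at the two ends, I obtain a zig-zag of quasi-isomorphisms of $\L_\infty$-algebras
\[
\Omega_\iota(A^\vee)\xrightarrow{\ \sim\ }p^*\Omega_\kappa(A^\vee)\xleftarrow{\ \sim\ }p^*L_X\xrightarrow{\ \sim\ }p^*\lambda(X)\xleftarrow{\ \sim\ }p^*\Omega_\kappa\quil(X)\xleftarrow{\ \sim\ }\Omega_\iota\quil(X)\ .
\]
By \cref{lemma:equalityOfInftyMorphisms} the two ends are $\Omega_\pi\big((f_\iota)_*A^\vee\big)$ and $\Omega_\pi\big((f_\iota)_*\quil(X)\big)$; both are cofibrant (cobar constructions land in cofibrant objects \cite[Thm. 2.9]{val14}) and every $\L_\infty$-algebra is fibrant, so this zig-zag of weak equivalences between bifibrant objects is realized by a single quasi-isomorphism $\Omega_\pi\big((f_\iota)_*A^\vee\big)\to\Omega_\pi\big((f_\iota)_*\quil(X)\big)$ by standard model-category arguments \cite[Lemma 4.24]{ds95}. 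By definition this is a $\pi$-weak equivalence $(f_\iota)_*A^\vee\rightsquigarrow(f_\iota)_*\quil(X)$, which by \cref{prop:alpha-we and zig-zags} can be reversed, exhibiting $A^\vee$ as a cocommutative model for $X$.

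For \ref{2}, choose any finite-type commutative model $A$ for $X$, for instance a minimal Sullivan model, which is finite type because $X$ is simply-connected of finite $\q$-type. By part \ref{1}, $A^\vee$ is a cocommutative model, so $C$ and $A^\vee$ are both $\pi$-weakly equivalent to $(f_\iota)_*\quil(X)$; gluing these and applying \cref{prop:alpha-we and zig-zags} produces a zig-zag of weak equivalences of homotopy cocommutative coalgebras between $C$ and $A^\vee$. Every such weak equivalence is in particular an $\infty$-quasi-isomorphism on underlying complexes by \cref{thm: alpha-we are oo-alpha-qi}, and the linear dual of a quasi-isomorphism over a field is again a quasi-isomorphism; dualizing the whole zig-zag therefore gives a zig-zag of quasi-isomorphisms of $\C_\infty$-algebras between $C^\vee$ and $A^{\vee\vee}$. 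Finite-dimensionality in each degree yields $A^{\vee\vee}\cong A$, so $C^\vee$ is connected by quasi-isomorphisms to the commutative model $A$, hence by \cite[Thm. 11.4.9]{lodayvallette} there is an $\infty_\iota$-quasi-isomorphism $C^\vee\rightsquigarrow A\rightsquigarrow A^*_{PL}(X)$, proving that $C^\vee$ is a commutative model.

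The main obstacle is the change-of-operad comparison $\nu_D$ of the first paragraph: everything else is an assembly of Majewski's theorem with the duality and model-categorical tools already developed, but it is precisely $\nu_D$ that transports Majewski's $\kappa$-statement into the $\pi$/$\iota$-formulation in which the notions of commutative and cocommutative model are phrased. I would double-check there that the decreasing filtration is exhaustive and complete and that $\nu_D$ is compatible with it, since these convergence points are exactly where simple-connectivity of $D$ is used.
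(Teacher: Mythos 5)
Your proof is correct in substance, and for part (\ref{1}) it takes a genuinely different route from the paper's. Both arguments hinge on \cref{thm:majewski} and on dualizing a finite-type zig-zag connecting $A$ to the minimal model $M_X$, but the paper runs the comparison on the \emph{coalgebra} side: it strings together the unit $A^\vee\to\bar_\pi\Omega_\pi A^\vee$, the functor $\bar_\pi\Omega_\pi$ applied to the dualized zig-zag, and $\bar_\pi$ applied to Majewski's quasi-isomorphism, obtaining a zig-zag of weak equivalences of homotopy cocommutative coalgebras ending at $\quil(X)$; what this buys is that ``bar of a quasi-isomorphism is a weak equivalence'' is formal from the counit theorem, so no extra comparison maps are needed. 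You instead run it on the \emph{algebra} side: a zig-zag of quasi-isomorphisms of $\L_\infty$-algebras between the two cobar constructions, rectified into a single quasi-isomorphism --- hence a $\pi$-weak equivalence --- using bifibrancy and \cite[Lemma 4.24]{ds95}, exactly in the spirit of \cref{prop:alpha-we and zig-zags}. What your route buys is precision at the point where the paper is loosest: the paper's proof quietly rewrites Majewski's $\Omega_\kappa(M_X^\vee)$ as ``$\Omega_\pi(M_X^\vee)$'' before applying $\bar_\pi$, and your comparison map $\nu_D\colon\Omega_\iota D\to p^*\Omega_\kappa D$ is exactly the (true) statement needed to make that identification honest up to quasi-isomorphism; your spectral-sequence proof of it is sound, with simple-connectivity providing the degreewise boundedness that makes the filtration comparison converge. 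Part (\ref{2}) of your proof coincides with the paper's.

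Two points to tighten, neither fatal. First, your parenthetical reduction to a model $A$ admitting an honest quasi-isomorphism to $A^*_{PL}(X)$ needs the intermediate terms of the zig-zag $A\leftarrow\bullet\rightarrow M_X$ to be simply-connected and of finite type, otherwise they cannot be dualized to conilpotent coalgebras; the paper addresses this explicitly by taking $\Omega_\iota\bar_\iota M_X$ as the intermediate. Second, that same parenthetical (``the dual zig-zag relates the two $\Omega_\kappa(-^\vee)$ up to quasi-isomorphism'') uses that $\Omega_\kappa$ sends quasi-isomorphisms of simply-connected conilpotent cocommutative coalgebras to quasi-isomorphisms; this is not automatic in general (see the warning after \cref{thm: alpha-we are oo-alpha-qi}), but it holds here by the same filtration argument you use for $\nu_D$ --- and the paper relies on the same fact just as silently when it asserts that $\bar_\pi\Omega_\pi$ applied to the dualized zig-zag yields weak equivalences. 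Both gaps are patched by machinery you already invoke.
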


\begin{proof}
	We begin by proving (\ref{1}). Let $A$ be a simply-connected commutative model of finite type for $X$. It is well known that every simply-connected space $X$ of finite $\q$-type admits a minimal commutative model $M_X$, which in particular is simply-connected and of finite type, see e.g. \cite[p.146]{FHT01}. It follows that we have a zig-zag of quasi-isomorphisms
	\[
	A\longleftarrow\bullet\longrightarrow M_X
	\]
	by \cite[Thm. 11.4.9]{lodayvallette}. Inspecting the proof in loc. cit. we notice that we can take $\Omega_\iota\bar_\iota M_X$ as intermediate algebra, which is again simply-connected and of finite type. Dualizing linearly, we obtain a zig-zag of quasi-isomorphisms
	\begin{equation}\label{eq:zigzag}
		A^\vee\longrightarrow\bullet\longleftarrow M_X^\vee\ ,
	\end{equation}
	where all terms are well-defined coalgebras thanks to the fact that they are of finite type. Finally, we obtain a zig-zag
	\[
	A^\vee\longrightarrow\bar_\pi\Omega_\pi A^\vee\longrightarrow\bar_\pi\Omega_\pi(\bullet)\longleftarrow\bar_\pi\Omega_\pi M_X^\vee\longleftarrow\bar_\kappa\lambda(X)\ ,
	\]
	where the first arrow is the unit of the bar-cobar adjunction and is a weak equivalence by \cite[Thm. 2.6(2)]{val14}, the second and third arrows are obtained by the arrows of the zig-zag (\ref{eq:zigzag}) by applying $\bar_\pi\Omega_\pi$ and are also weak equivalences. The last arrow is obtained as follows. The Lie algebra $\lambda(X)$ is a Lie model for $X$. Therefore, by \cref{thm:majewski} there is a quasi-isomorphism
	\[
	\alpha_X:\lambda(X)\stackrel{\sim}{\longrightarrow}\Omega_\pi(M_X^\vee)\ .
	\]
	Applying the bar construction we obtain the desired weak equivalence
	\[
	\bar_\pi\alpha_X:\bar_\pi\lambda(X)\longrightarrow\bar_\pi\Omega_\pi(M_X^\vee)\ ,
	\]
	concluding the first part of the proof.
	
	\medskip
	
	For point (\ref{2}), let $C$ be a cocommutative model for $X$. By point (\ref{1}), we have in particular that the dual $M_X^\vee$ of the minimal commutative model $M_X$ is a cocommutative model for $X$. Therefore, we have a zig-zag of weak equivalences
	\[
	C\longrightarrow\bullet\longleftarrow M_X^\vee\ ,
	\]
	which in particular are quasi-isomorphisms. Dualizing linearly, we obtain a zig-zag of quasi-isomor\-phisms
	\[
	C^\vee\longleftarrow\bullet\longrightarrow M_X^{\vee\vee}\cong M_X\ ,
	\]
	where the last isomorphism holds because $M_X$ is of finite type. Therefore, the commutative algebra $C^\vee$ is a commutative model for $X$.
\end{proof}

\subsection{Rational models for mapping spaces}

Given two spaces $K$ and $L$, a natural question is the following one. Suppose we are given rational models for both $K$ and $L$. Is it possible to use them to construct a rational model of the based mapping space $\map(K,L)$? A possible answer to this question was given by Berglund \cite{ber15} in the case when we have a strictly commutative model for the first space, and an $\L_\infty$-model for the second one.

\begin{theorem}[\cite{ber15} Theorem 6.3]\label{thm:Berglund}
	Let $K$ be a simply-connected simplicial set, let $L$ be a nilpotent space (e.g. a simply-connected space) of finite $\q$-type and $L_{\q}$ the $\q$-localization of $L$. Let $A$  be a commutative model for $K$ and $(\g,\F\g)$ a degree-wise nilpotent, locally finite $\L_{\infty}$-model of finite $\q$-type for $L$. There is a homotopy equivalence of simplicial sets
	\[
	\map(K,L_{\q}) \simeq \MC_{\bullet} (A \widehat{\otimes }\g) ,
	\]
	i.e. the $\L_\infty$-algebra $A \widehat{\otimes }\g$ is an $\L_\infty$-model for the mapping space.
\end{theorem}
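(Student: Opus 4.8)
The plan is to prove the equivalence by a chain of three identifications, reducing the homotopy type of $\map(K,L_\q)$ to the Maurer--Cartan space of $A\widehat\otimes\g$. Throughout write $X\coloneqq\MC_\bullet(\g,\F\g)$; since $\g$ is a model for $L$, \cref{def:Loo models} provides a homotopy equivalence $X\simeq L_\q$. As $K$ is cofibrant (every simplicial set is) and both $X$ and $L_\q$ are Kan complexes, hence fibrant, the based mapping space functor $\map(K,-)$ preserves this equivalence, so that $\map(K,L_\q)\simeq\map(K,X)$. It therefore suffices to produce, up to homotopy equivalence, a natural isomorphism of simplicial sets $\map(K,X)\cong\MC_\bullet(A\widehat\otimes\g)$.

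The heart of the argument is an ``exponential law''. Because $\g$ is degree-wise nilpotent and bounded below (simply-connectedness places it in degrees $\ge 2$), \cref{prop:MC with deg-wise nilpotent filtration} gives $X_n=\MC(\g\otimes\Omega_n)$ on the nose, and more generally, for any simplicial set $Y$, a map $Y\to X$ is the same as a compatible family of Maurer--Cartan elements over the simplices of $Y$, i.e.\ an element of $\MC\big(A_{PL}^*(Y)\widehat\otimes\g\big)$, where $A_{PL}^*(Y)=\hom_{\ssets}(Y,\Omega_\bullet)$ and the completion guarantees convergence of the Maurer--Cartan equation. This is the Bousfield--Gugenheim--Hinich adjunction adapted to $\L_\infty$-algebras \cite{Hin97},\cite{Get09},\cite{ber15}. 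Applying it with $Y=K\times\Delta^n$, using the Künneth isomorphism $A_{PL}^*(K\times\Delta^n)\cong A_{PL}^*(K)\otimes\Omega_n$, and rearranging the completion yields
\[
\hom_{\ssets}(K\times\Delta^n,X)\cong\MC\big((A_{PL}^*(K)\widehat\otimes\g)\otimes\Omega_n\big)=\MC_\bullet\big(A_{PL}^*(K)\widehat\otimes\g\big)_n,
\]
natural in $n$. Passing to the based version replaces $A_{PL}^*(K)$ by its augmentation ideal $\overline{A_{PL}^*(K)}$: the maps sending the basepoint of $K$ to the basepoint $0\in X$ are exactly those whose Maurer--Cartan datum is annihilated by the augmentation induced by $\ast\to K$. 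For a simply-connected reduced model this augmentation ideal is the entire tensorand, matching the notation $A\widehat\otimes\g$ of the statement.

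It remains to replace $A_{PL}^*(K)$ by the chosen model $A$. By \cref{def:Loo models} the algebras $A$ and $A_{PL}^*(K)$ are connected by a zig-zag of quasi-isomorphisms of (homotopy) commutative algebras. Completed-tensoring with the locally finite $\g$ turns each such quasi-isomorphism into a filtered $\infty$-quasi-isomorphism of $\L_\infty$-algebras: writing $A_{PL}^*(K)\widehat\otimes\g=\lim_n\big(A_{PL}^*(K)\otimes\g/\F_n\g\big)$ with each $\g/\F_n\g$ finite dimensional, one argues exactly as in the proof of \cref{thm:invariance of MC spaces}, via the long exact sequences of the filtration quotients and the five lemma, using \cref{thm:twoInftyMorphismsTensor} and the quasi-isomorphism-invariance of the tensor construction (the analogue of \cref{prop:qiInduceQi}) to transport $\infty$-quasi-isomorphisms. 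By the Dolgushev--Rogers theorem \cref{thm:DR} these induce homotopy equivalences of Maurer--Cartan spaces, whence $\MC_\bullet(A\widehat\otimes\g)\simeq\MC_\bullet(A_{PL}^*(K)\widehat\otimes\g)\simeq\map(K,X)\simeq\map(K,L_\q)$, as desired.

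The main obstacle is the second step: verifying that the exponential law is a genuine isomorphism of simplicial sets in the present, possibly infinite-dimensional, setting. One must check that $A_{PL}^*(Y)\widehat\otimes\g$ is precisely the object whose Maurer--Cartan set records compatible simplexwise Maurer--Cartan elements, that the Maurer--Cartan equation converges (this is where degree-wise nilpotence, local finiteness, and the degree bound are all used simultaneously), and that the Künneth isomorphism is compatible both with the completions and with the $\L_\infty$-structures. The based refinement and the naturality required to upgrade the bijection to a simplicial isomorphism also demand care, but become routine once the convergence bookkeeping is under control.
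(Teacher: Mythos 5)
First, a point of reference: the paper does not actually prove this statement. It is imported from Berglund \cite[Thm. 6.3]{ber15}, and the paper's only original contribution is the remark following it, which translates Berglund's Getzler $\infty$-groupoid $\gamma_\bullet$ into $\MC_\bullet$ via \cite[Cor. 5.9]{Get09} and observes that the argument survives completing with respect to $\F\g$ instead of the degree filtration. So your sketch must be measured against Berglund's original proof, whose overall shape --- realize the $\L_\infty$-model, identify $\map(K,\MC_\bullet(\g,\F\g))$ with a Maurer--Cartan space of a completed tensor product, then transport along quasi-isomorphisms of commutative models using Dolgushev--Rogers-type invariance --- your outline does reproduce.

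There is, however, a genuine gap at exactly the step you flag as ``the main obstacle,'' and it is not bookkeeping. Your adjunction
\[
\hom_{\ssets}(Y,\MC_\bullet(\g,\F\g))\cong\MC\big(\g\widehat{\otimes}A_{PL}^*(Y)\big)
\]
is correct as stated (local finiteness is what lets $\g\widehat{\otimes}-$ commute with the limit defining $A_{PL}^*(Y)$), but the next step fails: $A_{PL}^*(K\times\Delta^n)$ is \emph{not} isomorphic to $A_{PL}^*(K)\otimes\Omega_n$; the K\"unneth map is only a quasi-isomorphism. Consequently the simplicial sets $[n]\mapsto\MC\big(\g\widehat{\otimes}A_{PL}^*(K\times\Delta^n)\big)=\map(K,\MC_\bullet(\g,\F\g))_n$ and $[n]\mapsto\MC\big(\g\widehat{\otimes}A_{PL}^*(K)\otimes\Omega_n\big)=\MC_\bullet\big(A_{PL}^*(K)\widehat{\otimes}\g\big)_n$ are not isomorphic --- Maurer--Cartan \emph{sets} are not quasi-isomorphism invariant, so they genuinely differ level-wise. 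What exists is a natural comparison map of Kan complexes, and proving that this map is a weak equivalence is precisely the technical heart of Berglund's theorem (his homotopy-invariance results for the functor $\gamma_\bullet(\g\widehat{\otimes}-)$, in the tradition of Bousfield--Gugenheim and Getzler). It cannot be obtained from convergence bookkeeping, nor from a level-wise application of \cite{dr15}: the Dolgushev--Rogers theorem (\cref{thm:DR}) compares Maurer--Cartan \emph{spaces}, not the Maurer--Cartan sets appearing in each simplicial degree here. Your third step --- replacing $A_{PL}^*(K)$ by $A$ by tensoring a zig-zag of quasi-isomorphisms with $\g$, controlling it by the filtration quotients and the five lemma, and invoking \cref{thm:DR} --- is sound and parallels both Berglund's argument and the paper's own \cref{thm:invariance of MC spaces}; but the proof as a whole is incomplete until the exponential-law comparison is actually established as a weak equivalence rather than asserted as an isomorphism.
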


\begin{remark}
	In \cite{ber15}, this theorem is stated in terms of the Getzler $\infty$-groupoid $\gamma_{\bullet}$. However, the $\infty$-groupoid $\gamma_{\bullet}$ is homotopy equivalent to $\MC_{\bullet}$ by \cite[Cor. 5.9]{Get09}, and thus the statement above is equivalent to the original one. Also notice that we supposed that $(\g,\F\g)$ is locally finite, and completed the tensor product with respect to the filtration $\F\g$ and not with the degree filtration, as in \cite{ber15}. An inspection of the original proof reveals that the result still holds in this slightly more general context.
\end{remark}

Using the results of the present article, we will improve Berglund's Theorem in two ways: we will show that we can take homotopy cocommutative coalgebra models for $K$ instead of just cocommutative ones, and that this model is natural with respect to $\infty_\pi$-quasi-isomorphisms of  $\L_\infty$-algebras, respectively $\pi$-weak equivalences of homotopy cocommutative coalgebras. We will also show that, under certain restrictions on $C$ and $\g$, this model only depends on the homotopy types of $C$ and $\g$, i.e. different choices for $C$ and $\g$ will give homotopy equivalent models for the mapping space. The first result is the following one.

\begin{lemma}
	Let $K$ be a simply-connected simplicial set, let $L$ be a nilpotent space (e.g. a simply-connected space) of finite $\q$-type and $L_{\q}$ the $\q$-localization of $L$. Let $C$  be a cocommutative model for $K$ and $(\g,\F\g)$ a degree-wise nilpotent, locally finite $\L_{\infty}$-model of finite type for $L$. There is a homotopy equivalence of simplicial sets
	\[
	\map(K,L_{\q}) \simeq \MC_\bullet(\hom^\pi((f_\iota)_*C,\g)) ,
	\]
	i.e. the convolution $\L_\infty$-algebra $\hom^\pi((f_\iota)_*C,\g)$ is an $\L_\infty$-model for the mapping space.
\end{lemma}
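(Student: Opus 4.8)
The plan is to reduce the statement to Berglund's theorem (\cref{thm:Berglund}) through a chain of identifications of $\L_\infty$-algebras, so that no genuinely new homotopy-theoretic input is required beyond the results already established. The three ingredients I intend to use are the naturality of convolution algebras, the comparison of convolution algebras with completed tensor products, and the dualization of models.

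First I would replace the homotopy cocommutative coalgebra $(f_\iota)_*C$ by the strict cocommutative coalgebra $C$. Since $f_\iota\colon\com^\vee\to\bar\Omega\com^\vee$ is a morphism of cooperads and the defining triangle commutes, we have $f_\iota^*\pi=\pi f_\iota=\iota$. Applying \cref{lemma:naturality of convolution algebras wrt compositions}, part~(\ref{pt1}), with $f=f_\iota$ and $\alpha=\pi$ then gives the equality
\[
\hom^\pi\big((f_\iota)_*C,\g\big)=\hom^{f_\iota^*\pi}(C,\g)=\hom^\iota(C,\g)
\]
of $\L_\infty$-algebras, reducing the problem to computing $\MC_\bullet(\hom^\iota(C,\g))$.

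Next I would trade the convolution algebra for a completed tensor product. As $(\g,\F\g)$ is locally finite by hypothesis, \cref{lemma:isomHomTensor} (in its second case) yields an isomorphism $\hom^\iota(C,\g)\cong\g\widehat\otimes C^\vee$ of $\L_\infty$-algebras, which agrees with Berglund's $C^\vee\widehat\otimes\g$ under the canonical symmetry of the tensor product. Now $C$ is a cocommutative model for $K$, so by \cref{thm:dualization of models}, part~(\ref{2}), its linear dual $C^\vee$ is a commutative model for $K$ --- crucially with no finiteness assumption on $C$. Finally I would invoke \cref{thm:Berglund} with the commutative model $A\coloneqq C^\vee$, whose remaining hypotheses on $(\g,\F\g)$ (degree-wise nilpotent, locally finite, of finite $\q$-type) and on $K,L$ hold by assumption, to obtain
\[
\map(K,L_\q)\simeq\MC_\bullet\big(C^\vee\widehat\otimes\g\big)\cong\MC_\bullet\big(\hom^\iota(C,\g)\big)=\MC_\bullet\big(\hom^\pi((f_\iota)_*C,\g)\big),
\]
which is the desired equivalence.

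Every step here is either an equality, an isomorphism of $\L_\infty$-algebras, or a direct citation, so I anticipate no computational difficulty; essentially all of the work lies in lining up hypotheses. The points that need care are purely bookkeeping: checking that $\pi f_\iota=\iota$ together with the common weight grading makes \cref{lemma:naturality of convolution algebras wrt compositions} applicable, verifying that local finiteness of $(\g,\F\g)$ is exactly what the second case of \cref{lemma:isomHomTensor} requires, and matching the order of the factors in $\g\widehat\otimes C^\vee$ against Berglund's convention. The one substantive ingredient is \cref{thm:dualization of models}: it is precisely the ability to dualize an arbitrary cocommutative model $C$ --- with no finiteness hypothesis --- into a commutative model $C^\vee$ that lets us feed Berglund's theorem and thereby recast his result in terms of (homotopy) cocommutative coalgebra models.
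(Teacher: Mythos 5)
Your proposal is correct and follows essentially the same route as the paper's own proof: both combine \cref{thm:dualization of models} and \cref{thm:Berglund} to get that $\g\widehat{\otimes}C^\vee$ models the mapping space, then identify this with $\hom^\iota(C,\g)=\hom^\pi((f_\iota)_*C,\g)$ via \cref{lemma:isomHomTensor} and \cref{lemma:naturality of convolution algebras wrt compositions}. Your write-up merely traverses the same chain of identifications in the opposite order and spells out the bookkeeping (e.g.\ $f_\iota^*\pi=\iota$ and the symmetry of the tensor factors) that the paper leaves implicit.
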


\begin{proof}
	By \cref{thm:dualization of models} and \cref{thm:Berglund}, we know that $\g\otimes C^\vee$ is an $\L_\infty$-model for the mapping space. Further, by \cref{lemma:isomHomTensor} we know that
	\[
	\g\widehat{\otimes}C^\vee\cong\hom^\iota(C,\g) = \hom^\pi((f_\iota)_*C,\g)\ ,
	\]
	where the second equality is \cref{lemma:naturality of convolution algebras wrt compositions}.
\end{proof}

\begin{proposition}
	Let $C$ be a homotopy cocommutative coalgebra, and let
	\[
	\Psi:(\h,\F\h)\rightsquigarrow(\g,\F\g)\ .
	\]
	be a filtered $\infty$-morphism of $\L_\infty$-algebras. Then there is a weak equivalence of simplicial sets
	\[
	\MC_\bullet(\hom^\pi(C,\h))\simeq\MC_\bullet(\hom^\pi(C,\g))\ .
	\]
\end{proposition}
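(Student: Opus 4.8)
The plan is to read this statement as the specialisation of \cref{thm:invariance of MC spaces}(2) to the operadic data of rational homotopy theory: the cooperad $\C=\bar\Omega\com^\vee$, the operad $\P=\L_\infty$, and the Koszul twisting morphism $\pi\colon\bar\Omega\com^\vee\to\L_\infty$. Under this dictionary the homotopy cocommutative coalgebra $C$ is a conilpotent $\C$-coalgebra, the $\L_\infty$-algebras $\h$ and $\g$ are exactly $\P$-algebras, and the two objects in the statement are the convolution $\L_\infty$-algebras $\hom^\pi(C,\h)$ and $\hom^\pi(C,\g)$. A filtered $\L_\infty$-algebra is in particular a filtered $\P$-algebra (condition (ii) of the two definitions coincides once $\P=\L_\infty$, since the structure operations $\ell_k$ generate $\P$), so $(\h,\F\h)$ and $(\g,\F\g)$ are filtered $\P$-algebras and we equip the convolution algebras with the filtrations $\F_n\hom^\pi(C,A)=\hom^\pi(C,\F_nA)$.

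First I would record that, through this dictionary, $\Psi$ is a filtered $\infty_\pi$-quasi-isomorphism of filtered $\P$-algebras. Then \cref{thm:twoBifunctors}(b) produces the induced $\infty$-morphism of $\L_\infty$-algebras
\[
\hom_r^\pi(1,\Psi)\colon\hom^\pi(C,\h)\rightsquigarrow\hom^\pi(C,\g),
\]
which is filtered by \cref{prop:induced morphisms are filtered}(2). To see that it is moreover a \emph{filtered} $\infty$-quasi-isomorphism I would repeat the argument of \cref{thm:invariance of MC spaces}(2): the isomorphisms $\hom^\pi(C,A/\F_nA)\cong\hom^\pi(C,A)/\F_n\hom^\pi(C,A)$ turn $\Psi$ into $\infty_\pi$-quasi-isomorphisms on every quotient $A/\F_nA$, these are sent to $\infty$-quasi-isomorphisms by \cref{prop:qiInduceQi}, and a five-lemma argument on the long exact sequences of the filtration quotients upgrades this to a filtered $\infty$-quasi-isomorphism. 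Finally, the Dolgushev--Rogers theorem (\cref{thm:DR}) applied to $\hom_r^\pi(1,\Psi)$ yields the homotopy equivalence $\MC_\bullet(\hom^\pi(C,\h))\simeq\MC_\bullet(\hom^\pi(C,\g))$. Since all of this is already packaged in \cref{thm:invariance of MC spaces}(2), the essential content of the proof is the translation of the hypotheses into that theorem's setting.

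The one point deserving genuine care --- which I expect to be the main obstacle --- is the compatibility between the classical notion of $\infty$-morphism of $\L_\infty$-algebras (an $\infty_\iota$-morphism for $\iota\colon\com^\vee\to\L_\infty$, built on the cooperad $\com^\vee$) and the $\infty_\pi$-morphisms (built on the larger cooperad $\bar\Omega\com^\vee$) that \cref{thm:twoBifunctors}(b) and \cref{thm:invariance of MC spaces}(2) require. If $\Psi$ is already given as a filtered $\infty_\pi$-quasi-isomorphism, there is nothing to do. Otherwise one must promote a classical filtered $\infty$-quasi-isomorphism to an $\infty_\pi$-one along the cooperad quasi-isomorphism $f_\iota\colon\com^\vee\to\bar\Omega\com^\vee$ with $\pi f_\iota=\iota$: concretely, by pushing the classical $\infty$-morphism forward along $f_\iota$ and comparing it to the $\bar_\pi$-bar constructions through the weak equivalences $(f_\iota)_*\bar_\iota(-)\to\bar_\pi(-)$ induced by $f_\iota$, in the spirit of the rectification apparatus of \cref{subsect:oo-alpha-qis and rectifications} and of \cref{lemma:equalityOfInftyMorphisms}. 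Because $f_\iota$ is a weak equivalence and the filtrations are preserved throughout, this comparison is compatible with all the structure in sight, and once it is in place the proof reduces to the direct invocation above.
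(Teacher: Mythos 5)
Your overall skeleton (reduce to \cref{thm:invariance of MC spaces} plus Dolgushev--Rogers) is the right one, but your opening move contains the genuine gap, and it is precisely the point of this proposition. In this paper's conventions, a filtered $\infty$-morphism of $\L_\infty$-algebras (\cref{def:filteredLoo}) is a \emph{classical} $\infty$-morphism, i.e.\ an $\infty_\iota$-morphism for $\iota\colon\com^\vee\to\L_\infty$: a morphism of $\com^\vee$-coalgebras $\bar_\iota\h\to\bar_\iota\g$. It is \emph{not} an $\infty_\pi$-morphism, which would be a morphism of $\bar\Omega\com^\vee$-coalgebras $\bar_\pi\h\to\bar_\pi\g$ --- an object with a different underlying space. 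So you cannot ``record that $\Psi$ is a filtered $\infty_\pi$-quasi-isomorphism'' and feed it into \cref{thm:invariance of MC spaces}(2) with $\alpha=\pi$; the paper's own proof opens by stressing exactly this mismatch. You do flag the issue in your last paragraph, but your proposed repair fails for a concrete directional reason: pushing forward along $f_\iota$ produces (at best) comparison morphisms $(f_\iota)_*\bar_\iota\h\to\bar_\pi\h$ and $(f_\iota)_*\bar_\iota\g\to\bar_\pi\g$, both pointing away from the pushed-forward bar construction, so together with $(f_\iota)_*\Psi$ you only get a span $\bar_\pi\h\leftarrow(f_\iota)_*\bar_\iota\h\to\bar_\pi\g$, not a morphism $\bar_\pi\h\to\bar_\pi\g$. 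Extracting an honest \emph{filtered} $\infty_\pi$-quasi-isomorphism from this span would require inverting the left leg up to homotopy among conilpotent $\bar\Omega\com^\vee$-coalgebras and then tracking filtrations through that inversion, none of which you supply; and \cref{lemma:equalityOfInftyMorphisms} cannot do it for you, since its algebra-side clause concerns operad morphisms $\P\to\P'$ (not cooperad morphisms) and it only ever yields equalities of (co)bar constructions, never the weak equivalences $(f_\iota)_*\bar_\iota(-)\to\bar_\pi(-)$ you invoke.

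The paper resolves the mismatch on the other side of the convolution functor: instead of promoting $\Psi$, it rectifies the \emph{coalgebra} slot. One replaces $C$ by $R_{\pi,f_\iota}(C)=(f_\iota)_*\bar_\iota\Omega_\pi C$, so that \cref{lemma:naturality of convolution algebras wrt compositions} (together with the definition of the rectification) gives the \emph{equality} of $\L_\infty$-algebras $\hom^\pi(R_{\pi,f_\iota}(C),\h)=\hom^\iota(\bar_\iota\Omega_\pi C,\h)$. Since $\bar_\iota\Omega_\pi C$ is a genuine conilpotent $\com^\vee$-coalgebra, the given classical filtered $\infty$-quasi-isomorphism $\Psi$ is exactly what \cref{thm:invariance of MC spaces}(2), now applied with $\alpha=\iota$, accepts; this yields the middle weak equivalence $\MC_\bullet(\hom^\iota(\bar_\iota\Omega_\pi C,\h))\simeq\MC_\bullet(\hom^\iota(\bar_\iota\Omega_\pi C,\g))$. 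The two outer comparisons $\MC_\bullet(\hom^\pi(C,\h))\simeq\MC_\bullet(\hom^\pi(R_{\pi,f_\iota}(C),\h))$, and likewise for $\g$, come from applying \cref{thm:invariance of MC spaces}(1) to the rectification $\infty_\pi$-quasi-isomorphism $E_C\colon R_{\pi,f_\iota}(C)\rightsquigarrow C$ of \cref{prop:rectification is oo-qi to original}, and chaining the five comparisons finishes the proof. (As you implicitly assumed, $\Psi$ must be a filtered $\infty$-\emph{quasi}-isomorphism for the middle step; this is how the proposition is stated and used in \cref{cor:generalization of Berglund}.) If you want to salvage your route, you must actually prove the promotion statement with filtrations; otherwise adopt this coalgebra-side rectification.
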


\begin{proof}
	This looks very similar to \cref{thm:invariance of MC spaces} --- and indeed, that result is a fundamental ingredient of the proof --- but notice that here we have $\infty$-morphisms of $\L_\infty$-algebras, i.e. $\infty_\iota$-morphisms, instead of $\infty_\pi$-morphisms.
	
	\medskip
	
	The proof is schematized by the following diagram.
	\begin{center}
		\begin{tikzpicture}
			\node (a) at (0,5){$\hom^\pi(C,\h)$};
			\node (b) at (0,2.5){$\hom^\pi(R_{\pi,f_\iota}(C),\h)$};
			\node (c) at (0,0){$\hom^\iota(\bar_\iota\Omega_\pi C,\h)$};
			\node (d) at (6,0){$\hom^\iota(\bar_\iota\Omega_\pi C,\g)$};
			\node (e) at (6,2.5){$\hom^\pi(R_{\pi,f_\iota}(C),\g)$};
			\node (f) at (6,5){$\hom^\pi(C,\g)$};
			
			\draw[->,line join=round,decorate,decoration={zigzag,segment length=4,amplitude=.9,post=lineto,post length=2pt}] (b) -- node[left]{$\hom^\pi(E_D,1)$} (a);
			\draw[double equal sign distance] (b) -- (c);
			\draw[->,line join=round,decorate,decoration={zigzag,segment length=4,amplitude=.9,post=lineto,post length=2pt}] (c) -- node[above]{$\hom^\iota(1,\Psi)$} (d);
			\draw[double equal sign distance] (d) -- (e);
			\draw[->,line join=round,decorate,decoration={zigzag,segment length=4,amplitude=.9,post=lineto,post length=2pt}] (e) -- node[left]{$\hom^\pi(E_D,1)$} (f);
		\end{tikzpicture}
	\end{center}
	The vertical equalities are given by \cref{lemma:naturality of convolution algebras wrt compositions} and the definition of the rectification $R_{\pi,f_\iota}$. We apply the functor $\MC_\bullet$ on the whole diagram, and all the squiggly arrows become homotopy equivalences of simplicial sets by \cref{thm:invariance of MC spaces} and \cref{prop:rectification is oo-qi to original}. The result follows.
\end{proof}

Our generalization of \cref{thm:Berglund} is a direct consequence of this result.

\begin{theorem}\label{cor:generalization of Berglund}
	Let $K$ be a simply-connected simplicial set, let $L$ be a nilpotent space (e.g. a simply-connected space) of finite $\q$-type and $L_{\q}$ the $\q$-localization of $L$. Let $C$  be a homotopy cocommutative model for $K$, let $(\g,\F\g)$ be a degree-wise nilpotent, locally finite $\L_{\infty}$-model of finite type for $L$, and let $(\h,\F\h)$ be an $\L_\infty$-algebra such that there exists a filtered $\infty$-quasi-isomorphism
	\[
	\Psi:(\h,\F\h)\rightsquigarrow(\g,\F\g)\ .
	\]
	 There is a homotopy equivalence of simplicial sets
	\[
	\map(K,L_{\q}) \simeq \MC_\bullet(\hom^\pi(C,\h)) ,
	\]
	i.e. the convolution $\L_\infty$-algebra $\hom^\pi(C,\h)$ is an $\L_\infty$-model for the mapping space.
\end{theorem}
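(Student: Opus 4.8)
The statement is a direct assembly of the Lemma and the Proposition immediately preceding it, so the plan is to chain together three homotopy equivalences of Maurer--Cartan spaces and to supply the bookkeeping that matches the hypotheses. The overall strategy is: first reduce the homotopy cocommutative model $C$ to the strict Quillen model by an $\infty_\pi$-quasi-isomorphism together with the coalgebra-slot invariance, so that the Lemma above can be invoked verbatim; then move from the $\L_\infty$-model $\g$ to the abstract $\h$ using the preceding Proposition. No new computation should be required.

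First I would unwind the definition of a homotopy cocommutative rational model. Since $C$ is such a model for $K$, there is, by the equivalence recorded in \cref{prop:alpha-we and zig-zags}, a $\pi$-weak equivalence $\Phi\colon(f_\iota)_*\quil(K)\rightsquigarrow C$ of $\bar\Omega\com^\vee$-coalgebras; by \cref{thm: alpha-we are oo-alpha-qi} this $\Phi$ is in particular an $\infty_\pi$-quasi-isomorphism. The Quillen model $\quil(K)$ is a strict cocommutative model for $K$, so the Lemma above applies directly and yields a homotopy equivalence $\map(K,L_\q)\simeq\MC_\bullet(\hom^\pi((f_\iota)_*\quil(K),\g))$.

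Next I would transport this equivalence along $\Phi$. Applying \cref{thm:invariance of MC spaces}(1) to the $\infty_\pi$-quasi-isomorphism $\Phi$ shows that $\MC_\bullet(\hom^\pi_\ell(\Phi,1))\colon\MC_\bullet(\hom^\pi(C,\g))\to\MC_\bullet(\hom^\pi((f_\iota)_*\quil(K),\g))$ is a weak equivalence, so that $\hom^\pi(C,\g)$ is itself an $\L_\infty$-model for $\map(K,L_\q)$. Finally I would feed the filtered $\infty$-quasi-isomorphism $\Psi\colon(\h,\F\h)\rightsquigarrow(\g,\F\g)$ into the preceding Proposition to obtain $\MC_\bullet(\hom^\pi(C,\h))\simeq\MC_\bullet(\hom^\pi(C,\g))$, and composing the three equivalences yields the claim.

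The main point requiring care, rather than a genuine obstacle, is checking the hypotheses of \cref{thm:invariance of MC spaces} at each use: one must equip $\hom^\pi(C,\g)$ with the filtration $\F_n\hom^\pi(C,\g)=\hom^\pi(C,\F_n\g)$ so that it is a filtered $\L_\infty$-algebra and $\MC_\bullet$ is defined, and observe that part~(1) of that theorem places no filtration requirement on the coalgebra variable, so that the $\infty_\pi$-quasi-isomorphism $\Phi$ alone suffices. The variance in the algebra slot is entirely absorbed by the preceding Proposition, whose proof already combines the rectification of \cref{prop:rectification is oo-qi to original} with \cref{thm:invariance of MC spaces}; hence no additional argument is needed there, and the proof is short.
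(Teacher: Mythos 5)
Your proposal is correct and follows essentially the same route as the paper: the paper merely declares the theorem a ``direct consequence'' of the preceding proposition, the intended implicit argument being exactly your chain --- the lemma applied to the strict Quillen model $\quil(K)$, transport of the mapping-space equivalence along the $\pi$-weak equivalence $(f_\iota)_*\quil(K)\rightsquigarrow C$ via \cref{thm: alpha-we are oo-alpha-qi} and \cref{thm:invariance of MC spaces}, and then the preceding proposition to replace $(\g,\F\g)$ by $(\h,\F\h)$. Your write-up simply makes explicit the bookkeeping (the filtration $\F_n\hom^\pi(C,\g)=\hom^\pi(C,\F_n\g)$ and the hypothesis checks) that the paper leaves unstated.
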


An example of an application of this theorem is an alternative proof of \cite[Thm. 3.2]{BG16}, see \cite[Cor. 11.1]{w16}.

\subsection{Another application: Algebraic Hopf invariants and the mo\-du\-li space of Maurer--Cartan elements}

Another application of the theory developed in the present paper is given in \cite{w17}, where the second author uses it to construct a complete invariant of the real or rational homotopy classes of maps between simply-connected manifolds.

\medskip

Let $M$ and $N$ be two smooth, orientable and simply-connected based manifolds, and suppose that $M$ is compact. The homology $H_\bullet(M)$ of $M$ has a $\C_\infty$-coalgebra structure making it into a model for $M$, and similarly, the rational homotopy $\pi_\bullet(N)\otimes\q$  of $N$ has a $\L_\infty$-algebra structure making it into a model for $N$. By \cref{cor:generalization of Berglund}, the convolution $\L_\infty$-algebra $\hom(H_\bullet(M),\pi_\bullet(N)\otimes\q)$  becomes a model for the based mapping space $\map(M,N)$. In \cite{w16}, the second author constructed a map
\[
\mathrm{mc}:\map(M,N)\longrightarrow\MC(\hom(H_\bullet(M),\pi_\bullet(N)\otimes\q))
\]
from the based mapping space $\map(M,N)$ to the set of Maurer--Cartan elements of the convolution algebra. This map is not a homotopy invariant of maps, but by \cite[Theorem 7.1]{w16}, two maps $f,g:M\to N$ are homotopic if, and only if $\mathrm{mc}(f)$ and $\mathrm{mc}(g)$ are gauge equivalent. Therefore, one obtains a complete homotopy invariant
\[
\mathrm{mc}_\infty:\map(M,N)\longrightarrow\mathcal{MC}(M,N)
\]
of maps from $M$ to $N$ by composing $\mathrm{mc}$ with the projection onto the moduli space
\[
\mathcal{MC}(M,N)\coloneqq\MC(\hom(H_\bullet(M),\pi_\bullet(N)\otimes\q))/\sim_{\mathrm{gauge}}
\]
of Maurer--Cartan elements. In \cite{w17}, the second author uses the techniques developed in this paper to study the moduli space $\mathcal{MC}(M,N)$ with the goal of computing this complete invariant of maps. More precisely, this is done as follows. First one fixes a minimal CW-complex $X$, rationally equivalent to $M$. Here, a CW-complex is called minimal if its associated cellular chain complex has zero differential. If we denote the $n$-skeleton of $X$ by $X_n$ and the attaching maps by
\[
a_{n+1} : \bigvee_{k_{n+1}} S^n \rightarrow X_n\ ,
\]
where $k_{n+1}$ is the number of $(n+1)$-cells. We denote the inclusion map of the $n$-skeleton into the $(n+1)$-skeleton by $i_n:X_n \rightarrow X_{n+1}$. Using this minimal CW-decomposition, one gets a tower of fibrations approximating the moduli space of Maurer--Cartan elements: each inclusion map $i_n$ induces a fibration of simplicial sets
\[
i_n^*:\MC(\hom(X_{n+1},\pi_\bullet(N)\otimes\q))\rightarrow \MC(\hom(X_n,\pi_\bullet(N)\otimes\q))\ .
\] 
This way, one obtains a tower of fibrations approximating the moduli space of Maurer--Cartan elements. Thanks to the results of the present paper, the second author is able to give a description of the fibers of the fibrations in this tower in terms of the attaching maps $a_n$. Using this techniques one can often give very concrete descriptions of the approximations of the moduli space of Maurer--Cartan elements.

\bibliographystyle{alpha}
\bibliography{Homotopy_morphisms_between_convolution_homotopy_Lie_algebras}

\end{document}